\let\OLDthebibliography\thebibliography
\renewcommand\thebibliography[1]{
	\OLDthebibliography{#1}
	\setlength{\parskip}{0pt}
	\setlength{\itemsep}{2pt} 
}
\DeclareFontFamily{U}{rcjhbltx}{}
\DeclareFontShape{U}{rcjhbltx}{m}{n}{<->s*[1.15]rcjhbltx}{}   % replace <->s*[1.2]rcjhbltx with <->rcjhbltx for the normal size
\DeclareSymbolFont{hebrewletters}{U}{rcjhbltx}{m}{n}
\DeclareMathSymbol{\lamed}{\mathord}{hebrewletters}{108}
\DeclareMathSymbol{\mem}{\mathord}{hebrewletters}{109}
\DeclareMathSymbol{\ayin}{\mathord}{hebrewletters}{96}
\DeclareMathSymbol{\tsadi}{\mathord}{hebrewletters}{118}
\DeclareMathSymbol{\qof}{\mathord}{hebrewletters}{113}
\DeclareMathSymbol{\shin}{\mathord}{hebrewletters}{152}
\theoremstyle{definition}
\newtheorem{df}{Definition}[section]
\newtheorem{rem}[df]{Remark}
\newtheorem{cv}[df]{Convention}
\theoremstyle{plain}
\newtheorem{thm}[df]{Theorem}
\newtheorem{pp}[df]{Proposition}
\newtheorem{co}[df]{Corollary}
\newtheorem{lm}[df]{Lemma}
\newcommand{\fk}{\mathfrak}
\newcommand{\mc}{\mathcal}
\newcommand{\wtd}{\widetilde}
\newcommand{\wch}{\widecheck}
\newcommand{\ovl}{\overline}
\newcommand{\tr}{\mathrm{t}} %transpose
\newcommand{\Tr}{\mathrm{Tr}}
\newcommand{\End}{\mathrm{End}} %endomorphism
\newcommand{\idt}{\mathbf{1}}
\newcommand{\Hom}{\mathrm{Hom}}
\newcommand{\Res}{\mathrm{Res}}
\newcommand{\opp}{\mathrm{op}}
\newcommand{\diag}{\mathrm{diag}}
\newcommand{\SV}{\mathscr{V}}
\newcommand{\Span}{\mathrm{Span}}
\newcommand{\Vect}{\mathcal Vect}
\newcommand{\scr}{\mathscr}
\newcommand{\im}{\mathbf{i}}
\newcommand{\SLF}{\mathrm{SLF}}
\newcommand{\mbb}{\mathbb}
\newcommand{\blt}{\bullet}
\newcommand{\Vbb}{\mathbb V}
\newcommand{\Xbb}{\mathbb X}
\newcommand{\Wbb}{\mathbb W}
\newcommand{\Mbb}{\mathbb M}
\newcommand{\Gbb}{\mathbb G}
\newcommand{\Cbb}{\mathbb C}
\newcommand{\Nbb}{\mathbb N}
\newcommand{\Zbb}{\mathbb Z}
\newcommand{\Pbb}{\mathbb P}
\newcommand{\Ebb}{\mathbb E}
\newcommand{\cbf}{\mathbf c}
\newcommand{\MO}{\mathcal{O}}
\newcommand{\MU}{\mathcal{U}}
\newcommand{\fx}{\mathfrak{X}}
\newcommand{\ST}{\mathscr{T}}
\newcommand{\SF}{\mathscr{F}}
\newcommand{\MG}{\mathcal G}
\newcommand{\MD}{\mathcal{D}}
\newcommand{\fc}{\mathfrak{C}}
\newcommand{\bk}[1]{\langle {#1}\rangle}
\newcommand{\bigbk}[1]{\big\langle {#1}\big\rangle}
\newcommand{\bbs}{\boxbackslash}
\newcommand{\fq}{{\mathfrak Q}}
\newcommand{\ft}{{\mathfrak T}}
\newcommand{\Mod}{\mathrm{Mod}}
\newcommand{\ModfL}{\mathrm{Mod}^{\mathrm f}_{\mathrm L}}
\newcommand{\id}{\mathrm{id}}
\newcommand{\eps}{\varepsilon}
\newcommand{\fn}{\mathfrak{N}}
\newcommand{\fy}{\mathfrak{Y}}
\newcommand{\ff}{\mathfrak{F}}
\newcommand{\fg}{\mathfrak{G}}
\newcommand{\Lan}{{\big\langle}}
\newcommand{\Ran}{{\big\rangle}}
\newcommand{\Coh}{{\mathrm{Coh}_{\mathrm L}}}
\newcommand{\Abb}{{\mathbb{A}}}
\newcommand{\hqed}{\hfill\qedsymbol}
\newcommand{\Aut}{\mathrm{Aut}}
\newcommand{\Del}{\mathrm{Del}}
\newcommand{\tipaz}{\text{\textctyogh}}
\newcommand{\tipae}{\text{\textrhookrevepsilon}}
\newcommand{\tipath}{\text{\textbullseye}}
\numberwithin{equation}{section}
\title{How are pseudo-$q$-traces related to (co)ends?}
\author{{\sc Bin Gui, Hao Zhang}
	%\\
	%{\small Department of Mathematics, Rutgers university}\\
	%{\small bin.gui@rutgers.edu}
}
\date{}
\begin{document}\sloppy % avoid stretch into margins
	\pagenumbering{arabic}
	%\pagenumbering{gobble}
	%\newpage
	%\setcounter{page}{1}
	\setcounter{section}{-1}

	\maketitle

%%%%%%%%%%%%%%%%%%%%%%%%%%%%%%%%%%%%%%%%%%%%%%%5
\newcommand\blfootnote[1]{%
	\begingroup
	\renewcommand\thefootnote{}\footnote{#1}%
	\addtocounter{footnote}{-1}%
	\endgroup
}
% Footnote without marker

%%%%%%%%%%%%%%%%%%%%%%%%%%%%%%%%%%%%%%%%%%%%%

%\hyperlink{currentwriting}{Current writing progress}~~~~~~ 

%\hypertarget{currentwriting}{}

\begin{abstract}
Let $\Vbb$ be an $\Nbb$-graded $C_2$-cofinite vertex operator algebra (VOA), not necessarily rational or self-dual. Using a special case of the sewing-factorization theorem from \cite{GZ3}, illustrated in \hyperlink{figsfend}{Fig. 0} below, we show that the end
\begin{align*}
\Ebb=\int_{\Mbb\in\Mod(\Vbb)}\Mbb\otimes_\Cbb\Mbb'
\end{align*}
in $\Mod(\Vbb^{\otimes2})$ (where $\Mbb'$ is the contragredient module of $\Mbb$) admits a natural structure of associative $\Cbb$-algebra compatible with its $\Vbb^{\otimes2}$-module structure. Moreover, we show that a suitable category $\Coh(\Ebb)$ of left $\Ebb$-modules is isomorphic, as a linear category, to $\Mod(\Vbb)$, and that the space of vacuum torus conformal blocks is isomorphic to the space $\SLF(\Ebb)$ of symmetric linear functionals on $\Ebb$.

Combining these results with the main theorem of \cite{GZ4}, we prove a conjecture of Gainutdinov-Runkel \cite{GR-Verlinde}: For any projective generator $\Gbb$ in $\Mod(\Vbb)$, the pseudo-$q$-trace construction yields a linear isomorphism from $\SLF(\End_\Vbb(\Gbb)^\opp)$ to the space of vacuum torus conformal blocks of $\Vbb$. 

In particular, if $A$ is a unital finite-dimensional $\Cbb$-algebra such that the category of finite-dimensional left $A$-modules is equivalent to $\Mod(\Vbb)$, then $\SLF(A)$ is linearly isomorphic to the space of vacuum torus conformal blocks of $\Vbb$. This confirms a conjecture of Arike-Nagatomo \cite{AN-pseudo-trace}.
\end{abstract}

\hypertarget{figsfend}{}
\begin{figure}[H]
	\centering
\begin{gather*}
\ST^*\Bigg(\vcenter{\hbox{{
		\includegraphics[height=1cm]{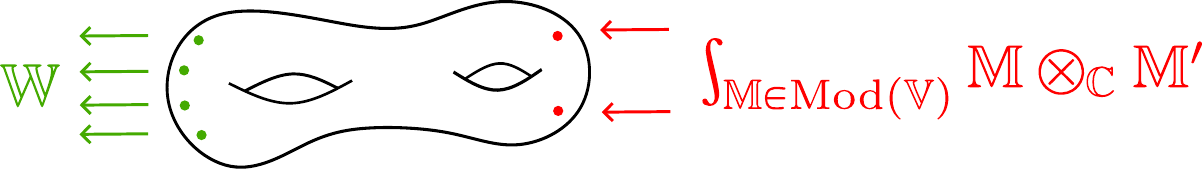}}}}\Bigg)\simeq\ST^*\Bigg(\vcenter{\hbox{{
		\includegraphics[height=1cm]{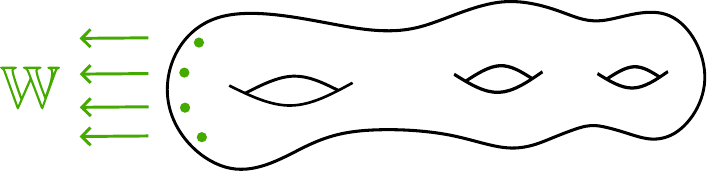}}}}\Bigg)
   \end{gather*}
\caption*{Figure 0. A pictorial illustration of the sewing-factorization isomorphism \eqref{eq138}.}
\end{figure}
\tableofcontents

%\vspace{-0.5cm}
%\blfootnote{Last major revision:  2021.6}

%%%%%%%%%%%%%%%%%%%%%%%%%%%%%%%%5
%\makeatletter
%\newcommand*{\toccontents}{\@starttoc{toc}}
%\makeatother
%\toccontents

% title and table of contents same page

%%%%%%%%%%%%%%%%%%%%%%%%%%%%%

	%%%%%%%%%%%%%%%%%%%%%%%%%%%%%%%%%%%%%%%%%%%%%%%%%%%%%%%%%
	
	%\newpage
	%$~$
	%\renewcommand\contentsname{} % the empty name
	
	%\begingroup
	%\let\clearpage\relax
	%\vspace{-2cm} % the removed space. Set as appropriate

	% Remove header of table of contents
	
	%%%%%%%%%%%%%%%%%%%%%%%%%%%%%%%%%%%%%%%%%%%%%%%%%%%%%%%

\section{Introduction}

\subsection{(Co)ends in finite logarithmic CFT}

\nocite{HLZ1,HLZ2,HLZ3,HLZ4,HLZ5,HLZ6,HLZ7,HLZ8}

A fundamental feature of rational conformal field theory (rational CFT) is the \textbf{factorization property}, which states, roughly speaking, that when a (possibly disconnected) compact Riemann surface is sewn along several pairs of marked points using prescribed local coordinates, the resulting space of conformal blocks is isomorphic to a direct sum of spaces of conformal blocks associated to the pre-sewing configuration. 

In the theory of rational and $C_2$-cofinite vertex operator algebras (VOAs), proving the factorization property in low-genus cases is a central topic in the literature. The key results in this direction include the modular invariance property \cite{Zhu-modular-invariance, Hua-differential-genus-1} and the associativity of intertwining operators \cite{Hua-tensor-4, Hua-differential-genus-0}. These are analytic in nature, meaning that the isomorphisms appearing in the factorization property are derived using Segal's formalism of sewing \cite{Segal-CFT1, Segal-CFT2}. More recently, the factorization property in arbitrary genus has been established for \textit{rational} $C_2$-cofinite VOAs using formal (algebraic) sewing, as in \cite{DGT2}; for genus zero, this formal approach was developed in \cite{NT-P1_conformal_blocks}.

Now we focus on a $C_2$-cofinite, but not necessarily rational, VOA $\Vbb=\bigoplus_{n\in\Nbb}\Vbb(n)$. The category $\Mod(\Vbb)$ of grading-restricted generalized $\Vbb$-modules is a finite abelian category as a linear category \cite{MNT10, Hua-projectivecover}. As a monoidal category, its tensor structure---defined via the formalism of Huang-Lepowsky-Zhang \cite{HLZ1}, \cite{HLZ2}-\cite{HLZ8}---forms a (possibly non-rigid) ribbon Grothendieck-Verdier category \cite{ALSW21}. 

In this setting, the study of factorization originates from the construction of non-semisimple modular functors in topological field theory (TFT), especially the work of Lyubashenko \cite{Lyu95-Invariants, Lyu96-Ribbon}. From the TFT perspective, factorization should naturally be formulated in terms of ends and coends, the definitions of which will be recalled in Def. \ref{lb75}. There are several approaches to expressing factorization via ends and coends. One is the left exact coend formulation \cite{Lyu95-Invariants, Lyu96-Ribbon,FS-coends-CFT}. Another employs the horizontal composition of profunctors, as in \cite{HR24-MF}. In fact, the Huang-Lepowsky-Zhang tensor category theory can be viewed as realizing a genus-zero sewing-factorization theorem in the language of horizontal composition of profunctors---a perspective first emphasized in \cite{Moriwaki22-CB}. 

On the other hand, the modular invariance property is typically regarded as a genus-one sewing-factorization theorem. In the non-rational case, one well-established formulation of modular invariance is expressed in terms of pseudo-$q$-traces \cite{Miy-modular-invariance,AN-pseudo-trace,Fio-genus-1,Hua-modular-C2}. However, the connection between this formulation and the end/coend perspective remains unclear. The aim of this paper is to clarify that relationship.

\subsection{The sewing-factorization (SF) theorem}

In \cite{GZ3}, we established several equivalent formulations of the sewing-factorization theorem for any $C_2$-cofinite VOA $\Vbb=\bigoplus_{n\in\Nbb}\Vbb(n)$. One such formulation appears in the language of horizontal composition of profunctors; see \cite[Sec. 3.2]{GZ3}. While this is a coend-based expression of the factorization property, its relation to pseudo-$q$-traces is not immediately transparent. In the following, we recall the version of the sewing-factorization theorem stated in terms of (dual) fusion products, as proved in \cite[Sec. 3.1]{GZ3} and reviewed in detail in Sec. \ref{lb76}.

Let $\fg$ be a (possibly disconnected) compact Riemann surface with disjoint sets $G'$ and $G$ of outgoing and incoming marked points, respectively, such that each connected component of $\fg$ intersects $G'\cup G$. Suppose each point in $G'\cup G$ is equipped with a local coordinate. Let $N = |G'|$ and $R = |G|$. Fix orderings of $G'$ and $G$, that is, bijections
\begin{align*}
\{1,\dots,N\}\xlongrightarrow{\simeq}G'\qquad \{1,\dots,R\}\xlongrightarrow{\simeq}G
\end{align*}
Let $\Vect$ be the category of finite-dimensional $\Cbb$-vector spaces. Then we have a left exact (cf. Rem. \ref{lb38}) profunctor
\begin{gather*}
\Mod(\Vbb^{\otimes N})\times\Mod(\Vbb^{\otimes R})\rightarrow\Vect\\
(\Mbb,\Xbb)\quad\mapsto\quad\ST^*\Bigg(\vcenter{\hbox{{
		\includegraphics[height=1.2cm]{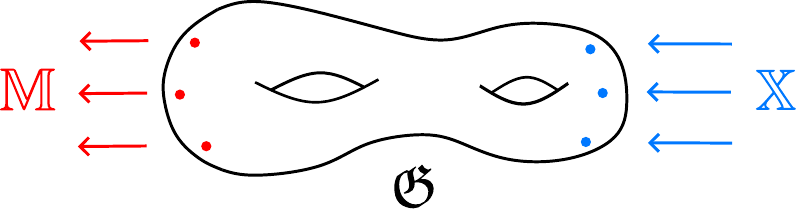}}}}\Bigg)
\end{gather*}
where the notation $\ST^*(\cdots)$ on the RHS denotes the space of conformal blocks over $\fg$ with input module $\Xbb$ and output module $\Mbb$. (The roles of input and output modules can be interchanged by taking contragredient modules.) This profunctor is covariant in $\Mbb$ and contravariant in $\Xbb$. For a detailed definition and interpretation of the figure, see Sec. \ref{lb77}.

By \cite{DSPS19-balanced}, every linear functor from a finite $\Cbb$-linear category to $\Vect$ is representable. Therefore, for each fixed $\Xbb\in\Mod(\Vbb^{\otimes R})$, there exists an $\Mbb$-natural linear isomorphism
\begin{align}\label{eq134}
\Hom_{\Vbb^{\otimes N}}(\boxtimes_\fg\Xbb,\Mbb)\xlongrightarrow{\simeq}\ST^*\bigg(\vcenter{\hbox{{
		\includegraphics[height=1.2cm]{fig27a.pdf}}}}\bigg)
\end{align}
Fix such a natural isomorphism, and let
\begin{align*}
\daleth\in \ST^*\bigg(\vcenter{\hbox{{
		\includegraphics[height=1.2cm]{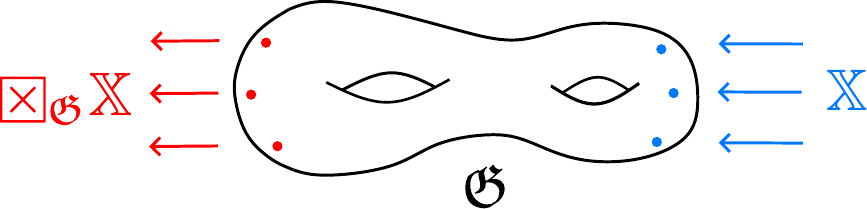}}}}\bigg)
\end{align*}
be the element corresponding to $\id_{\boxtimes_\fg\Xbb}\in\End_{\Vbb^{\otimes N}}(\boxtimes_\fg\Xbb)$ under the isomorphism \eqref{eq134}. The pair $(\boxtimes_\fg\Xbb,\daleth)$ is called a \textbf{fusion product of $\Xbb$ along $\fg$}, and $\daleth$ is called the \textbf{canonical conformal block}. Note that this construction depends on the chosen orderings of the incoming and outgoing marked points $G$ and $G'$. For simplicity, we suppress this dependence in the introduction, but it will be made explicit in the main body of the paper.

Now suppose we are given, similarly to $\fg$, a compact Riemann surface $\ff$ with disjoint sets $F',F$ of outgoing and incoming marked points. Let $K=|F'|$ and assume that $|F|=N=|G'|$. Fix orderings of $F'$ and $F$. Assume that each component of $\ff$ intersects $F'\cup F$. Then, using the chosen orderings, we can analytically sew $\ff$ and $\fg$ along $F$ and $G'$, producing a new surface $\ff \# \fg$:
\begin{align*}
\vcenter{\hbox{{
		\includegraphics[height=1.6cm]{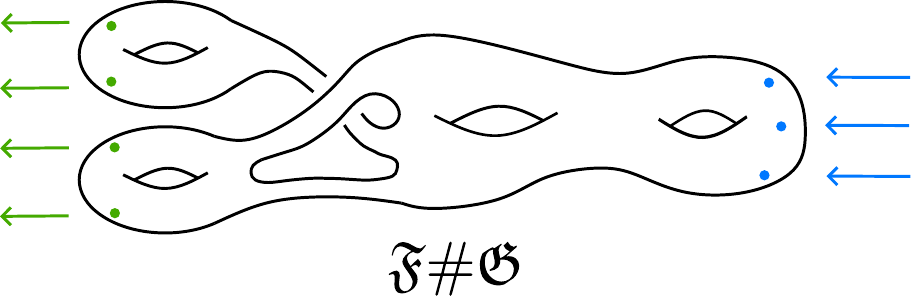}}}}\quad=\quad\vcenter{\hbox{{
		\includegraphics[height=1.6cm]{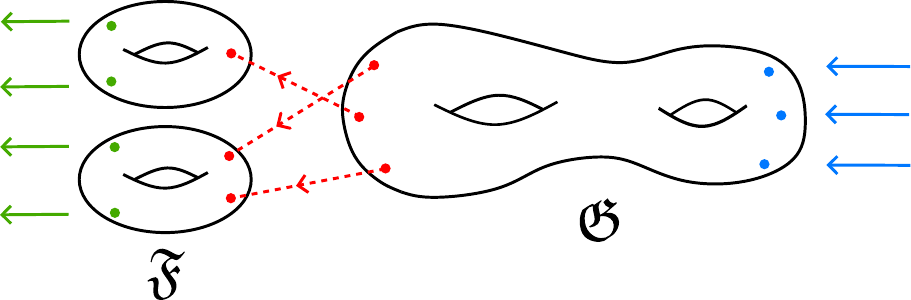}}}}
\end{align*}
(Since we are performing analytic sewing, we must choose sewing moduli. In the introduction, as well as in many parts of this paper, we fix all sewing moduli to be 1; see Subsec. \ref{lb46} for details.)

Assume that each component of $\ff\#\fg$ intersects $F'\cup G$. The \textbf{sewing-factorization (SF) theorem} says that for each $\Wbb\in\Mod(\Vbb^{\otimes K})$ and $\Xbb\in\Mod(\Vbb^{\otimes R})$, we have a linear isomorphism (called the \textbf{sewing-factorization isomorphism})
\begin{gather}\label{eq135}
\begin{gathered}
\ST^*\Bigg(\vcenter{\hbox{{
		\includegraphics[height=1.6cm]{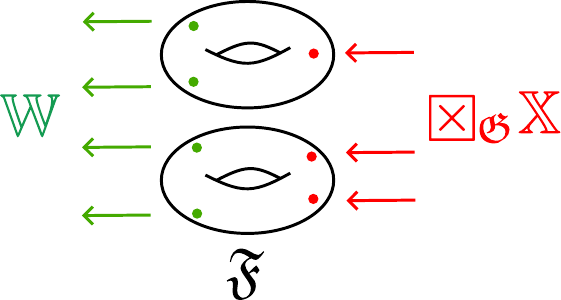}}}}\Bigg)\quad\xlongrightarrow{\simeq}\quad\ST^*\Bigg(\vcenter{\hbox{{
		\includegraphics[height=1.6cm]{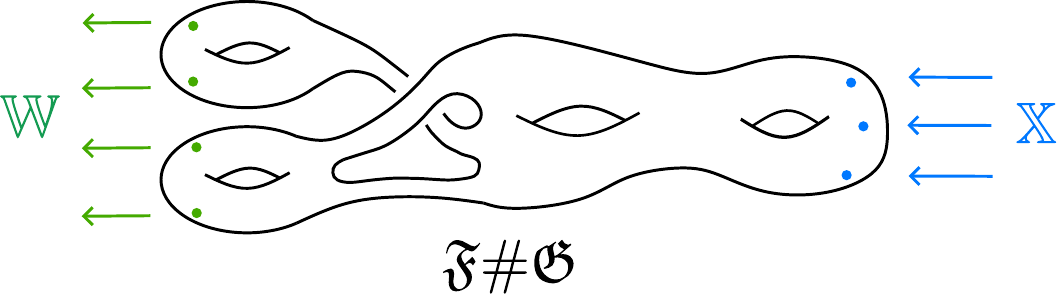}}}}\Bigg)\\ \upphi\mapsto\upphi\circ\daleth
\end{gathered}
\end{gather}
where $\upphi\circ\daleth:\Wbb'\otimes\Xbb\rightarrow\Cbb$ is the contraction of $\upphi:\Wbb'\otimes\boxtimes_\fg\Xbb\rightarrow\Cbb$ and $\daleth:\bbs_\fg\Xbb\otimes\Xbb\rightarrow\Cbb$, with $\bbs_\fg\Xbb$ being the contragredient module of $\boxtimes_\fg\Xbb$ (called the \textbf{dual fusion product}). 

We remark that in \cite{GZ3}, the SF theorem is stated under the assumption that each component of $\fg$ intersects $G$. This condition can be removed by invoking the propagation of conformal blocks; see Thm. \ref{SF}. (In particular, when $\fg$ is connected, it is allowed to have no incoming marked points. In that case, since $\fg$ intersects $G'\cup G$, the set $G'$ of outgoing marked points must be non-empty.)

\subsection{The SF theorem in terms of the end $\int_{\Mbb\in\Mod(\Vbb)}\Mbb\otimes_\Cbb\Mbb'$}

An important special case of the above SF theorem is when $\ff$ has two incoming points, and $\fg$ is the sphere $\fn$ with no incoming and two outgoing points $\infty,0$, equipped with the local coordinates $1/\zeta,\zeta$ respectively, where $\zeta$ is the standard coordinate of $\Cbb$. We refer to $\fn$ as the \textbf{default $2$-pointed sphere}. Let $(\boxtimes_\fn\Cbb,\upomega)$ be a fusion product of the complex field $\Cbb\in\Mod(\Vbb^{\otimes0})$ along $\fn$. Assume that
\begin{align*}
\ff\quad=\quad\vcenter{\hbox{{
		\includegraphics[height=1cm]{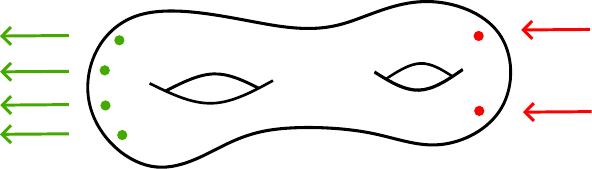}}}}
\end{align*}
Then, since
\begin{align}
\vcenter{\hbox{{
		\includegraphics[height=1.1cm]{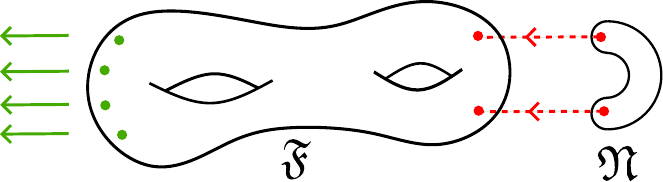}}}}\quad=\quad\vcenter{\hbox{{
		\includegraphics[height=1cm]{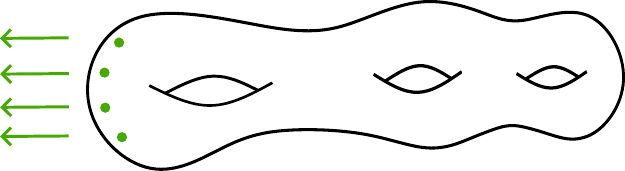}}}}
\end{align}
the SF isomorphism \eqref{eq135} becomes
\begin{subequations}\label{eq138}
\begin{gather}\label{eq136}
\begin{gathered}
\ST^*\Bigg(\vcenter{\hbox{{
		\includegraphics[height=1cm]{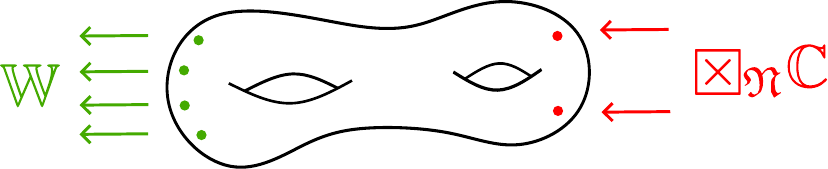}}}}\Bigg)\quad\xlongrightarrow{\simeq}\quad\ST^*\Bigg(\vcenter{\hbox{{
		\includegraphics[height=1cm]{fig28c.pdf}}}}\Bigg)\\ \upphi\mapsto\upphi\circ\upomega
\end{gathered}
\end{gather}
where $\Wbb\in\Mod(\Vbb^{\otimes K})$, and $K$ is (again) the number of outgoing points of $\ff$.

By the propagation of conformal blocks, $\boxtimes_\fn\Cbb$ is isomorphic to the fusion product $\boxtimes_\fq\Vbb$ of $\Vbb$ along a sphere with one input and two outputs; see Thm. \ref{lb36} for details.\footnote{The contragredient of $\boxtimes_\fq\Vbb$ was first considered by Li in \cite{Li-regular-rep}, where it was referred to as the regular representation of $\Vbb$.} In \cite[Sec. 0.6]{GZ3}, we explained why $\boxtimes_\fq\Vbb$ can be viewed as the end $\int_{\Mbb\in\Mod(\Vbb)}\Mbb\otimes\Mbb'$. Therefore, we have an isomorphism of $\Vbb^{\otimes 2}$ modules
\begin{align}\label{eq137}
\boxtimes_\fn\Cbb\simeq\int_{\Mbb\in\Mod(\Vbb)}\Mbb\otimes_\Cbb\Mbb'
\end{align}
\end{subequations}
(An alternative proof of \eqref{eq137} will be given in this paper; see Thm. \ref{end}.) Therefore, \eqref{eq136} yields the isomorphism shown in \hyperlink{figsfend}{Figure 0} following the abstract. See Rem. \ref{SF1} for further explanation.

\subsection{The Arike-Nagatomo conjecture is an easy consequence of the SF theorem}\label{lb80}

As a special case of the isomorphism in \hyperlink{figsfend}{Figure 0}, we have
\begin{align*}
&\ST^*\bigg(\vcenter{\hbox{{
		\includegraphics[height=1cm]{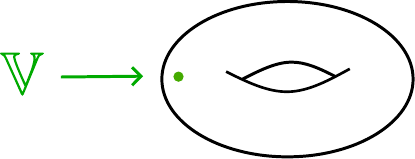}}}}\bigg)\quad\simeq\quad\ST^*\bigg(\vcenter{\hbox{{
		\includegraphics[height=1cm]{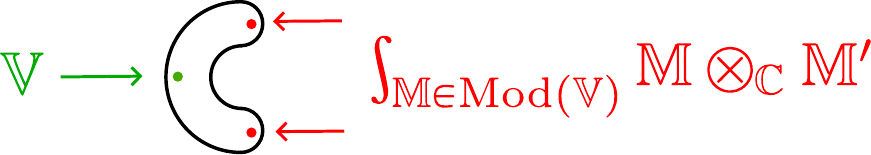}}}}\bigg)\\[1ex]
\simeq\quad&\ST^*\bigg(\vcenter{\hbox{{
		\includegraphics[height=1cm]{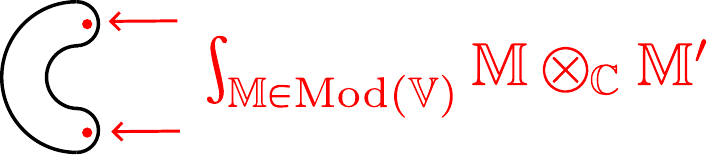}}}}\bigg)
\end{align*}
where the last equality is due to the propagation of conformal blocks. Since the transpose of the dinatural transform $\int_{\Mbb\in\Mod(\Vbb)}\Mbb\otimes\Mbb'\rightarrow\Mbb\otimes\Mbb'$ is $\Mbb'\otimes\Mbb\rightarrow\big(\int_{\Mbb\in\Mod(\Vbb)}\Mbb\otimes\Mbb'\big)'$, the latter must satisfy the universal property required for a coend. Thus
\begin{align*}
\Big(\int_{\Mbb\in\Mod(\Vbb)}\Mbb\otimes\Mbb'\Big)'\simeq\int^{\Mbb\in\Mod(\Vbb)}\Mbb'\otimes\Mbb
\end{align*}
Therefore, we have
\begin{align*}
\ST^*\bigg(\vcenter{\hbox{{
		\includegraphics[height=1cm]{fig29a.pdf}}}}\bigg)\quad\simeq\quad\ST^*\bigg(\vcenter{\hbox{{
		\includegraphics[height=1cm]{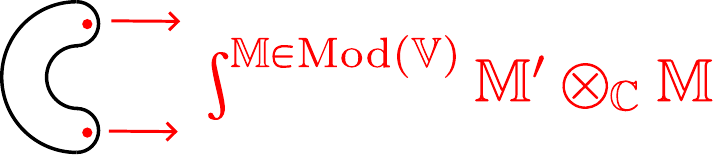}}}}\bigg)
\end{align*}
By \eqref{eq134} and \eqref{eq137}, the RHS above is isomorphic to
\begin{align}\label{eq139}
\Hom_{\Vbb^{\otimes2}}\Big(\int_{\Mbb\in\Mod(\Vbb)}\Mbb\otimes\Mbb',\int^{\Mbb\in\Mod(\Vbb)}\Mbb'\otimes\Mbb\Big)
\end{align}
Therefore, the space of vacuum torus conformal blocks is isomorphic to \eqref{eq139}. This immediately implies the following theorem, originally conjectured by Arike-Nagatomo in the Introduction of \cite{AN03-finite-dimensional}.

\begin{thm}\label{lb78}
Let $A$ be a unital finite-dimensional $\Cbb$-algebra such that $\Mod(\Vbb)$ is linearly isomorphic to the category $\ModfL(A)$ of finite dimensional left $A$-modules. Then we have a linear isomorphism
\begin{align}\label{eq140}
\ST^*\bigg(\vcenter{\hbox{{
		\includegraphics[height=1cm]{fig29a.pdf}}}}\bigg)\quad\simeq\quad\SLF(A)
\end{align}
where $\SLF(A)$ denotes the space of symmetric linear functionals on $A$.
\end{thm}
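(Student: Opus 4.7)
The plan is to combine the isomorphism derived in Subsection 0.4,
\begin{align*}
\ST^*(\text{vacuum torus}) \simeq \Hom_{\Vbb^{\otimes 2}}\Big(\int_\Mbb \Mbb \otimes \Mbb',\; \int^\Mbb \Mbb' \otimes \Mbb\Big),
\end{align*}
with a direct computation of the right-hand side after translating to the $A$-module picture. The key observation is that this Hom space is a linear-categorical invariant of $\Mod(\Vbb)$---built from the contragredient functor, the tensor product over $\Cbb$, and the Hom structure of the ambient $\Mod(\Vbb^{\otimes 2})$---and hence is preserved under the hypothesized equivalence $\Mod(\Vbb) \simeq \ModfL(A)$.

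Concretely, I would pass through the decomposition $\Mod(\Vbb^{\otimes 2}) \simeq \Mod(\Vbb) \boxtimes \Mod(\Vbb) \simeq \ModfL(A \otimes A)$, reinterpreted as the category of $A$-bimodules via the anti-involution of $A$ corresponding to the VOA contragredient. Under this translation, the end and coend become
\begin{align*}
\int_{N \in \ModfL(A)} N \otimes_\Cbb N^* \simeq A, \qquad \int^{N \in \ModfL(A)} N^* \otimes_\Cbb N \simeq A^*
\end{align*}
as $A$-bimodules. Both identifications are straightforward universal-property verifications: the end is represented by $A$ with dinatural transformation $A \to \End_\Cbb(N) \simeq N \otimes N^*$ given by the left-regular action $a \mapsto L_a$, while the coend is represented by $A^*$ with dinatural transformation $\sigma_N(\xi \otimes n)(a) = \xi(an)$.

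The theorem then reduces to the classical identification $\Hom_{A \otimes A^\opp}(A, A^*) \simeq \SLF(A)$: any $A$-bimodule map $f \colon A \to A^*$ is determined by $\phi := f(1) \in A^*$, and the compatibility of $f$ with both the left and right regular $A$-actions forces $\phi(ab) = \phi(ba)$ for all $a, b \in A$, identifying $f$ with an element of $\SLF(A)$. Composing with the isomorphism from the first step yields the desired linear isomorphism.

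The main technical obstacle is justifying the translation step above: establishing $\Mod(\Vbb^{\otimes 2}) \simeq \Mod(\Vbb) \boxtimes \Mod(\Vbb)$ as linear categories (expected for $C_2$-cofinite $\Vbb$ but requiring some care in the non-rational setting), and verifying that the VOA contragredient $'$ descends, under the equivalence $\Mod(\Vbb) \simeq \ModfL(A)$, to a contravariant autoequivalence of $\ModfL(A)$ compatible with the end and coend identifications above. Any twist by an anti-involution of $A$ that may appear in this descent does not affect the final identification with $\SLF(A)$, since $\SLF$ is invariant under such twists.
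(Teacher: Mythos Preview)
Your proposal is correct and follows essentially the same route as the paper's proof. The paper resolves your ``main technical obstacle'' by citing \cite{McR-deligne} for the Deligne product $\Mod(\Vbb)\times\Mod(\Vbb)\to\Mod(\Vbb^{\otimes2})$, and sidesteps the anti-involution issue by using the contragredient to identify $\Mod(\Vbb)^\opp\simeq\Mod(\Vbb)$ (hence working with $\ModfL(A)\otimes^\Del\ModfL(A)^\opp\simeq\mathrm{Bim}^{\mathrm f}(A)$, which needs no anti-involution on $A$); it then cites \cite[Cor.~2.9]{FSS20} for the end/coend identifications you propose to verify directly.
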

By a symmetric linear functional on $A$, we mean a linear map $\phi:A\rightarrow\Cbb$ satisfying $\phi(ab)=\phi(ba)$ for all $a,b\in A$.

\begin{proof}
We have used the SF theorem to prove that the LHS of \eqref{eq140} is linearly isomorphic to \eqref{eq139}. Note that
\begin{align*}
\ModfL(A)^\opp\simeq\Mod(\Vbb)^\opp\simeq\Mod(\Vbb)
\end{align*}
where the isomorphism $\Mod(\Vbb)^\opp\simeq\Mod(\Vbb)$ is defined by sending each $\Mbb^\opp$ to the contragredient $\Mbb'$ of $\Mbb$. By \cite{McR-deligne}, 
\begin{align*}
\Mod(\Vbb)\times\Mod(\Vbb)\rightarrow\Mod(\Vbb^{\otimes2})\qquad(\Xbb,\mbb Y)\mapsto\Xbb\otimes_\Cbb\mbb Y
\end{align*}
is a Deligne product. Therefore, \eqref{eq139} can be written as
\begin{align*}
&\Hom_{\Mod(\Vbb)\otimes^\Del\Mod(\Vbb)}\Big(\int_{\Mbb\in\Mod(\Vbb)}\Mbb\otimes^\Del\Mbb',\int^{\Mbb\in\Mod(\Vbb)}\Mbb'\otimes^\Del\Mbb\Big)\\
\simeq&\Hom_{\ModfL(A)\otimes^\Del\ModfL(A)^\opp}\Big(\int_{M\in\ModfL(A)}M\otimes^\Del M^\opp,\int^{M\in\ModfL(A)}M^\opp\otimes^\Del M\Big)
\end{align*}
where $\otimes^\Del$ denotes the Deligne product. By \cite[Cor. 2.9]{FSS20}, if we identify $\ModfL(A)\otimes^\Del\ModfL(A)^\opp$ with the category $\mathrm{Bim}^{\mathrm f}(A)$ of finite-dimensional $A$-bimodules, then the last Hom space above is equivalent to
\begin{align*}
\Hom_{\mathrm{Bim}^{\mathrm f}(A)}(A,A^*)\simeq\SLF(A)
\end{align*}
where each $T\in \Hom_{\mathrm{Bim}^{\mathrm f}(A)}(A,A^*)$ corresponds to $1_A^\tr\circ T\in\SLF(A)$ and $1_A^\tr:A^*\rightarrow\Cbb$ is the transpose of $\lambda\in\Cbb\mapsto \lambda\cdot 1_A\in A$. This establishes the isomorphism \eqref{eq140}.
\end{proof}

\subsection{The Gainutdinov-Runkel conjecture on pseudo-$q$-traces}

The isomorphism \eqref{eq140} established in Thm. \ref{lb78} is fairly abstract, and it is natural to seek an explicit linear map that realizes this isomorphism. Such a map was proposed by Gainutdinov and Runkel in \cite{GR-Verlinde}. Specifically, Conjecture 5.8 of \cite{GR-Verlinde} asserts that if $\Gbb$ is a projective generator in the abelian category $\Mod(\Vbb)$, then the pseudo-$q$-trace construction (in the sense of \cite{AN-pseudo-trace}) yields a linear isomorphism
\begin{align}\label{eq141}
\SLF(\End_\Vbb(\Gbb)^\opp)\xlongrightarrow{\simeq}\ST^*\bigg(\vcenter{\hbox{{
		\includegraphics[height=1cm]{fig29a.pdf}}}}\bigg)
\end{align}

We do not recall the definition of the pseudo-$q$-trace construction here; see Sec. \ref{lb79} for details, or the Introduction of \cite{GZ4} for a brief overview. However, let us explain why the Arike-Nagatomo conjecture is a special case of the Gainutdinov-Runkel conjecture: Suppose that $\Mod(\Vbb)\simeq\ModfL(A)$ as linear categories, where $A$ is a unital finite-dimensional algebra. Then $A$, as a left $A$-module, is a projective generator of $\ModfL(A)$, and $\End_{\ModfL(A)}(A)^\opp\simeq A$. By choosing any $\Gbb\in\Mod(\Vbb)$ corresponding to the object $A$ of $\ModfL(A)$, we recover the isomorphism \eqref{eq140} from \eqref{eq141}.

We emphasize that in the main body of this paper, our proof of the Gainutdinov-Runkel conjecture, formally stated in Thm. \ref{lb68}, does not rely on first establishing the Arike-Nagatomo conjecture. In fact, our argument provides an independent proof of the Arike-Nagatomo conjecture, separate from the one given in Sec. \ref{lb80}. Furthermore, our proof does not assume the isomorphism $\boxtimes_\fn\Cbb\simeq\int_{\Mbb\in\Mod(\Vbb)}\Mbb\otimes\Mbb'$; rather, the alternative proof of this isomorphism is a consequence of the techniques developed for proving the Gainutdinov-Runkel conjecture, as we will see in Sec. \ref{lb81}.

To prove the Gainutdinov-Runkel conjecture, we must relate pseudo-$q$-traces to ends and coends---in other words, to answer the question posed in the title of this paper. Our answer, in brief, is as follows:
\begin{enumerate}[label=(\arabic*)]
\item The vector space $\boxtimes_\fn\Cbb$ can be equipped with an associative $\Cbb$-algebra structure that is compatible with its $\Vbb^{\otimes2}$-structure (Cor. \ref{lb82}). Moreover, the algebra $\boxtimes_\fn\Cbb$ is \textbf{almost unital and finite-dimensional (AUF)} in the sense of \cite{GZ4} (Cor. \ref{lb54}). 
\item There is a canonical linear isomorphism from the category $\Coh(\boxtimes_\fn\Cbb)$ of coherent left $\boxtimes_\fn\Cbb$-modules (cf. Def. \ref{lb83}) to the abelian category $\Mod(\Vbb)$ (Thm. \ref{lb55}). Therefore, the projective generators of the two categories can be identified.
\item By \eqref{eq136} and the propagation of conformal blocks, the sewing-factorization isomorphism implements an isomorphism from the following space of conformal blocks
\begin{align}
\ST^*\bigg(\vcenter{\hbox{{
		\includegraphics[height=1cm]{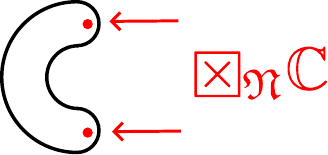}}}}\bigg)
\end{align}
(which can be identified with $\SLF(\boxtimes_\fn\Cbb)$, cf. Thm. \ref{lb48}) to the space of vacuum torus conformal blocks.
\item By the main result (Thm. 9.4) of \cite{GZ4} on symmetric linear functionals of AUF algebras, the pseudotrace construction yields a linear isomorphism
\begin{align}\label{eq142}
\SLF(\End_\Vbb(\Gbb))\xlongequal{\text{by (2)}}\SLF(\End_{\boxtimes_\fn\Cbb}(\Gbb))\xlongrightarrow{\simeq}\SLF(\boxtimes_\fn\Cbb)
\end{align}
Due to (3), the sewing-factorization isomorphism yields an isomorphism from $\SLF(\boxtimes_\fn\Cbb)$ to the space of vacuum torus conformal blocks. One can show that the composition of the pseudotrace construction \eqref{eq142} and the SF isomorphism equals the pseudo-$q$-trace construction. The proof of the Gainutdinov-Runkel conjecture is finished.
\end{enumerate}

\subsection{The cobordism geometry of associative $\Cbb$-algebras}

According to the discussion above, the key to answering the question ``how are pseudo-$q$-traces related to (co)ends" lies in the fact that the end $\boxtimes_\fn\Cbb$ naturally carries the structure of an AUF algebra. Our approach to studying torus conformal blocks via infinite-dimensional associative algebras is partly inspired by Huang's construction of the algebra $A^\infty(\Vbb)$ in \cite{Hua-associative,Hua22-Ass-IO} and his use of this algebra in \cite{Hua-modular-C2} to establish modular invariance of intertwining operators for $C_2$-cofinite VOAs. Another type of infinite-dimensional algebra, the so called mode transition algebra, was considered by Damiolini-Gibney-Krashen \cite{DGK2,DGK3-morita}, and was conjectured in \cite{DW-modular-functor} to be closely related to the end $\int_{\Mbb\in\Mod(\Vbb)}\Mbb\otimes\Mbb'$, although this connection remains unclear in the absence of rationality assumption on the $C_2$-cofinite VOA $\Vbb$.

Our construction of the associative algebra structure on $\boxtimes_\fn\Cbb$ differs fundamentally from all previous approaches to associative algebras in the VOA context: it is purely geometric, in the sense of Segal's CFT and cobordism categories. 

It is well-known that the vertex operator $Y(-,z)$, associated to any $\Vbb$-module, gives a conformal block associated to a sphere with two inputs and one output. In contrast, the geometric realization of the Zhu algebra $A(\Vbb)$ and the higher Zhu algebras $A_n(\Vbb)$, is far less transparent. However, the geometry for the algebra structure of $\boxtimes_\fn\Cbb$ is much clearer: the multiplication map $\diamond:\boxtimes_\fn\Cbb\otimes\boxtimes_\fn\Cbb\rightarrow\boxtimes_\fn\Cbb$ belongs to the following space of conformal blocks:
\begin{align}\label{eq143}
\ST^*\Bigg(\vcenter{\hbox{{
		\includegraphics[height=2.6cm]{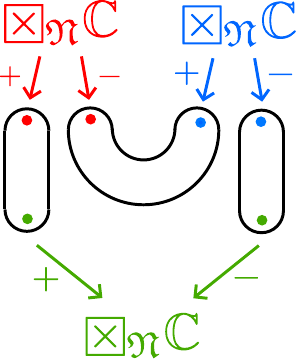}}}}~\Bigg)
\end{align}
where the signs $+$ and $-$ indicate the ordering of marked points; see Sec. \ref{lb77} for the precise meaning of the graphical notation for conformal blocks. Moreover, for any $\Mbb\in\Mod(\Vbb)$, the corresponding left $\boxtimes_\fn\Cbb$-module structure on $\Mbb$, given by a linear map $\boxtimes_\fn\Cbb\otimes\Mbb\rightarrow\Mbb$, belongs to the following space of conformal blocks:
\begin{align}
\ST^*\Bigg(\vcenter{\hbox{{
		\includegraphics[height=2.4cm]{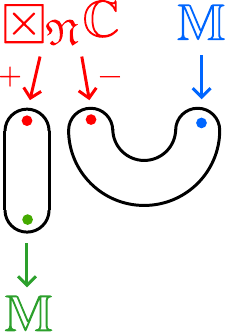}}}}~\Bigg)
\end{align}
These two conformal blocks will be defined precisely in Def. \ref{lb49} in a uniform way using the SF theorem. This provides yet another illustration of the power of the SF theorem established in \cite{GZ3}.

According to \cite{Li-regular-Zhu,Li-regular-AnV} (see also the appendex Chapter B of the arXiv version of \cite{GZ1}), the Zhu algebra $A(\Vbb)$ and the higher Zhu algebra $A_n(\Vbb)$ can be realized as quotient spaces of $\boxtimes_\fn\Cbb$, and their algebra structures can be defined via the $\Vbb^{\otimes2}$-module structure of $\boxtimes_\fn\Cbb$. Since our associative algebra structure on $\boxtimes_\fn\Cbb$ is compatible with this $\Vbb^{\otimes2}$-module structure, we expect that the algebra structure on $\boxtimes_\fn\Cbb$ descends to those of $A(\Vbb)$ and $A_n(\Vbb)$. This, in turn, provides a cobordism-geometric interpretation of $A(\Vbb)$ and $A_n(\Vbb)$: the geometries of the associative algebras $A(\Vbb)$ and $A_n(\Vbb)$ should be viewed as the zero-level and finite-level truncations of a distinguished conformal block in \eqref{eq143}, namely, the conformal block defining the multiplication map $\diamond:\boxtimes_\fn\Cbb\otimes\boxtimes_\fn\Cbb\rightarrow\boxtimes_\fn\Cbb$.

\subsection*{Acknowledgment}

We would like to thank Jurgen Fuchs, Robert McRae, Shuang Ming, Ingo Runkel, Christoph Schweigert, Yilong Wang,  Lukas Woike, Baojun Wu, and Jinwei Yang for helpful discussions. Special thanks go to Yi-Zhi Huang for encouraging us to explore the connection between our series of works \cite{GZ1,GZ2,GZ3} and the pseudo-$q$-traces, a relationship that was not at all apparent to us when the outlines of these three articles were initially conceived.

\section{Conformal blocks and their graphical calculus}

\subsection{Notation}\label{lb32}

Throughout this paper, we use the following notation.

\begin{itemize}
\item $\Nbb=\{0,1,2,\dots\}$, $\Zbb_+=\{1,2,\dots\}$. Neighborhoods are assumed to be open.
\item Let $\Cbb^\times=\Cbb\setminus\{0\}$. For each $r\in[0,+\infty]$, we let
\begin{align*}
\MD_r=\{z\in\Cbb:|z|<r\}\qquad \MD_r^\times=\{z\in\Cbb:0<|z|<r\}
\end{align*}
\item Let $\Vect$ be the abelian category of finite-dimensional $\Cbb$-vector spaces.
\item Let $\zeta$ be the \textbf{standard coordinate of $\pmb\Cbb$}, namely, the identity map $\id:\Cbb\rightarrow\Cbb$. 
\item For each complex manifold $X$, $\MO_X$ denotes the sheaf of germs of holomorphic functions on $X$. Therefore, $\MO_X(X)=\MO(X)$ is the space of holomorphic functions $X\rightarrow\Cbb$. We let $\omega_X$ be the sheaf of germs of holomorphic $1$-forms on $X$.
	\item Throughout this paper, we fix an $\Nbb$-graded $C_2$-cofinite vertex operator algebra (VOA) $\Vbb=\bigoplus_{n\in \Nbb}\Vbb(n)$ with conformal vector $\cbf$ and vacuum vector $\idt$. For each $n\in\Nbb$, we let
\begin{align*}
\Vbb^{\leq n}=\bigoplus_{k\leq n}\Vbb(k)
\end{align*}
For each $N\in\Nbb$,  we let $\pmb{\Mod(\Vbb^{\otimes N})}$ be the category of grading-restricted generalized $\Vbb^{\otimes N}$-modules, which is an abelian category by \cite{Hua-projectivecover} (see also \cite{MNT10}). We refer the reader to \cite{Hua-projectivecover} for the general properties of grading-restricted generalized modules of $C_2$-cofinite VOAs.

	\item Recall from \cite[Sec. 1.1]{GZ2} that if $\Wbb\in\Mod(\Vbb^{\otimes N})$ and $v\in\Vbb$, the $i$-th vertex operator
\begin{align*}
Y_{\Wbb,i}(v,z)=\sum_{n\in\Zbb}Y_{\Wbb,i}(v)_nz^{-n-1}
\end{align*}
is $Y(\idt\otimes\cdots\otimes v\otimes\cdots\otimes\idt,z)$ where $v$ is at the $i$-th component. We abbreviate $Y_{\Wbb,i}$ to $Y_i$ when no confusion arises. We also write
	\begin{subequations}\label{eq1}
	\begin{align}
	Y_i'(v,z)=Y_i(\MU(\upgamma_z)v,z^{-1})
	\end{align}
	where $\MU(\upgamma_z)=e^{zL(1)}(-z^{-2})^{L(0)}$ is the change-of-coordinate operator (cf. Subsec. \ref{lb52}) associated to
\begin{align}\label{eq132}
\upgamma_z:t\mapsto\frac{1}{z+t}-\frac 1z
\end{align}
Clearly $\MU(\upgamma_z)^{-1}=\MU(\upgamma_{1/z})$, and hence
	\begin{align}
	Y_i(v,z)=Y_i'(\MU(\upgamma_z)v,z^{-1})
	\end{align}
	\end{subequations}
We write
\begin{align}\label{eq2}
Y_i'(v,z)=\sum_{n\in\Zbb}Y_i'(v)_nz^{-n-1}
\end{align}
We also write
\begin{align}
Y_+=Y_1\quad Y_+'=Y_1'\quad Y_-=Y_2\quad Y_-'=Y_2'\qquad\text{for $\Vbb^{\otimes 2}$-modules}
\end{align}
Let $L_i(n)=Y_i(\cbf)_{n-1}$. 
\item 	If $\Wbb\in\Mod(\Vbb^{\otimes N})$ and $\lambda_1,\dots,\lambda_N\in\Cbb$, then $\Wbb_{[\lambda_\blt]}$ is the subspace of all $w\in\Wbb$ such that for all $1\leq i\leq N$, $w$ is a generalized eigenvector of $L_i(0)$ with eigenvalue $\lambda_i$. The finite-dimensional subspace $\Wbb_{[\leq\lambda_\blt]}$ is defined to be the direct sum of all $\Wbb_{[\mu_\blt]}$ where $\Re(\mu_i)\leq \Re(\lambda_i)$ for all $1\leq i\leq N$. Then the contragredient $\Vbb^{\otimes N}$-module of $\Wbb$, as a vector space, is
\begin{align*}
\Wbb'=\bigoplus_{\lambda_\blt\in\Cbb^N}(\Wbb_{[\lambda_\blt]})^*
\end{align*}
Then for each $w\in\Wbb,w'\in\Wbb$ we clearly have
\begin{align}\label{eq3}
\bk{Y_i(v,z)w,w'}=\bk{w,Y_i'(v,z)w'}
\end{align}
The algebraic completion of $\Wbb$ is 
\begin{align*}
\ovl\Wbb=(\Wbb')^*=\prod_{\lambda_\blt\in\Cbb^N}\Wbb_{[\lambda_\blt]}
\end{align*}
We let 
\begin{gather*}
P(\lambda_\blt)=\text{the projection of $\ovl\Wbb$ onto $\Wbb_{[\lambda_\blt]}$}\\
P({\leq\lambda_\blt})=\text{the projection of $\ovl\Wbb$ onto $\Wbb_{[\leq\lambda_\blt]}$}
\end{gather*}
Fix $1\leq i\leq N$ and $\lambda\in \Cbb$, then 
\begin{gather}\label{eq115}
\begin{gathered}
\text{$P_i(\lambda)$ resp. $P_i({\leq\lambda})$ denotes the projection of $\ovl{\Wbb}$ onto}\\
	\bigoplus_{\mu_\blt\in\Cbb^N,\mu_i=\lambda}\Wbb_{[\mu_\blt]}\qquad\text{resp.}\qquad \bigoplus_{\mu_\blt\in\Cbb^N,\Re(\mu_i)\leq \Re(\lambda)}\Wbb_{[\mu_\blt]}
\end{gathered}
\end{gather}
If $N=2$, we write
\begin{align}\label{eq130}
P_+(\lambda)=P_1(\lambda)\quad P_+({\leq\lambda})=P_1({\leq\lambda}) \quad P_-(\lambda)=P_2(\lambda)\quad P_-({\leq\lambda})=P_2({\leq\lambda})
\end{align}
\item Let $E$ be a finite set such that $|E|=N$. An \textbf{ordering} of $E$ is a bijection $\eps:\{1,\cdots,N\}\rightarrow E$. Suppose we have two orderings
\begin{align*}
	\eps:\{1,\cdots,N\}\rightarrow E,\quad \tipae:\{1,\cdots, M\}\rightarrow F
\end{align*}
The \textbf{composition of orderings} is defined as 
\begin{gather*}
	\eps*\tipae:\{1,\cdots,N+M\}\rightarrow E\sqcup F\\
	\eps*\tipae(i)=\eps(i),1\leq i\leq N\qquad \eps*\tipae(N+j)=\tipae(j),1\leq j \leq M
\end{gather*}
Then $\eps*\tipae$ is an ordering of $E\sqcup F$. It is easy to check that composition of orderings satisfies the associativity. Thus, the composition of $l$ orderings $\eps_1*\eps_2*\cdots *\eps_l$ is well-defined for $\eps_i:\{1,\cdots,N_i\}\rightarrow E_i$.
\item Let $N\in \Zbb_+$. The permutation group of $\{1,\cdots,N\}$ is denoted as $\fk S_N$.
\item Let $\Mbb\in \Mod(\Vbb)$. Let $\End(\Mbb)$ be the set of linear operators on $\Mbb$. Let
\begin{align}\label{eq87}
\begin{aligned}
	&\End^0(\Mbb):=\bigcup_{\lambda\in\Cbb} \End(\Mbb_{[\leq\lambda]})\\
=&\{T\in\End(\Mbb):T=P(\leq\lambda)TP(\leq\lambda)\text{ for some }\lambda\in\Cbb\}
\end{aligned}
\end{align}
$\End^0(\Mbb)$ is a grading-restricted $\Vbb^{\otimes 2}$-module whose module structure is determined by the fact that for each $v\in\Vbb,T\in\End^0(\Mbb)$, the following relation holds in $\End^0(\Mbb)[[z^{\pm1}]]$:
\begin{align*}
Y(v\otimes \idt,z) T=Y_\Mbb(v,z)\circ T\qquad Y(\idt\otimes v,z)T=T\circ Y_\Mbb (\mc U(\upgamma_z)v,z^{-1})
\end{align*}
Under this structure, the linear isomorphism
\begin{align*}
\Mbb\otimes\Mbb'\xlongrightarrow{\simeq}\End^0(\Mbb) \qquad m\otimes m'\mapsto m\cdot \bk{m',-}
\end{align*}
is an isomorphism in $\Mod(\Vbb^{\otimes 2})$.
\item Let $\Mbb\in\Mod(\Vbb)$. If $\Mbb$ is a right module over an associative $\Cbb$-algebra $B$, we let
\begin{align}\label{eq74}
\End_B^0(\Mbb)=\{T\in\End^0(\Mbb):(Tm)b=T(mb)\text{for each $m\in\Mbb$ and $b\in B$}\}
\end{align}
\item If $A$ is an associative $\Cbb$-algebra, we let 
\begin{align*}
\SLF(A)=\{\text{symmetric linear functionals on $A$}\}
\end{align*}
where a \textbf{symmetric linear functional} on $A$ denotes a linear map $\phi:A\rightarrow\Cbb$ satisfying $\phi(ab)=\phi(ba)$ for all $a,b\in A$.
\end{itemize}

\subsection{Conformal blocks for unordered $N$-pointed compact Riemann surfaces}\label{lb77}

In this section, we introduce the (unordered) $N$-pointed compact Riemann surfaces with local coordinates. 

\subsubsection{Sheaf of VOA}\label{lb52}

Let us recall the definition of sheaf of VOA. See \cite[Subsec. 1.3.1]{GZ2} for details.

Let $\mathcal G$ be the group of all $f(z)=\sum_{n>0}a_n z^n$, where $a_n\in \Cbb$ and $a_1\ne 0$. The group product of $f,g\in\MG$ is defined to be the composition $f\circ g$. For each $\alpha\in\MG$, $\MU(\alpha)$ is an invertible linear operator on $\Vbb$ defined by
	 \begin{align*}
\MU(\alpha)=\alpha'(0)^{L(0)}\exp\Big(\sum_{n>0}c_n L(n)\Big)
	 \end{align*}
where $c_n\in\Cbb$ and $0\neq\alpha'(0)\in\Cbb$ are the constants defined by $\alpha(z)=\alpha'(0)\cdot \exp\big(\sum_{n>0}c_n z^{n+1}\partial_z\big)z$.

More generally, if $\Wbb\in\Mod(\Vbb^{\otimes N})$, for each $1\leq i\leq N$ we define a linear operator
\begin{align}\label{eq93}
\MU_i(\alpha)=\alpha'(0)^{L_i(0)}\exp\Big(\sum_{n>0}c_n L_i(n)\Big)
\end{align}
on $\Wbb$, which depends on the choice of $\arg\alpha'(0)$.

If $X$ is a complex manifold, a map $\rho:X\rightarrow\MG$ (sending $x\in X$ to $\rho_x\in\MG$) is called a \textbf{holomorphic family of transformations} if for each $x\in X$, there exists a neighborhood $V\subset X$ of $x$ and a neighborhood $U\subset \Cbb$ of $0$ such that the map $(z,y)\in U\times V\mapsto\rho_y(z)\in\Cbb$ is holomorphic. In that case, for each $n\in\Nbb$, we have an $\End(\Vbb^{\leq n})$-valued holomorphic map
\begin{align*}
\MU(\rho):X\rightarrow \End(\Vbb^{\leq n})\qquad x\mapsto\MU(\rho_x)
\end{align*}
which induces an isomorphism of holomorphic vector bundles
\begin{align*}
\MU(\rho):\Vbb^{\leq n}\otimes_\Cbb\MO_X\xlongrightarrow{\simeq}\Vbb^{\leq n}\otimes_\Cbb\MO_X
\end{align*}

Let $C$ be a Riemann surface (without boundary). The \textbf{sheaf of VOA} 
	 \begin{align*}
		\scr V_C:=\varinjlim_{n\in\Nbb}\scr V_C^{\leq n}=\bigcup_{n\in\Nbb}\scr V_C^{\leq n}
	 \end{align*}
which relies on $\Vbb$ and $C$, is defined as follows.  $\SV_C^{\leq n}$ is a (finite rank) locally free $\MO_C$-module defined by the transition functions provided below. For each open subset $U\subset C$ and each univalent (i.e. holomorphic injective) function $\eta\in \MO(U)$, we have a trivialization, i.e., an isomorphism of holomorphic vector bundles
	 \begin{align*}
		\MU_\varrho(\eta):\SV_C^{\leq n}|_U\xrightarrow{\simeq} \Vbb^{\leq n}\otimes \MO_U
	 \end{align*}
	 If $\mu\in \MO(U)$ is another univalent function, the \textbf{transition function} is given by the isomorphism
	 \begin{align}\label{eq109}
		\MU_\varrho(\eta)\MU_\varrho(\mu)^{-1}=\MU(\varrho(\eta|\mu)):\Vbb^{\leq n}\otimes_\Cbb \MO_U \xrightarrow{\simeq} \Vbb^{\leq n}\otimes_\Cbb \MO_U
	 \end{align}
	 Here $\varrho(\eta|\mu):U\rightarrow \mathcal G$ is the holomorphic family such that for each $p\in U$, 
	 \begin{align*}
    \varrho(\eta|\mu)_p(z)=\eta\circ\mu^{-1}\big(z+\mu(p)\big)-\eta(p)
	 \end{align*}
Equivalently, $\varrho(\eta|\mu)_p$ is the unique element of $\MG$ transforming the local coordinate $\mu-\mu(p)$ at $p$ to $\eta-\eta(p)$, i.e.,
\begin{align*}
\varrho(\eta|\mu)_p\circ(\mu-\mu(p))=\eta-\eta(p)
\end{align*}

Recall that $\omega_C$ is the sheaf of germs of holomorphic 1-forms on $C$. We view $\omega_C$ as a holomorphic line bundle. Consider the tensor product bundle $\SV_C^{\leq n}\otimes\omega_C$, formed from $\SV_C^{\leq n}$ and $\omega_C$. Then for each univalent $\eta\in\MO(U)$ as above, we have a trivialization
\begin{align*}
\MU_\varrho(\eta)\otimes \id:\SV_C^{\leq n}\otimes\omega_C|_U\xrightarrow{\simeq} \Vbb^{\leq n}\otimes \omega_U
\end{align*}
abbreviated to $\MU_\varrho(\eta)$ for convenience.

\subsubsection{Definition of conformal blocks}

Let $N\in\Nbb$.

\begin{df}\label{lb34}
An  \textbf{(unordered) \pmb{$N$}-pointed compact Riemann surface with local coordinates} denotes the data
	\begin{align}\label{eq4}
		\fx=(C\big|E;\eta_E)=(C\big| E;(\eta_x)_{x\in E}),
	 \end{align}
	 where $C$ is a compact Riemann surface; $E\subset C$ is a set of marked points such that $|E|=N$; for each $x\in E$, $\eta_x$ is a \textbf{local coordinate at $x$}---that is, $\eta_x$ is an injective holomorphic function on a neighborhood $U_x$ of $x$ satisfying $\eta_x(x)=0$. We also assume that
\begin{align}\label{eq113}
\text{the intersection of $E$ with each connected component of $C$ is non-empty.}
\end{align}
An \textbf{ordering} of $E$ is defined to be a bijection
	 \begin{align}\label{eq5}
		\eps:\{1,2,\cdots,N\}\xlongrightarrow{\simeq} E
	 \end{align}
Given an ordering $\eps$, the set $E$ can be written as $E=\{\eps(1),\cdots,\eps(N)\}$. 
\end{df}

Consider the sheaf $\SV_\fx\otimes \omega_C(\blt E)$, where
\begin{align*}
\SV_\fx:=\SV_C
\end{align*}
The sheaf $\SV_\fx\otimes \omega_C(\blt E)$ consists of sections of $\SV_\fx\otimes \omega_C\big|_{C\setminus E}$ with finite poles at $E$. Then $H^0\big(C,\SV_\fx\otimes \omega_C(\blt E)\big)$ is the space of global sections of $\SV_\fx\otimes \omega_C$ on $C\setminus E$ with finite poles at $E$.

Let $\eps$ be an ordering of $E$. Let $\Wbb\in \Mod(\Vbb^{\otimes N})$. We will use the terminology of \textbf{associating \pmb{$\Wbb$} to \pmb{$E$} via \pmb{$\eps$}} to describe the assignment of $\Vbb^{\otimes N}$-modules to marked points. This terminology allows us to define the \textbf{\pmb{$\eps$}-residue action $\pmb{*^\eps}$} of $H^0\big(C,\SV_\fx\otimes \omega_C(\blt  E)\big)$ on $\Wbb$, which is a linear action defined by
\begin{subequations}\label{eq6}
	 \begin{gather}
		\sigma*^\eps_i w=\Res_{\eps(i)}~Y_i(\MU_\varrho(\eta_{\eps(i)})\sigma,\eta_{\eps(i)})w\label{eq6a}\\
        \sigma*^\eps w=\sum_{i=1}^N \sigma*^\eps_i w
	 \end{gather}
\end{subequations}
for each $\sigma\in H^0\big(C,\SV_\fx\otimes \omega_C(\blt  E)\big)$, $w\in \Wbb$, and $1\leq i\leq N$. 

Eq. \eqref{eq6a} is interpreted in the following way: Let $U_{\eps(i)}$ be a connected neighborhood of $\eps(i)$ on which $\eta_{\eps(i)}$ is defined, and let $U^\times_{\eps(i)}=U_{\eps(i)}\setminus\{\eps(i)\}$. Then the trivialization
\begin{align*}
\MU_\varrho(\eta_{\eps(i)}):\SV_\fx\otimes \omega_C\big|_{U^\times_{\eps(i)}}\xlongrightarrow{\simeq}\Vbb\otimes_\Cbb\omega_{U^\times_{\eps(i)}}=\Vbb\otimes_\Cbb\MO_{U^\times_{\eps(i)}}d\eta_{\eps(i)}
\end{align*}
maps $\sigma$ (more precisely, the restriction $\sigma|_{U^\times_{\eps(i)}}$) to a finite sum
\begin{align*}
\sum_k v_k\otimes f_k d\eta_{\eps(i)}
\end{align*}
where $v_k\in\Vbb$, and $f_k=f_k(\eta_{\eps(i)})\in\MO(U^\times_{\eps(i)})$ admits a Laurent series expansion at $\eps(i)$:
\begin{align*}
f_k=\sum_{n\in\Zbb}a_{k,n}(\eta_{\eps(i)})^n
\end{align*} 
with $a_{k,n}\in\Cbb$, and $a_{k,n}=0$ for $n\ll0$. Then
\begin{align}\label{eq7}
\sigma*^\eps_i w=\sum_k\Res_{\eps(i)}~Y_i(v_k,\eta_{\eps(i)})w\cdot f_kd\eta_{\eps(i)}=\sum_k\sum_{n\in\Zbb}a_{k,n}Y_i(v_k)_nw
\end{align}

The space of coinvariants is given by 
	  \begin{align*}
		\ST_{\fx,\eps}(\Wbb)=\frac{\Wbb}{H^0\big(C,\SV_\fx\otimes \omega_C(\blt E)\big)*^\eps \Wbb}
	  \end{align*}
where $\Span_\Cbb$ has been omited in the denominator. Its dual space is called the \textbf{space of conformal blocks} and is denoted by \pmb{$\ST^*_{\fx,\eps}(\Wbb)$}. The elements of $\ST^*_{\fx,\eps}(\Wbb)$ are called \textbf{conformal blocks} associated to the family $\fx$, the module $\Wbb$, and the ordering $\eps$. In other words:
\begin{itemize}
\item The space $\ST^*_{\fx,\eps}(\Wbb)$ consists of all conformal blocks, i.e., linear functionals $\upphi:\Wbb\rightarrow\Cbb$ satisfying
\begin{align*}
\bk{\upphi,\sigma*^\eps w}=0
\end{align*}
for each $w\in\Wbb,\sigma\in H^0(C,\SV_\fx\otimes\omega_C(\blt E))$.
\end{itemize}

\begin{rem}
Let $\Wbb\in \Mod(\Vbb^{\otimes N})$. A pictorial illustration of $\ST_{\fx,\eps}^*(\Wbb)$ is 
\begin{align*}
	\ST^*\bigg(~\vcenter{\hbox{{
		 \includegraphics[height=2.2cm]{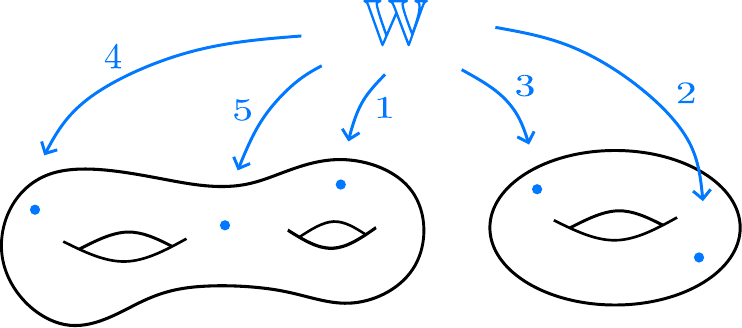}}}}~~\bigg)
 \end{align*}
In this picture, $\fx$ has two connected components, and $N=5$. Thus, the set $E$ of marked points has five elements, corresponding to $\eps(4),\eps(5),\eps(1),\eps(3),\eps(2)$, listed from left to right. (Note that each marked point $\eps(i)$ is equipped with a local coordinate $\eta_{\eps(i)}$.) And $\Mbb\in\Mod(\Vbb^{\otimes 5})$.
\end{rem} 

\begin{rem}
The setting of conformal blocks laid out in this article is slightly different from that of \cite{GZ1,GZ2,GZ3}, where we study conformal blocks associated to $\Wbb\in\Vbb^{\otimes N}$ and an \textbf{\textit{ordered} $N$-pointed compact Riemann surface with local coordinates}
\begin{align*}
\fy=(C|x_1,\dots,x_N;\eta_1,\dots,\eta_N)
\end{align*}
The definition used there agrees with the present definition of $\ST^*_{\fx,\eps}(\Wbb)$ provided that, given the above $\fy$, we define  the unordered data
\begin{align*}
\fx=(C|E;(\eta_x)_{x\in E})\qquad\text{where }E=\{x_1,\dots,x_N\}\text{ and }\eta_{x_i}=\eta_i
\end{align*}
and define the ordering $\eps$ by $\eps(i)=x_i$. Conversely, given the unordered data $\fx$ and the ordering $\eps$ as in \eqref{eq4} and \eqref{eq5}, we define the ordered data
\begin{align*}
\fy=(C|\eps(1),\dots,\eps(N);\eta_{\eps(1)},\dots,\eta_{\eps(N)})
\end{align*}
\end{rem}

\begin{rem}\label{lb38}
For each $\Mbb,\Wbb\in\Mod(\Vbb^{\otimes N})$ and $T\in\Hom_{\Vbb^{\otimes N}}(\Mbb,\Wbb)$, the map $T:\Mbb\rightarrow\Wbb$ clearly descends to a linear map of the corresponding quotient spaces
\begin{align}
T:\ST_{\fx,\eps}(\Mbb)\longrightarrow\ST_{\fx,\eps}(\Wbb)
\end{align}
Its transpose is
\begin{align}
T^\tr:\ST^*_{\fx,\eps}(\Wbb)\longrightarrow\ST^*_{\fx,\eps}(\Mbb)
\end{align}
Therefore, we have a contravariant functor
\begin{gather*}
\ST^*_{\fx,\eps}:\Mod(\Vbb^{\otimes N})\longrightarrow \Vect\\
\Wbb\longmapsto \ST^*_{\fx,\eps}(\Wbb)\\
T\in\Hom_{\Vbb^{\otimes N}}(\Mbb,\Wbb)\longmapsto\big(T^\tr:\ST^*_{\fx,\eps}(\Wbb)\rightarrow\ST^*_{\fx,\eps}(\Mbb)\big)
\end{gather*}
called the \textbf{conformal block functor}. In fact, this contravariant functor is left exact; see \cite[Thm. 1.22]{GZ3}.
\end{rem}

 \subsubsection{Grouping marked points}

Let $\fx$ be as in Def. \ref{lb34}. In practice, the set $E$ of marked points of \eqref{eq4} is often divided into $T$ subsets (where $T\in\Zbb_+$), written as $E=E_1\sqcup\cdots\sqcup E_T$. In this case, for each $i=1,\dots,T$, we let
\begin{align}
\eta_{E_i}=(\eta_x)_{x\in E_i}
\end{align}
so that $\fx$ can be written as
 \begin{align}
	\fx=(C\big|E_1\sqcup \cdots \sqcup E_T;\eta_{E_1},\cdots,\eta_{E_T})
 \end{align}

For each $1\leq i\leq T$, set $N_i=|E_i|$, and choose an ordering
\begin{align*}
	\eps_i:\{1,\cdots,N_i\}\xlongrightarrow{\simeq} E_i
\end{align*}
Choose $\Wbb_i\in \Mod(\Vbb^{\otimes N_i})$, and associate $\Wbb_i$ to $E_i$ via $\eps_i$.

Recall that $\eps_1*\cdots*\eps_T$ denotes the composition of the orderings $\eps_1,\dots,\eps_T$, cf. Sec. \ref{lb32}.

\begin{pp}\label{lb10}
For each $\alpha\in \fk S_T$, we associate $\Wbb_{\alpha(1)}\otimes \cdots \otimes \Wbb_{\alpha(T)}\in \Mod(\Vbb^{\otimes N})$ to $E$ via the ordering $\eps_{\alpha(1)}*\cdots*\eps_{\alpha(T)}$. Then the linear isomorphism
\begin{gather*}
\pi_\alpha:\Wbb_1\otimes \cdots \otimes \Wbb_T\rightarrow \Wbb_{\alpha(1)}\otimes \cdots \otimes \Wbb_{\alpha(T)}\\
w_1\otimes \cdots \otimes w_T\mapsto w_{\alpha(1)}\otimes \cdots \otimes w_{\alpha(T)}
\end{gather*}
descends to a linear isomorphism
\begin{align*}
\pi_\alpha:\ST_{\fx,\eps_{1}*\cdots*\eps_{T}}(\Wbb_{1}\otimes \cdots \otimes \Wbb_{T})\xlongrightarrow{\simeq}\ST_{\fx,\eps_{\alpha(1)}*\cdots*\eps_{\alpha(T)}}(\Wbb_{\alpha(1)}\otimes \cdots \otimes \Wbb_{\alpha(T)})
\end{align*}
Therefore, its transpose gives a linear isomorphism
	\begin{align*}
\pi_\alpha^\tr:		\ST_{\fx,\eps_{\alpha(1)}*\cdots*\eps_{\alpha(T)}}^*(\Wbb_{\alpha(1)}\otimes \cdots \otimes \Wbb_{\alpha(T)})\xlongrightarrow{\simeq} \ST_{\fx,\eps_{1}*\cdots*\eps_{T}}^*(\Wbb_{1}\otimes \cdots \otimes \Wbb_{T})
	\end{align*}
\end{pp}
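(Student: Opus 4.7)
The plan is to observe that the $\eps$-residue action decomposes as a sum of local contributions at each marked point, and that each such local contribution depends only on the local coordinate at that point together with the tensor factor assigned to its group $E_j$, rather than on the global ordering of the groups. From this, compatibility under $\pi_\alpha$ will follow essentially by bookkeeping.

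First, I would unpack the definition of the residue action on a tensor product. Set $M_j = N_1 + \cdots + N_{j-1}$ and write $w = w_1 \otimes \cdots \otimes w_T$. By \eqref{eq6}, the action of $\sigma \in H^0\big(C,\SV_\fx\otimes\omega_C(\blt E)\big)$ on $\Wbb_1 \otimes \cdots \otimes \Wbb_T$ via $\eps_1 * \cdots * \eps_T$ is
\begin{align*}
\sigma *^{\eps_1*\cdots*\eps_T} w \;=\; \sum_{j=1}^T \sum_{i=1}^{N_j} \Res_{\eps_j(i)}\, Y_{M_j+i}\!\big(\MU_\varrho(\eta_{\eps_j(i)})\sigma,\eta_{\eps_j(i)}\big)\,w.
\end{align*}
The crucial point is that, under the identification of $\Wbb_1 \otimes \cdots \otimes \Wbb_T$ as a $\Vbb^{\otimes N}$-module, the vertex operator $Y_{M_j+i}$ acts as $\id \otimes \cdots \otimes Y_{\Wbb_j,i} \otimes \cdots \otimes \id$, nontrivially only on the $j$-th factor. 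Thus the inner sum in the above formula is precisely the $\eps_j$-residue action on $\Wbb_j$, tensored with identity on the remaining factors; in particular, it depends only on $\eps_j$, $\eta_{E_j}$, and $\Wbb_j$, not on how $E_j$ is placed within the combined ordering.

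Second, I would apply this observation to both $\eps_1*\cdots*\eps_T$ and $\eps_{\alpha(1)}*\cdots*\eps_{\alpha(T)}$. By the same unpacking, the analogous formula on the permuted tensor product reads
\begin{align*}
\sigma *^{\eps_{\alpha(1)}*\cdots*\eps_{\alpha(T)}}(\pi_\alpha w) \;=\; \sum_{k=1}^T \sum_{i=1}^{N_{\alpha(k)}} \Res_{\eps_{\alpha(k)}(i)}\, Y_{M'_k+i}\!\big(\MU_\varrho(\eta_{\eps_{\alpha(k)}(i)})\sigma,\eta_{\eps_{\alpha(k)}(i)}\big)\,\pi_\alpha w,
\end{align*}
where $M'_k = N_{\alpha(1)} + \cdots + N_{\alpha(k-1)}$, and $Y_{M'_k+i}$ now acts nontrivially only on the $\alpha(k)$-th factor of the permuted tensor product, which carries $\Wbb_{\alpha(k)}$. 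Reindexing the outer sum via $j = \alpha(k)$ and using that $\pi_\alpha$ intertwines the local vertex operator action on the $\alpha(k)$-th slot of the image with the action on the $\alpha(k)$-th slot of the source, the two sums become term-by-term equal. This gives
\begin{align*}
\pi_\alpha\big(\sigma *^{\eps_1*\cdots*\eps_T} w\big) \;=\; \sigma *^{\eps_{\alpha(1)}*\cdots*\eps_{\alpha(T)}}(\pi_\alpha w).
\end{align*}

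Third, this intertwining identity implies that $\pi_\alpha$ sends the subspace $H^0(C,\SV_\fx\otimes\omega_C(\blt E)) *^{\eps_1*\cdots*\eps_T}(\Wbb_1\otimes\cdots\otimes\Wbb_T)$ isomorphically onto its counterpart for the permuted data. Hence $\pi_\alpha$ descends to a well-defined isomorphism of coinvariant spaces, and taking transpose yields the dual isomorphism on conformal blocks. There is no real obstacle: the statement is a formal consequence of the locality of the residue action at each marked point. The only care needed is to correctly track the index shifts $M_j$ versus $M'_k$ and to confirm that the $(M_j+i)$-th vertex operator on $\Wbb_1 \otimes \cdots \otimes \Wbb_T$ coincides, after applying $\pi_\alpha$, with the corresponding indexed vertex operator on the permuted tensor product; this is immediate from the construction of the $\Vbb^{\otimes N}$-module structure on tensor products.
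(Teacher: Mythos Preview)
Your proof is correct and follows essentially the same approach as the paper: both establish the intertwining identity $\pi_\alpha(\sigma *^{\eps_1*\cdots*\eps_T} w) = \sigma *^{\eps_{\alpha(1)}*\cdots*\eps_{\alpha(T)}}(\pi_\alpha w)$ by unpacking the residue action into local contributions at each marked point, and then conclude that $\pi_\alpha$ carries one coinvariant subspace onto the other. The paper's version is simply more terse, omitting the explicit index-shift bookkeeping ($M_j$, $M'_k$) that you spell out.
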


\begin{proof}
By \eqref{eq6} and \eqref{eq7}, for each $\sigma\in H^0\big(C,\SV_\fx\otimes \omega_C(\blt E)\big)$ we have
\begin{align*}
\pi_\alpha(\sigma *^{\eps_1*\cdots*\eps_T}(w_1\otimes\cdots\otimes w_T))=\sigma *^{\eps_{\alpha(1)}*\cdots*\eps_{\alpha(T)}}(w_{\alpha(1)}\otimes\cdots\otimes w_{\alpha(T)})
\end{align*}
for each $w_1\in\Wbb_1,\dots,w_T\in\Wbb_T$, and hence 
\begin{align}
\pi_\alpha(\sigma *^{\eps_1*\cdots*\eps_T}w)=\sigma *^{\eps_{\alpha(1)}*\cdots*\eps_{\alpha(T)}}\pi_\alpha(w)
\end{align}
for each $w\in\Wbb_1\otimes\cdots\otimes\Wbb_T$. Therefore
\begin{align*}
&\pi_\alpha\Big(H^0\big(C,\SV_\fx\otimes \omega_C(\blt E)\big)*^{\eps_1*\cdots*\eps_T} (\Wbb_1\otimes\cdots\otimes\Wbb_T)\Big)\\
=&H^0\big(C,\SV_\fx\otimes \omega_C(\blt E)\big)*^{\eps_{\alpha(1)}*\cdots*\eps_{\alpha(T)}} (\Wbb_{\alpha(1)}\otimes \cdots \otimes \Wbb_{\alpha(T)})
\end{align*}
This proves that $\pi_\alpha$ descends to a bijective linear map between spaces of coinvariants.
\end{proof}

\begin{df}\label{lb35}
By Prop. \ref{lb10}, for each $\alpha\in\fk S_T$, we can identify the spaces of conformal blocks $\ST_{\fx,\eps_{\alpha(1)}*\cdots*\eps_{\alpha(T)}}^*(\Wbb_{\alpha(1)}\otimes \cdots \otimes \Wbb_{\alpha(T)})$ and $\ST_{\fx,\eps_{1}*\cdots*\eps_{T}}^*(\Wbb_{1}\otimes \cdots \otimes \Wbb_{T})$ via $\pi_\alpha^\tr$. We denote this identified space by
\begin{align*}
\pmb{\ST_{\fx,\eps_\blt}^*(\Wbb_\blt)}\equiv\ST^*_{\fx,\eps_1,\dots,\eps_T}(\Wbb_1,\dots,\Wbb_T)\equiv\ST^*_{\fx,\eps_{\alpha(1)},\dots,\eps_{\alpha(T)}}(\Wbb_{\alpha(1)},\dots,\Wbb_{\alpha(T)})
\end{align*}
and call it the \textbf{space of conformal blocks} associated to $\fx$ and $\Wbb_1,\dots,\Wbb_T$ via $\eps_1,\dots,\eps_T$.
\end{df}

\begin{rem}\label{lb33}
A pictorial illustration of $\ST^*_{\fx,\eps_\blt}(\Wbb_\blt)$ is
\begin{align*}
\ST^*_{\fx,\eps_\blt}(\Wbb_\blt)=\ST^*\bigg(\vcenter{\hbox{{
		 \includegraphics[height=2.7cm]{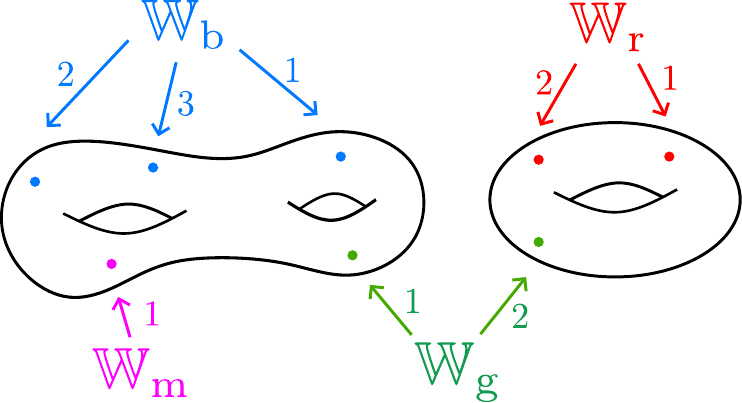}}}}~  \bigg)
\end{align*}
In this example, $\fx$ has two components, with a total of $N=8$ marked points, divided into $T=4$ subsets. The set of marked points is $E=E_{\mathrm b}\sqcup E_{\mathrm r}\sqcup E_{\mathrm m}\sqcup E_{\mathrm g}$ where the subscripts $\mathrm b,\mathrm r,\mathrm m,\mathrm g$ stand for blue, red, magenta, and green, respectively. Let $\eps_{\mathrm b},\eps_{\mathrm r},\eps_{\mathrm m},\eps_{\mathrm g}$ be the corresponding orderings.
\begin{itemize}
\item $E_{\mathrm b}$ consists of the top three blue points on the left component, ordered from left to right as $\eps_{\mathrm b}(2),\eps_{\mathrm b}(3),\eps_{\mathrm b}(1)$.
\item  $E_{\mathrm r}$ consists of the top two red points on the right component, ordered from left to right as $\eps_{\mathrm r}(2),\eps_{\mathrm r}(1)$.
\item $E_{\mathrm m}$ consists of the single magenta point, $\eps_{\mathrm m}(1)$, located at the bottom of the left component.
\item $E_{\mathrm g}$ consists of the bottom two green points: $\eps_{\mathrm g}(1)$ lies on the left component, and $\eps_{\mathrm g}(2)$ on the right.
\end{itemize}
The modules $\Wbb_{\mathrm b}\in\Mod(\Vbb^{\otimes 3})$, $\Wbb_{\mathrm r}\in\Mod(\Vbb^{\otimes 2})$, $\Wbb_{\mathrm m}\in\Mod(\Vbb)$, and $\Wbb_{\mathrm g}\in\Mod(\Vbb^{\otimes 2})$ are associated to $E_{\mathrm b},E_{\mathrm r},E_{\mathrm m},E_{\mathrm g}$ via the orderings $\eps_{\mathrm b},\eps_{\mathrm r},\eps_{\mathrm m},\eps_{\mathrm g}$ respectively. 
\end{rem}

\begin{rem}\label{lb84}
When $|E_i|=1$, we typically omit the index of the arrow in the picture. When $|E_i|=2$, we 
\begin{subequations}
\begin{align}
\text{write $1$ and $2$ as $+$ and $-$, respectively}
\end{align} 
Accordingly, we
\begin{align}
\text{write $Y_1$ and $Y_2$ as $Y_+$ and $Y_-$, respectively, for $\Vbb^{\otimes 2}$-modules}
\end{align}
\end{subequations}
Moreover, for any $E_i$, reversing the directions of the arrows for $\Wbb_i$ and simultaneously replacing $\Wbb_i$ with its contragredient $\Wbb_i'$ represent the same space of conformal blocks. For example, the space $\ST^*_{\fx,\eps_\blt}(\Wbb_\blt)$ in Rem. \ref{lb33} can be represented by
\begin{align*}
&\ST^*_{\fx,\eps_\blt}(\Wbb_\blt)=\ST^*\bigg(\vcenter{\hbox{{
		 \includegraphics[height=2.7cm]{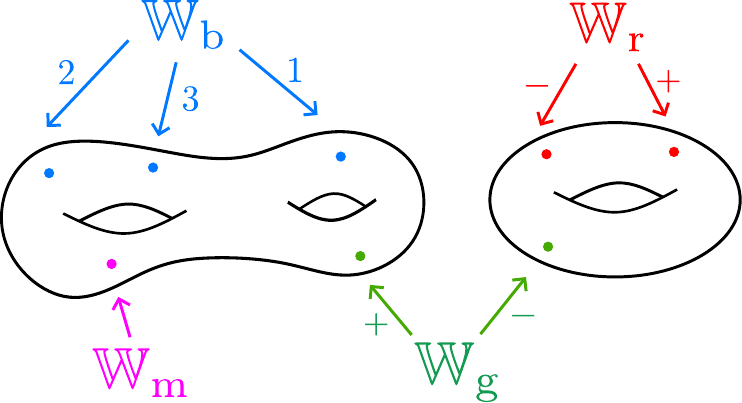}}}}~  \bigg)\\[0.5ex]
=&\ST^*\bigg(\vcenter{\hbox{{
		 \includegraphics[height=2.7cm]{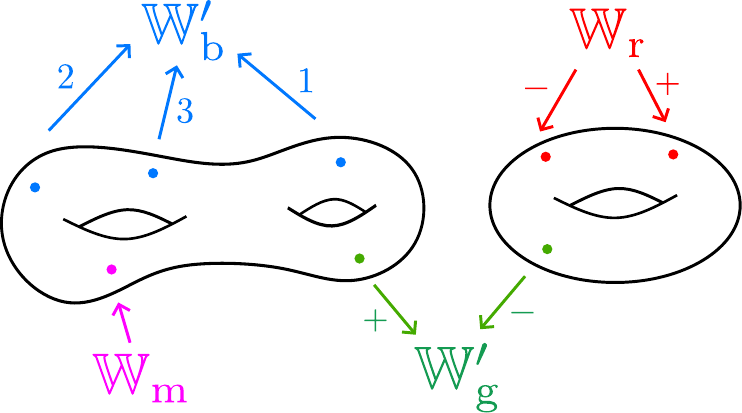}}}}~  \bigg)=\ST^*\bigg(\vcenter{\hbox{{
		 \includegraphics[height=2.7cm]{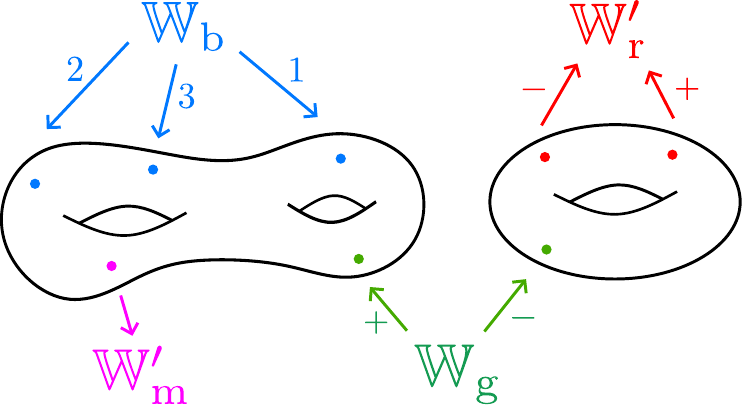}}}}~  \bigg)
\end{align*}
\end{rem}

	\subsection{(Dual) fusion products}

In this section, we let $N,R\in\Nbb$. In order to define (dual) fusion products, we divide the set of marked points $E$ into $T=2$ parts $E',E$, and place one of them---say $E'$---together with its local coordinates in front of $C$. More precisely:

\begin{df}\label{lb1}
Let $\ff$ be an unordered $(R+N)$-pointed compact Riemann surfaces with local coordinates, where the set of marked points is divided into two groups $E'$ and $E$. We let $\theta_x$ denote the local coordinate at $x\in E$, and let $\theta'_{x'}$ denote the local coordinate at $x'\in E'$. We write this data as
\begin{align}\label{eq13}
\fk F=\big(E'; \theta'_{E'}\big|C\big|E;\theta_E\big)\qquad\text{where }\theta'_{E'}=(\theta'_{x'})_{x'\in E'}\text{ and }\theta_E=(\theta_x)_{x\in E}
\end{align}
and call it an \textbf{unordered \pmb{$(R,N)$}-pointed compact Riemann surface with local coordinates}. We call $E$ the \textbf{incoming marked points} (or simply \textbf{inputs}) of $\fk F$, and we call $E'$ the \textbf{outgoing marked points} (or simply \textbf{outputs}) of $\fk F$.
\end{df}

Note that according to Def. \ref{lb34}, we have:
\begin{align}\label{eq8}
\text{Each connected component of $C$ intersects $E\cup E'$}.
\end{align}

When we view $\ff$ as an $(R+N)$-pointed compact Riemann surface with local coordinate, we write it as
\begin{align}\label{eq14}
	\fk F=\big(C\big|E'\sqcup E;\theta'_{E'},\theta_E).
	\end{align}

	%If we choose orderings $\eps:\{1,\cdots,N\}\rightarrow E,\eps':\{1,\cdots ,R\}\rightarrow E'$, then $\eps'*\eps$ and $\eps*\eps'$ are both orderings of $E'\cup E$.

\begin{df}\label{lb2}
	Let $\ff=\eqref{eq13}$ be as in Def. \ref{lb1}. Choose orderings $\eps:\{1,\cdots,N\}\rightarrow E$ and $\eps':\{1,\cdots ,R\}\rightarrow E'$. Associate $\Wbb\in\Mod(\Vbb^{\otimes N})$ to $E$ via $\eps$. An \textbf{\pmb{$(\eps',\eps)$}-dual fusion product} of $\Wbb$ along $\fk F$ denotes a pair \pmb{$(\bbs_{\fk F,\eps',\eps} (\Wbb),\gimel)$}, where $\bbs_{\fk F,\eps',\eps} (\Wbb)\in\Mod(\Vbb^{\otimes R})$ is associated to $E'$ via $\eps'$,
\begin{align*}
\gimel\in\ST_{\fk F,\eps',\eps}^*(\bbs_{\fk F,\eps',\eps} (\Wbb),\Wbb)
\end{align*}
where $\ST_{\fk F,\eps',\eps}^*(\bbs_{\fk F,\eps',\eps} (\Wbb),\Wbb)$ is the space of conformal blocks associated to $\ff$ and $\bbs_{\fk F,\eps',\eps} (\Wbb),\Wbb$ via $\eps',\eps$ (cf. Def. \ref{lb35}), and the following \textbf{universal property} is satisfied:
\begin{itemize}
\item For each $\Mbb\in\Mod(\Vbb^{\otimes R})$ associated to $E'$ via $\eps'$, the map
\begin{align}\label{eq15}
\Hom_{\Vbb^{\otimes R}}(\Mbb,\bbs_{\fk F,\eps',\eps} (\Wbb))\rightarrow \scr T_{\fk F,\eps',\eps}^*(\Mbb,\Wbb)\qquad T\mapsto \gimel\circ(T\otimes\id_\Wbb)
\end{align}
is a linear isomorphism. 
\end{itemize}
We call $\gimel$ the \textbf{canonical conformal block}. We abbreviate $(\bbs_{\fk F,\eps',\eps} (\Wbb),\gimel)$ to $\bbs_{\fk F,\eps',\eps}(\Wbb)$ when no confusion arises. 

The contragredient $\Vbb^{\otimes R}$-module of $\bbs_{\fk F,\eps',\eps} (\Wbb)$ is denoted by \pmb{$\boxtimes_{\fk F,\eps',\eps}(\Wbb)$}, that is,
\begin{align*}
\boxtimes_{\fk F,\eps',\eps}(\Wbb):=\bbs_{\fk F,\eps',\eps}(\Wbb)'
\end{align*}
We call the pair $(\boxtimes_{\fk F,\eps',\eps} (\Wbb),\gimel)$---or simply $\boxtimes_{\fk F,\eps',\eps} (\Wbb)$---an \textbf{\pmb{$(\eps',\eps)$}-fusion product} of $\Wbb$ along $\ff$.  \hqed
\end{df}

\begin{rem}
In Def. \ref{lb2}, we are viewing
\begin{gather*}
\ST_{\fk F,\eps',\eps}^*(\bbs_{\fk F,\eps',\eps}(\Wbb),\Wbb)=\ST_{\fk F,\eps'*\eps}^*(\bbs_{\fk F,\eps',\eps} (\Wbb)\otimes\Wbb)\\
\scr T_{\fk F,\eps',\eps}^*(\Mbb,\Wbb)=\scr T_{\fk F,\eps'*\eps}^*(\Mbb\otimes\Wbb)
\end{gather*}
cf. Def. \ref{lb35}. Then $\gimel$ is a linear functional
\begin{align*}
\gimel:\bbs_{\fk F,\eps',\eps}(\Wbb)\otimes\Wbb\rightarrow\Cbb
\end{align*}
By Def. \ref{lb35}, one can also view
\begin{gather*}
\ST_{\fk F,\eps',\eps}^*(\bbs_{\fk F,\eps',\eps}(\Wbb),\Wbb)=\ST_{\fk F,\eps*\eps'}^*(\Wbb\otimes\bbs_{\fk F,\eps',\eps} (\Wbb))\\
\scr T_{\fk F,\eps',\eps}^*(\Mbb,\Wbb)=\scr T_{\fk F,\eps*\eps'}^*(\Wbb\otimes\Mbb)
\end{gather*}
In that case, $\gimel$ is a linear functional
\begin{align*}
\gimel:\Wbb\otimes\bbs_{\fk F,\eps',\eps}(\Wbb)\rightarrow\Cbb
\end{align*}
and the expression $ T\mapsto \gimel\circ(T\otimes\id_\Wbb)$ in \eqref{eq15} should be replaced by
\begin{align*}
T\mapsto\gimel\circ(\id_\Wbb\otimes T)
\end{align*}
\end{rem}

\begin{rem}\label{lb39}
When $E=\emptyset$ (and hence $N=0$), Def. \ref{lb2} is interpreted in the following way. Since
\begin{align*}
\Mod(\Vbb^{\otimes 0})=\Vect
\end{align*}
It suffices to consider the scalar field $\Cbb\in\Mod(\Vbb^{\otimes 0})$. Then, an $\eps'$-dual fusion product of $\Cbb$ along $\ff$ denotes a pair $(\bbs_{\ff,\eps'}(\Cbb),\gimel)$, where $\bbs_{\ff,\eps'}(\Cbb)\in\Mod(\Vbb^{\otimes R})$, and
\begin{align*}
\gimel\in\ST^*_{\ff,\eps'}\big(\bbs_{\ff,\eps'}(\Cbb)\big)
\end{align*}
satisfies the following universal property that for each $\Mbb\in\Mod(\Vbb^{\otimes R})$ associated to $E'$ via $\eps'$, the following map is a linear isomorphism:
\begin{align}
\Hom_{\Vbb^{\otimes R}}(\Mbb,\bbs_{\fk F,\eps'} (\Cbb))\rightarrow \scr T_{\fk F,\eps'}^*(\Mbb)\qquad T\mapsto \gimel\circ T
\end{align}
\end{rem}

In \cite{GZ1}, the existence of (dual) fusion products is established under the additional assumption that each component of $C$ intersects $E$ (not merely $E\cup E'$). This assumption, however, can be removed with the help of propagation of conformal blocks:

\begin{thm}\label{lb3}
Let $\ff=\eqref{eq13}$ be as in Def. \ref{lb1}. Choose orderings $\eps:\{1,\cdots,N\}\rightarrow E$ and $\eps':\{1,\cdots ,R\}\rightarrow E'$. Associate $\Wbb\in\Mod(\Vbb^{\otimes N})$ to $E$ via $\eps$. Then there exists an $(\eps',\eps)$-dual fusion products of $\Wbb$ along $\ff$.
\end{thm}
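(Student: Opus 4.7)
My plan is to reduce the assertion to the existence result of \cite{GZ1} by enlarging $E$ with auxiliary incoming points carrying the vacuum module $\Vbb$, and then to transport the conclusion back using propagation of conformal blocks (cf. Thm. \ref{lb36}). Concretely, I would call a component of $C$ \emph{empty} if it is disjoint from $E$; by assumption \eqref{eq8}, each empty component meets $E'$. For every empty component $C_\alpha$, I would pick an auxiliary point $y_\alpha \in C_\alpha \setminus (E\cup E')$ together with a chosen local coordinate $\mu_\alpha$. Writing $M$ for the number of such points, set $\wtd E := E\sqcup\{y_\alpha\}_\alpha$ and extend $\eps$ to an ordering $\wtd\eps:\{1,\ldots,N+M\}\to\wtd E$ by listing the auxiliary points in any fixed order after $\eps$. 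Associate $\wtd\Wbb := \Wbb\otimes\Vbb^{\otimes M}$ to $\wtd E$ via $\wtd\eps$, and write $\wtd\ff$ for the resulting $(R,N+M)$-pointed datum. By construction every component of $C$ now contains an incoming marked point of $\wtd\ff$, so \cite{GZ1} yields an $(\eps',\wtd\eps)$-dual fusion product $(\Ybb,\wtd\gimel)$ of $\wtd\Wbb$ along $\wtd\ff$.

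Next I would invoke propagation of conformal blocks: for each $\Mbb\in\Mod(\Vbb^{\otimes R})$ associated to $E'$ via $\eps'$, it should provide a natural linear isomorphism
\begin{align*}
\mathrm{Prop}_\Mbb:\ST^*_{\ff,\eps',\eps}(\Mbb,\Wbb)\xlongrightarrow{\simeq}\ST^*_{\wtd\ff,\eps',\wtd\eps}(\Mbb,\wtd\Wbb),
\end{align*}
obtained on the dual side by inserting the vacuum vectors $\idt$ at the auxiliary inputs, with inverse given by evaluating at $\idt^{\otimes M}$. This map is natural in $\Mbb$ and, by the standard form of the propagation theorem, independent of the choice of auxiliary points and local coordinates.

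I would then define $\bbs_{\ff,\eps',\eps}(\Wbb) := \Ybb$ and $\gimel := \mathrm{Prop}_\Ybb^{-1}(\wtd\gimel)\in\ST^*_{\ff,\eps',\eps}(\Ybb,\Wbb)$. To verify the universal property of Def. \ref{lb2}, I would use naturality of $\mathrm{Prop}$ in the output module to get, for any $T\in\Hom_{\Vbb^{\otimes R}}(\Mbb,\Ybb)$,
\begin{align*}
\mathrm{Prop}_\Mbb\bigl(\gimel\circ(T\otimes\id_\Wbb)\bigr) \;=\; \wtd\gimel\circ(T\otimes\id_{\wtd\Wbb}).
\end{align*}
Thus the map $T\mapsto\gimel\circ(T\otimes\id_\Wbb)$ is the composition of the universal-property bijection of $(\Ybb,\wtd\gimel)$ with $\mathrm{Prop}_\Mbb^{-1}$, hence itself a bijection, which is exactly the universal property required.

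The main obstacle I expect is the precise formulation and application of propagation in this unordered, multi-component setting: one must check that inserting vacuum vectors at auxiliary points really yields a natural isomorphism of conformal-block functors, is independent of the auxiliary choices, and is compatible with $\Vbb^{\otimes R}$-module morphisms at the outputs. All of this should be contained in the propagation theorem the paper cites as Thm. \ref{lb36}; once it is in hand, the remainder is straightforward bookkeeping with the ordering conventions of Def. \ref{lb35}.
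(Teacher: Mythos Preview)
Your proposal is correct and follows essentially the same approach as the paper: enlarge $E$ by auxiliary incoming points so that every component meets the incoming set, invoke \cite[Thm.~3.31]{GZ1} to obtain a dual fusion product on the enlarged datum, and then transport the canonical conformal block back via the propagation isomorphism $\uppsi\mapsto\uppsi(-\otimes\idt^{\otimes M})$. One small correction: the propagation isomorphism you need is \cite[Cor.~2.44]{GZ1} (the paper's \eqref{eq11}), not Thm.~\ref{lb36}, which is stated and proved \emph{after} Thm.~\ref{lb3} and in fact relies on it; citing Thm.~\ref{lb36} here would be circular.
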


Note that by the universal property in Def. \ref{lb2}, $(\eps',\eps)$-dual fusion products are unique up to unique $\Vbb^{\otimes R}$-module isomorphisms. 

\begin{proof}
By enlarging the set of incoming marked points of $\ff$, we get an $(R,N+L)$-pointed compact Riemann surface with local coordinates
	\begin{align*}
		\wtd{\fk F}=\big(E'; \theta'_{E'}\big|C\big|E\sqcup \{z_1,\dots,z_L\};\theta_E,\theta_{z_1},\cdots \theta_{z_L}\big)
	\end{align*}
(where $z_1,\dots,z_L$ are distinct points of $C\setminus (E\cup E')$, and $\theta_{z_i}$ is an arbitrary local coordinate at $z_i$) such that
	\begin{align}\label{eq9}
		\text{each component of $C$ intersects $E\cup \{z_1,\cdots,z_L\}$}.
		\end{align}
Then $\wtd{\fk F}$ satisfies \cite[Asmp. 2.2]{GZ1}, so that the dual fusion product exists. More precisely, let 
		\begin{align*}
			\iota_L:\{1,\cdots,L\}\xlongrightarrow{\simeq} \{z_1,\cdots,z_L\}\qquad i\mapsto z_i
		\end{align*}
Associate $\Wbb\otimes \Vbb^{\otimes L}$ to $E\cup \{z_1,\cdots,z_L\}$ via $\eps*\iota_L$. By \cite[Thm. 3.31]{GZ1}, there exists an $(\eps',\eps*\iota_L)$-dual fusion product $\big(\bbs_{\wtd\ff,\eps',\eps*\iota_L} (\Wbb\otimes \Vbb^{\otimes L}),\wtd \gimel\big)$ of $\Wbb\otimes\Vbb^{\otimes L}$ along $\wtd\ff$. We view $\wtd\gimel$ as a linear functional
\begin{align*}
\wtd\gimel:\bbs_{\wtd\ff,\eps',\eps*\iota_L} (\Wbb\otimes \Vbb^{\otimes L})\otimes\Wbb\otimes\Vbb^{\otimes L}\rightarrow\Cbb
\end{align*}

By \cite[Cor. 2.44]{GZ1}, for each $\Mbb\in\Mod(\Vbb^{\otimes R})$ we have an linear isomorphism 
		\begin{align}\label{eq11}
			\scr T_{\wtd{\fk F},\eps'*\eps*\iota_L}^*(\Mbb\otimes\Wbb\otimes \Vbb^{\otimes L})\xlongrightarrow{\simeq} \scr T_{\fk F,\eps'*\eps}^*(\Mbb\otimes\Wbb)\qquad \uppsi\mapsto\uppsi(-\otimes \idt^{\otimes L})
		\end{align}
Therefore, the linear functional
\begin{align*}
\gimel:\bbs_{\wtd\ff,\eps',\eps*\iota_L} (\Wbb\otimes \Vbb^{\otimes L})\otimes\Wbb\rightarrow\Cbb \qquad \gimel(-)=\wtd\gimel(-\otimes\idt^{\otimes L})
\end{align*}
belongs to
\begin{align*}
\scr T_{\fk F,\eps',\eps}^*\big(\bbs_{\wtd\ff,\eps',\eps*\iota_L} (\Wbb\otimes \Vbb^{\otimes L}),\Wbb\big)=\scr T_{\fk F,\eps'*\eps}^*\big(\bbs_{\wtd\ff,\eps',\eps*\iota_L} (\Wbb\otimes \Vbb^{\otimes L})\otimes\Wbb\big)
\end{align*}
By the universal property of $\wtd\gimel$, 
\begin{equation}\label{eq10}
		\begin{gathered}
\Hom_{\Vbb^{\otimes R}}\big(\Mbb,\bbs_{\wtd{\fk F},\eps',\eps*\iota_L} (\Wbb\otimes \Vbb^{\otimes L})\big)\longrightarrow \scr T_{\wtd{\fk F},\eps'*\eps*\iota_L}^*\big(\Mbb\otimes\Wbb\otimes \Vbb^{\otimes L}\big)\\
			T\mapsto \wtd\gimel\circ(T\otimes\id_{\Wbb}\otimes \id_{\Vbb^{\otimes L}})
			\end{gathered}
		\end{equation}
is a linear isomorphism. Its composition with \eqref{eq11}, namely
\begin{gather*}
\Hom_{\Vbb^{\otimes R}}\big(\Mbb,\bbs_{\wtd{\fk F},\eps',\eps*\iota_L} (\Wbb\otimes \Vbb^{\otimes L})\big)\longrightarrow\scr T_{\fk F,\eps'*\eps}^*\big(\Mbb\otimes\Wbb\big)\\
T\mapsto\gimel\circ(T\otimes\id_\Wbb)
\end{gather*}
is also a linear isomorphism. This proves that $\big(\bbs_{\wtd\ff,\eps',\eps*\iota_L} (\Wbb\otimes \Vbb^{\otimes L}),\gimel\big)$ is an $(\eps',\eps)$-dual fusion products of $\Wbb$ along $\ff$.
\end{proof}

The proof of Thm. \ref{lb3} implies a result that is important enough to be stated separately.  Recall that $\idt\in\Vbb$ denotes the vacuum vector of $\Vbb$.

\begin{thm}\label{lb36}
Let $\ff=\eqref{eq13}$ be as in Def. \ref{lb1}. Choose orderings $\eps:\{1,\cdots,N\}\rightarrow E$ and $\eps':\{1,\cdots ,R\}\rightarrow E'$. Associate $\Wbb\in\Mod(\Vbb^{\otimes N})$ to $E$ via $\eps$. Let $(\bbs_{\ff,\eps',\eps}(\Wbb),\gimel)$ be an $(\eps',\eps)$-dual fusion product of $\Wbb$ along $\ff$.

Choose distinct points $z_1,\dots,z_L\in C\setminus(E\cup E')$ and local coordinates $\theta_{z_1},\dots,\theta_{z_L}$ at these points. Let
\begin{align}\label{eq111}
\wtd{\fk F}=\big(E'; \theta'_{E'}\big|C\big|E\sqcup \{z_1,\dots,z_L\};\theta_E,\theta_{z_1},\cdots \theta_{z_L}\big)
\end{align}
Let $\iota_L$ be the ordering of $\{z_1,\dots,z_L\}$ defined by $\iota_L(i)=z_i$. Then there exists a unique
\begin{subequations}\label{eq110}
\begin{align}
\wtd\gimel\in \ST^*_{\wtd\ff,\eps',\eps*\iota_L}\big(\bbs_{\ff,\eps',\eps}(\Wbb),\Wbb\otimes\Vbb^{\otimes L}\big)
\end{align}
such that $\wtd\gimel$, as a linear functional $\bbs_{\ff,\eps',\eps}(\Wbb)\otimes\Wbb\otimes\Vbb^{\otimes L}\rightarrow\Cbb$, satisfies
\begin{align}\label{eq110b}
\gimel(-)=\wtd\gimel(-\otimes\idt^{\otimes L})
\end{align}
\end{subequations}
Moreover, $\big(\bbs_{\ff,\eps',\eps}(\Wbb),\wtd\gimel\big)$ is an $(\eps',\eps*\iota_L)$-dual fusion product of $\Wbb\otimes\Vbb^{\otimes L}$ along $\wtd\ff$.
\end{thm}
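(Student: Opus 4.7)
The plan is to derive both the existence/uniqueness of $\wtd\gimel$ and the universal property by invoking the propagation of conformal blocks, in the form of the isomorphism \eqref{eq11} already used in the proof of Thm. \ref{lb3}. I would first handle the existence and uniqueness of $\wtd\gimel$. Applying \eqref{eq11} with $\Mbb=\bbs_{\ff,\eps',\eps}(\Wbb)$ gives a linear isomorphism
\begin{align*}
\ST^*_{\wtd\ff,\eps'*\eps*\iota_L}\bigl(\bbs_{\ff,\eps',\eps}(\Wbb)\otimes\Wbb\otimes\Vbb^{\otimes L}\bigr)\xlongrightarrow{\simeq}\ST^*_{\ff,\eps'*\eps}\bigl(\bbs_{\ff,\eps',\eps}(\Wbb)\otimes\Wbb\bigr),\qquad \uppsi\mapsto\uppsi(-\otimes\idt^{\otimes L}).
\end{align*}
Since $\gimel$ lies in the codomain, there is a unique preimage $\wtd\gimel$ satisfying \eqref{eq110b}, and this $\wtd\gimel$ automatically belongs to the space of conformal blocks on the LHS.

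Next I would verify that $(\bbs_{\ff,\eps',\eps}(\Wbb),\wtd\gimel)$ satisfies the universal property of Def. \ref{lb2} for $\wtd\ff$. For any $\Mbb\in\Mod(\Vbb^{\otimes R})$ associated to $E'$ via $\eps'$, I would compose two known isomorphisms: the universal property of the original dual fusion product,
\begin{align*}
\Hom_{\Vbb^{\otimes R}}(\Mbb,\bbs_{\ff,\eps',\eps}(\Wbb))\xlongrightarrow{\simeq}\ST^*_{\ff,\eps'*\eps}(\Mbb\otimes\Wbb),\qquad T\mapsto\gimel\circ(T\otimes\id_\Wbb),
\end{align*}
followed by the inverse of the propagation isomorphism \eqref{eq11},
\begin{align*}
\ST^*_{\ff,\eps'*\eps}(\Mbb\otimes\Wbb)\xlongrightarrow{\simeq}\ST^*_{\wtd\ff,\eps'*\eps*\iota_L}(\Mbb\otimes\Wbb\otimes\Vbb^{\otimes L}).
\end{align*}
The composition is a linear isomorphism. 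I would then identify it with $T\mapsto\wtd\gimel\circ(T\otimes\id_\Wbb\otimes\id_{\Vbb^{\otimes L}})$ by evaluating on an arbitrary element $m\otimes w\otimes\idt^{\otimes L}$: the defining relation \eqref{eq110b} gives
\begin{align*}
\wtd\gimel\bigl(T(m)\otimes w\otimes\idt^{\otimes L}\bigr)=\gimel\bigl(T(m)\otimes w\bigr),
\end{align*}
which is exactly the value of the image of $\gimel\circ(T\otimes\id_\Wbb)$ under the propagation isomorphism (since the inverse of \eqref{eq11} is characterized by the equation $\uppsi(-\otimes\idt^{\otimes L})=\upphi(-)$). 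This identifies the composition with the desired map and completes the proof.

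The argument is essentially bookkeeping once \eqref{eq11} is available, so there is no serious obstacle; the only subtle point is making sure the ordering conventions match. Specifically, I need to confirm that identifying spaces of conformal blocks via Prop. \ref{lb10}/Def. \ref{lb35} is compatible with inserting $\idt^{\otimes L}$ at the new $\Vbb$-slots labeled by $\iota_L$, so that the relation \eqref{eq110b} holds literally when both sides are regarded as functionals on $\bbs_{\ff,\eps',\eps}(\Wbb)\otimes\Wbb\otimes\Vbb^{\otimes L}$ with ordering $\eps'*\eps*\iota_L$. This is a direct consequence of the definition of $*^\eps$ in \eqref{eq6} combined with the fact that $Y_i(\idt,z)=\id$ annihilates residues at the auxiliary points $z_1,\dots,z_L$.
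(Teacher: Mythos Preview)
Your argument is correct and rests on the same key input as the paper's proof, namely the propagation isomorphism \eqref{eq11}. The existence/uniqueness part is handled identically.

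For the universal property, the paper takes a slightly different route: it picks an arbitrary $(\eps',\eps*\iota_L)$-dual fusion product $\big(\bbs_{\wtd\ff,\eps',\eps*\iota_L}(\Wbb\otimes\Vbb^{\otimes L}),\wtd\daleth\big)$ of $\Wbb\otimes\Vbb^{\otimes L}$ along $\wtd\ff$ (whose existence is guaranteed by Thm.~\ref{lb3}), observes via the proof of Thm.~\ref{lb3} that $\daleth(-):=\wtd\daleth(-\otimes\idt^{\otimes L})$ makes this same module a dual fusion product along $\ff$, and then uses uniqueness of dual fusion products to produce an isomorphism $T$ intertwining $\daleth$ and $\gimel$; finally \eqref{eq11} is invoked once more to check that $T$ also intertwines $\wtd\daleth$ and $\wtd\gimel$. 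Your approach instead verifies the universal property directly by composing the universal property of $\gimel$ with the inverse of \eqref{eq11} and identifying the resulting isomorphism with $T\mapsto\wtd\gimel\circ(T\otimes\id_\Wbb\otimes\id_{\Vbb^{\otimes L}})$. Your route is marginally more self-contained (it does not invoke the existence of a dual fusion product along $\wtd\ff$), while the paper's route makes the comparison with an arbitrary dual fusion product explicit; both are short and essentially equivalent.

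One small point worth making explicit in your write-up: when you identify the composite isomorphism with $T\mapsto\wtd\gimel\circ(T\otimes\id_\Wbb\otimes\id_{\Vbb^{\otimes L}})$ by testing on vectors of the form $m\otimes w\otimes\idt^{\otimes L}$, you are implicitly using that $\wtd\gimel\circ(T\otimes\id_\Wbb\otimes\id_{\Vbb^{\otimes L}})$ is already a conformal block for $\wtd\ff$ (so that the injectivity of \eqref{eq11} applies). This is immediate from the functoriality in Rem.~\ref{lb38}, but it is the one step in your chain that is not purely formal.
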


By viewing $\wtd\gimel$ as a linear functional on $\bbs_{\ff,\eps',\eps}(\Wbb)\otimes\Wbb\otimes\Vbb^{\otimes L}$, we regard
\begin{align*}
\ST^*_{\wtd\ff,\eps',\eps*\iota_L}\big(\bbs_{\ff,\eps',\eps}(\Wbb),\Wbb\otimes\Vbb^{\otimes L}\big)=\ST^*_{\wtd\ff,\eps'*\eps*\iota_L}\big(\bbs_{\ff,\eps',\eps}(\Wbb)\otimes\Wbb\otimes\Vbb^{\otimes L}\big)
\end{align*}

\begin{df}\label{lb37}
The data $\wtd\ff=\eqref{eq111}$ is called the \textbf{propagation} of $\ff$ at $z_1,\dots,z_L$ with local coordinates $\theta_{z_1},\dots,\theta_{z_L}$. The pair
\begin{align*}
\big(\bbs_{\ff,\eps',\eps}(\Wbb),\wtd\gimel\big)
\end{align*}
defined in Thm. \ref{lb36} is called the  \textbf{propagation of the dual fusion product}  $(\bbs_{\ff,\eps',\eps}(\Wbb),\gimel)$ at $z_1,\dots,z_L$ with local coordinates $\theta_{z_1},\dots,\theta_{z_L}$.
\end{df}

Note that unlike the proof of Thm. \ref{lb3}, in Thm. \ref{lb36} and Def. \ref{lb37} we do not assume that each component of $C$ intersects $E\cup\{z_1,\dots,z_L\}$. However, since we are assuming that each component of $C$ intersects $E\cup E'$ (cf. Def. \ref{lb34}), the results in \cite{GZ1} on the propagation of conformal blocks still apply.

\begin{proof}[\textbf{Proof of Thm. \ref{lb36}}]
Let $(\bbs_{\ff,\eps',\eps}(\Wbb),\gimel)$ be an $(\eps',\eps)$-dual fusion product of $\Wbb$ along $\ff$. The existence and uniqueness of $\wtd\gimel$ satisfying \eqref{eq110} follow from the linear isomorphism \eqref{eq11}. 

Let $\big(\bbs_{\wtd\ff,\eps',\eps*\iota_L} (\Wbb\otimes \Vbb^{\otimes L}),\wtd \daleth\big)$ be any $(\eps',\eps*\iota_L)$-dual fusion product of $\Wbb\otimes\Vbb^{\otimes L}$ along $\wtd\ff$. By the proof of Thm. \ref{lb3}, $\big(\bbs_{\wtd\ff,\eps',\eps*\iota_L} (\Wbb\otimes \Vbb^{\otimes L}),\daleth\big)$ is an $(\eps',\eps)$-dual fusion product of $\Wbb$ along $\ff$, provided that we set $\daleth(-)=\wtd\daleth(-\otimes\idt^{\otimes L})$. Therefore, by the uniqueness of dual fusion products, there exists a unique $\Vbb^{\otimes R}$-module isomorphism
\begin{align*}
T:\bbs_{\wtd\ff,\eps',\eps*\iota_L} (\Wbb\otimes \Vbb^{\otimes L})\xlongrightarrow{\simeq}\bbs_{\ff,\eps',\eps}(\Wbb)
\end{align*}
such that $\daleth=\gimel\circ(T\otimes\id_\Wbb)$. With the help of the isomorphism \eqref{eq11}, one easily checks that $\wtd\daleth=\wtd\gimel\circ(T\otimes\id_\Wbb\otimes\id_{\Vbb^{\otimes L}})$. This proves that $\big(\bbs_{\ff,\eps',\eps}(\Wbb),\wtd\gimel\big)$ is an $(\eps',\eps*\iota_L)$-dual fusion product of $\Wbb\otimes\Vbb^{\otimes L}$ along $\wtd\ff$.
\end{proof}

\subsection{Basic properties}

\subsubsection{The action of $\fk S_N$}

Let $\fx=(C\big|E;\eta_E)$ be an $N$-pointed compact Riemann surface with local coordinates. 

\begin{df}\label{lb61}
Choose $\alpha\in\fk S_N$. Then $\alpha$ induces an automorphism 
	  \begin{align*}
		\ovl{\alpha}:\Mod(\Vbb^{\otimes N})\xrightarrow{\simeq} \Mod(\Vbb^{\otimes N})
	  \end{align*}
	  defined as follows. For each $\Wbb\in \Mod(\Vbb^{\otimes N})$
\begin{subequations}
\begin{gather}
\text{$\ovl{\alpha}(\Wbb):=\Wbb$ as vector spaces}
\end{gather}
The $i$-th vertex operator on $\ovl{\alpha}(\Wbb)$ is defined by
	  \begin{align}
		Y_{\ovl{\alpha}(\Wbb),i}:=Y_{\Wbb,\alpha^{-1}(i)}
	  \end{align}
\end{subequations}
The operator $\ovl\alpha$ acts as the identity on the Hom spaces of $\Mod(\Vbb^{\otimes N})$. 
\end{df}

\begin{rem}
Note that for each $\alpha,\beta\in\fk S_N$ we have
\begin{align*}
\ovl\alpha\circ\ovl\beta=\ovl{\alpha\circ\beta}
\end{align*}
since, for each $\Wbb\in\Mod(\Vbb^{\otimes N})$, we have
\begin{align*}
Y_{\ovl\alpha\circ\ovl\beta(\Wbb),i}=Y_{\ovl\beta(\Wbb),\alpha^{-1}(i)}=Y_{\Wbb,\beta^{-1}\circ\alpha^{-1}(i)}=Y_{\Wbb,(\alpha\circ\beta)^{-1}(i)}
\end{align*}
Therefore, we have a group homomorphism
\begin{align*}
\fk S_N\rightarrow\mathrm{Aut}\big(\Mod(\Vbb^{\otimes N})\big)\qquad\alpha\mapsto\ovl\alpha
\end{align*}
\end{rem}

\begin{pp}\label{lb12}
Let $\eps:\{1,2,\cdots,N\}\rightarrow E$ be an ordering of $E$. Let $\Wbb\in\Mod(\Vbb^{\otimes N})$. Let $\alpha\in \fk S_N$. Then the identity map
\begin{align*}
\id_\Wbb:\Wbb\rightarrow\ovl\alpha(\Wbb)
\end{align*}
descends to a linear isomorphism
\begin{align*}
\ST_{\fx,\eps}(\Wbb)\xlongrightarrow{\simeq} \ST_{\fx,\eps\circ \alpha^{-1}}(\ovl{\alpha}(\Wbb))
\end{align*}
Therefore, its transpose is a linear isomorphism
\begin{align*}
\ST_{\fx,\eps\circ \alpha^{-1}}^*(\ovl{\alpha}(\Wbb))\xlongrightarrow{\simeq}  \ST_{\fx,\eps}^*(\Wbb)
\end{align*}
\end{pp}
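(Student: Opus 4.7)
The plan is to unwind the definitions and observe that the total residue action $\sigma *^\eps w = \sum_i \sigma *^\eps_i w$ is symmetric in the indexing in a very strong sense: it depends on the set $E$ and the module structures carried at the points of $E$, but not on how the points are labeled by $\{1,\dots,N\}$. Since $\ovl\alpha$ merely relabels the tensor factors of $\Vbb^{\otimes N}$ and the ordering $\eps \circ \alpha^{-1}$ relabels the marked points compatibly, the two residue actions should coincide on the common underlying vector space $\Wbb = \ovl\alpha(\Wbb)$.

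Concretely, I would compute: fix $\sigma \in H^0(C, \SV_\fx \otimes \omega_C(\blt E))$ and $w \in \Wbb$, and evaluate the action with respect to $\eps' := \eps \circ \alpha^{-1}$ on $\ovl\alpha(\Wbb)$ using \eqref{eq6a}. For each $1 \leq i \leq N$, using $\eps'(i) = \eps(\alpha^{-1}(i))$ and $Y_{\ovl\alpha(\Wbb), i} = Y_{\Wbb, \alpha^{-1}(i)}$ (Def. \ref{lb61}), one gets
\begin{align*}
\sigma *^{\eps'}_i w
&= \Res_{\eps'(i)}\, Y_{\ovl\alpha(\Wbb),i}\big(\MU_\varrho(\eta_{\eps'(i)})\sigma, \eta_{\eps'(i)}\big)w \\
&= \Res_{\eps(\alpha^{-1}(i))}\, Y_{\Wbb, \alpha^{-1}(i)}\big(\MU_\varrho(\eta_{\eps(\alpha^{-1}(i))})\sigma, \eta_{\eps(\alpha^{-1}(i))}\big)w
\;=\; \sigma *^{\eps}_{\alpha^{-1}(i)} w.
\end{align*}
Summing over $i$ and reindexing $j = \alpha^{-1}(i)$, we obtain the crucial identity
\begin{align*}
\sigma *^{\eps \circ \alpha^{-1}} w \;=\; \sum_{i=1}^N \sigma *^{\eps}_{\alpha^{-1}(i)} w \;=\; \sum_{j=1}^N \sigma *^{\eps}_j w \;=\; \sigma *^\eps w.
\end{align*}

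From this, it is immediate that $\id_\Wbb : \Wbb \to \ovl\alpha(\Wbb)$ carries the subspace $H^0(C, \SV_\fx \otimes \omega_C(\blt E)) *^\eps \Wbb$ bijectively onto $H^0(C, \SV_\fx \otimes \omega_C(\blt E)) *^{\eps \circ \alpha^{-1}} \ovl\alpha(\Wbb)$, hence descends to a linear isomorphism of quotient spaces $\ST_{\fx,\eps}(\Wbb) \xrightarrow{\simeq} \ST_{\fx, \eps \circ \alpha^{-1}}(\ovl\alpha(\Wbb))$. Taking transposes then yields the desired isomorphism of conformal-block spaces.

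There is no real obstacle here: the proof is purely a bookkeeping verification, entirely analogous to (and in fact simpler than) the argument given for Prop. \ref{lb10}, where a reordering of \emph{grouped} marked points was handled by a nontrivial permutation $\pi_\alpha$ of tensor factors. Here no permutation of tensor factors is needed since the underlying vector space is unchanged; only the labels of the tensor components and of the marked points are permuted, and they are permuted by the same $\alpha$, so they cancel. The only care needed is to ensure that the formulas \eqref{eq6a} and the module structure in Def. \ref{lb61} are applied consistently, as above.
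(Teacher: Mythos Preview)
Your proof is correct and follows essentially the same approach as the paper: both compute $\sigma*^{\eps\circ\alpha^{-1}}w$ on $\ovl\alpha(\Wbb)$ term by term, use $Y_{\ovl\alpha(\Wbb),i}=Y_{\Wbb,\alpha^{-1}(i)}$ and $\eps\circ\alpha^{-1}(i)=\eps(\alpha^{-1}(i))$, and observe that the resulting sum is a reindexing of $\sigma*^\eps w$. Your write-up is slightly more explicit about the reindexing $j=\alpha^{-1}(i)$, but the argument is the same bookkeeping verification.
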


	\begin{proof}
		Choose $\sigma\in H^0\big(C,\SV_\fx\otimes \omega_C(\blt E)\big)$ and $w\in \Wbb$. If we view $w$ as an element of the module $\Wbb$, then 
		\begin{gather*}
			\sigma*^\eps w=\sum_{i=1}^N\Res_{\eps(i)}~Y_{\Wbb,i}(\MU_\varrho(\eta_{\eps(i)})\sigma,\eta_{\eps(i)})w
		\end{gather*}
		If we view $w$ as an element of the module $\ovl{\alpha}(\Wbb)$, then
		\begin{align*}
&\sigma*^{\eps\circ \alpha^{-1}} w=\sum_{i=1}^N\Res_{\eps\circ\alpha^{-1}(i)}~Y_{\ovl\alpha(\Wbb),i}(\MU_\varrho(\eta_{\eps\circ\alpha^{-1}(i)})\sigma,\eta_{\eps\circ\alpha^{-1}(i)})w\\
=&\sum_{i=1}^N\Res_{{\eps\circ\alpha^{-1}(i)}}~Y_{\Wbb,\alpha^{-1}(i)}(\MU_\varrho(\eta_{\eps\circ\alpha^{-1}(i)})\sigma,\eta_{\eps\circ\alpha^{-1}(i)})w
		\end{align*}
The above two expressions are clearly equal. This proves our result.
	\end{proof}

\begin{rem}
A pictorial illustration of Prop. \ref{lb12} is given by
\begin{align}\label{eq112}
	\ST^*\bigg(~\vcenter{\hbox{{
		 \includegraphics[height=2cm]{fig2.pdf}}}}~~\bigg)\simeq\ST^*\bigg(~\vcenter{\hbox{{
		 \includegraphics[height=2cm]{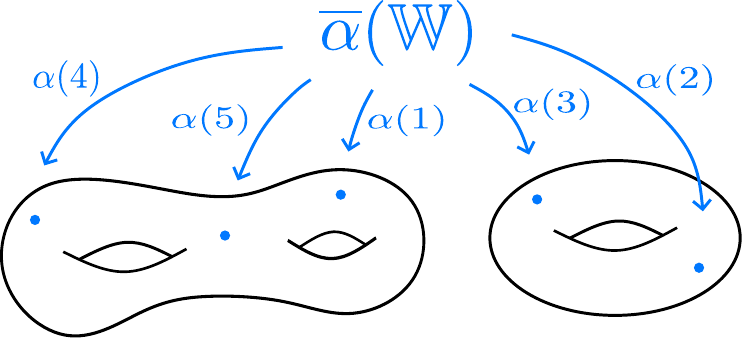}}}}~~\bigg)
\end{align}
where the isomorphism is induced by $\id:\Wbb\rightarrow\ovl\alpha(\Wbb)$. On the LHS of \eqref{eq112}, the number for each $x\in E$ is $\eps^{-1}(x)$. On the RHS, the number for $x$ is $\alpha\circ\eps^{-1}(x)$.
\end{rem}

\subsubsection{Isomorphisms of pointed surfaces}

\begin{df}\label{lb5}
	Suppose we have two unordered $N$-pointed compact Riemann surfaces with local coordinates 
\begin{gather*}
\fx=(C\big|E;\eta_E)\qquad \fy=(D\big|F;\tau_F)
\end{gather*}
An \textbf{isomorphism of \pmb{$N$}-pointed compact Riemann surfaces with local coordinates} $f:\fx\rightarrow\fy$ denotes a biholomorphism $f:C\rightarrow D$ satisfying the following conditions:
	\begin{itemize}
		\item $f(E)=F$.
		\item The pullpack of $\tau_F$ along $f$ is $\eta_E$. More precisely, for each $x\in E$, the relation
\begin{align*}
\tau_{f(x)}\circ f=\eta_x
\end{align*}
holds on a neighborhood $U_x$ of $x$ such that $\eta_x$ is defined on $U_x$ and $\tau_{f(x)}$ is defined on $f(U_x)$.
	\end{itemize}
\end{df}

\begin{pp}\label{lb7}
Let $\fx,\fy$ be as in Def. \ref{lb5}, and let $f:\fx\rightarrow\fy$ be an isomorphism of $N$-pointed compact Riemann surfaces with local coordinates. Choose an ordering $\eps:\{1,\cdots, N\}\rightarrow E$. Let $\Wbb\in \Mod(\Vbb^{\otimes N})$. Then we have
\begin{align*}
\ST_{\fx,\eps}(\Wbb)=\ST_{\fy,f\circ \eps}(\Wbb)\qquad \ST_{\fx,\eps}^*(\Wbb)=\ST_{\fy,f\circ \eps}^*(\Wbb)
\end{align*}
\end{pp}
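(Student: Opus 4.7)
The plan is to show that the biholomorphism $f:C\to D$ induces a canonical identification between the two defining spaces
\[
H^0\bigl(C,\SV_\fx\otimes\omega_C(\blt E)\bigr)\quad\text{and}\quad H^0\bigl(D,\SV_\fy\otimes\omega_D(\blt F)\bigr),
\]
and that under this identification, the residue action $*^\eps$ on $\Wbb$ coincides with the residue action $*^{f\circ\eps}$ on $\Wbb$. Once this is established, the subspaces spanned by the residue actions in $\Wbb$ are literally equal as subspaces of $\Wbb$, so the two coinvariant spaces are equal as quotients of $\Wbb$, and hence so are their dual spaces of conformal blocks.

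First I would construct the identification. Since $\tau_{f(x)}\circ f=\eta_x$ holds near each $x\in E$ (and in particular the two sides are both univalent local coordinates at $x$), the transition function description \eqref{eq109} of $\SV_C$ shows that pullback by $f$ yields an isomorphism of $\MO_C$-modules $f^*\SV_\fy^{\leq n}\xrightarrow{\simeq}\SV_\fx^{\leq n}$ compatible with the trivializations $\MU_\varrho(\tau_{f(x)})$ and $\MU_\varrho(\eta_x)$ near $x\in E$. Combined with the standard pullback of holomorphic $1$-forms $f^*\omega_D\simeq\omega_C$ and the fact that $f$ restricts to a bijection $E\xrightarrow{\simeq}F$ (carrying poles to poles), we obtain a linear isomorphism
\[
f^*:H^0\bigl(D,\SV_\fy\otimes\omega_D(\blt F)\bigr)\xlongrightarrow{\simeq} H^0\bigl(C,\SV_\fx\otimes\omega_C(\blt E)\bigr).
\]

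Next I would check that $f^*$ intertwines the residue actions. Fix $\sigma\in H^0\bigl(D,\SV_\fy\otimes\omega_D(\blt F)\bigr)$ and $1\leq i\leq N$. Near $f\circ\eps(i)$, the trivialization $\MU_\varrho(\tau_{f\circ\eps(i)})$ expresses $\sigma$ as a finite sum $\sum_k v_k\otimes g_k\,d\tau_{f\circ\eps(i)}$ with $g_k$ having a Laurent expansion in $\tau_{f\circ\eps(i)}$. Pulling back by $f$ and using $\tau_{f\circ\eps(i)}\circ f=\eta_{\eps(i)}$, the trivialization $\MU_\varrho(\eta_{\eps(i)})$ expresses $f^*\sigma$ near $\eps(i)$ as $\sum_k v_k\otimes (g_k\circ f)\,d\eta_{\eps(i)}$ with the \emph{same} coefficients $v_k$ and the same Laurent coefficients $a_{k,n}$. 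Since taking residues at $\eps(i)$ in $\eta_{\eps(i)}$ and at $f\circ\eps(i)$ in $\tau_{f\circ\eps(i)}$ yields identical scalars, the formula \eqref{eq7} gives
\[
(f^*\sigma)*^\eps_i w=\sum_k\sum_{n\in\Zbb}a_{k,n}Y_i(v_k)_n w=\sigma*^{f\circ\eps}_i w
\]
for every $w\in\Wbb$. Summing over $i$ yields $(f^*\sigma)*^\eps w=\sigma *^{f\circ\eps}w$.

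It follows that the subspace of $\Wbb$ spanned by $H^0\bigl(C,\SV_\fx\otimes\omega_C(\blt E)\bigr)*^\eps\Wbb$ equals the one spanned by $H^0\bigl(D,\SV_\fy\otimes\omega_D(\blt F)\bigr)*^{f\circ\eps}\Wbb$, so the two coinvariant spaces (and therefore the two conformal block spaces) are literally identical. There is no serious obstacle here; the entire proposition is a naturality statement, and the only nontrivial point is the careful verification—done above—that matching local coordinates via pullback make the two trivializations $\MU_\varrho(\eta_{\eps(i)})$ and $\MU_\varrho(\tau_{f\circ\eps(i)})$ agree, so that residues and vertex operators line up on the nose.
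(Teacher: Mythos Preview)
Your proof is correct and follows exactly the same approach as the paper: both arguments rest on the equality of the two subspaces $H^0\bigl(C,\SV_\fx\otimes\omega_C(\blt E)\bigr)*^\eps\Wbb$ and $H^0\bigl(D,\SV_\fy\otimes\omega_D(\blt F)\bigr)*^{f\circ\eps}\Wbb$ inside $\Wbb$. The paper's proof is a single line declaring this equality to be clear, whereas you have carefully unpacked the pullback identification and the matching of residues under the trivializations $\MU_\varrho(\eta_{\eps(i)})$ and $\MU_\varrho(\tau_{f\circ\eps(i)})$---exactly the naturality verification the paper leaves implicit.
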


In other words, the space of coinvariants (resp. space of conformal blocks) associated to $\Wbb$ and $\fx$ via $\eps$ is identical to the one associated to $\Wbb$ and $\fy$ via $f\circ\eps$. (Note that $f\circ\eps$ is an ordering of $F$.)

\begin{proof}
It is clear that
		\begin{gather*}
			H^0\big(C,\SV_{\fx}\otimes \omega_C(\blt E)\big)\cdot^\eps \Wbb=H^0\big(D,\SV_{\fy}\otimes \omega_D(\blt F)\big)\cdot^{f\circ \eps} \Wbb
		\end{gather*}
\end{proof}

\begin{df}\label{lb6}
	Suppose that we are given two unordered $(R,N)$-pointed compact Riemann surfaces with local coordinates
	\begin{gather*}
		\ff=(E';\theta'_{E'}\big|C\big|E;\theta_E)\qquad
		\fk K=(F';\vartheta'_{F'}\big|D\big|F;\vartheta_F)
	\end{gather*}
If $f:\ff\rightarrow\fg$ is an isomorphism of $(R+N)$-pointed compact Riemann surfaces with local coordinates satisfying $f(E)=F$ and $f(E')=F'$, we call $f$ an \textbf{isomorphism of \pmb{$(R,N)$}-pointed compact Riemann surfaces with local coordinates}.
\end{df}

\begin{pp}\label{lb8}
Let $\ff,\fg$ be as in Def. \ref{lb6}, and let $f:\ff\rightarrow\fg$ be an isomorphism of $(R,N)$-pointed compact Riemann surfaces with local coordinates. Choose orderings $\eps:\{1,\cdots,N\}\rightarrow E$ and  $\eps':\{1,\cdots,R\}\rightarrow E'$. Let $\Wbb\in\Mod(\Vbb^{\otimes N})$. Let $(\bbs_{\ff,\eps',\eps}(\Wbb),\gimel)$ be an $(\eps',\eps)$-dual fusion product of $\Wbb$ along $\ff$. Then $(\bbs_{\ff,\eps',\eps}(\Wbb),\gimel)$ is also an $(f\circ\eps',f\circ\eps)$-dual fusion product of $\Wbb$ along $\fg$.
\end{pp}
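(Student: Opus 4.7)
The plan is to reduce the statement directly to Prop. \ref{lb7} applied to the underlying $(R+N)$-pointed structure. The main observation is that a dual fusion product is characterized entirely by (a) an element of a space of conformal blocks associated to an $(R+N)$-pointed surface with the ordering $\eps' * \eps$, and (b) a universal property that is formulated purely in terms of such spaces of conformal blocks. Since $f$ is an isomorphism of $(R+N)$-pointed compact Riemann surfaces with local coordinates (via the composed ordering $\eps' * \eps$), both data are preserved.

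More concretely, first I would note that if $f:\ff\to\fg$ is an isomorphism of $(R,N)$-pointed surfaces in the sense of Def. \ref{lb6}, then it is in particular an isomorphism of the underlying $(R+N)$-pointed surfaces in the sense of Def. \ref{lb5}, and clearly
\begin{align*}
f\circ(\eps'*\eps)=(f\circ\eps')*(f\circ\eps).
\end{align*}
Hence, by Prop. \ref{lb7} applied to $\bbs_{\ff,\eps',\eps}(\Wbb)\otimes\Wbb$ and to any $\Mbb\otimes\Wbb$ with $\Mbb\in\Mod(\Vbb^{\otimes R})$, we get the equalities
\begin{align*}
\ST^*_{\ff,\eps'*\eps}\big(\bbs_{\ff,\eps',\eps}(\Wbb)\otimes\Wbb\big) &= \ST^*_{\fg,(f\circ\eps')*(f\circ\eps)}\big(\bbs_{\ff,\eps',\eps}(\Wbb)\otimes\Wbb\big),\\
\ST^*_{\ff,\eps'*\eps}(\Mbb\otimes\Wbb) &= \ST^*_{\fg,(f\circ\eps')*(f\circ\eps)}(\Mbb\otimes\Wbb),
\end{align*}
as subspaces of the respective linear dual spaces. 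In particular, $\gimel$ lies in the space $\ST^*_{\fg,f\circ\eps',f\circ\eps}(\bbs_{\ff,\eps',\eps}(\Wbb),\Wbb)$, so it is a candidate canonical conformal block along $\fg$.

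Next I would verify the universal property of Def. \ref{lb2} with respect to $\fg$. For any $\Mbb\in\Mod(\Vbb^{\otimes R})$ associated to $F'$ via $f\circ\eps'$, the map
\begin{align*}
\Hom_{\Vbb^{\otimes R}}\big(\Mbb,\bbs_{\ff,\eps',\eps}(\Wbb)\big)\longrightarrow \ST^*_{\fg,f\circ\eps',f\circ\eps}(\Mbb,\Wbb),\qquad T\mapsto\gimel\circ(T\otimes\id_\Wbb),
\end{align*}
coincides, via the above equality of conformal block spaces, with the corresponding map into $\ST^*_{\ff,\eps',\eps}(\Mbb,\Wbb)$, which is a linear isomorphism by the universal property of $(\bbs_{\ff,\eps',\eps}(\Wbb),\gimel)$ along $\ff$. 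Therefore $(\bbs_{\ff,\eps',\eps}(\Wbb),\gimel)$ satisfies the universal property of an $(f\circ\eps',f\circ\eps)$-dual fusion product of $\Wbb$ along $\fg$.

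There is no real obstacle here; the statement is essentially a restatement of Prop. \ref{lb7} packaged through the universal property of Def. \ref{lb2}. The only mild bookkeeping step is checking that the ordering composition is compatible with the action of $f$ on marked points, which is immediate from the definition of the composition $*$ of orderings.
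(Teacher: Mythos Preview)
Your proposal is correct and follows exactly the same approach as the paper, which simply states ``This is clear by Prop. \ref{lb7}.'' You have merely spelled out in detail the bookkeeping that the paper leaves implicit.
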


\begin{proof}
This is clear by Prop. \ref{lb7}.
\end{proof}

\subsubsection{Change of coordinates}

\begin{pp}\label{lb65}
Let $\fx=(C|E;\eta_E)$ be an unordered $N$-pointed compact Riemann surface with local coordinates. Choose an ordering $\eps:\{1,\dots,N\}\rightarrow E$. Let $\Wbb\in\Mod(\Vbb^{\otimes N})$. For each $1\leq i\leq N$, let $\alpha_i\in\MG$, and choose an argument $\arg\alpha'_i(0)$. Let $\wtd\fx=(C|E;\wtd\eta_E)$ where
\begin{align*}
\wtd\eta_{\eps(i)}=\alpha_i\circ\eta_{\eps(i)}\qquad\text{for all }i
\end{align*}
Then the (invertible) operator $\MU_1(\alpha_1)\circ\cdots\circ\MU_N(\alpha_N)$ on $\Wbb$ descends to a linear isomorphism
\begin{gather*}
\ST_{\fx,\eps}(\Wbb)\xlongrightarrow{\simeq}\ST_{\wtd\fx,\eps}(\Wbb)
\end{gather*}
Therefore, its transpose is a linear isomorphism:
\begin{gather*}
\ST^*_{\wtd\fx,\eps}(\Wbb)\xlongrightarrow{\simeq}\ST^*_{\fx,\eps}(\Wbb)\qquad\upphi\mapsto\upphi\circ \MU_1(\alpha_1)\circ\cdots\circ\MU_N(\alpha_N)
\end{gather*}
\end{pp}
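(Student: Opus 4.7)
The plan is to prove the identity
\begin{align*}
T(\sigma *^\eps_\fx w) = \sigma *^\eps_{\wtd\fx} (Tw) \qquad \text{for all } \sigma \in H^0(C,\SV_C\otimes\omega_C(\blt E)),\ w\in\Wbb,
\end{align*}
where $T=\MU_1(\alpha_1)\circ\cdots\circ\MU_N(\alpha_N)$. The crucial preliminary observation is that the sheaf $\SV_C\otimes\omega_C$ is intrinsic to $C$: it depends only on the conformal structure, not on the choice of local coordinates at $E$. Hence the ambient space $H^0(C,\SV_C\otimes\omega_C(\blt E))$ is the same for $\fx$ and $\wtd\fx$, and the identity above will imply $T\big(H^0*^\eps_\fx \Wbb\big)=H^0*^\eps_{\wtd\fx}\Wbb$, so $T$ descends to a well-defined linear map between quotients. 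Applying the same identity to the inverse operator $T^{-1}=\MU_1(\alpha_1^{-1})\circ\cdots\circ\MU_N(\alpha_N^{-1})$ (with the roles of $\eta$ and $\wtd\eta$ swapped) shows that this descended map is a bijection.

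The first main step is localization. Using $*^\eps=\sum_{i=1}^N *^\eps_i$, one reduces the identity to, for each $1\leq i\leq N$,
\begin{align*}
\MU_i(\alpha_i)\circ\Res_{\eps(i)}Y_i(\MU_\varrho(\eta_{\eps(i)})\sigma,\eta_{\eps(i)})=\Res_{\eps(i)}Y_i(\MU_\varrho(\wtd\eta_{\eps(i)})\sigma,\wtd\eta_{\eps(i)})\circ\MU_i(\alpha_i),
\end{align*}
since for $j\neq i$ the operator $\MU_j(\alpha_j)$ acts on the $j$-th tensor factor and thus commutes with $Y_i(v,z)$, and similarly with the residue operation at $\eps(i)$.

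The second step combines two standard ingredients. On the geometric side, the transition function formula \eqref{eq109} gives $\MU_\varrho(\wtd\eta_{\eps(i)})=\MU(\varrho(\wtd\eta_{\eps(i)}|\eta_{\eps(i)}))\circ\MU_\varrho(\eta_{\eps(i)})$, and since $\wtd\eta_{\eps(i)}=\alpha_i\circ\eta_{\eps(i)}$ vanishes at $\eps(i)$, the transition function at the point $\eps(i)$ is exactly $\alpha_i$. Thus, writing $\MU_\varrho(\eta_{\eps(i)})\sigma$ locally near $\eps(i)$ as $\sum_k v_k\otimes f_k(\eta_{\eps(i)})\,d\eta_{\eps(i)}$, the expression $\MU_\varrho(\wtd\eta_{\eps(i)})\sigma$ becomes a corresponding sum involving $\MU(\alpha_i)v_k$ written in the $\wtd\eta_{\eps(i)}$-coordinate. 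On the algebraic side, the change-of-variable formula for vertex operators under $\MG$ (the conjugation law $\MU(\alpha)Y(v,z)\MU(\alpha)^{-1}$ expressed as a vertex operator in the variable $\alpha(z)$ on a transformed vector, with the appropriate Jacobian) produces precisely the $\wtd\eta_{\eps(i)}$-residue identity after performing the change of integration variable $\eta_{\eps(i)}\mapsto\wtd\eta_{\eps(i)}=\alpha_i\circ\eta_{\eps(i)}$.

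The main obstacle is the second step: verifying that the geometric transition formula and the algebraic conjugation formula match up after the residue operation. This is exactly the compatibility which underlies the fact that $*^\eps$ is well-defined intrinsically on $C$, independently of local coordinates modulo $\MG$-action. Once this identity is established, descent to the quotient, bijectivity via the inverse, and the transpose statement for $\ST^*$ are all automatic.
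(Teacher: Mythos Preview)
Your proposal is correct and is essentially the direct unpacking of what the paper cites: the paper's proof simply appeals to the coordinate-free definition of conformal blocks in \cite{GZ1,FB04,Gui-sewingconvergence}, and the identity $T(\sigma*^\eps_\fx w)=\sigma*^\eps_{\wtd\fx}(Tw)$ you aim for is exactly that coordinate-independence statement. One small caveat in your phrasing: the transition family $\varrho(\wtd\eta_{\eps(i)}|\eta_{\eps(i)})_p$ equals $\alpha_i$ only at $p=\eps(i)$ and varies over the punctured neighborhood, so it is the full Huang change-of-variable formula (with the pointwise-varying $\MU(\varrho(\cdot)_p)$ and the Jacobian $d\eta\leftrightarrow d\wtd\eta$) that makes the two residues match, not merely the value at the marked point; you do invoke this correctly in your ``algebraic side'' paragraph, but the earlier sentence could be misread. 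The paper also inserts a technical remark reconciling the operator $\MU_i(\alpha)$ built from $L_i(0)$ here with the one built from the semisimple part $\wtd L_i(0)$ in \cite{GZ1}; this is orthogonal to your argument and only serves to justify quoting that reference verbatim.
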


\begin{proof}
This follows from the coordinate-free definition of conformal blocks. See \cite[Sec. 2.1]{GZ1}, \cite[Sec. 6.5]{FB04}, or \cite[Thm. 3.2]{Gui-sewingconvergence}. Note that in \cite{GZ1}, a diagonal operator $\wtd L_i(0)$ is used instead of $L_i(0)$ to define \eqref{eq93}. The operator $\wtd L_i(0)$ satisfies that $L_i(0)-\wtd L_i(0)$ commutes with the action of $\Vbb^{\otimes N}$. Therefore, the operator $\MU_i(\alpha)$ defined in this paper equals the composition of the corresponding operator in \cite{GZ1} with an automorphism of the $\Vbb^{\otimes N}$-module $\Wbb$. Thus, the results in \cite[Sec. 2.1]{GZ1} remain applicable in the present setting.
\end{proof}

\begin{pp}\label{lb66}
Let $\fk F=\big(E'; \theta'_{E'}\big|C\big|E;\theta_E\big)$ be an unordered $(R,N)$-pointed compact Riemann surfaces with local coordinates. Choose orderings $\eps:\{1,\cdots,N\}\rightarrow E$ and  $\eps':\{1,\cdots,R\}\rightarrow E'$. Let $\Wbb\in\Mod(\Vbb^{\otimes N})$. For each $1\leq i\leq N$ and $1\leq j\leq R$, choose $\alpha_i,\beta_j\in\MG$ with prescribed $\arg\alpha'_i(0)$ and $\arg\beta'_j(0)$, and let
\begin{align*}
\wtd\theta_{\eps(i)}=\alpha_i\circ\theta_{\eps(i)}\qquad\wtd\theta'_{\eps'(j)}=\beta_j\circ\theta'_{\eps'(j)}
\end{align*}
Let $\wtd\ff=\big(E'; \wtd\theta'_{E'}\big|C\big|E;\wtd\theta_E\big)$. Let $\big(\bbs_{\wtd\ff,\eps',\eps}(\Wbb),\wtd\gimel\big)$ be an $(\eps',\eps)$-dual fusion product of $\Wbb$ along $\wtd\ff$. Then $\big(\bbs_{\wtd\ff,\eps',\eps}(\Wbb),\gimel\big)$ is an $(\eps',\eps)$-dual fusion product of $\Wbb$ along $\ff$, where
\begin{align*}
\gimel=\wtd\gimel\circ(\MU_1(\alpha_1)\cdots\MU_N(\alpha_N)\otimes\MU_1(\beta_1)\cdots\MU_R(\beta_R) ):\Wbb\otimes\bbs_{\wtd\ff,\eps',\eps}(\Wbb)\rightarrow\Cbb
\end{align*}
\end{pp}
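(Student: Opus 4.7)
The plan is to reduce the universal property of $\gimel$ to that of $\wtd\gimel$ via the change-of-coordinates isomorphism established in Prop.~\ref{lb65}. Applying that proposition to $\ff$, viewed as an $(R+N)$-pointed compact Riemann surface with ordering $\eps*\eps'$ and the $(R+N)$-tuple $(\alpha_1,\dots,\alpha_N,\beta_1,\dots,\beta_R)\in\MG^{R+N}$, yields for each $\Mbb\in\Mod(\Vbb^{\otimes R})$ a linear isomorphism
\begin{align*}
\Psi_\Mbb:\ST^*_{\wtd\ff,\eps*\eps'}(\Wbb\otimes\Mbb)\xlongrightarrow{\simeq}\ST^*_{\ff,\eps*\eps'}(\Wbb\otimes\Mbb),\qquad\uppsi\mapsto\uppsi\circ\Xi_\Mbb,
\end{align*}
where $\Xi_\Mbb:=\MU_1(\alpha_1)\cdots\MU_N(\alpha_N)\otimes\MU_1(\beta_1)\cdots\MU_R(\beta_R)$ acts on $\Wbb\otimes\Mbb$; this tensor-factored form is legitimate because operators $\MU_i$ on distinct tensor factors of $\Vbb^{\otimes(R+N)}$ commute. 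Specializing to $\Mbb=\bbs_{\wtd\ff,\eps',\eps}(\Wbb)$, the element $\gimel$ in the statement is exactly $\Psi_{\bbs_{\wtd\ff,\eps',\eps}(\Wbb)}(\wtd\gimel)$, and hence already lies in $\ST^*_{\ff,\eps',\eps}\bigl(\bbs_{\wtd\ff,\eps',\eps}(\Wbb),\Wbb\bigr)$.

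Next I would compose $\Psi_\Mbb$ with the universal isomorphism of Def.~\ref{lb2} for $\wtd\gimel$ (in the form appearing in the remark following Def.~\ref{lb2}, where $\gimel$ acts on $\Wbb\otimes\Mbb$) to obtain a composite linear isomorphism
\begin{align*}
\Hom_{\Vbb^{\otimes R}}\bigl(\Mbb,\bbs_{\wtd\ff,\eps',\eps}(\Wbb)\bigr)\xlongrightarrow{\simeq}\ST^*_{\ff,\eps',\eps}(\Wbb,\Mbb),\qquad T\mapsto\wtd\gimel\circ(\id_\Wbb\otimes T)\circ\Xi_\Mbb,
\end{align*}
and reduce the universal property of $\gimel$ to verifying that this composite agrees with $T\mapsto\gimel\circ(\id_\Wbb\otimes T)$.

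The only non-formal step---and hence the point on which to focus---is the commutation identity $(\id_\Wbb\otimes T)\circ\Xi_\Mbb=\Xi_{\bbs_{\wtd\ff,\eps',\eps}(\Wbb)}\circ(\id_\Wbb\otimes T)$, which in turn amounts to showing that $T$ intertwines the operators $\MU_j(\beta_j)$ on $\Mbb$ and on $\bbs_{\wtd\ff,\eps',\eps}(\Wbb)$ for every $1\leq j\leq R$. But by \eqref{eq93}, each $\MU_j(\beta_j)$ is a power series in the modes $L_j(n)=Y_j(\cbf)_{n-1}$, which are built from the action of the $j$-th copy of $\Vbb\subset\Vbb^{\otimes R}$; any morphism $T$ in $\Mod(\Vbb^{\otimes R})$ therefore commutes with them. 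Granted this intertwining, the composite collapses to $\wtd\gimel\circ\Xi_{\bbs_{\wtd\ff,\eps',\eps}(\Wbb)}\circ(\id_\Wbb\otimes T)=\gimel\circ(\id_\Wbb\otimes T)$, finishing the proof.
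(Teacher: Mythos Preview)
Your proposal is correct and takes essentially the same approach as the paper, which simply records that the statement ``follows immediately from Prop.~\ref{lb65}.'' You have unpacked the details of that deduction: applying the change-of-coordinates isomorphism of Prop.~\ref{lb65} to $\Wbb\otimes\Mbb$, then checking that the resulting composite realizes the universal property for $\gimel$ by noting that any $T\in\Hom_{\Vbb^{\otimes R}}(\Mbb,\bbs_{\wtd\ff,\eps',\eps}(\Wbb))$ commutes with the operators $\MU_j(\beta_j)$.
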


\begin{proof}
This follows immediately from Prop. \ref{lb65}.
\end{proof}

\subsection{Standard $2$-pointed spheres and the default fusion product $(\boxtimes_\fn\Cbb,\upomega)$}

\subsubsection{The $2$-pointed sphere $\fn$ and the default fusion product $(\boxtimes_\fn\Cbb,\upomega)$}

Recall from Sec. \ref{lb32} that $\zeta$ denotes the standard coordinate of $\Cbb$. 

\begin{df}\label{lb51}
Throughout this paper, we let $\fn$ denote the (unordered) $2$-pointed sphere with local coordinates:
\begin{align}\label{eq12}
		\fn=\big(\Pbb^1\big| \{\infty,0\};(\eta_x)_{x\in\{\infty,0\}}\big)\qquad\text{where }\eta_\infty=1/\zeta\text{ and }\eta_0=\zeta
\end{align}
Then the automorphism group $\Aut(\fn)\simeq\Zbb_2$ is generated by
\begin{align*}
\tipath:\Pbb^1\rightarrow\Pbb^1\qquad z\mapsto 1/z
\end{align*}
The \textbf{default ordering of \pmb{$\{\infty,0\}$}} is defined to be
\begin{align}
\epsilon:\{+,-\}\xlongrightarrow{\simeq}\{\infty,0\}\qquad \epsilon(+)=\infty\qquad \epsilon(-)=0
\end{align} 
\end{df}

\begin{rem}\label{lb40}
Choose any $\Wbb\in\Mod(\Vbb^{\otimes 2})$. By Prop. \ref{lb7}, we have $\ST_{\fn,\epsilon}^*(\Wbb)=\ST_{\fn,\tipath\circ\epsilon}^*(\Wbb)$. In fact, both spaces consist of linear functionals $\upphi:\Wbb\rightarrow\Cbb$ satisfying the relation
\begin{subequations}\label{eq69}
\begin{align}
\bk{\upphi,Y_+'(v,z)w}=\bk{\upphi,Y_-(v,z)w}\qquad\text{for all }v\in\Vbb,w\in\Wbb
\end{align}
in $\Cbb[[z^{\pm 1}]]$. Equivalently, $\upphi$ satisfies 
\begin{align}
\bk{\upphi,Y_+(v,z)w}=\bk{\upphi,Y'_-(v,z)w}\qquad\text{for all }v\in\Vbb,w\in\Wbb
\end{align}
\end{subequations}
See \cite[Rem. 2.1]{GZ3} for more explanations.
\end{rem}

\begin{df}\label{lb29}
By viewing $\fn=\eqref{eq12}$ as an $(2,0)$-pointed sphere with local coordinates
	\begin{align}\label{eq18}
\fn=\big( \{\infty,0\};(\eta_x)_{x\in\{\infty,0\}}\big|\Pbb^1\big)\qquad\text{where }\eta_\infty=1/\zeta\text{ and }\eta_0=\zeta
		\end{align}
we fix, throughout this article, an $\epsilon$-dual fusion product $(\bbs_{\fn,\epsilon}(\Cbb),\upomega)$ of $\Cbb$ along $\fn$, cf. Rem. \ref{lb39}. We use the abbreviations
\begin{align*}
{\bbs_\fn\Cbb}:=\bbs_{\fn,\epsilon}(\Cbb)\qquad {\boxtimes_\fn\Cbb}:=\boxtimes_{\fn,\epsilon}(\Cbb)
\end{align*}
and call $(\bbs_\fn\Cbb,\upomega)$ the \textbf{default dual fusion product of $\pmb\Cbb$ along $\pmb\fn$}. Accordingly, $(\boxtimes_\fn\Cbb,\upomega)$---or simply $\boxtimes_\fn\Cbb$---is called the \textbf{default fusion product of $\pmb\Cbb$ along $\pmb\fn$}.
	\end{df}

\begin{rem}
Note that the canonical conformal block
\begin{align*}
\upomega\in\ST^*_{\fn,\epsilon}(\bbs_\fn\Cbb)=\ST^*_{\fn,\tipath\circ\epsilon}(\bbs_\fn\Cbb)
\end{align*}
is a linear functional
\begin{align*}
\upomega:\bbs_\fn\Cbb\rightarrow\Cbb
\end{align*}
satisfying the same condition as $\upphi$ in Rem. \ref{lb40}.
\end{rem}

\subsubsection{Standard $2$-pointed spheres}

\begin{df}\label{lb43}
A \textbf{standard $2$-pointed sphere} is defined to be a $2$-pointed compact Riemann surface with local coordinates $\fk C$ that is isomorphic to $\fn$. Equivalently, it is defined to be
\begin{align}\label{eq22}
\fk C=(C\big| \{z,\tipaz\};\eta_z,\eta_\tipaz)
\end{align}
where $C$ is a compact Riemann surface biholomorphic to $\Pbb^1$, and the local coordinates $\eta_z,\eta_\tipaz$ are linear fractional transformations satisfying $\eta_z\cdot\eta_\tipaz=1$.
\end{df}

\begin{rem}\label{lb59}
Let $\Wbb\in\Mod(\Vbb^{\otimes2})$. According to Rem. \ref{lb40}, the space of conformal blocks associated to $\Wbb$ and a standard $2$-pointed sphere $\fk C$ is independent of the ordering of the marked points of $\fk C$. Therefore, we denote this space by $\pmb{\ST^*_{\fk C}(\Wbb)}$, whose elements are precisely the linear functionals on $\Wbb$ satisfying \eqref{eq69}.
\end{rem}

\begin{pp}\label{lb13}
Let $\fc=\eqref{eq22}$ be a standard $2$-pointed sphere. Let $\eps:\{+,-\}\rightarrow\{z,\tipaz\}$ be any ordering of $\{z,\tipaz\}$. Then $(\boxtimes_\fn\Cbb,\upomega)$ is an $\eps$-fusion product of $\Cbb$ along $\fc$.
\end{pp}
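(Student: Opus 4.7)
The plan is to reduce the statement to Prop.~\ref{lb8} by exhibiting $\fc$ as an isomorphic copy of $\fn$ as a $(2,0)$-pointed compact Riemann surface with local coordinates, and then to use the non-trivial automorphism of $\fn$ to cover both possible orderings of $\{z,\tipaz\}$.

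First, I would check that the hypotheses on $\fc$ force the existence of a canonical isomorphism $f:\fn\to\fc$. Since $\eta_z$ is a linear fractional transformation vanishing at $z$, it extends to a biholomorphism $C\to\Pbb^1$ sending $z\mapsto 0$, and the relation $\eta_z\cdot\eta_\tipaz=1$ then forces $\eta_z(\tipaz)=\infty$. Defining $f=\eta_z^{-1}:\Pbb^1\to C$, one has $f(0)=z$, $f(\infty)=\tipaz$, and the pullbacks are $\eta_z\circ f=\zeta=\eta_0$ and $\eta_\tipaz\circ f=(1/\eta_z)\circ f=1/\zeta=\eta_\infty$. Hence $f:\fn\to\fc$ is an isomorphism of $2$-pointed compact Riemann surfaces with local coordinates in the sense of Def.~\ref{lb5}; viewing both $\fn$ and $\fc$ as $(2,0)$-pointed surfaces (all marked points outgoing), this is equally an isomorphism in the sense of Def.~\ref{lb6}.

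Next, I apply Prop.~\ref{lb8} with $N=0$, $R=2$, and $\Wbb=\Cbb\in\Mod(\Vbb^{\otimes 0})$: this yields that $(\bbs_\fn\Cbb,\upomega)$ is an $(f\circ\epsilon)$-dual fusion product of $\Cbb$ along $\fc$, and therefore by taking contragredients $(\boxtimes_\fn\Cbb,\upomega)$ is an $(f\circ\epsilon)$-fusion product of $\Cbb$ along $\fc$. Under $f\circ\epsilon$ the assignments are $+\mapsto\tipaz$ and $-\mapsto z$, which realizes one of the two orderings $\eps$ of $\{z,\tipaz\}$.

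For the opposite ordering, I would invoke the non-trivial automorphism $\tipath:\fn\to\fn$ from Def.~\ref{lb51}: it swaps $\infty\leftrightarrow 0$ and preserves local coordinates, since $\eta_0\circ\tipath=1/\zeta=\eta_\infty$ and $\eta_\infty\circ\tipath=\zeta=\eta_0$. Then $f\circ\tipath:\fn\to\fc$ is another isomorphism of $(2,0)$-pointed compact Riemann surfaces with local coordinates, and $(f\circ\tipath)\circ\epsilon$ realizes the remaining ordering $+\mapsto z$, $-\mapsto\tipaz$. Applying Prop.~\ref{lb8} again gives $(\boxtimes_\fn\Cbb,\upomega)$ as a fusion product for this ordering as well. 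Since every ordering $\eps:\{+,-\}\to\{z,\tipaz\}$ coincides with one of $f\circ\epsilon$ or $f\circ\tipath\circ\epsilon$, this completes the proof. There is no genuine obstacle: the argument is bookkeeping, and the built-in $\Zbb_2$-symmetry $\tipath$ of $\fn$ is precisely what makes both orderings simultaneously accessible from a single fusion product.
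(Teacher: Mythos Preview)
Your proof is correct and follows essentially the same approach as the paper, which simply states ``This is clear by Prop.~\ref{lb8}.'' Your argument spells out the details that the paper leaves implicit: constructing the explicit isomorphism $f=\eta_z^{-1}:\fn\to\fc$, and invoking the $\Zbb_2$-automorphism $\tipath$ to handle both orderings.
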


\begin{proof}
This is clear by Prop. \ref{lb8}.
\end{proof}

The following two figures represent the fusion products of $\Cbb$ along $\fc$ with respect to the two orderings of $\{z,\tipaz\}$.
\begin{align}\label{eq29}
\vcenter{\hbox{{\includegraphics[height=1.8cm]{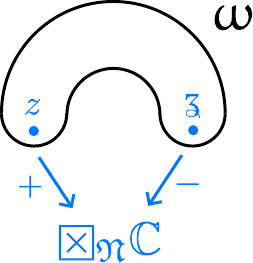}}}}
\qquad\text{and}\qquad
\vcenter{\hbox{{\includegraphics[height=1.8cm]{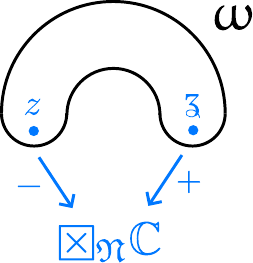}}}}
\end{align}

\subsection{Composition of conformal blocks and the sewing-factorization theorem}\label{lb76}

Let $K,N,R\in\Nbb$. In this section, we let 
\begin{gather*}
\ff=\big(F'; \theta'_{F'}\big|C_1\big|F;\theta_F\big)\qquad
\fg=\big(G';\mu'_{G'}\big|C_2\big| G;\mu_G \big)
    \end{gather*}
be respectively $(K,N)$-pointed and $(N,R)$-pointed compact Riemann surfaces with local coordinates, cf. Def. \ref{lb1}. 
Fix orderings
	\begin{gather*}
\eps':\{1,\cdots,K\}\xlongrightarrow{\simeq}F'\qquad	\eps:\{1,\cdots,N\}\xlongrightarrow{\simeq}F\\ \tipae':\{1,\cdots,N\}\xlongrightarrow{\simeq}G'\qquad \tipae:\{1,\cdots,R\}\xlongrightarrow{\simeq}G
	\end{gather*}

\subsubsection{The sewing $\ff\#^{\eps,\tipae'}_{p_\blt}\fg$ and the composition $\ff\#^{\eps,\tipae'}\fg$}\label{lb46}

For each $x\in F$ (resp. $y'\in G'$), choose a neighborhood $V_x$ (resp. $W'_{y'}$) such that $\theta_x$ (resp. $\mu'_{y'}$) is defined. Assume that $V_x$ and $W'_{y'}$ are open disks, i.e.
\begin{align}
\theta_x(V_x)=\MD_{r_x}\qquad \mu'_{y'}(W'_{y'})=\MD_{\rho_{y'}}\qquad\text{where }r_x,\rho_{y'}\in(0,+\infty]
\end{align} 
The numbers $r_x,\rho_{y'}$ are called \textbf{sewing radii}. Assume that $V_{x_1}\cap V_{x_2}=\emptyset$ if $x_1\neq x_2$, and that $W'_{y_1'}\cap W'_{y_2'}=\emptyset$ if $y_1'\neq y_2'$.

\begin{df}\label{lb11}
	Set
\begin{gather*}
	\MD_{r_\blt\rho_\blt}=\prod_{i=1}^N \MD_{r_{\eps(i)}\rho_{\tipae'(i)}}\quad 
	\MD^\times_{r_\blt\rho_\blt}=\prod_{i=1}^N \MD^\times_{r_{\eps(i)}\rho_{\tipae'(i)}}
\end{gather*}
Choose $p_\blt=(p_1,\dots,p_N)\in\MD_{r_\blt\rho_\blt}^\times$. In other words, $p_\blt\in\Cbb^N$ satisfies
\begin{align}\label{eq121}
0<|p_i|<r_{\eps(i)}\rho_{\tipae'(i)}\qquad\text{for each }1\leq i\leq N
\end{align}
The \textbf{sewing} $\ff\#^{\eps,\tipae'}_{p_\blt}\fg$ of $\ff$ and $\fg$ via $(\eps,\tipae')$ with \textbf{sewing moduli} $p_\blt$ is defined as follows. (Note that if \eqref{eq121} is satisfied, we say that \textbf{the sewing radii $r_\blt,\rho_\blt$ are admissible for the sewing moduli $p_\blt$}.)

For each $1\leq i\leq N$, 
\begin{align*}
\Gamma_i=\Big\{z\in V_{\eps(i)}:|\theta_{\eps(i)}(z)|\leq \frac{|p_i|}{\rho_{\tipae'(i)}} \Big\}
\qquad
\Delta_i=\Big\{z\in W'_{\tipae'(i)}:|\mu'_{\tipae'(i)}(z)|\leq \frac{|p_i|}{r_{\eps(i)}} \Big\}
\end{align*}
are compact subsets of $V_{\eps(i)}$ and $W'_{\tipae'(i)}$ respectively. We have a biholomorphism
\begin{align*}
\scr S_i: V_{\eps(i)}\setminus\Gamma_i\xlongrightarrow{\simeq} W'_{\tipae'(i)}\setminus\Delta_i\qquad z\mapsto (\mu'_{\tipae'(i)})^{-1}\Big(\frac{p_i}{\theta_{\eps(i)}(z)}\Big)
\end{align*}
The compact Riemann surface $C_1\#^{\eps,\tipae'}_{p_\blt}C_2$ is defined by removing $\Gamma_i$ and $\Delta_i$ (for all $1\leq i\leq N$) and gluing $V_{\eps(i)}\setminus\Gamma_i$ and $W'_{\tipae'(i)}\setminus\Delta_i$ via the biholomorphism $\scr S_i$. In other words, for each $z_1\in V_{\eps(i)}\setminus\Gamma_i$ and $z_2\in W'_{\tipae'(i)}\setminus\Delta_i$,
\begin{align}
\text{$z_1$ is identified with $z_2$}\qquad\Longleftrightarrow\qquad \theta_{\eps(i)}(z_1)\cdot \mu'_{\tipae'(i)}(z_2)=p_i
\end{align}
Note that after gluing, $F$ and $G'$ are removed, but $F',G$ and their local coordinates $\theta'_{F'},\mu_G$ remain. We let
\begin{align}
\ff\#^{\eps,\tipae'}_{p_\blt}\fg=\big(F';\theta'_{F'}\big|C_1\#^{\eps,\tipae'}_{p_\blt}C_2\big|G;\mu_G\big)
\end{align}
Then $\ff\#^{\eps,\tipae'}_{p_\blt}\fg$ satisfies the definition of an $(K,R)$-pointed compact Riemann surface with local coordinates, except that \eqref{eq8} in Def. \ref{lb1} is not necessarily satisfied---that is, it is \textit{not necessarily true that}
\begin{align}\label{eq114}
\text{each component of $C_1\#^{\eps,\tipae'}_{p_\blt}C_2$ intersects $F'\cup G$}
\end{align}
The superscript $\eps,\tipae'$ in $\#^{\eps,\tipae'}$ and $\#^{\eps,\tipae'}_{p_\blt}$ will be omitted when the context is clear.

In the case where $p_1=\dots=p_N=1$ (note that this requires $r_{\eps(i)}\rho_{\tipae'(i)}>1$ for each $i$, due to \eqref{eq121}), we suppress the subscript $p_\blt$, that is, we write
\begin{gather*}
\ff\#^{\eps,\tipae'}\fg:=\ff\#^{\eps,\tipae'}_{(1,\dots,1)}\fg
\end{gather*}
We call $\ff\#^{\eps,\tipae'}\fg$ the \textbf{composition} of $\ff$ and $\fg$ via $(\eps,\tipae')$. \hqed
	\end{df}

\subsubsection{Sewing and composition of conformal blocks}

Assume that condition \eqref{eq114} holds, namely, each component of $\ff\#^{\eps,\tipae'}_{p_\blt}\fg$ contains at least one incoming or outgoing marked point. (Note that this condition is independent of the choice of $p_\blt$.)

Choose $\Wbb\in\Mod(\Vbb^{\otimes K})$, $\Mbb\in\Mod(\Vbb^{\otimes N})$, $\Xbb\in\Mod(\Vbb^{\otimes R})$. Choose
\begin{align}\label{eq27}
\upphi\in\ST_{\ff,\eps'*\eps}^*(\Wbb'\otimes\Mbb)\qquad\uppsi\in\ST^*_{\fg,\tipae'*\tipae}(\Mbb'\otimes\Xbb)
\end{align}
Since $\upphi$ and $\uppsi$ are linear functionals $\Wbb'\otimes\Mbb\rightarrow\Cbb$ and $\Mbb'\otimes\Xbb\rightarrow\Cbb$, respectively, they can be viewed as linear maps
\begin{gather}
\upphi^\sharp:\Mbb\rightarrow\ovl\Wbb\qquad\uppsi^\sharp:\Xbb\rightarrow\ovl\Mbb
\end{gather}
Choose $p_\blt\in\MD_{r_\blt\rho_\blt}^\times$ with fixed arguments $\arg p_1,\dots,\arg p_R$. 

In the following, we let
\begin{align}
p_\blt^{L_\blt(0)}:=p_1^{L_1(0)}\cdots p_N^{L_N(0)}
\end{align}

\begin{thm}\label{lb47}
The linear functional
\begin{gather*}
\upphi\circ_{p_\blt}\uppsi: \Wbb'\otimes\Xbb\rightarrow\Cbb\\
w'\otimes w\mapsto  \sum_{\lambda_\blt\in\Cbb^N} \bigbk{w',\upphi^\sharp\circ p_\blt^{L_\blt(0)}\circ P(\lambda_\blt)\circ\uppsi^\sharp(w)}   
\end{gather*}
converges absolutely for each $w'\in\Wbb'$ and $w\in\Xbb$. Moreover, $\upphi\circ_{p_\blt}\uppsi$ belongs to $\ST^*_{\ff\#^{\eps,\tipae'}_{p_\blt}\fg,\eps',\tipae}(\Wbb',\Xbb)$, that is, it is a conformal block associated to $\ff\#^{\eps,\tipae'}_{p_\blt}\fg$ and $\Wbb',\Xbb$ via the orderings $\eps',\tipae$.
\end{thm}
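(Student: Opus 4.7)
The plan is to split the claim into two parts: absolute convergence of the defining series, and verification that $\upphi\circ_{p_\blt}\uppsi$ satisfies the conformal block gauge condition on the sewn surface $\ff\#^{\eps,\tipae'}_{p_\blt}\fg$. Throughout, I would first reduce to the ordered setting by grouping all marked points of $\ff$ via the composition $\eps'*\eps$ and those of $\fg$ via $\tipae'*\tipae$, which lets one apply analogous sewing results from \cite{GZ1,GZ2,GZ3} without further combinatorial complication.

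For convergence, the essential input is the sewing convergence theorem for $C_2$-cofinite VOAs. Because $\Vbb$ is $C_2$-cofinite, each graded piece $\Mbb_{[\lambda_\blt]}$ is finite-dimensional, and the formal series
\begin{align*}
\sum_{\lambda_\blt\in\Cbb^N}\bk{w',\upphi^\sharp\circ p_\blt^{L_\blt(0)}\circ P(\lambda_\blt)\circ\uppsi^\sharp(w)}
\end{align*}
viewed as a formal series in the sewing parameters $p_1,\dots,p_N$ with prescribed arguments, satisfies a system of linear ODEs with regular singular points at $p_i=0$. The indicial roots are controlled by the $L_i(0)$-spectrum on $\Mbb$, so standard Frobenius theory, applied on the polydisc $\MD^\times_{r_\blt\rho_\blt}$, delivers absolute convergence. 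This is the genuinely analytic step, and the place where $C_2$-cofiniteness is indispensable: without it, neither the finite-dimensionality of weight spaces nor the finite rank of the ODE system would be available.

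For the gauge condition, I would use a Mayer--Vietoris decomposition in the spirit of the nodal-smoothing arguments in \cite{GZ1}. Any $\sigma\in H^0\bigl(C_1\#^{\eps,\tipae'}_{p_\blt}C_2,\SV_{\ff\#\fg}\otimes\omega(\blt(F'\cup G))\bigr)$ can be Laurent-expanded on each sewing tube and written, up to boundary terms on the tubes, as a sum of a section on $C_1$ with admissible extra poles at $F$ and a section on $C_2$ with admissible extra poles at $G'$. The gauge invariance of $\upphi$ on $\ff$ annihilates the $C_1$-piece against $w'$, and that of $\uppsi$ on $\fg$ annihilates the $C_2$-piece against $w$; the residual tube contributions then cancel because $\sum_{\lambda_\blt}p_\blt^{L_\blt(0)}P(\lambda_\blt)$ implements the canonical pairing between $\Mbb$ and $\Mbb'$, and the change-of-coordinates operator $\MU(\upgamma_{p_i})$ hidden in $Y_i'$ encodes precisely the relation $\theta_{\eps(i)}\cdot\mu'_{\tipae'(i)}=p_i$ on the $i$-th tube. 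The main obstacle is convergence: once the series is shown to converge absolutely (so that summation over $\lambda_\blt$ may be interchanged with the action of $\sigma$), the gauge verification reduces to a formal residue-matching calculation of the type already present in the ordered setting of \cite{GZ1}.
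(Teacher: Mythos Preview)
Your sketch is consistent with the proof the paper cites from \cite{GZ2} (Thm.~4.9 and Rem.~4.10), which the paper invokes directly rather than reproving. The two ingredients you isolate---convergence via an ODE/Frobenius argument resting on $C_2$-cofiniteness, and gauge invariance via decomposing global sections on the sewn surface into pieces supported on $C_1$ and $C_2$ with matching tube residues---are indeed the architecture of that argument. As written your proposal is a high-level outline rather than a proof: constructing the ODE system with regular singularities on the polydisc, and making precise the sheaf-theoretic splitting of $H^0\bigl(C_1\# C_2,\SV\otimes\omega(\blt(F'\cup G))\bigr)$, each require substantial work carried out in \cite{GZ1,GZ2}. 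But there is no wrong idea here; you have correctly identified where the analytic and algebraic difficulties lie and how they are resolved.
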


\begin{proof}
This is due to Thm. 4.9 and Rem. 4.10 of \cite{GZ2}.
\end{proof}

\begin{df}\label{lb71}
We call $\upphi\circ_{p_\blt}\uppsi$ the \textbf{sewing of $\upphi$ and $\uppsi$ with moduli $p_\blt$}. Note that $\upphi\circ_{p_\blt}\uppsi$ can also be defined by the \textbf{contraction}
\begin{align}\label{eq57}
\begin{aligned}
\bk{\upphi\circ_{p_\blt}\uppsi,w'\otimes w}&=\wick{\Lan\upphi,w'\otimes p_\blt^{L_\blt(0)}\c1-\Ran\cdot \Lan\uppsi,\c1 -\otimes w\Ran}\\
&:=\sum_{\lambda_\blt\in\Cbb^N}\sum_{\alpha\in\fk A_{\lambda_\blt}}\bk{\uppsi,w'\otimes p_\blt^{L_\blt(0)}e_{\lambda_\blt}(\alpha)}\cdot\bk{\uppsi,\wch e_{\lambda_\blt}(\alpha)\otimes w}
\end{aligned}
\end{align}
where $(e_{\lambda_\blt}(\alpha))_{\alpha\in\fk A_{\lambda_\blt}}$ is a (finite) basis of $\Mbb_{[\lambda_\blt]}$ with dual basis $(\wch e_{\lambda_\blt}(\alpha))_{\alpha\in\fk A_{\lambda_\blt}}$. In the case that $p_i=1$ and $\arg p_i=0$ for each $i$, we write
\begin{align*}
\upphi\circ\uppsi:=\upphi\circ_{1,\dots,1}\uppsi
\end{align*}
and call $\upphi\circ\uppsi$ the \textbf{composition of $\upphi$ and $\uppsi$}. 
\end{df}

\begin{rem}\label{lb72}
Suppose that $p_i=1$ and $\arg p_i=0$ for each $i$. The terminology of composing conformal blocks is due to the obvious fact that the linear map
\begin{subequations}
\begin{align}
(\upphi\circ\uppsi)^\sharp:\Xbb\rightarrow\ovl\Wbb
\end{align}
determined by $\upphi\circ\uppsi$ equals the composition of $\upphi^\sharp$ and $\uppsi^\sharp$, that is, for each $w\in\Xbb$,
\begin{align}\label{eq104}
(\upphi\circ\uppsi)^\sharp(w)=\upphi^\#\circ\uppsi^\#(w):=\sum_{\lambda_\blt\in\Cbb^N} \upphi^\sharp\circ P(\lambda_\blt)\circ\uppsi^\sharp(w)
\end{align}
\end{subequations}
where the RHS converges absolutely when evaluated with each element of $\Wbb'$. 
\end{rem}

\begin{rem}\label{lb41}
Here, we give a pictorial illustration of \textit{composing} conformal blocks: Let
  \begin{gather*}
	\upphi\in \ST^*\Bigg(\vcenter{\hbox{{
		\includegraphics[height=2cm]{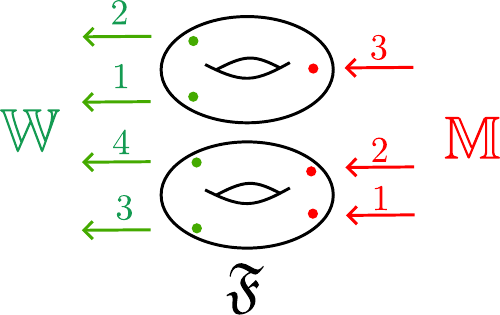}}}}\Bigg)
  \end{gather*} 
In this figure, $\ff$ has two components and $7$ marked points. The four green points on the left, listed from top to bottom as $\eps'(2),\eps'(1),\eps'(4),\eps'(3)$, are ordered by $\eps'$. The three red points on the right, listed from top to bottom as $\eps(3),\eps(2),\eps(1)$, are ordered by $\eps$. Let
\begin{gather*}
 \uppsi\in \ST^*\bigg(\vcenter{\hbox{{
			\includegraphics[height=1.3cm]{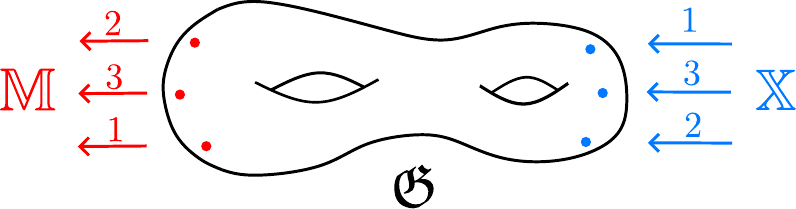}}}}~\bigg)
\end{gather*}
In this figure, $\fg$ is connected and has $6$ marked points. The three red points on the left, listed from top to bottom as $\tipae'(2),\tipae'(3),\tipae'(1)$, are ordered by $\tipae'$. The three blue points on the right, listed from top to bottom as $\tipae(1),\tipae(3),\tipae(2)$, are ordered by $\tipae$.

Let $\upchi$ be a conformal block associated to $\ff\#^{\eps,\tipae'}\fg$ and $\Wbb',\Xbb$ via $\eps',\tipae$, that is, $\upchi\in\ST^*_{\ff\#^{\eps,\tipae'}\fg,\eps',\tipae}(\Wbb',\Xbb)$. Then the relation $\upchi=\upphi\circ\uppsi$ is represented by the graphical equation
\begin{align*}
&\vcenter{\hbox{{
		 \includegraphics[height=1.8cm]{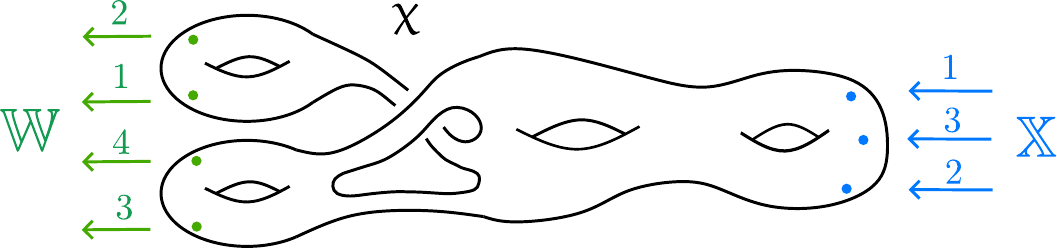}}}}\\[1ex]
=~&\vcenter{\hbox{{
			\includegraphics[height=2.2cm]{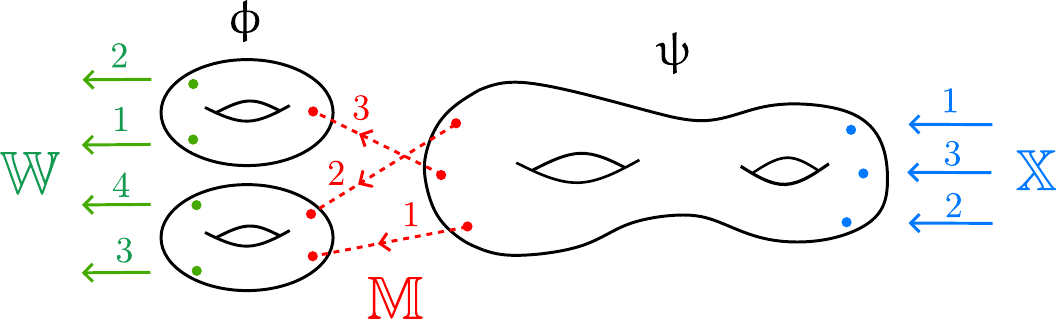}}}}
  \end{align*}
The pointed compact Riemann surface with local coordinates on the LHS of the above equation represents the composition $\ff\#^{\eps,\tipae'}\fg$ of $\ff,\fg$ via $\eps,\tipae'$.  \hqed
\end{rem}

\subsubsection{The sewing-factorization theorem}

We continue to assume that \eqref{eq114} holds, that is, each component of $\ff\#^{\eps,\tipae'}_{p_\blt}\fg$ contains at least one incoming or outgoing marked point. Choose $\Wbb\in\Mod(\Vbb^{\otimes K})$ and $\Xbb\in\Mod(\Vbb^{\otimes R})$. Choose $p_\blt\in\MD_{r_\blt\rho_\blt}^\times$ with fixed arguments $\arg p_1,\dots,\arg p_R$.

\begin{thm}\label{SF}
Let $(\boxtimes_{\fg,\tipae',\tipae}(\Xbb),\daleth)$ be an $(\tipae',\tipae)$-fusion product of $\Xbb$ along $\fg$. Then the following map is a linear isomorphism
\begin{gather}\label{eq19}
\begin{gathered}
\ST_{\ff,\eps'*\eps}^*\big(\Wbb'\otimes \boxtimes_{\fg,\tipae',\tipae}(\Xbb)\big)\xlongrightarrow{\simeq} \ST_{\ff\#^{\eps,\tipae'}_{p_\blt}\fg,\eps'*\tipae}^*(\Wbb'\otimes \Xbb)\\[0.5ex]
\upphi \mapsto \upphi\circ_{p_\blt}\daleth
\end{gathered}
\end{gather}
called the \textbf{sewing-factorization isomorphism}.
\end{thm}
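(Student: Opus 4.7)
My plan is to reduce Thm. \ref{SF} to the version of the sewing-factorization theorem proved in \cite{GZ3}, which assumes the stronger hypothesis that every component of $\fg$ intersects $G$. The gap between that hypothesis and ours --- each component of $\ff\#^{\eps,\tipae'}_{p_\blt}\fg$ intersects $F'\cup G$ --- will be bridged by propagating the dual fusion product (Thm. \ref{lb36}), in exactly the spirit of the proof of Thm. \ref{lb3}.

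First I will choose distinct auxiliary points $z_1,\dots,z_L\in C_2\setminus(G\cup G')$, lying outside the sewing neighborhoods $W'_{G'}$, together with local coordinates $\theta_{z_1},\dots,\theta_{z_L}$, so that every component of $C_2$ intersects $G\sqcup\{z_1,\dots,z_L\}$. Such a choice exists because every component of $C_2$ already intersects $G\cup G'$ by \eqref{eq8}, so it suffices to add one point to each component that meets $G'$ but not $G$. Let $\wtd\fg$ denote the resulting propagation and let $\iota_L(i)=z_i$. By Thm. \ref{lb36}, the pair $(\bbs_{\fg,\tipae',\tipae}(\Xbb),\wtd\daleth)$, where $\wtd\daleth$ is the unique lift characterized by $\daleth(-)=\wtd\daleth(-\otimes\idt^{\otimes L})$, is an $(\tipae',\tipae*\iota_L)$-dual fusion product of $\Xbb\otimes\Vbb^{\otimes L}$ along $\wtd\fg$. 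Since every component of $\wtd\fg$ now meets its inputs, the GZ3 version of the SF theorem applies to the pair $(\ff,\wtd\fg)$ and yields a linear isomorphism
\begin{align*}
\ST_{\ff,\eps'*\eps}^*\big(\Wbb'\otimes \boxtimes_{\fg,\tipae',\tipae}(\Xbb)\big)&\xlongrightarrow{\simeq}\ST_{\ff\#^{\eps,\tipae'}_{p_\blt}\wtd\fg,\eps'*\tipae*\iota_L}^*(\Wbb'\otimes\Xbb\otimes\Vbb^{\otimes L}),\\
\upphi&\longmapsto \upphi\circ_{p_\blt}\wtd\daleth.
\end{align*}

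Because the $z_i$ lie in $C_2$ away from the sewing data $V_F,W'_{G'}$, the gluing recipe in Def. \ref{lb11} is unaffected by their insertion, so $\ff\#^{\eps,\tipae'}_{p_\blt}\wtd\fg$ is canonically identified with the propagation of $\ff\#^{\eps,\tipae'}_{p_\blt}\fg$ at $z_1,\dots,z_L$; in particular the hypothesis \eqref{eq114} carries over. Composing the bijection above with the inverse of the propagation isomorphism \eqref{eq11} therefore produces a bijection onto $\ST_{\ff\#^{\eps,\tipae'}_{p_\blt}\fg,\eps'*\tipae}^*(\Wbb'\otimes\Xbb)$. To finish, I need to verify that the composite equals $\upphi\mapsto\upphi\circ_{p_\blt}\daleth$: this is immediate from the contraction formula \eqref{eq57} and the identity $\wtd\daleth(-\otimes\idt^{\otimes L})=\daleth(-)$, since substituting $\idt^{\otimes L}$ into each term $\bk{\wtd\daleth,\wch e_{\lambda_\blt}(\alpha)\otimes w\otimes\idt^{\otimes L}}$ collapses it to $\bk{\daleth,\wch e_{\lambda_\blt}(\alpha)\otimes w}$.

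The main --- though still modest --- technical obstacle will be the bookkeeping of orderings: in particular, checking that the ordering $\eps'*\tipae*\iota_L$ supplied by the GZ3 statement applied to $\ff\#\wtd\fg$ aligns with the one used in Thm. \ref{lb36} to propagate $\daleth$ to $\wtd\daleth$, and confirming that $\ff\#^{\eps,\tipae'}_{p_\blt}\wtd\fg$ coincides with the propagation of $\ff\#^{\eps,\tipae'}_{p_\blt}\fg$ as unordered pointed surfaces with local coordinates. Both reduce to the elementary observation that the auxiliary points and their coordinate neighborhoods are disjoint from the sewing data and hence survive the gluing intact, so no substantial difficulty is expected.
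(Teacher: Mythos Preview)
Your proposal is correct and follows essentially the same approach as the paper: reduce to the version in \cite{GZ3} by propagating $\fg$ at auxiliary points $z_1,\dots,z_L$ so that every component meets the (enlarged) set of inputs, use Thm.~\ref{lb36} to identify $(\boxtimes_{\fg,\tipae',\tipae}(\Xbb),\wtd\daleth)$ as a fusion product along $\wtd\fg$, apply the GZ3 isomorphism, and then descend via the propagation isomorphism~\eqref{eq11}; the paper phrases this last step as a commutative square, but the content is identical to your composition argument. Your additional remark that the $z_i$ should lie outside the sewing neighborhoods $W'_{G'}$ is a useful clarification that the paper leaves implicit.
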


Note that $\daleth\in\ST^*_{\fg,\tipae'*\tipae}(\bbs_{\fg,\tipae',\tipae}(\Xbb)\otimes\Xbb)$ is a linear functional $\bbs_{\fg,\tipae',\tipae}(\Xbb)\otimes\Xbb\rightarrow\Cbb$.

 \begin{proof}
In the special case that each component of $\fg$ intersects the set $G$ of incoming marked points, the theorem is due to \cite[Thm. 3.5]{GZ3}. The general case follows from the special case and the propagation of conformal blocks and fusion products, as we explain below.

Choose distinct points $z_1,\dots,z_L$ of $\fg$, disjoint from $G'\cup G$, such that the propagation $\wtd\fg$ of $\fg$ at $z_1,\dots,z_L$ (with arbitrarily chosen local coordinates at $z_1,\dots,z_L$) satisfies the condition that each component intersects $G\cup\{z_1,\dots,z_L\}$. Let $\iota_L$ be the ordering of $\{z_1,\dots,z_L\}$ defined by $\iota_L(i)=z_i$. Let 
\begin{align*}
\wtd\daleth\in\ST^*_{\wtd\fg,\tipae'*\tipae*\iota_L}\big(\bbs_{\fg,\tipae',\tipae}(\Xbb)\otimes\Xbb\otimes\Vbb^{\otimes L}\big)
\end{align*}
be the propagation of $\daleth$ at $z_1,\dots,z_L$, cf. Def. \ref{lb37}. Then, we have a commutative diagram
\begin{equation*}
\begin{tikzcd}[column sep=2cm,row sep=large]
\ST_{\ff,\eps'*\eps}^*\big(\Wbb'\otimes \boxtimes_{\fg,\tipae',\tipae}(\Xbb)\big) \arrow[r,"\upphi\mapsto\upphi\circ_{p_\blt}\wtd\daleth"] \arrow[d,"="'] & \ST_{\ff\#^{\eps,\tipae'}_{p_\blt}\wtd\fg,\eps'*\tipae*\iota_L}^*(\Wbb'\otimes \Xbb\otimes\Vbb^{\otimes L}) \arrow[d,"\upchi\mapsto\upchi(-\otimes\idt^{\otimes L})"] \\
\ST_{\ff,\eps'*\eps}^*\big(\Wbb'\otimes \boxtimes_{\fg,\tipae',\tipae}(\Xbb)\big) \arrow[r,"\upphi\mapsto\upphi\circ_{p_\blt}\daleth"]           & \ST_{\ff\#^{\eps,\tipae'}_{p_\blt}\fg,\eps'*\tipae}^*(\Wbb'\otimes \Xbb)      
\end{tikzcd}
\end{equation*}
where the vertical arrow on the right is an isomorphism by \eqref{eq11}. 

By Thm. \ref{lb36}, $\big(\boxtimes_{\fg,\tipae',\tipae}(\Xbb),\wtd\daleth\big)$ is an $(\tipae',\tipae*\iota_L)$-fusion product of $\Xbb\otimes\Vbb^{\otimes L}$ along $\wtd\fg$. Therefore, by \cite[Thm. 3.5]{GZ3}, the top vertical arrow in the above diagram is an isomorphism. It follows that the bottom arrow is also an isomorphism.
 \end{proof}

\begin{rem}\label{SF2}
Let $(\boxtimes_{\fg,\tipae',\tipae}(\Xbb),\daleth)$ be an $(\tipae',\tipae)$-fusion product of $\Xbb$ along $\fg$. Then the canonical conformal block $\daleth:\bbs_{\fg,\tipae',\tipae}(\Xbb)\otimes\Xbb\rightarrow\Cbb$ is \textbf{partially injective}, meaning that if each component of $\fg$ intersects the set $G$ of incoming marked points, then for each $\xi\in\bbs_{\fg,\tipae',\tipae}(\Xbb)$, we have
\begin{align}
\daleth(\xi\otimes w)=0\text{ for all $w\in\Wbb$}\qquad\Longrightarrow\qquad \xi=0
\end{align}
See \cite[Rem. 3.17]{GZ2} for the explanation. 

Since this partial injectivity is used in the proof that the sewing-factorization map \eqref{eq19} is injective (see \cite[Subsec. 2.5.2]{GZ3}), we also refer to the injectivity of the map $\upphi\mapsto\upphi\circ_{p_\blt}\daleth$ as the \textbf{partial injectivity of the canonical conformal block $\daleth$}. (Note, however, that the injectivity of this map does not require the assumption that each component of $\fg$ intersects the set $G$ of incoming marked points.)  \hqed
\end{rem}

 \begin{rem}\label{lb42}
In the remainder of this article, we restrict to the case where $p_i=1$ and $\arg p_i=0$ for each $i$. In other words, we consider only the composition of conformal blocks, rather than the more general sewing.
Then, using the graphical calculus for the composition of conformal blocks as described in Rem. \ref{lb41}, we reformulate the sewing-factorization Thm. \ref{SF} as follows: 

Consider the fusion product
  \begin{align}\label{eq26}
	\vcenter{\hbox{{
			\includegraphics[height=1.4cm]{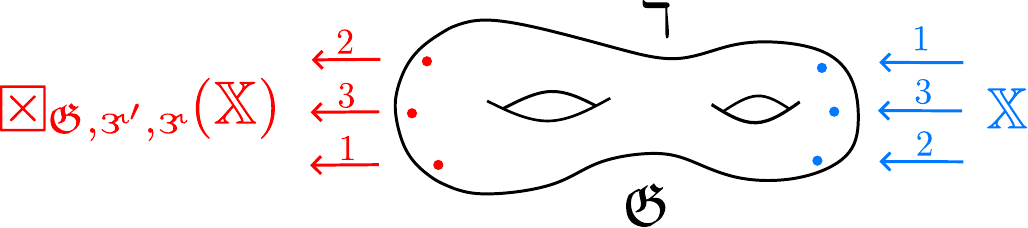}}}}
  \end{align}
Then for each
\begin{align*}
\upchi\in \ST^*\Bigg(\vcenter{\hbox{{
\includegraphics[height=1.95cm]{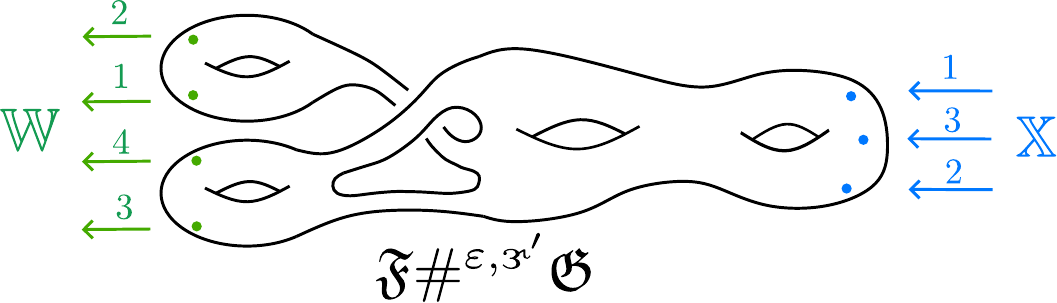}}}}\Bigg)
	\end{align*}
	there exists a unique 
	\begin{align*}
		\upphi\in \ST^*\Bigg(\vcenter{\hbox{{
		 \includegraphics[height=2cm]{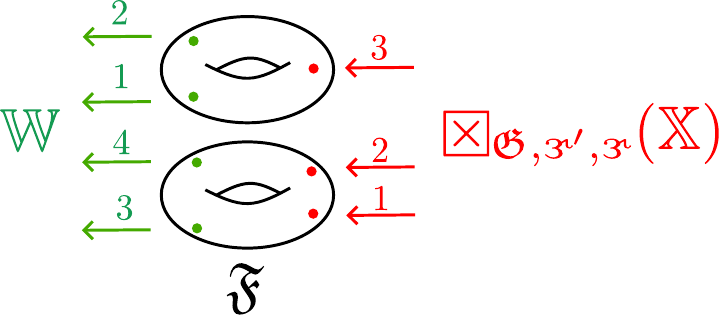}}}}~\Bigg)
	\end{align*}
such that 
\begin{align*}
\upchi=\vcenter{\hbox{{
\includegraphics[height=2.2cm]{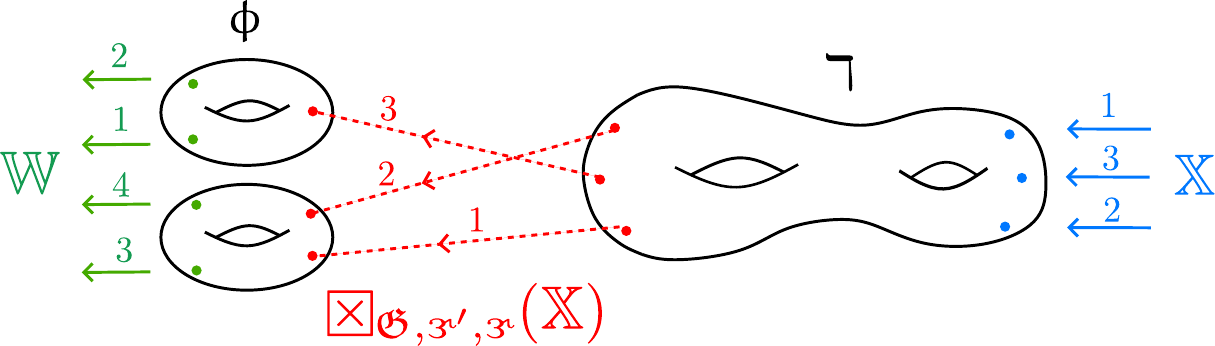}}}}
	\end{align*}
\hqed
 \end{rem}

\begin{rem}\label{SF1}
In this paper, we will apply Rem. \ref{lb42} mainly to the case that \eqref{eq26} is the fusion product $(\boxtimes_\fn\Cbb,\upomega)$ of $\Cbb$ along a standard $(2,0)$-sphere described in Prop. \ref{lb13}:
\begin{align*}
	\vcenter{\hbox{{
			\includegraphics[height=1cm]{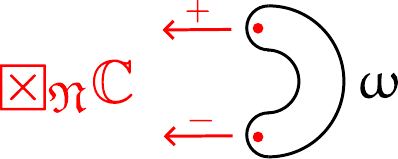}}}}
\end{align*}
Then the sewing-factorization Thm. \ref{SF} asserts that for each
\begin{align*}
\upchi\in \ST^*\Bigg(\vcenter{\hbox{{
\includegraphics[height=1.9cm]{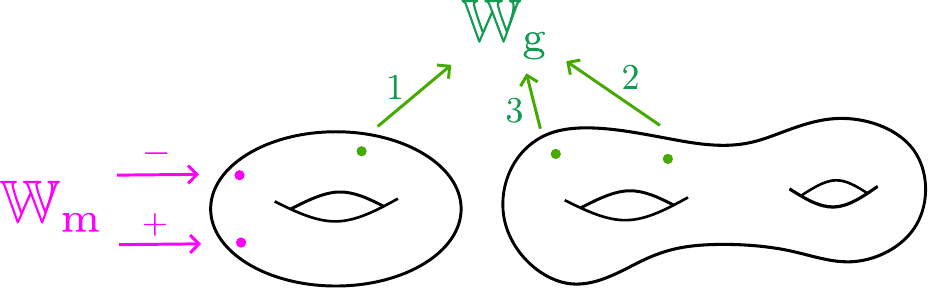}}}}~\Bigg)
	\end{align*}
	there exists a unique 
	\begin{align*}
		\upphi\in \ST^*\Bigg(\vcenter{\hbox{{
		 \includegraphics[height=1.95cm]{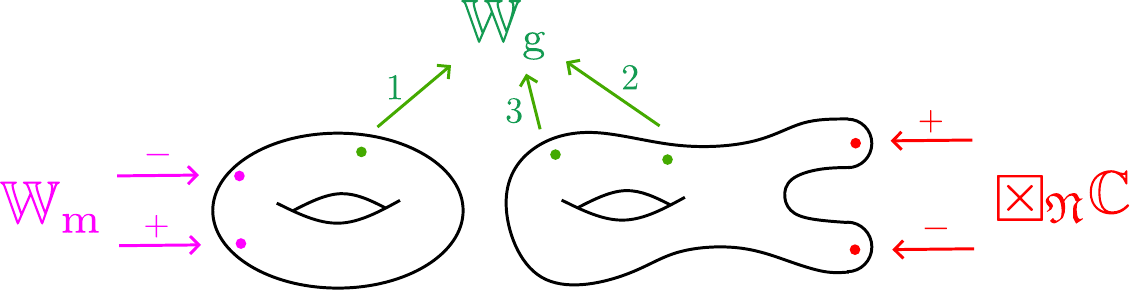}}}}~\Bigg)
	\end{align*}
such that 
\begin{align*}
\upchi=\vcenter{\hbox{{
\includegraphics[height=2.45cm]{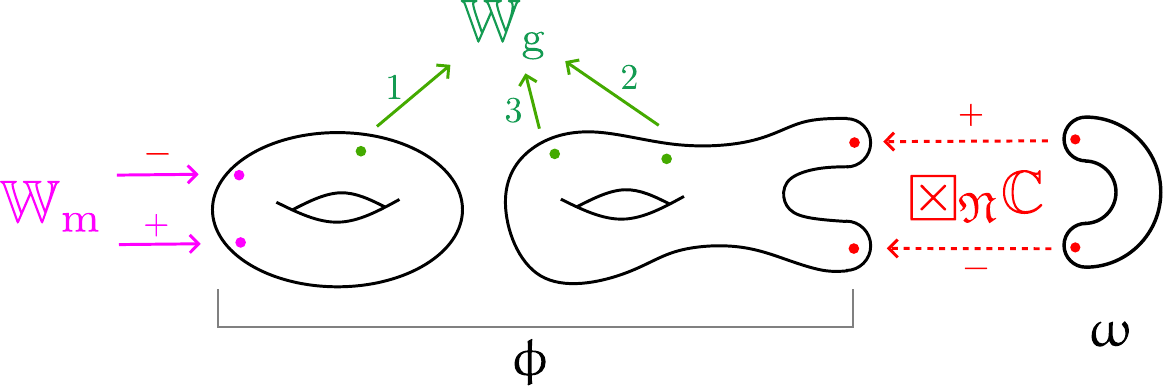}}}}
	\end{align*}
\end{rem}

\section{$\boxtimes_\fn\Cbb$ as an AUF algebra with involution $\Theta$}

Recall Def. \ref{lb29} for the default fusion product $(\boxtimes_\fn\Cbb,\upomega)$. In this chapter, we prove that $\boxtimes_\fn\Cbb$ has a natural structure of associative algebra, which is \textbf{almost unital and finite dimensional (AUF)} in the sense of \cite{GZ4}---that is, there is a family of mutually orthogonal idempotents $(e_i)_{i\in\fk I}$ of $\boxtimes_\fn\Cbb$ such that $\boxtimes_\fn\Cbb=\sum_{i,j}e_i(\boxtimes_\fn\Cbb) e_j$ where each summand is finite-dimensional. (Note that this sum must be a direct sum.) We will also explain why $\boxtimes_\fn\Cbb$ is the end $\int_{\Mbb\in\Mod(\Vbb)}\Mbb\otimes\Mbb'$.

In this chapter, all $2$-pointed spheres in the pictures are assumed to be standard (cf. Def. \ref{lb43}). Let $N\in\Zbb_+$.

\subsection{The actions $\Phi_{i,+}$ and $\Phi_{i,-}$ of $\boxtimes_\fn\Cbb$ on $\Wbb\in \Mod(\Vbb^{\otimes N})$}

Let $\Wbb,\Mbb\in \Mod(\Vbb^{\otimes N})$. In this section, we fix $1\leq i\leq N$ and consider the spaces of conformal blocks 
\begin{gather}
\ST^*\Bigg(\vcenter{\hbox{{
		\includegraphics[height=2.6cm]{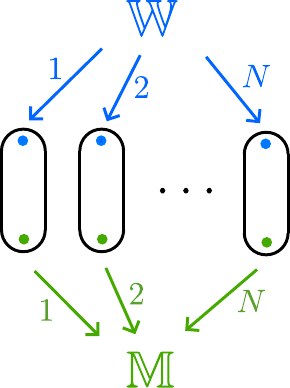}}}}~\Bigg)	\label{eq28}\\
\ST^*\Bigg(\vcenter{\hbox{{
			\includegraphics[height=2.8cm]{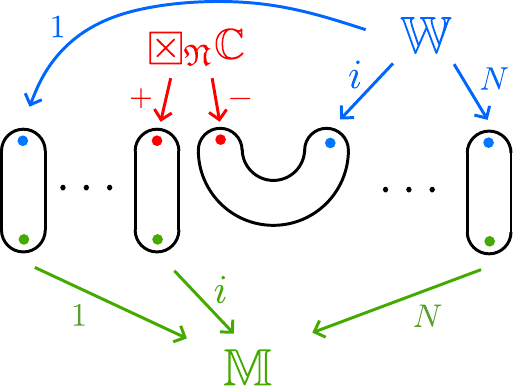}}}}~\Bigg)=	\ST^*\Bigg(\vcenter{\hbox{{
			\includegraphics[height=2.9cm]{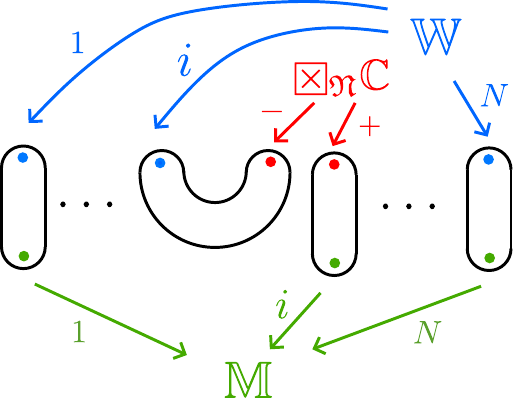}}}}~\Bigg)	\label{eq55}\\[1ex]
\ST^*\Bigg(\vcenter{\hbox{{
			\includegraphics[height=2.9cm]{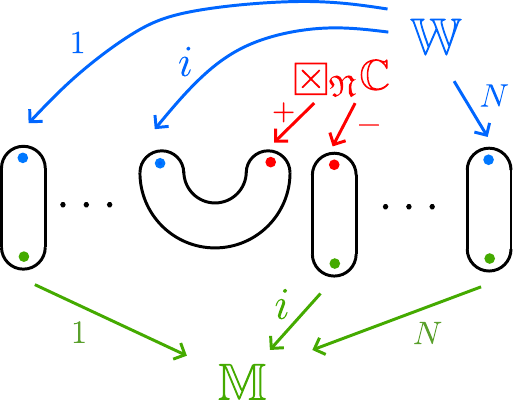}}}}~\Bigg)=	\ST^*\Bigg(\vcenter{\hbox{{
			\includegraphics[height=2.8cm]{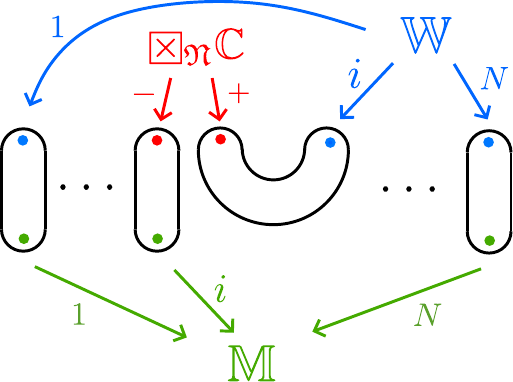}}}}~\Bigg)		\label{eq56}
\end{gather}

Let us give explicit algebraic descriptions of these conformal blocks.

\begin{rem}
Let $\upchi:\Wbb\otimes\Mbb'\rightarrow \Cbb$ be a linear map. Then $\upchi\in \eqref{eq28}$ if and only if for each $1\leq j\leq N,v\in\Vbb,w\in \Wbb$ and $m'\in\Mbb'$, the following relation holds in $\Cbb[[z^{\pm1}]]$:
\begin{align}\label{eq59}
\bigbk{\upchi,Y_j(v,z)w\otimes m'}=\bigbk{\upchi,w\otimes Y_j'(v,z) m'}
\end{align}
\end{rem}

We now show that each $\upchi\in\eqref{eq28}$, when viewed as a linear map $\Wbb\rightarrow\ovl\Mbb$, has range in $\Mbb$. Recall \eqref{eq115} for the meaning of $P_j(\lambda)$.

\begin{lm}\label{lb31}
Let $K\in\Zbb_+$. Choose $\Xbb\in \Mod(\Vbb^{\otimes K})$ and a finite subset $S\subset \Xbb$. Then for each $\lambda\in \Cbb$, there exists a polynomial $h(x)\in \Cbb[x]$ such that $P_j(\lambda)\tipaz=h(L_j(0))\tipaz$ for all $\tipaz\in S$ and $1\leq j\leq K$.
\end{lm}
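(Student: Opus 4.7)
The plan is to exploit the fact that $S$ being finite constrains each $\mathfrak{z} \in S$ to a finite-dimensional subspace of $\mathbb{X}$ on which, for each $j$, the operator $L_j(0)$ has only finitely many generalized eigenvalues with a uniform bound on the nilpotency order. A single polynomial $h$ that interpolates $P_j(\lambda)$ at each of these eigenvalues can then be produced by the Chinese Remainder Theorem.

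More precisely, first I would decompose each $\mathfrak{z} \in S$ as a finite sum $\mathfrak{z} = \sum_{\mu_\bullet} \mathfrak{z}_{\mu_\bullet}$ with $\mathfrak{z}_{\mu_\bullet} \in \mathbb{X}_{[\mu_\bullet]}$, collect the finite set $F \subset \mathbb{C}^K$ of multi-indices that appear for some $\mathfrak{z} \in S$, and set $E = \{\mu_j : \mu_\bullet \in F, \ 1 \leq j \leq K\}$, which is a finite subset of $\mathbb{C}$. Since $\mathbb{X}$ is grading-restricted, each $\mathbb{X}_{[\mu_\bullet]}$ is finite-dimensional, so $L_j(0) - \mu_j$ is nilpotent there; thus there exists $n_0 \in \mathbb{Z}_+$ such that $(L_j(0) - \mu_j)^{n_0}$ annihilates $\mathbb{X}_{[\mu_\bullet]}$ for every $\mu_\bullet \in F$ and every $1 \leq j \leq K$.

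Next, because the ideals $((x - \mu)^{n_0})_{\mu \in E}$ in $\mathbb{C}[x]$ are pairwise coprime, the Chinese Remainder Theorem produces a polynomial $h(x)$ satisfying $h(x) \equiv 1 \pmod{(x - \lambda)^{n_0}}$ if $\lambda \in E$, and $h(x) \equiv 0 \pmod{(x - \mu)^{n_0}}$ for every $\mu \in E \setminus \{\lambda\}$ (if $\lambda \notin E$, we simply take $h(x) = \prod_{\mu \in E}(x - \mu)^{n_0}$). Crucially, the conditions defining $h$ depend only on the eigenvalue $\mu$ and not on the index $j$, which is why the \emph{same} $h$ can serve for all $j$ simultaneously.

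Finally, I would verify that this $h$ works component-by-component: on $\mathbb{X}_{[\mu_\bullet]}$, write $L_j(0) = \mu_j + N_j$ with $N_j$ nilpotent of order $\leq n_0$, so $h(L_j(0))$ equals the truncated Taylor expansion $\sum_{k=0}^{n_0 - 1} \tfrac{h^{(k)}(\mu_j)}{k!} N_j^k$. By construction, this expansion collapses to the identity when $\mu_j = \lambda$ and to $0$ when $\mu_j \neq \lambda$, which is exactly the action of $P_j(\lambda)$ on $\mathbb{X}_{[\mu_\bullet]}$; applying this to each graded component of each $\mathfrak{z} \in S$ yields the claim. No step here poses a serious obstacle; the only genuine observation is the uniformity of the nilpotency bound and of the CRT conditions across $j$.
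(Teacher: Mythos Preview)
Your proof is correct and follows essentially the same idea as the paper's: restrict to a finite-dimensional $L_j(0)$-invariant subspace containing $S$, where spectral projections are polynomials in the operator, and observe that a single polynomial works for all $j$ because the relevant eigenvalue data is the same. The paper packages this slightly more concisely by forming $\mathcal W=\bigoplus_{j=1}^K \Xbb_{[\leq\mu_\bullet]}$ with $T=\mathrm{diag}_j(L_j(0))$ and quoting the well-known fact that the projection onto a generalized eigenspace of $T$ is a polynomial in $T$; your explicit CRT construction unpacks exactly what that well-known fact says.
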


\begin{proof}
It is well-known that if $T$ is a linear operator on a finite-dimensional $\Cbb$-vector space $\mc W$, then $\mc W$ is the direct sum of generalized eigenspaces of $T$, and the projection operator of $\mc W$ onto each generalized eigenspace is a polynomial of $T$.

Now, choose $\mu_\blt\in \Cbb^K$ such that $\Re (\mu_j)\geq\Re (\lambda)$ for all $j$, and that $S\subset \Xbb_{[\leq \mu_\blt]}$. Let $\mc W$ be the direct sum of $K$ copies of $\Xbb_{[\leq \mu_\blt]}$, and let $T=\diag_{j=1}^K \big(L_j(0)\big|_{\Xbb_{[\leq\mu_\blt]}}\big)$. Then $Q=\diag_{j=1}^K \big(P_j(\lambda)\big|_{\Xbb_{[\leq\mu_\blt]}}\big)$ is the projection of $\mc W$ onto the generalized eigenspace of $T$ with eigenvalue $\lambda$. Therefore, by the first paragraph, there exists a polynomial $h$ such that $Q=h(T)$.
\end{proof}

\begin{pp}\label{lb19}
Elements in \eqref{eq28} are precisely those elements of the form 
	\begin{align}\label{eq53}
		T^\flat:\Wbb\otimes \Mbb'\rightarrow \Cbb\qquad w\otimes  m'\mapsto \Lan T(w),m' \Ran
	\end{align}
	where $T\in \Hom_{\Vbb^{\otimes N}}(\Wbb,\Mbb)$. 
\end{pp}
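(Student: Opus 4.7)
The plan is to establish the two directions of the correspondence $T \leftrightarrow T^\flat$ separately. First I would verify the easy direction: if $T \in \Hom_{\Vbb^{\otimes N}}(\Wbb,\Mbb)$, then $T^\flat$ satisfies the invariance relation \eqref{eq59}. This is a direct computation: for each $j,v,w,m'$,
\begin{align*}
\bk{T^\flat, Y_j(v,z) w \otimes m'} = \bk{T(Y_j(v,z)w), m'} = \bk{Y_j(v,z) T(w), m'} = \bk{T(w), Y_j'(v,z) m'}
\end{align*}
using that $T$ intertwines $Y_j$ and the duality pairing \eqref{eq3}. So $T^\flat$ lies in the space \eqref{eq28}.

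For the harder direction, given $\upchi \in \eqref{eq28}$, I would define $T : \Wbb \to \ovl{\Mbb}$ by $\bk{T(w), m'} = \bk{\upchi, w \otimes m'}$ for each $m' \in \Mbb'$; this is well-defined because $\ovl{\Mbb} = (\Mbb')^*$. Next, using \eqref{eq59} together with \eqref{eq3} extended to $\ovl{\Mbb}$, I would show that for every $j$ and $v$,
\begin{align*}
T(Y_j(v,z) w) = Y_j(v,z) T(w)
\end{align*}
as formal series with coefficients in $\ovl{\Mbb}$. The key step—and the one I expect to be the crux—is to improve this from $T(w) \in \ovl{\Mbb}$ to $T(w) \in \Mbb$. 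I would specialize to $v=\cbf$ and extract the $z^{-2}$-coefficient to get $T L_j(0) = L_j(0) T$ for each $j$. Applying Lemma \ref{lb31} to the finite set $\{w\}$ (and separately to the finitely many components of $T(w)$ that one needs to analyze), I would conclude that $T$ commutes with the spectral projection $P_j(\lambda)$ when evaluated on $w$.

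Now fix $w \in \Wbb_{[\mu_\blt]}$ (WLOG homogeneous). Then $P_j(\mu_j) w = w$ for every $j$, hence $P_j(\mu_j) T(w) = T(w)$ in $\ovl{\Mbb}$ for every $j$. Iterating over $j=1,\dots,N$ forces $T(w)$ to lie in the single generalized-weight-space component $\Mbb_{[\mu_\blt]}$, which is finite-dimensional by grading-restrictedness. In particular $T(w) \in \Mbb$. Thus $T$ restricts to a linear map $\Wbb \to \Mbb$, and the intertwining relation above shows $T \in \Hom_{\Vbb^{\otimes N}}(\Wbb, \Mbb)$. Finally, $\upchi = T^\flat$ by construction, completing the characterization. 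The main technical obstacle is the $\ovl{\Mbb} \to \Mbb$ promotion, which is exactly what Lemma \ref{lb31} was set up to enable.
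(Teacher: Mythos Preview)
Your overall strategy matches the paper's proof: define $T=\upchi^\sharp:\Wbb\to\ovl\Mbb$, extract $TL_j(0)=L_j(0)T$ from \eqref{eq59}, promote this to commutation with $P_j(\lambda)$ via Lemma~\ref{lb31}, and conclude $T(\Wbb)\subset\Mbb$. The one soft spot is your parenthetical ``and separately to the finitely many components of $T(w)$ that one needs to analyze'': at that stage $T(w)$ lives in $\ovl\Mbb$, and you do not yet know it has only finitely many nonzero weight components---that is precisely what you are trying to prove---so Lemma~\ref{lb31} cannot be applied to $T(w)$ directly.

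The paper handles this by applying Lemma~\ref{lb31} to $\Wbb\oplus\Mbb'$: for each fixed $w$ and test vector $m'\in\Mbb'$, one obtains a single polynomial $h$ with $P_j(\lambda)w=h(L_j(0))w$ \emph{and} $P_j(\lambda)m'=h(L_j(0))m'$, whence
\[
\bk{T(P_j(\lambda)w),m'}=\bk{T(w),h(L_j(0))m'}=\bk{T(w),P_j(\lambda)m'}=\bk{P_j(\lambda)T(w),m'}.
\]
Equivalently, you could bypass the lemma entirely: for $w\in\Wbb_{[\mu_\blt]}$ choose $k$ with $(L_j(0)-\mu_j)^kw=0$; then $(L_j(0)-\mu_j)^kT(w)=0$ in $\ovl\Mbb$, and since $(L_j(0)-\mu_j)^k$ is invertible on every block $\Mbb_{[\nu_\blt]}$ with $\nu_j\neq\mu_j$, this already forces $T(w)\in P_j(\mu_j)\ovl\Mbb$. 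Either route closes the gap; your remaining steps are fine.
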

\begin{proof}
Note that a linear map $T:\Wbb\rightarrow\Mbb$ belongs to $\Hom_{\Vbb^{\otimes N}}(\Wbb,\Mbb)$ iff for each $1\leq j\leq N,v\in\Vbb,w\in \Wbb,m'\in \Mbb'$, the following relation holds in $\Wbb[[z^{\pm1}]]$:
\begin{align}\label{eq106}
	\Lan T(Y_j(v,z)w),m'\Ran=\Lan T(w),Y_j'(v,z)m'\Ran
\end{align}
Thus, \eqref{eq53} belongs to $\eqref{eq28}$. Conversely, choose an element $\upchi\in \eqref{eq28}$. Set  
\begin{align*}
\upchi^\sharp:\Wbb\rightarrow (\Mbb')^*=\ovl{\Mbb}\qquad w\mapsto \upchi(w\otimes-)
\end{align*}
By \eqref{eq59}, 
\begin{align*}
\upchi\big(L_j(0)w\otimes m'\big)=	\upchi\big(w\otimes L_j(0) m'\big)
	\end{align*}
This, together with Lem. \ref{lb31} (applied to $\Wbb\oplus\Mbb'$), implies for each $j$ and $\lambda\in\Cbb$ that
	\begin{align*}
\Lan \upchi^\sharp (P_j(\lambda)w),m' \Ran=	\upchi\big(P_j(\lambda)w\otimes m'\big)=\upchi\big(w\otimes P_j(\lambda) m'\big)=	\Lan \upchi^\sharp (w),P_j(\lambda) m'\Ran
		\end{align*}
		Thus, $\upchi^\sharp (P_j(\lambda)w)=P_j(\lambda)\upchi^\sharp(w)$. Therefore, $\upchi^\sharp$ has range in $\Mbb$. Let $T=\upchi^\sharp$. By \eqref{eq59} and \eqref{eq106}, $T$ belongs to $\Hom_{\Vbb^{\otimes N}}(\Wbb,\Mbb)$. Clearly $T^\flat=\upchi$. This proves that $\upchi$ is of the form \eqref{eq53}.
\end{proof}

Next, we describe conformal blocks in \eqref{eq55} and \eqref{eq56}.

\begin{rem}\label{lb17}
Let $\upchi:\boxtimes_\fn\Cbb\otimes \Wbb\otimes \Mbb'\rightarrow \Cbb$ (resp. $\updelta:\Wbb\otimes \boxtimes_\fn\Cbb\otimes \Mbb'\rightarrow \Cbb$) be a linear map. Then $\upchi\in \eqref{eq55}$ (resp. $\updelta\in \eqref{eq56}$) iff for each $v\in \Vbb, \psi\in \boxtimes_\fn\Cbb,w\in \Wbb,m'\in \Mbb',j\ne i$, the following relations hold in $\Cbb[[z^{\pm1}]]$:
\begin{subequations}\label{eq116}
\begin{gather}
	\upchi\big(\psi\otimes Y_j(v,z)w\otimes m'\big)=\upchi\big(\psi\otimes w\otimes Y'_j(v,z)m'\big)\\
	\upchi\big(\psi\otimes Y_i(v,z)w\otimes m'\big)=\upchi\big(Y_-'(v,z)\psi\otimes w\otimes m'\big)\\
	\upchi\big(Y_+(v,z)\psi\otimes w\otimes m'\big)=\upchi\big(\psi\otimes w\otimes Y_i'(v,z)m'\big)
\end{gather}
\end{subequations}
resp.
\begin{subequations}\label{eq117}
\begin{gather}
	\updelta\big(Y_j(v,z)w\otimes\psi\otimes m'\big)=\updelta\big(w\otimes\psi\otimes Y'_j(v,z)m'\big)\\
	\updelta\big(Y_i(v,z)w\otimes\psi\otimes m'\big)=\updelta\big(w\otimes Y_+'(v,z)\psi\otimes m'\big)\\
	\updelta\big(w\otimes Y_-(v,z)\psi\otimes m'\big)=\updelta\big(w\otimes\psi\otimes Y_i'(v,z)m'\big)
\end{gather}
\end{subequations}
\end{rem}

\begin{pp}\label{lb44}
Elements in \eqref{eq55} (resp. \eqref{eq56}) are precisely those of the form
\begin{gather*}
T_+^\flat:\boxtimes_\fn\Cbb\otimes\Wbb\otimes\Mbb'\rightarrow\Cbb\qquad \psi\otimes w\otimes w'\mapsto\bk{T_+(\psi\otimes w),w'}
\end{gather*}
resp.
\begin{gather*}
T_-^\flat:\Wbb\otimes\boxtimes_\fn\Cbb\otimes\Mbb'\rightarrow\Cbb\qquad w\otimes\psi\otimes w'\mapsto\bk{T_-(w\otimes\psi),w'}
\end{gather*}
where
\begin{align*}
T_+:\boxtimes_\fn\Cbb\otimes\Wbb\rightarrow\Mbb\qquad\text{resp.}\qquad T_-:\Wbb\otimes\boxtimes_\fn\Cbb\rightarrow\Mbb
\end{align*}
is a linear map such that for all $w\in\Wbb,v\in\Vbb,\psi\in\boxtimes_\fn\Cbb$ and $j\neq i$, the following relations hold in $\Mbb[[z^{\pm 1}]]$:
\begin{subequations}\label{eq118}
\begin{gather}
T_+\big(\psi\otimes Y_j(v,z)w\big)=Y_j(v,z)T_+\big(\psi\otimes w\big)\\
T_+\big(\psi\otimes Y_i(v,z)w\big)=T_+\big(Y'_-(v,z)\psi\otimes w\big)\\
T_+\big(Y_+(v,z)\psi\otimes w\big)=Y_i\big(v,z)T_+(\psi\otimes w\big)\label{eq118c}
\end{gather}
\end{subequations}
resp.
\begin{subequations}\label{eq119}
\begin{gather}
T_-\big(Y_j(v,z)w\otimes \psi\big)=Y_j(v,z)T_-\big(w\otimes\psi\big)\\
T_-\big(Y_i(v,z)w\otimes\psi\big)=T_-\big(w\otimes Y'_+(v,z)\psi\big)\\
T_-\big(w\otimes Y_-(v,z)\psi\big)=Y_i(v,z)T_-\big(w\otimes\psi\big)
\end{gather}
\end{subequations}
\end{pp}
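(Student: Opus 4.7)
Following the strategy of Prop. \ref{lb19}, the plan is to mirror its two-step argument with the additional bookkeeping forced by the presence of $\boxtimes_\fn\Cbb$. First I would handle the easy direction: suppose $T_+:\boxtimes_\fn\Cbb\otimes\Wbb\to\Mbb$ satisfies \eqref{eq118}. Then $T_+^\flat$ satisfies the relations \eqref{eq116} characterizing elements of \eqref{eq55}, because for any $m'\in\Mbb'$ one moves the dual action $Y_j'(v,z)m'$ (resp. $Y_i'(v,z)m'$) across the pairing via \eqref{eq3} to recover $Y_j(v,z)T_+(\psi\otimes w)$ (resp. $Y_i(v,z)T_+(\psi\otimes w)$), and then uses \eqref{eq118} to convert this into the corresponding action on $\Wbb$ or on $\boxtimes_\fn\Cbb$. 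The analogous computation gives the forward direction for \eqref{eq56} from \eqref{eq119}.

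For the converse, given $\upchi$ in \eqref{eq55}, define $\upchi^\sharp:\boxtimes_\fn\Cbb\otimes\Wbb\to\ovl{\Mbb}$ by $\upchi^\sharp(\psi\otimes w)=\upchi(\psi\otimes w\otimes-)$. The central step, and the only real obstacle, is to show that the image lies in $\Mbb$ rather than just in $\ovl{\Mbb}$. Setting $v=\cbf$ and extracting the coefficient of $z^{-2}$ in the first relation of \eqref{eq116} gives
\begin{align*}
\upchi^\sharp(\psi\otimes L_j(0)w)=L_j(0)\upchi^\sharp(\psi\otimes w)\qquad(j\neq i),
\end{align*}
while the same procedure applied to the third relation of \eqref{eq116} yields
\begin{align*}
\upchi^\sharp(L_+(0)\psi\otimes w)=L_i(0)\upchi^\sharp(\psi\otimes w)
\end{align*}
(using that the $z^{-2}$-coefficient of $Y'(\cbf,z)$ equals $L(0)$, since $\MU(\upgamma_z)\cbf=z^{-4}\cbf$ as $L(1)\cbf=0$). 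Together with Lem. \ref{lb31}---applied once to $\Wbb$ with $S=\{w\}$, and once to $\boxtimes_\fn\Cbb$ with $S=\{\psi\}$---these identities promote $L_j(0)$-compatibility to $P_j(\lambda)$-compatibility: $P_j(\lambda)\upchi^\sharp(\psi\otimes w)=\upchi^\sharp(\psi\otimes P_j(\lambda)w)$ for $j\neq i$, and $P_i(\lambda)\upchi^\sharp(\psi\otimes w)=\upchi^\sharp(P_+(\lambda)\psi\otimes w)$. Since $w$ and $\psi$ are finite sums of generalized eigenvectors (by grading-restriction of $\Wbb$ and of $\boxtimes_\fn\Cbb$), the vector $\upchi^\sharp(\psi\otimes w)$ is supported on only finitely many joint generalized eigenspaces $\Mbb_{[\mu_\blt]}$, and therefore lies in $\Mbb$.

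Having established this, set $T_+:=\upchi^\sharp$, so $T_+^\flat=\upchi$ by construction. Each of the three relations in \eqref{eq118} then follows from the corresponding relation in \eqref{eq116} by pairing with an arbitrary $m'\in\Mbb'$ and using \eqref{eq3} in reverse to move $Y_j'$, $Y_+'$, or $Y_-'$ off of $m'$. The argument for \eqref{eq56} is entirely parallel, with $L_-(0)$ acting on $\psi$ playing the role of $L_+(0)$: the transfer of $L_i(0)$ to $\boxtimes_\fn\Cbb$ now goes through the third relation of \eqref{eq117} instead of the third relation of \eqref{eq116}.
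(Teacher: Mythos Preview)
Your proof is correct and follows the same two-step strategy as the paper. The one place where your argument is slightly less direct than the paper's is in the application of Lem.~\ref{lb31}: you apply it separately to $\Wbb$ (with $S=\{w\}$) and to $\boxtimes_\fn\Cbb$ (with $S=\{\psi\}$), whereas the paper applies it once to the direct sum $\boxtimes_\fn\Cbb\oplus\Wbb\oplus\Mbb'$. Your version does work, but it relies on an implicit step: the polynomial $h$ you obtain for $w$ satisfies $h(L_j(0))=P_j(\lambda)$ only on the finite-dimensional $L_j(0)$-invariant subspace generated by $w$, and you need it to also act as $P_j(\lambda)$ on $\upchi^\sharp(\psi\otimes w)\in\ovl\Mbb$. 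This follows because the intertwining relation $L_j(0)\upchi^\sharp(\psi\otimes-)=\upchi^\sharp(\psi\otimes L_j(0)-)$ forces the minimal polynomial of $L_j(0)$ on the image to divide that on the source, so the same $h$ works; but you should make this explicit. The paper sidesteps this by including $\Mbb'$ in the direct sum, so that the same polynomial governs $P_j(\lambda)$ on $m'$ as well, and one simply transfers through the pairing $\upchi(\psi\otimes w\otimes P_j(\lambda)m')=\upchi(\psi\otimes P_j(\lambda)w\otimes m')$.
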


\begin{proof}
It is clear that \eqref{eq116} and \eqref{eq117} are equivalent to \eqref{eq118} and \eqref{eq119}, respectively. Therefore, the only remaining step is to prove the following: if $\upchi$ and $\updelta$ satisfy the descriptions in Rem. \ref{lb17}, then, when viewing $\upchi$ as a linear map $T_+=\upchi^\sharp:\boxtimes_\fn\Cbb\otimes\Wbb\rightarrow\ovl\Mbb$, and viewing $\updelta$ as a linear map $T_-=\updelta^\sharp:\Wbb\otimes\boxtimes_\fn\Cbb\rightarrow\ovl\Mbb$, the ranges of both maps lie in $\Mbb$.

By \eqref{eq116} and Lem. \ref{lb31} (applied to $\boxtimes_\fn\Cbb\oplus\Wbb\oplus\Mbb'$), we have
\begin{gather*}
	\upchi\big(\psi\otimes P_j(\lambda)w\otimes w'\big)=\upchi\big(\psi\otimes w\otimes P_j(\lambda)w'\big)\\
	\upchi\big(\psi\otimes P_i(\lambda)w\otimes w'\big)=\upchi\big(P_-(\lambda)\psi\otimes w\otimes w'\big)\\
	\upchi\big(P_+(\lambda)\psi\otimes w\otimes w'\big)=\upchi\big(\psi\otimes w\otimes P_i(\lambda)w'\big)
\end{gather*}
That is, for each $\psi\in\boxtimes_\fn\Cbb,w\in\Wbb,\lambda\in\Cbb$ and $j\neq i$, the map $T_+=\upchi^\sharp$ satisfies
\begin{subequations}\label{eq64}
\begin{gather}
T_+\big(\psi\otimes P_j(\lambda)w\big)=P_j(\lambda)T_+\big(\psi\otimes w\big)\label{eq64a}\\
	T_+\big(\psi\otimes P_i(\lambda)w\big)=T_+\big(P_-(\lambda)\psi\otimes w\big)\label{eq64b}\\
T_+\big(P_+(\lambda)\psi\otimes w\big)=P_i(\lambda)T_+\big(\psi\otimes w\big)\label{eq64c}
\end{gather}
\end{subequations}
Similarly, by \eqref{eq117} and Lem. \ref{lb31}, the map $T_-=\updelta^\sharp$ satisfies
\begin{subequations}\label{eq65}
\begin{gather}
	T_-\big(P_j(\lambda)w\otimes\psi\big)=P_j(\lambda)T_-\big(w\otimes\psi\big)\label{eq65a}\\
	T_-\big(P_i(\lambda)w\otimes\psi\big)=T_-\big(w\otimes P_+(\lambda)\psi\big)\label{eq65b}\\
T_-\big(w\otimes P_-(\lambda)\psi\big)=P_i(\lambda)T_-\big(w\otimes\psi\big)\label{eq65c}
\end{gather}
\end{subequations}
This proves that $T_\pm$ have ranges in $\Mbb$.
\end{proof}

\begin{cv}\label{lb45}
In the pictures of conformal blocks, $T_\pm$ denote the conformal blocks $T_\pm^\flat$; equivalently, $\upchi^\sharp$ and $\updelta^\sharp$ denote the conformal blocks $\upchi$ and $\updelta$, respectively. Similarly, $T\in\Hom_{\Vbb^{\otimes N}}(\Wbb,\Mbb)$ denotes the conformal block $T^\flat:\Wbb\otimes\Mbb'\rightarrow\Cbb$ in \eqref{eq28} sending $w\otimes m'$ to $\bk{Tw,m'}$.
\end{cv}

We now focus on the case that $\Mbb=\Wbb$. Recall that $1\leq i\leq N$ is fixed.

\begin{df}\label{lb49}
By Rem. \ref{SF1}, there exist unique
\begin{gather}
\Phi_{i,+}^\flat\in \ST^*\Bigg(\vcenter{\hbox{{
		\includegraphics[height=2.8cm]{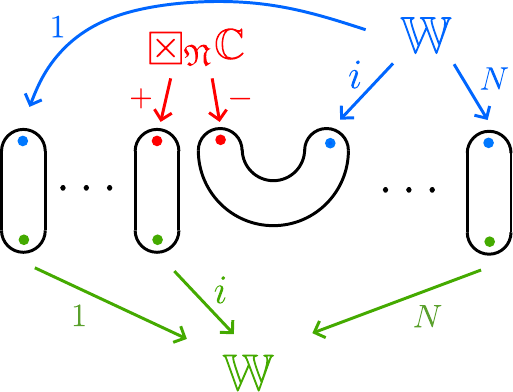}}}}~\Bigg) \qquad \Phi_{i,-}^\flat\in \ST^*\Bigg(\vcenter{\hbox{{
			\includegraphics[height=2.9cm]{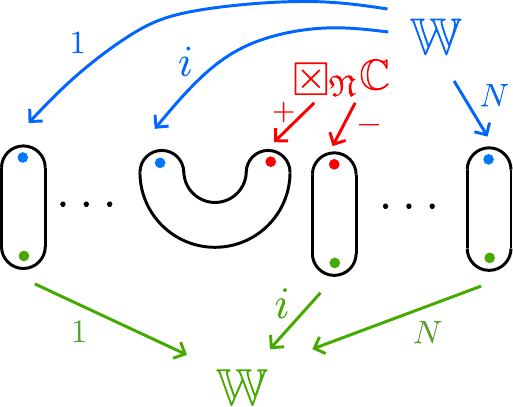}}}}~\Bigg)\label{eq54}
\end{gather}
such that when viewed as linear maps 
\begin{align}\label{eq120}
\Phi_{i,+}:\boxtimes_\fn\Cbb\otimes\Wbb\rightarrow\Wbb\qquad \Phi_{i,-}:\Wbb\otimes\boxtimes_\fn\Cbb\rightarrow\Wbb
\end{align}
(cf. Prop. \ref{lb44}), for each $w\in\Wbb,w'\in\Wbb'$ we have
\begin{subequations}\label{eq47}
\begin{align}\label{eq50}
\wick{\upomega(\c1 -)\cdot\big\langle\Phi_{i,+}(\c1 -\otimes w),w'\big\rangle}=\Lan w',w\Ran=\wick{\big\langle\Phi_{i,-}(w\otimes \c1 -),w'\big\rangle\cdot \upomega(\c1 -)}
 \end{align}
which is abbreviated to
\begin{align}\label{eq51}
	 \wick{\upomega(\c1 -)\cdot\Phi_{i,+}(\c1 -\otimes w)}=w=\wick{\Phi_{i,-}(w\otimes \c1 -)\cdot \upomega(\c1 -)}
  \end{align}
\end{subequations}
for each $w\in\Wbb$. Graphically, 
\begin{align}\label{eq48}
\begin{aligned}
&\vcenter{\hbox{{
		 \includegraphics[height=3cm]{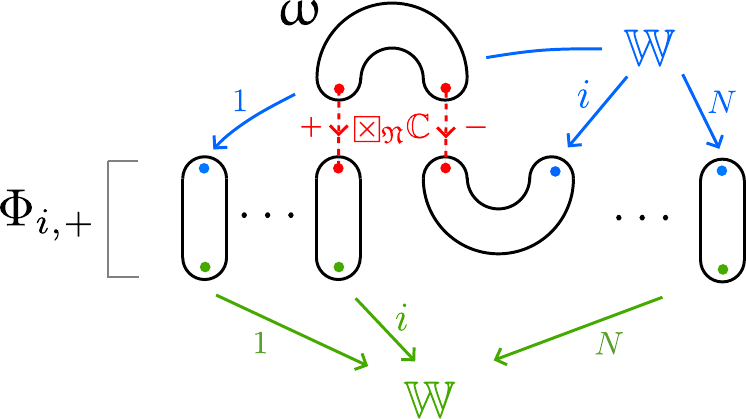}}}}~~~~=~~\vcenter{\hbox{{
			\includegraphics[height=2.8cm]{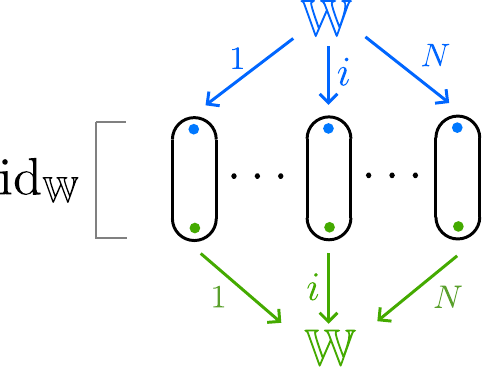}}}}\\[1ex]
=~~&\vcenter{\hbox{{
				   \includegraphics[height=3cm]{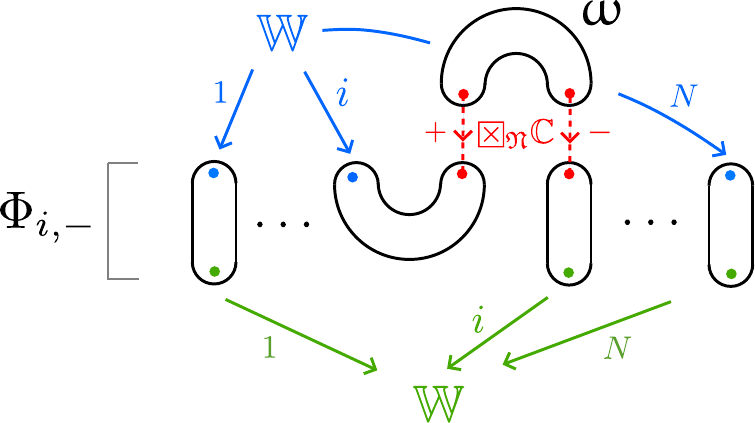}}}}
\end{aligned}
 \end{align}
Note that the sewing radii can be chosen to be admissible when all the sewing moduli are set to $1$ (cf. Subsec. \ref{lb46} for terminology). Therefore, the contractions in \eqref{eq47} converge absolutely (by Thm. \ref{lb47}).
\end{df}

\subsection{The canonical involution $\Theta$ of $\boxtimes_\fn\Cbb$}

We continue to fix $1\leq i\leq N$. In this section, we relate $\Phi_{i,+}$ and $\Phi_{i,-}$.

\begin{thm}\label{lb62}
The space of conformal blocks
\begin{align}\label{eq82}
\ST^*\Bigg(\vcenter{\hbox{{
				   \includegraphics[height=2.5cm]{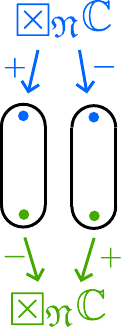}}}}~\Bigg)\quad=\quad
\ST^*\Bigg(\vcenter{\hbox{{
				   \includegraphics[height=2.5cm]{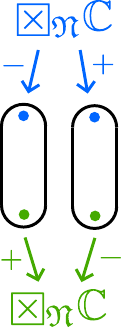}}}}~\Bigg)
\end{align}
consists of linear operators $T\in\End(\boxtimes_\fn\Cbb)$ satisfying
\begin{gather}\label{eq84}
TY_+(v)_n=Y_-(v)_nT\qquad TY_-(v)_n=Y_+(v)_nT\qquad\text{for all }v\in\Vbb,n\in\Zbb
\end{gather}
Moreover, there exists a unique $\Theta\in\eqref{eq82}$ whose transpose $\Theta^\tr\in\End(\bbs_\fn\Cbb)$ satisfies
\begin{align}\label{eq86}
\upomega=\upomega\circ\Theta^\tr
\end{align}
In addition, we have $\Theta^2=\id_{\boxtimes_\fn\Cbb}$, in particular, $\Theta$ is bijective. For each $\Wbb\in\Mod(\Vbb^{\otimes N})$ and $\psi\in\boxtimes_\fn\Cbb$, we have
\begin{align}\label{eq88}
\Phi_{i,+}(\psi\otimes w)=\Phi_{i,-}(w\otimes\Theta\psi)
\end{align}
\end{thm}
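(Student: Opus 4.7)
The plan is to address the four assertions in turn: (i) the explicit description of the space \eqref{eq82}, (ii) the existence and uniqueness of $\Theta$, (iii) the involutive property $\Theta^2=\id$, and (iv) the formula \eqref{eq88}. For (i), the $\pm$ labels at the two ends of the two pictures indicate that the marked points of the underlying $2$-pointed sphere are ordered oppositely on the domain and codomain copies of $\boxtimes_\fn\Cbb$. Combining Prop.~\ref{lb19} with the ordering-change isomorphism Prop.~\ref{lb12} applied to the transposition $\alpha\in\fk S_2$, a conformal block in \eqref{eq82} is identified with a $\Vbb^{\otimes 2}$-module morphism $T:\boxtimes_\fn\Cbb\to\ovl\alpha(\boxtimes_\fn\Cbb)$, which by Def.~\ref{lb61} is exactly a linear operator on $\boxtimes_\fn\Cbb$ satisfying \eqref{eq84}.

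For (ii) and (iii), I will exploit the nontrivial automorphism $\tipath:z\mapsto 1/z$ of $\fn$, which swaps the marked points and preserves their local coordinates. By Prop.~\ref{lb8}, $(\bbs_\fn\Cbb,\upomega)$ is a DFP of $\Cbb$ along $\fn$ with respect to both the default ordering $\epsilon$ and the swapped ordering $\tipath\circ\epsilon$. Translating via Prop.~\ref{lb12}, this is equivalent to saying that $(\ovl\alpha(\bbs_\fn\Cbb),\upomega)$ is also an $\epsilon$-DFP, where $\upomega$ is identified with the same underlying linear functional (legitimate by Rem.~\ref{lb40}). The universal property of the original $\epsilon$-DFP $(\bbs_\fn\Cbb,\upomega)$ applied to $\Mbb=\ovl\alpha(\bbs_\fn\Cbb)$ then produces a unique $\Vbb^{\otimes 2}$-module morphism $\Theta^\tr:\ovl\alpha(\bbs_\fn\Cbb)\to\bbs_\fn\Cbb$ with $\upomega\circ\Theta^\tr=\upomega$; transposing gives the desired $\Theta\in\End(\boxtimes_\fn\Cbb)$ in the space \eqref{eq82}, with uniqueness inherited from that of the universal property. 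For (iii), $(\Theta^\tr)^2$ and $\id_{\bbs_\fn\Cbb}$ are both genuine $\Vbb^{\otimes 2}$-module endomorphisms of $\bbs_\fn\Cbb$ (no net swap, as two swaps cancel) satisfying $\upomega\circ X=\upomega$; applying the universal property with $\Mbb=\bbs_\fn\Cbb$ forces them to agree, hence $\Theta^2=\id$.

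For (iv), I will define $\wtd\Phi_{i,+}(\psi\otimes w):=\Phi_{i,-}(w\otimes\Theta\psi)$ and verify the two properties that uniquely determine $\Phi_{i,+}$ in Def.~\ref{lb49}. First, showing that $\wtd\Phi_{i,+}^\flat$ lies in the conformal block space on the LHS of \eqref{eq54} reduces via Prop.~\ref{lb44} to checking the relations \eqref{eq118}; these follow from \eqref{eq119} for $\Phi_{i,-}$ together with $\Theta Y_\pm(v)_n=Y_\mp(v)_n\Theta$ on $\boxtimes_\fn\Cbb$ and the resulting prime variant $\Theta Y'_\pm(v,z)=Y'_\mp(v,z)\Theta$. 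Second, to verify the sewing equation \eqref{eq51}, I expand in a homogeneous basis $\{\psi_\alpha\}$ of $\boxtimes_\fn\Cbb$ with dual basis $\{\wch\psi_\alpha\}$ of $\bbs_\fn\Cbb$; the involutivity $\Theta^2=\id$ makes $\{\Theta\psi_\alpha\}$ another basis with dual basis $\{\Theta^\tr\wch\psi_\alpha\}$, and the relation $\upomega\circ\Theta^\tr=\upomega$ rewrites
\[
\wick{\upomega(\c1-)\cdot\Phi_{i,-}(w\otimes\Theta(\c1-))}=\wick{\upomega(\c1-)\cdot\Phi_{i,-}(w\otimes\c1-)}=w,
\]
the last equality being the defining relation of $\Phi_{i,-}$. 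The uniqueness clause of the sewing-factorization isomorphism (Rem.~\ref{SF1}) then forces $\wtd\Phi_{i,+}=\Phi_{i,+}$, proving \eqref{eq88}. The main obstacle lies in (ii): carefully translating the geometric $\Zbb_2$-symmetry of $\fn$ into the algebraic datum $\Theta^\tr$ via the chain of identifications in Prop.~\ref{lb12} and Rem.~\ref{lb40}. Once this bookkeeping is handled precisely, the remaining assertions follow by routine invocations of the universal property and the sewing-factorization theorem.
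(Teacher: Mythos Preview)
Your proposal is correct and follows essentially the same approach as the paper. The paper's proof is terser---for (iv) it simply states that by \eqref{eq84} and \eqref{eq86} the map $\psi\otimes w\mapsto\Phi_{i,-}(w\otimes\Theta\psi)$ satisfies the defining conditions of $\Phi_{i,+}$ in Def.~\ref{lb49}---whereas you spell out the verification of \eqref{eq118} and the sewing equation via a basis change, but the underlying logic is identical.
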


The map $\Theta$ is called the \textbf{canonical involution}  of $\boxtimes_\fn\Cbb$.

\begin{proof}
Let $\alpha:\{+,-\}\rightarrow\{+,-\}$ send $\pm$ to $\mp$. Recall Def. \ref{lb61} for the meaning of $\ovl\alpha(\boxtimes_\fn\Cbb)$. Then, by Prop. \ref{lb12}, we have
\begin{align}
\ST^*\Bigg(\vcenter{\hbox{{
				   \includegraphics[height=2.5cm]{fig24b.pdf}}}}~\Bigg)\quad=\quad
\ST^*\Bigg(\vcenter{\hbox{{
				   \includegraphics[height=2.5cm]{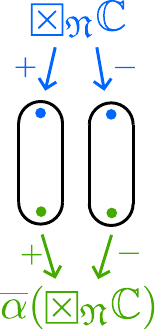}}}}~\Bigg)\quad\xlongequal{\text{Prop. \ref{lb19}}}\quad\Hom_{\Vbb^{\otimes 2}}(\boxtimes_\fn\Cbb,\ovl\alpha(\boxtimes_\fn\Cbb))
\end{align}
The latter hom space clearly consists of $T\in\End(\boxtimes_\fn\Cbb)$ satisfying \eqref{eq84}.

By Prop. \ref{lb12}, we have
\begin{align}
\ST^*\Bigg(\vcenter{\hbox{{
				   \includegraphics[height=1.6cm]{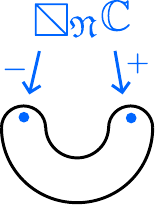}}}}~\Bigg)\quad=\quad
\ST^*\Bigg(\vcenter{\hbox{{
				   \includegraphics[height=1.6cm]{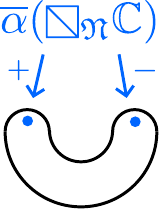}}}}~\Bigg)
\end{align}
Therefore, by Prop. \ref{lb13}, for each $\upphi$ belonging to the RHS above, there exists a unique $T\in\Hom_{\Vbb^{\otimes 2}}(\ovl\alpha(\bbs_\fn\Cbb),\bbs_\fn\Cbb)$ satisfying $\upphi=\upomega\circ T$. The existence and uniqueness of $\Theta$ satisfying \eqref{eq84} and \eqref{eq86} follow by letting $\upphi=\upomega$ and $\Theta=T^\tr$. Thus $\upomega=\upomega\circ(\Theta^\tr)^2$. Since $(\Theta^\tr)^2\in\End_{\Vbb^{\otimes 2}}(\bbs_\fn\Cbb)$, by the universal property for dual fusion products, we have $(\Theta^\tr)^2=\id$, and hence $\Theta^2=\id$.

By \eqref{eq84} and \eqref{eq86}, the map
\begin{align*}
\boxtimes_\fn\Cbb\otimes\Wbb\rightarrow\Wbb\qquad \psi\otimes w\mapsto\Phi_{i,-}(w\otimes\Theta\psi)
\end{align*}
satisfies the definition of $\Phi_{i,+}$ in Def. \ref{lb49}. This proves \eqref{eq88}. 
\end{proof}

\subsection{The  left and right actions $\Phi=\Phi_{+,+}$ and $\Psi=\Phi_{-,-}$ of $\boxtimes_\fn\Cbb$ on $\Wbb\in \Mod(\Vbb^{\otimes 2})$}
\label{sec2}
In this section, we assume $N=2$. Choose $\Wbb\in \Mod(\Vbb^{\otimes 2})$. 

\begin{df}
Let $\Phi=\Phi_{1,+}\equiv\Phi_{+,+}$ and $\Psi=\Phi_{2,-}\equiv\Phi_{-,-}$, that is, 
\begin{gather}
   \Phi=\Phi_{+,+}:\boxtimes_\fn\Cbb\otimes \Wbb\rightarrow \Wbb\qquad 
   \Psi=\Phi_{-,-}:\Wbb\otimes \boxtimes_\fn\Cbb\rightarrow \Wbb
\end{gather}
For each $\psi\in \boxtimes_\fn\Cbb$ and $w\in \Wbb$, write 
\begin{align}\label{eq38}
   \pmb{\psi\diamond_L w}:=\Phi(\psi\otimes w) \qquad \pmb{w\diamond_R \psi}:=\Psi(w\otimes \psi)
\end{align}
\end{df}

The figures representing the conformal blocks $\Phi,\Psi$ are
\begin{align}\label{eq123}
   \vcenter{\hbox{{
		\includegraphics[height=2.6cm]{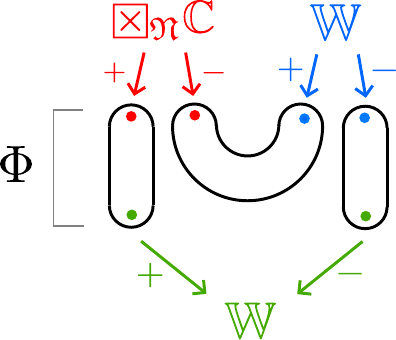}}}}\qquad \vcenter{\hbox{{
		   \includegraphics[height=2.6cm]{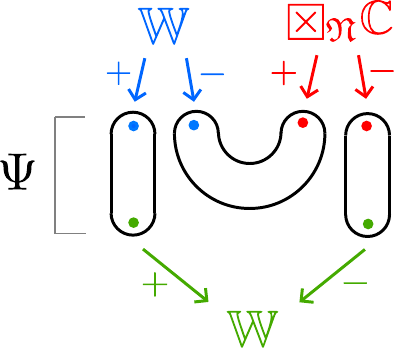}}}}
\end{align}
By Prop. \ref{lb44}, one can give an explicit and algebraic characterization of $\Phi$ and $\Psi$ being conformal blocks of the corresponding types. Specifically, for each $v\in \Vbb, \psi\in \boxtimes_\fn\Cbb,w\in \Wbb$, the following relations hold in $\Wbb[[z^{\pm1}]]$:
 \begin{gather*}
	\Phi(\psi\otimes Y_-(v,z)w)=Y_-(v,z)\Phi(\psi\otimes w)\\
\Phi(\psi\otimes Y_+(v,z)w)=\Phi(Y'_-(v,z)\psi\otimes w)\\
	\Phi(Y_+(v,z)\psi\otimes w)=Y_+(v,z)\Phi(\psi\otimes w)
 \end{gather*}
 resp. 
 \begin{gather*}
\Psi(Y_+(v,z)w\otimes \psi)=Y_+(v,z)\Psi(w\otimes\psi)\\
\Psi(Y_-(v,z)w\otimes \psi)=\Psi(w\otimes Y'_+(v,z)\psi)\\
	\Psi(w\otimes Y_-(v,z)\psi)=Y_-(v,z)\Psi(w\otimes \psi)
 \end{gather*}
By Def. \ref{lb49}, the conformal blocks $\Phi$ and $\Psi$ are determined by the fact that
\begin{gather}\label{eq58}
\wick{\upomega(\c1 -)\cdot\Phi(\c1 -\otimes w)}=w=\wick{\Psi(w\otimes \c1 -)\cdot \upomega(\c1 -)}
\end{gather}
holds for each $w\in\Wbb$. The picture for \eqref{eq58} is
\begin{gather}\label{eq34}
   \vcenter{\hbox{{
		\includegraphics[height=2.9cm]{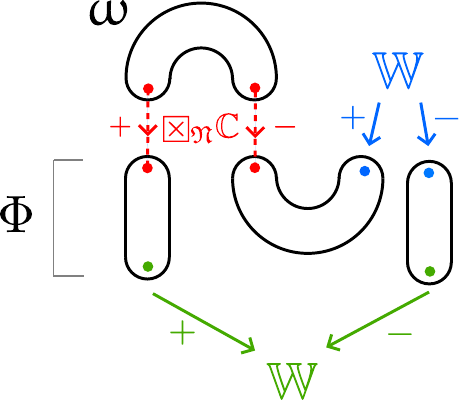}}}}\quad=\quad\vcenter{\hbox{{
		   \includegraphics[height=2.6cm]{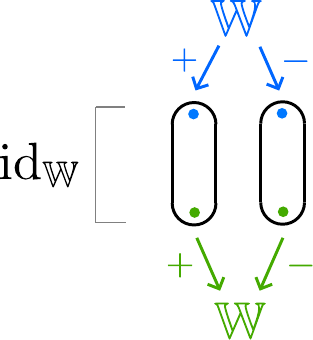}}}}\quad=\quad\vcenter{\hbox{{
			   \includegraphics[height=2.9cm]{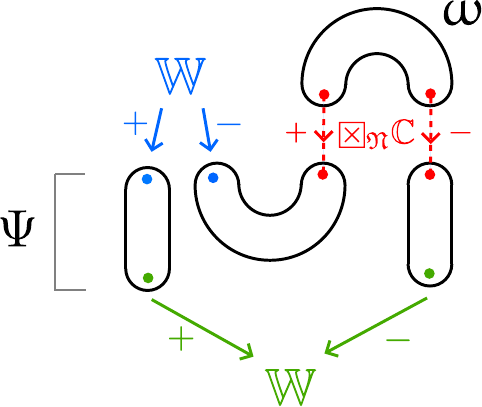}}}}
\end{gather}

\begin{comment}
By \eqref{eq64} and \eqref{eq65}, for each $\psi\in\boxtimes_\fn\Cbb,w\in\Wbb,\lambda\in\Cbb$, we have
 \begin{subequations}
 \begin{gather}
\Phi(\psi\otimes P_-(\lambda)w)=P_-(\lambda)\Phi(\psi\otimes w)\\
\Phi(\psi\otimes P_+(\lambda)w)=\Phi(P_-(\lambda)\psi\otimes w)\label\\
\Phi(P_+(\lambda)\psi\otimes w)=P_+(\lambda)\Phi(\psi\otimes w)
 \end{gather}
\end{subequations}
 resp. 
 \begin{subequations}
 \begin{gather}
\Psi(P_+(\lambda)w\otimes \psi)=P_+(\lambda)\Psi(\psi\otimes w)\\
\Psi(P_-(\lambda)w\otimes \psi)=\Psi(w\otimes P_+(\lambda)\psi)\\
	\Psi(w\otimes P_-(\lambda)\psi)=P_-(\lambda)\Psi(w\otimes \psi)
 \end{gather}
\end{subequations}
\end{comment}

\begin{pp}\label{lb16}
	For each $\psi_1,\psi_2\in \boxtimes_\fn\Cbb$ and $w\in \Wbb$, we have the associative law 
	\begin{align*}
		(\psi_1\diamond_L w)\diamond_R \psi_2=\psi_1\diamond_L(w\diamond_R \psi_2)
	\end{align*}
\end{pp}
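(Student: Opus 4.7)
The plan is to realize both sides of the identity as values of a single conformal block on a common sewn surface, and then conclude by the uniqueness afforded by the sewing–factorization theorem. Let $\ff_0$ denote the pointed compact Riemann surface obtained by attaching one copy of the default $2$-pointed sphere $\fn$ to the $+$ marked point of the $\Wbb$-tube (providing the $\psi_1$-slot) and another copy to the $-$ marked point (providing the $\psi_2$-slot), so that $\ff_0$ has incoming marked points carrying $\boxtimes_\fn\Cbb,\Wbb,\boxtimes_\fn\Cbb$ and one outgoing $\Wbb$. Set
\begin{align*}
G(\psi_1\otimes w\otimes \psi_2)&=\Psi\bigl(\Phi(\psi_1\otimes w)\otimes \psi_2\bigr),\\
H(\psi_1\otimes w\otimes \psi_2)&=\Phi\bigl(\psi_1\otimes \Psi(w\otimes \psi_2)\bigr).
\end{align*}
By composition of conformal blocks (Thm.~\ref{lb47}) applied to $\Phi^\flat$ and $\Psi^\flat$ in either order along the intermediate $\Wbb$-slot, both $G^\flat$ and $H^\flat$ belong to $\ST^*(\ff_0)$; the two orders produce the same pointed surface (via Prop.~\ref{lb8}) since the two $\fn$-attachments occur at disjoint marked points.

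Next I would verify that $G^\flat$ and $H^\flat$ satisfy the same characterizing identity: \emph{sewing each with $\upomega$ at both $\boxtimes_\fn\Cbb$-slots recovers $\id_\Wbb^\flat\in\ST^*$ of the $\Wbb$-tube}. For $G^\flat$, the contraction at the $\psi_1$-slot collapses $\upomega(-)\cdot\Phi(-\otimes w)$ to $w$ by \eqref{eq58}, after which the remaining contraction at the $\psi_2$-slot collapses $\Psi(w\otimes -)\cdot\upomega(-)$ to $w$, again by \eqref{eq58}; pairing with $w'$ returns $\bk{w,w'}=\id_\Wbb^\flat(w\otimes w')$. The calculation for $H^\flat$ is symmetric. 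The essential point is that the two $\upomega$-contractions are carried out at disjoint marked points of $\ff_0$ and therefore commute, which is a Fubini-type consequence of the absolute convergence asserted in Thm.~\ref{lb47}.

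Finally, the sewing–factorization theorem (Thm.~\ref{SF}), applied successively to the two $\fn$-slots with canonical conformal block $\upomega$, exhibits the double contraction as a linear \emph{isomorphism} $\ST^*(\ff_0)\xlongrightarrow{\simeq}\ST^*(\Wbb\text{-tube})$. In particular it is injective, which forces $G^\flat=H^\flat$ and hence the desired associativity $G=H$. The principal bookkeeping hurdle is to confirm that the two orders of composing $\Phi^\flat$ and $\Psi^\flat$ really do yield literally the same $\ff_0$ (a direct application of Prop.~\ref{lb8}) and that Thm.~\ref{SF} can be iterated on the two $\fn$-slots as claimed; both are formal verifications once the graphical calculus of Rem.~\ref{SF1} is unpacked.
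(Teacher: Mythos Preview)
Your proposal is correct and follows essentially the same approach as the paper: the paper defines the same two maps (called $A$ and $B$ rather than $G$ and $H$), shows each is a conformal block on the common sewn surface, verifies that contracting both $\boxtimes_\fn\Cbb$-slots with $\upomega$ returns $\id_\Wbb$ in each case via \eqref{eq58}, and concludes by applying twice the partial injectivity of $\upomega$ (Rem.~\ref{SF2}), which is exactly the injectivity of the sewing--factorization map you invoke. Your remarks on the Fubini-type commutation of the two contractions and on the two composition orders yielding the same surface are also made (more briefly) in the paper.
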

Therefore, both sides can be denoted by
\begin{align*}
\psi_1\diamond w\diamond \psi_2
\end{align*}
\begin{proof}
We need to prove
	\begin{align}\label{eq37}
		\Psi\big(\Phi(\psi_1\otimes w)\otimes \psi_2\big)=\Phi\big(\psi_1\otimes \Psi(w\otimes \psi_2)\big)
	\end{align}
for all $\psi_1,\psi_2\in \boxtimes_\fn\Cbb$ and $w\in \Wbb$. Set 
	\begin{gather*}
		A:\boxtimes_\fn\Cbb\otimes \Wbb\otimes \boxtimes_\fn\Cbb\rightarrow \Wbb\qquad \psi_1\otimes w\otimes\psi_2\mapsto \Psi\big(\Phi(\psi_1\otimes w)\otimes \psi_2 \big)\\
		B:\boxtimes_\fn\Cbb\otimes \Wbb\otimes \boxtimes_\fn\Cbb\rightarrow \Wbb\qquad \psi_1\otimes w\otimes\psi_2\mapsto\Phi\big(\psi_1\otimes \Psi(w\otimes \psi_2)\big)
	\end{gather*}
In other words, $A$ and $B$ are defined by the contractions
   \begin{subequations}\label{eq122}
   \begin{gather}
	\label{eq32} A(\psi_1\otimes w\otimes \psi_2)=\wick{\Lan \Phi(\psi_1\otimes w),\c1 -\Ran \cdot \Psi(\c1 -\otimes \psi_2)}\\
	\label{eq33} B(\psi_1\otimes w\otimes \psi_2)=\wick{\Phi(\psi_1\otimes \c1 -)\cdot\Lan \c1 -,\Psi(w\otimes \psi_2)\Ran}
   \end{gather}
\end{subequations}
where the notation is similar to that in \eqref{eq51}. Thus, $A,B$  are obtained by composing conformal blocks, and are therefore themselves conformal blocks; see Fig. \ref{img1}.
\begin{figure}[h]
	\centering
\begin{gather*}
\vcenter{\hbox{{
		   \includegraphics[height=2.6cm]{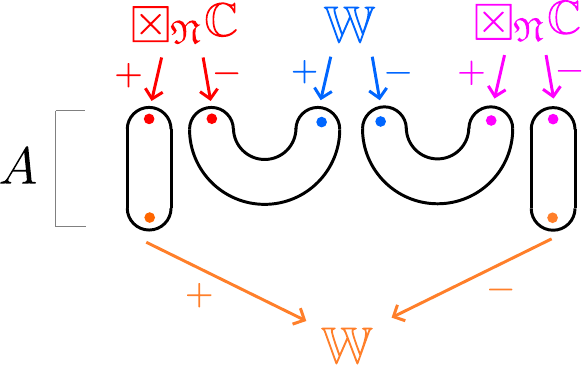}}}}	\quad=\quad\vcenter{\hbox{{
		\includegraphics[height=3.8cm]{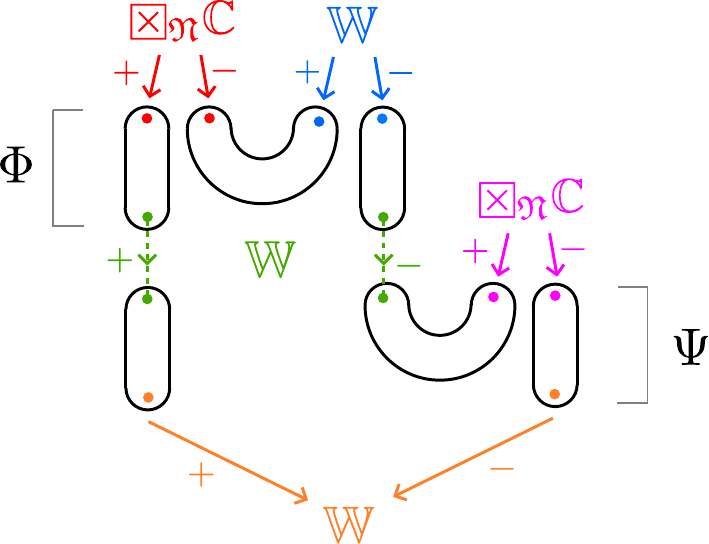}}}}\\[1ex]
\vcenter{\hbox{{
		   \includegraphics[height=2.6cm]{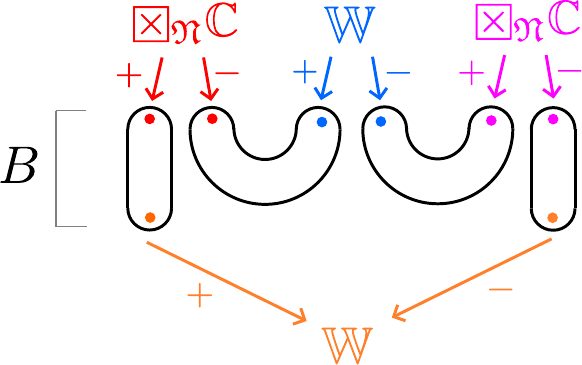}}}}	\quad=\quad	  \vcenter{\hbox{{
		\includegraphics[height=3.8cm]{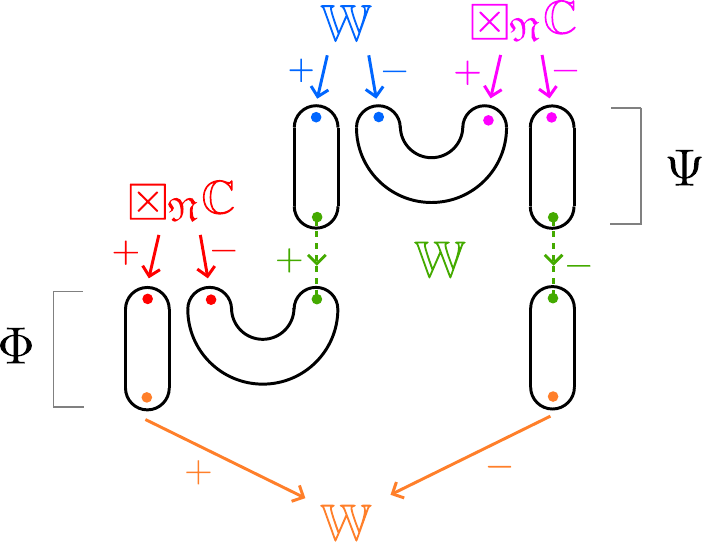}}}}
   \end{gather*}
\caption{~~The conformal blocks $A$ and $B$.}
	\label{img1}
\end{figure}

For each $w\in \Wbb$,
\begin{align}
&\wick{\upomega(\c1 -)A(\c1 -\otimes w\otimes \c1 -)\upomega(\c1 -)}\xlongequal{\eqref{eq122}}\wick{\upomega(\c1 -)\Lan \Phi(\c1 -\otimes w), \c1 -\Ran\cdot \Psi(\c1 -\otimes \c1 -)\upomega(\c1 -)}\nonumber\\
\xlongequal{\eqref{eq58}}&\wick{\Lan w,\c1 -\Ran\Psi(\c1 -\otimes \c1 -)\upomega(\c1 -)}\xlongequal{\eqref{eq58}}\wick{\Lan w,\c1 -\Ran\cdot \id_\Wbb(\c1-)}=w \label{eq35}
\end{align}
The picture for \eqref{eq35} is Fig. \ref{img2}. From this picture, it is evident that the sewing radii can be chosen to be admissible when all sewing moduli are set to $1$. Therefore, by Thm. \ref{lb47}, the contractions involved in each term of \eqref{eq35} are simultaneously converging absolutely. In particular, by Fubini's theorem for absolutely integrable functions, the order in which the contractions are performed does not affect the resulting values.\footnote{This reasoning for the commutativity of contractions will appear repeatedly in the remainder of the article. We will not refer to it explicitly each time.}
\begin{figure}[h]
	\centering
\begin{align*}
&\vcenter{\hbox{{
		   \includegraphics[height=4.1cm]{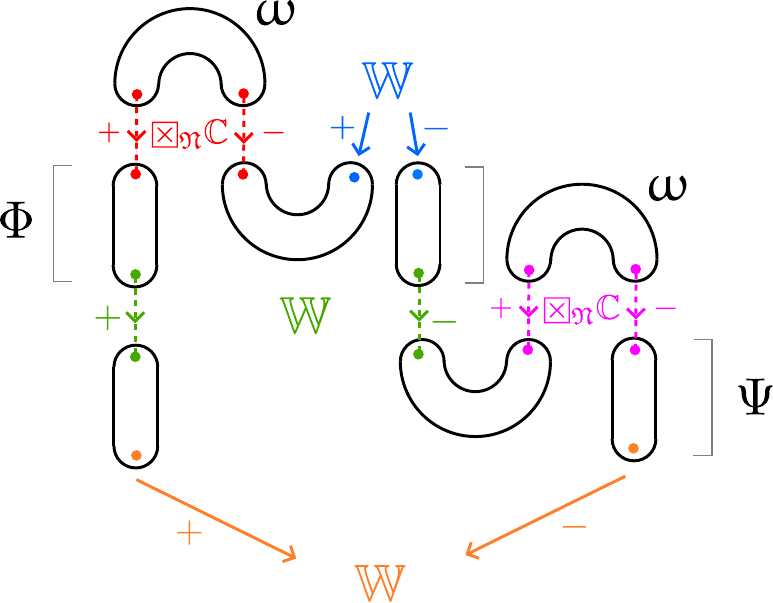}}}}\quad\xlongequal{\eqref{eq34}}\quad \vcenter{\hbox{{
		   \includegraphics[height=3.7cm]{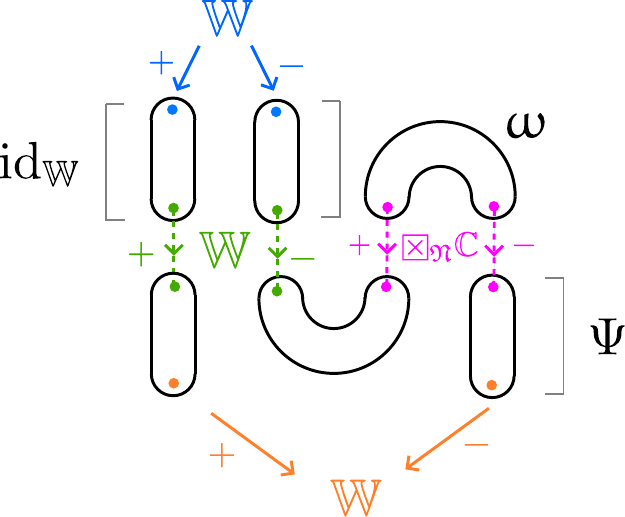}}}}\\[1ex]
\xlongequal{\eqref{eq34}}&\quad\vcenter{\hbox{{
		   \includegraphics[height=3.7cm]{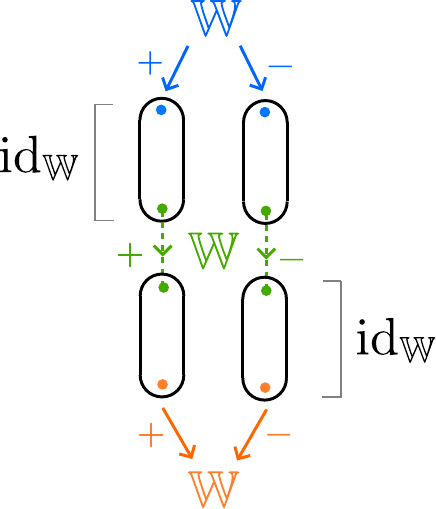}}}}\quad = \quad \vcenter{\hbox{{
		   \includegraphics[height=2.3cm]{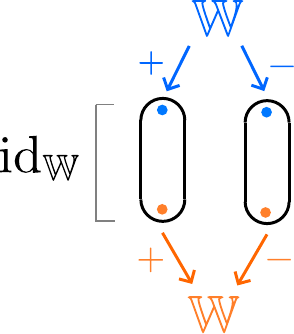}}}}
\end{align*}
\caption{~~The pictorial illustration of \eqref{eq35}.}
	\label{img2}
\end{figure}

By a similar argument, for each $w\in \Wbb$ we have
\begin{align*}
	\wick{\upomega(\c1 -)B(\c1 -\otimes w\otimes \c1 -)\upomega(\c1 -)}=w
\end{align*}
Applying twice the partial injectivity of the canonical conformal block $\upomega$ (cf. Rem. \ref{SF2}), we conclude that $A=B$. This proves \eqref{eq37}.
\end{proof}

\subsection{$\boxtimes_\fn\Cbb$ as an associative algebra over $\Cbb$}
In this section, we choose $\Wbb=\boxtimes_\fn\Cbb\in \Mod(\Vbb^{\otimes 2})$. Then \eqref{eq123} becomes
\begin{align}\label{eq39}
   \vcenter{\hbox{{
		\includegraphics[height=2.6cm]{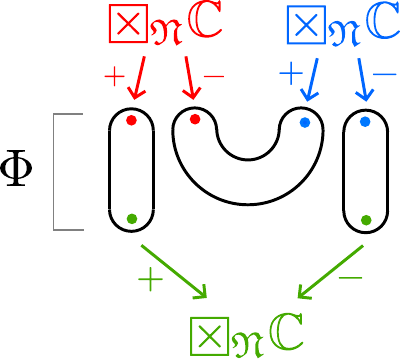}}}}\qquad \vcenter{\hbox{{
		   \includegraphics[height=2.6cm]{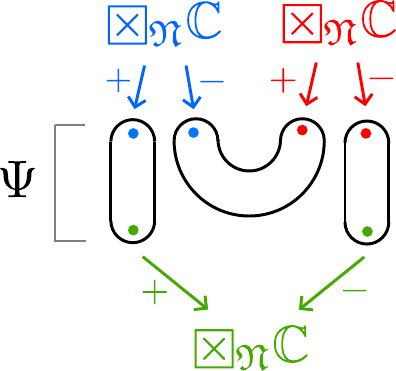}}}}
\end{align}

Following \eqref{eq38}, we have 
\begin{align}\label{eq40}
	\psi_1\diamond_L \psi_2:=\Phi(\psi_1\otimes \psi_2), \quad \psi_1\diamond_R \psi_2:=\Psi(\psi_1\otimes \psi_2).
 \end{align}
 for each $\psi_1,\psi_2\in \boxtimes_\fn\Cbb$.
\begin{pp}\label{lb15}
	For each $\psi_1,\psi_2\in \boxtimes_\fn\Cbb$, we have $\psi_1\diamond_L \psi_2=\psi_1\diamond_R \psi_2$. Therefore, we denote both $\diamond_L$ and $\diamond_R$ by $\diamond$.
\end{pp}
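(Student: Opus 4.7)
The plan is to mimic the proof of Prop.~\ref{lb16}: realize both $\Phi(\psi_1\otimes\psi_2)$ and $\Psi(\psi_1\otimes\psi_2)$ as conformal blocks living in a common space, and then reduce their equality to a contraction identity via two applications of the partial injectivity of the canonical conformal block $\upomega$ (Rem.~\ref{SF2}).

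Set $A(\psi_1 \otimes \psi_2) := \Phi(\psi_1 \otimes \psi_2)$ and $B(\psi_1 \otimes \psi_2) := \Psi(\psi_1 \otimes \psi_2)$. Pairing the $\boxtimes_\fn\Cbb$-valued outputs with an arbitrary $w' \in \bbs_\fn\Cbb$ realizes $A^\flat$ and $B^\flat$ as conformal blocks on the configuration of three copies of $\fn$ sewn in a chain. After appropriate relabellings of marked points via Prop.~\ref{lb12} (combined with Prop.~\ref{lb7} to absorb the resulting isomorphism of pointed surfaces), the two pictures in \eqref{eq39} yield isomorphic pointed Riemann surfaces, so that $A^\flat$ and $B^\flat$ belong to a common space of conformal blocks.

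The core computation then parallels the argument for Fig.~\ref{img2}. Contracting the first $\boxtimes_\fn\Cbb$-slot of $A^\flat$ with $\upomega$ and applying the defining identity \eqref{eq58} for $\Phi$ yields the pairing $\langle w', \psi_2\rangle$; a second contraction of the remaining slot with another copy of $\upomega$ produces the sum $\sum_{\lambda,\alpha}\upomega(e_\lambda(\alpha))\langle \wch e_\lambda(\alpha), w'\rangle = \upomega(w')$, where $(e_\lambda(\alpha))$ is a weight basis of $\bbs_\fn\Cbb$ with dual basis $(\wch e_\lambda(\alpha))$ in $\boxtimes_\fn\Cbb$. Hence the double contraction of $A^\flat$ against $\upomega$ equals the canonical block $\upomega \in \ST^*_\fn(\bbs_\fn\Cbb)$. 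The parallel computation for $B^\flat$ contracts the second slot first, uses the defining identity for $\Psi$, and likewise terminates in $\upomega$. The admissibility of the iterated sewing radii is inherited from Def.~\ref{lb49}, exactly as in the argument leading to \eqref{eq35}.

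Applying the partial injectivity of $\upomega$ twice---once per contracted slot---then upgrades the agreement of the doubly-contracted blocks to the equality $A^\flat = B^\flat$, i.e.\ $\Phi(\psi_1\otimes\psi_2) = \Psi(\psi_1\otimes\psi_2)$. The main technical point will be the first step: verifying, via careful bookkeeping of the $\fk S$-action of Prop.~\ref{lb12} on the marked points, that the two pictures in \eqref{eq39} really produce conformal blocks in the same space. Once that identification is in place, the contraction-and-injectivity argument is a direct transcription of the one in the proof of Prop.~\ref{lb16}.
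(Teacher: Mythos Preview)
Your proposal is correct and follows essentially the same route as the paper: both compute the double $\upomega$-contraction of $\Phi$ and of $\Psi$, show each equals $\upomega(\psi')$, and then invoke the partial injectivity of $\upomega$ twice. The only difference is that you make explicit the verification that $\Phi^\flat$ and $\Psi^\flat$ lie in a common conformal block space (which the paper leaves implicit in the figures and in the algebraic relations for $\Phi,\Psi$ with $\Wbb=\boxtimes_\fn\Cbb$); this is straightforward once the intertwining relations from Sec.~\ref{sec2} are written out.
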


\begin{proof}
We compute for each $\psi'\in\bbs_\fn\Cbb$ that
	\begin{align}\label{eq126}
\Lan\psi',\wick{\upomega(\c1-)\Phi(\c1 -\otimes \c1-)\cdot \upomega(\c1 -)}\Ran\xlongequal{\eqref{eq58}}\Lan\psi',\wick{\id_{\boxtimes_\fn\Cbb}(\c1-)\cdot \upomega(\c1 -)}\Ran=\upomega(\psi')
	\end{align}
The picture of \eqref{eq126} is
\begin{align}\label{eq127}
		\vcenter{\hbox{{
			\includegraphics[height=2.9cm]{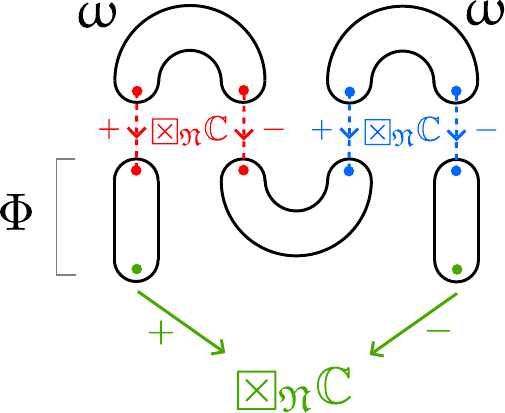}}}}\quad\xlongequal{\eqref{eq34}}\quad \vcenter{\hbox{{
			   \includegraphics[height=2.9cm]{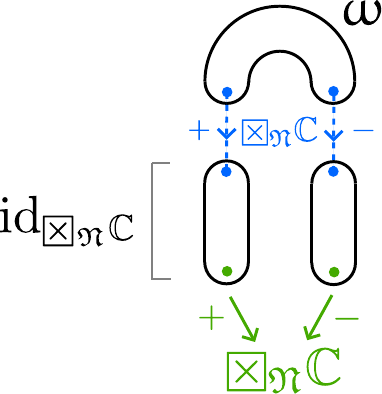}}}}\quad=\quad\vcenter{\hbox{{
				\includegraphics[height=1.6cm]{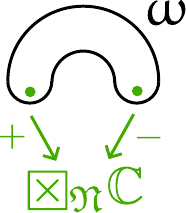}}}}
	\end{align}
In the pictures, we do not distinguish between $\upomega$ and $\upomega^\sharp$. By a similar argument, 
\begin{align*}
\Lan\psi',\wick{\upomega(\c1-)\Psi(\c1 -\otimes \c1-)\cdot \upomega(\c1 -)}\Ran=\upomega(\psi')
\end{align*}
Therefore, applying twice the partial injectivity of $\upomega$ (Rem. \ref{SF2}), we conclude $\Phi=\Psi$.
\end{proof}

\begin{co}\label{lb82}
The complex vector space $\boxtimes_\fn\Cbb$, together with the operation $\diamond$, defines a (not necessarily unital) associative $\Cbb$-algebra $(\boxtimes_\fn\Cbb,\diamond)$.
\end{co}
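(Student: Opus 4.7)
The plan is to derive the corollary as an immediate consequence of the two propositions that precede it. Bilinearity of $\diamond$ is clear from the construction of $\Phi$ and $\Psi$ in Def. \ref{lb49} (these are linear maps out of tensor products), so the only substantive assertion is associativity, i.e.\ that for all $\psi_1,\psi_2,\psi_3\in\boxtimes_\fn\Cbb$,
\begin{align*}
(\psi_1\diamond\psi_2)\diamond\psi_3\;=\;\psi_1\diamond(\psi_2\diamond\psi_3).
\end{align*}

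The key observation is that Prop. \ref{lb16} was formulated for an arbitrary $\Wbb\in\Mod(\Vbb^{\otimes2})$, and $\boxtimes_\fn\Cbb$ is itself such a module. So I would simply specialize Prop. \ref{lb16} to $\Wbb=\boxtimes_\fn\Cbb$ and to the element $w=\psi_2$, obtaining
\begin{align*}
(\psi_1\diamond_L\psi_2)\diamond_R\psi_3\;=\;\psi_1\diamond_L(\psi_2\diamond_R\psi_3).
\end{align*}
This is the associativity of the combined left/right action, and no further geometric input is required.

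To pass from this mixed identity to the plain associativity of $\diamond$, I would apply Prop. \ref{lb15} three times, to convert each $\diamond_L$ or $\diamond_R$ between two elements of $\boxtimes_\fn\Cbb$ into the common product $\diamond$: first to rewrite $\psi_1\diamond_L\psi_2$ and $\psi_2\diamond_R\psi_3$ as $\psi_1\diamond\psi_2$ and $\psi_2\diamond\psi_3$ respectively, and then to rewrite the outer operations $(\psi_1\diamond\psi_2)\diamond_R\psi_3$ and $\psi_1\diamond_L(\psi_2\diamond\psi_3)$ as $(\psi_1\diamond\psi_2)\diamond\psi_3$ and $\psi_1\diamond(\psi_2\diamond\psi_3)$.

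There is essentially no obstacle here: all the geometric and analytic work -- the construction of $\Phi_{i,\pm}$ via the SF theorem, the admissibility of sewing radii guaranteeing absolute convergence, and, most importantly, the double use of the partial injectivity of $\upomega$ (Rem. \ref{SF2}) that underpins both Prop. \ref{lb15} and Prop. \ref{lb16} -- has already been carried out. The corollary is purely a bookkeeping consequence of those two propositions, and the proof should be a single short paragraph invoking them.
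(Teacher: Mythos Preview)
Your proposal is correct and matches the paper's own proof essentially verbatim: the paper simply cites Prop. \ref{lb16} and Prop. \ref{lb15} for associativity, together with the linearity of $\Phi$ and $\Psi$ for bilinearity, in a single sentence. Your detailed unpacking of how the mixed $\diamond_L/\diamond_R$ identity converts to plain associativity via three applications of Prop. \ref{lb15} is exactly the implicit reasoning behind that citation.
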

\begin{proof}
By Prop. \ref{lb16} and \ref{lb15}, $\diamond$ satisfies the associativity. This fact, together with the linearity of $\Phi$ and $\Psi$, proves that $(\boxtimes_\fn\Cbb,\diamond)$ is an associative $\Cbb$-algebra.
\end{proof}

\subsection{The $\boxtimes_\fn\Cbb$-module structures on $\Wbb\in\Mod(\Vbb^{\otimes N})$}

Fix $\Wbb\in\Mod(\Vbb^{\otimes N})$ and $1\leq i\leq N$.

\begin{thm}\label{lb50}
The linear map $\Phi_{i,+}:\boxtimes_\fn\Cbb\otimes\Wbb\rightarrow\Wbb$ defines a left $\boxtimes_\fn\Cbb$-module structure on $\Wbb$, and the linear map $\Phi_{i,-}:\Wbb\otimes\boxtimes_\fn\Cbb\rightarrow\Wbb$ defines a right $\boxtimes_\fn\Cbb$-module structure on $\Wbb$. In other words, for each $\psi_1,\psi_2\in\boxtimes_\fn\Cbb$ and $w\in\Wbb$, we have
\begin{gather}
\Phi_{i,+}\big((\psi_1\diamond\psi_2)\otimes w\big)=\Phi_{i,+}\big(\psi_1\otimes\Phi_{i,+}(\psi_2\otimes w)\big)\label{eq124}\\
\Phi_{i,-}\big(w\otimes(\psi_2\diamond\psi_1)\big)=\Phi_{i,-}\big(\Phi_{i,-}(w\otimes\psi_2)\otimes\psi_1\big)\label{eq125}
\end{gather}
\end{thm}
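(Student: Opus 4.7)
The plan is to imitate the strategy of Prop.~\ref{lb16} and Prop.~\ref{lb15}: reformulate each identity as an equality between two linear maps, exhibit both as conformal blocks on a common composite pointed Riemann surface, contract against the canonical conformal block $\upomega$ to collapse everything to the identity on $\Wbb$, and then apply partial injectivity of $\upomega$ (Rem.~\ref{SF2}) twice.

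For \eqref{eq124}, I would define
\begin{align*}
A,B:\boxtimes_\fn\Cbb\otimes\boxtimes_\fn\Cbb\otimes\Wbb\longrightarrow\Wbb
\end{align*}
by $A(\psi_1\otimes\psi_2\otimes w)=\Phi_{i,+}\big(\Phi(\psi_1\otimes\psi_2)\otimes w\big)$ and $B(\psi_1\otimes\psi_2\otimes w)=\Phi_{i,+}\big(\psi_1\otimes\Phi_{i,+}(\psi_2\otimes w)\big)$. Since $\psi_1\diamond\psi_2=\Phi(\psi_1\otimes\psi_2)$ by Prop.~\ref{lb15}, the identity \eqref{eq124} is equivalent to $A=B$. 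Both $A^\flat$ and $B^\flat$ are obtained by composing conformal blocks at the $i$-th marked point of $\Wbb$: $A^\flat$ sews the sphere underlying $\Phi$ onto the sphere underlying $\Phi_{i,+}$, while $B^\flat$ sews two copies of the sphere underlying $\Phi_{i,+}$ in series. A picture analogous to Fig.~\ref{img1} shows that the two composite pointed surfaces are isomorphic in the sense of Def.~\ref{lb5}, so by Prop.~\ref{lb8} and Thm.~\ref{lb47} both $A^\flat$ and $B^\flat$ lie in a common space of conformal blocks.

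Next I contract both slots against $\upomega$. For $A$, a first application of \eqref{eq58} collapses $\wick{\upomega(\c1 -)\Phi(\c1 -\otimes\psi_2)}=\psi_2$, after which \eqref{eq51} gives $\wick{\upomega(\c2 -)\Phi_{i,+}(\c2 -\otimes w)}=w$. For $B$, two successive applications of \eqref{eq51} collapse the inner and then the outer $\Phi_{i,+}$, again producing $w$. As in Prop.~\ref{lb16}, admissible sewing radii exist when all sewing moduli are set to $1$, so by Thm.~\ref{lb47} every contraction converges absolutely and may be reordered. Hence
\begin{align*}
\wick{\upomega(\c1 -)\upomega(\c2 -)A(\c1 -\otimes\c2 -\otimes w)}=w=\wick{\upomega(\c1 -)\upomega(\c2 -)B(\c1 -\otimes\c2 -\otimes w)},
\end{align*}
and applying partial injectivity of $\upomega$ (Rem.~\ref{SF2}) in each of the two $\boxtimes_\fn\Cbb$-slots yields $A=B$, which is \eqref{eq124}.

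The proof of \eqref{eq125} is entirely analogous, replacing $\Phi$ by $\Psi=\Phi$ (Prop.~\ref{lb15}) and $\Phi_{i,+}$ by $\Phi_{i,-}$, and using the right-hand identity $\wick{\Phi_{i,-}(w\otimes\c1 -)\upomega(\c1 -)}=w$ from \eqref{eq51}. The only non-routine point, as in Prop.~\ref{lb16}, is the bookkeeping needed to recognize the two composite sewings as conformal blocks on isomorphic pointed Riemann surfaces; this is a routine exercise in drawing the sewing diagrams and applying Prop.~\ref{lb8}, rather than a genuine obstacle.
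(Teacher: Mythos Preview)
Your proposal is correct and follows essentially the same approach as the paper: define $A$ and $B$ as compositions of conformal blocks, verify both lie in a common conformal block space (the paper's Fig.~\ref{img3}), contract each $\boxtimes_\fn\Cbb$-slot against $\upomega$ using \eqref{eq58}/\eqref{eq51} to reduce both to the identity on $\Wbb$, and then invoke partial injectivity of $\upomega$ twice. The only cosmetic difference is that the paper packages the double contraction for $A$ via the intermediate identity \eqref{eq126} rather than collapsing one slot at a time, but this is the same computation.
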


\begin{proof}
We only prove \eqref{eq124}, as \eqref{eq125} can be proved in a similar way. We draw \eqref{eq28} as
\begin{align}
\ST^*\Bigg(\vcenter{\hbox{{
			\includegraphics[height=2.5cm]{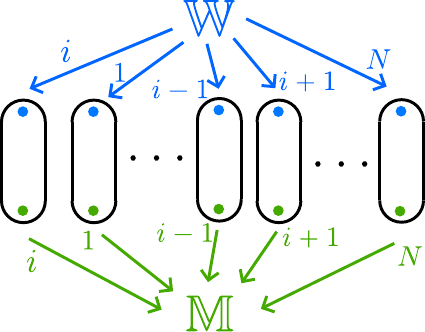}}}}~\Bigg)\xlongequal{\text{abbreviate}}\ST^*\Bigg(\vcenter{\hbox{{
			\includegraphics[height=2.5cm]{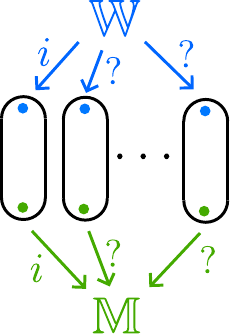}}}}~\Bigg)
\end{align}
That is, we rearrange of order of the spheres in \eqref{eq28} from $1,2,\dots,N$ to $i,1,\dots,i-1,i+1,\cdots,N$, and the labels $1,\dots,i-1,i+1,\dots,N$ on the respective arrows are abbreviated to the symbol $?$. Of course, in this proof we set $\Mbb=\Wbb$. Set 
\begin{gather*}
A:\boxtimes_\fn\Cbb\otimes \boxtimes_\fn\Cbb\otimes \Wbb\rightarrow \Wbb\qquad \psi_1\otimes\psi_2\otimes w\mapsto \Phi_{i,+}(\Phi_{+,+}(\psi_1\otimes\psi_2)\otimes w)\\
B:\boxtimes_\fn\Cbb\otimes \boxtimes_\fn\Cbb\otimes \Wbb\rightarrow \Wbb\qquad  \psi_1\otimes\psi_2\otimes w \mapsto \Phi_{i,+}(\psi_1\otimes \Phi_{i,+}(\psi_2\otimes w))
\end{gather*}
(Recall that $\Phi_{+,+}(\psi_1\otimes\psi_2)=\psi_1\diamond\psi_2$.) Since $A,B$ are defined by composing conformal blocks, they are themselves conformal blocks. See Fig. \ref{img3}.
\begin{figure}[h]
	\centering
\begin{gather*}
\vcenter{\hbox{{
		   \includegraphics[height=2.7cm]{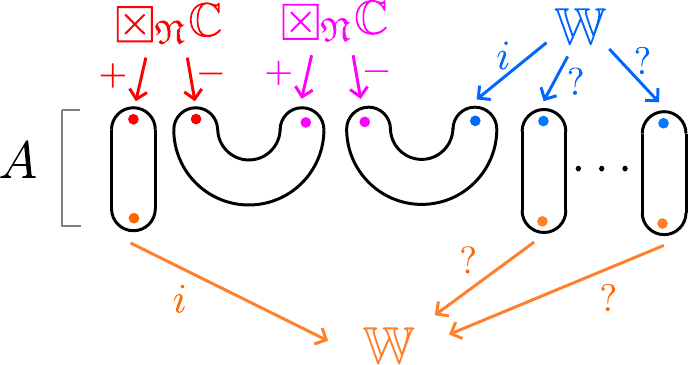}}}}	\quad=\quad\vcenter{\hbox{{
		\includegraphics[height=3.9cm]{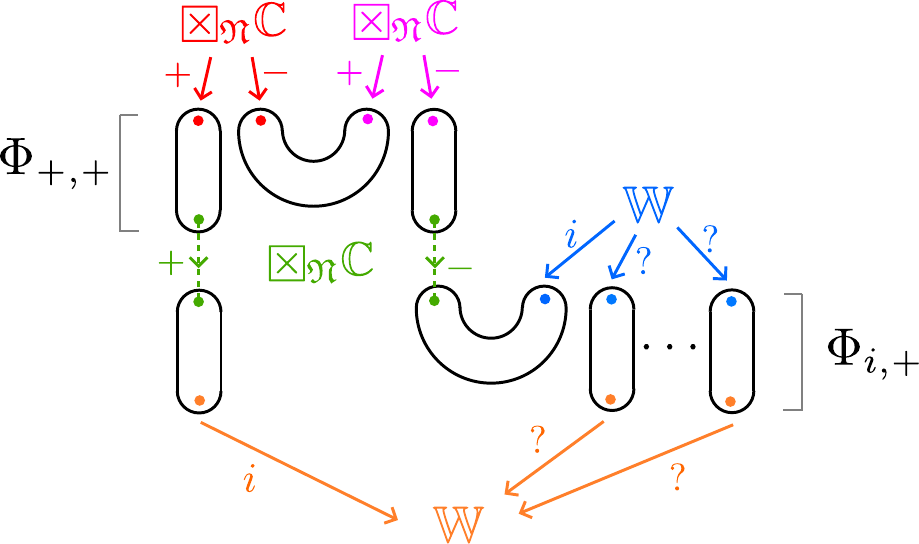}}}}\\[1ex]
\vcenter{\hbox{{
		   \includegraphics[height=2.7cm]{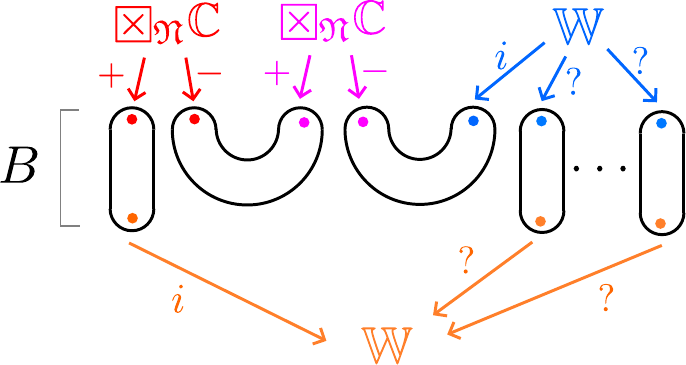}}}}	\quad=\quad	  \vcenter{\hbox{{
		\includegraphics[height=3.9cm]{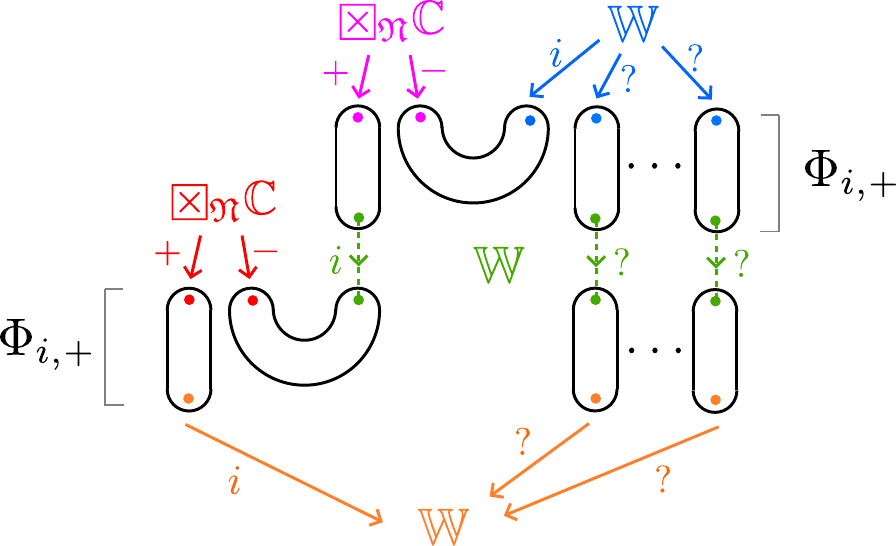}}}}
   \end{gather*}
\caption{~~The conformal blocks $A$ and $B$.}
	\label{img3}
\end{figure}

For each $w\in\Wbb$, we compute that
\begin{align}
&\wick{\upomega(\c1 -) A(\c1 -\otimes \c1 -\otimes w)\upomega(\c1 -)}=\wick{\upomega(\c1 -)\Phi_{i,+}(\Phi_{+,+}(\c1 -\otimes \c1 -)\otimes w)\upomega(\c1 -)}\nonumber\\
=&\wick{\upomega(\c1-)\langle\Phi_{+,+}(\c1-\otimes\c1-),\c2-\rangle\upomega(\c1-)\Phi_{i,+}(\c2-\otimes w) }\xlongequal{\eqref{eq126}}\wick{\upomega(\c1 -)\Phi_{i,+}(\c1 -\otimes w)}\xlongequal{\eqref{eq47}}w  \label{eq128}
	\end{align}
See Fig. \ref{img4} for the picture. We also compute that
\begin{align}
&\wick{\upomega(\c1 -) B(\c1 -\otimes \c1 -\otimes w)\upomega(\c1 -)}=\wick{\upomega(\c1 -)\Phi_{i,+}(\c1 -\otimes \Phi_{i,+}(\c1 -\otimes w))\upomega(\c1 -)}\nonumber\\
\xlongequal{\eqref{eq47}}&\wick{\upomega(\c1 -)\Phi_{i,+}(\c1 -\otimes w)}\xlongequal{\eqref{eq47}}w  \label{eq129}
\end{align}
See Fig. \ref{img5}. The figures show that the sewing radii can be chosen to be admissible when all the sewing moduli are set to $1$. Therefore, by Thm. \ref{lb47}, the contractions in \eqref{eq128} and \eqref{eq129} converge absolutely.

\begin{figure}[H]
	\centering
\begin{align*}
&\vcenter{\hbox{{
		   \includegraphics[height=4.3cm]{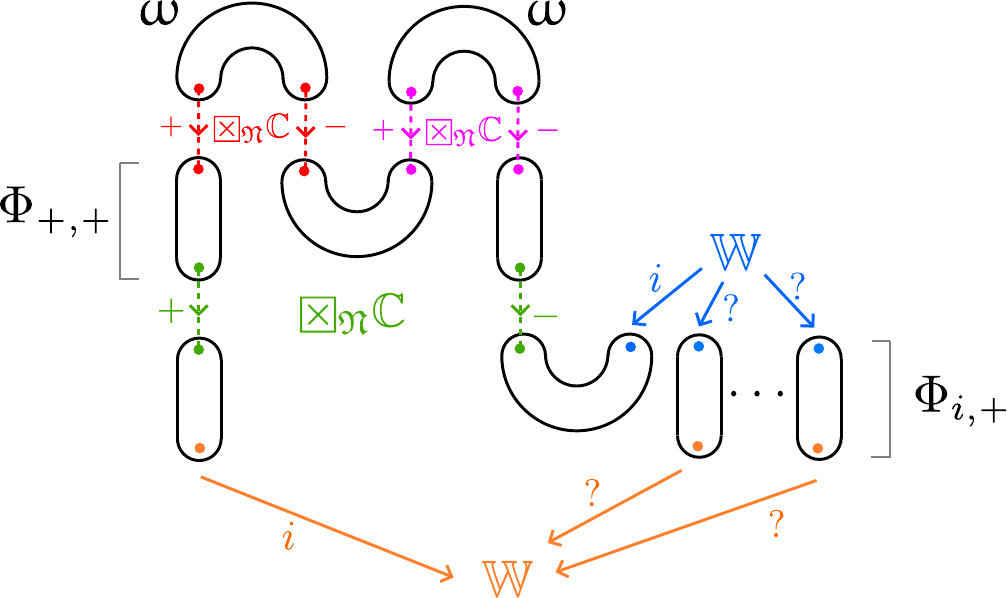}}}}\\[1ex]
&\xlongequal{\eqref{eq127}}\quad
\vcenter{\hbox{{
		   \includegraphics[height=3cm]{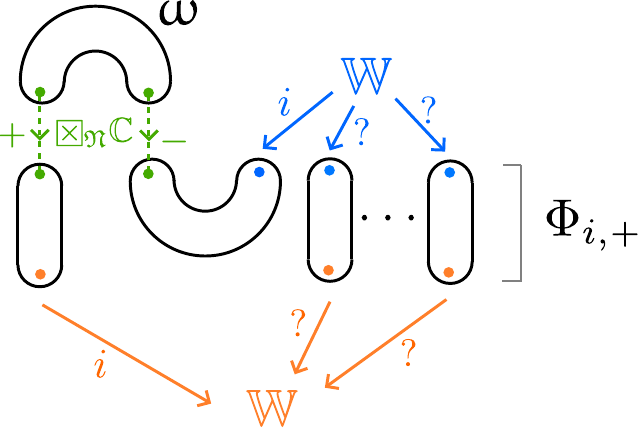}}}}
\quad\xlongequal{\eqref{eq48}}\quad \vcenter{\hbox{{
		   \includegraphics[height=2.4cm]{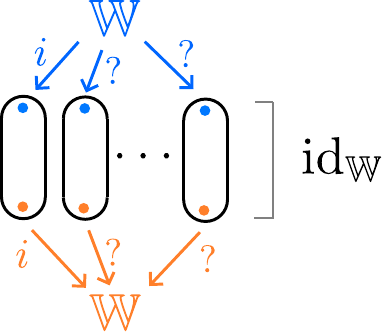}}}}
\end{align*}
\caption{~~The pictorial illustration of \eqref{eq128}.}
	\label{img4}
\end{figure}

\begin{figure}[H]
	\centering
\begin{align*}
&\vcenter{\hbox{{
		   \includegraphics[height=4.3cm]{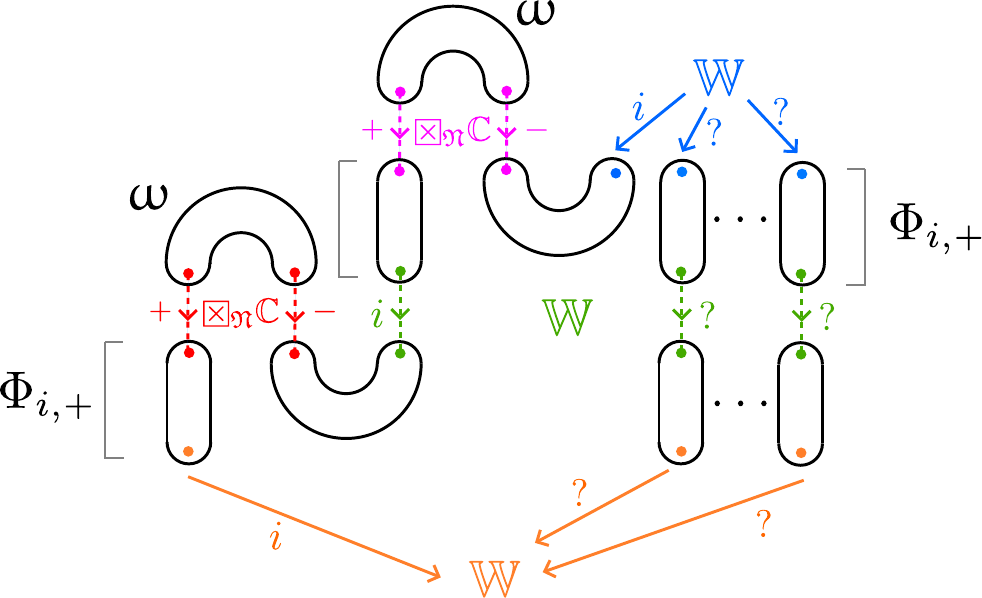}}}}
\quad\xlongequal{\eqref{eq48}}\quad\vcenter{\hbox{{
		   \includegraphics[height=3.8cm]{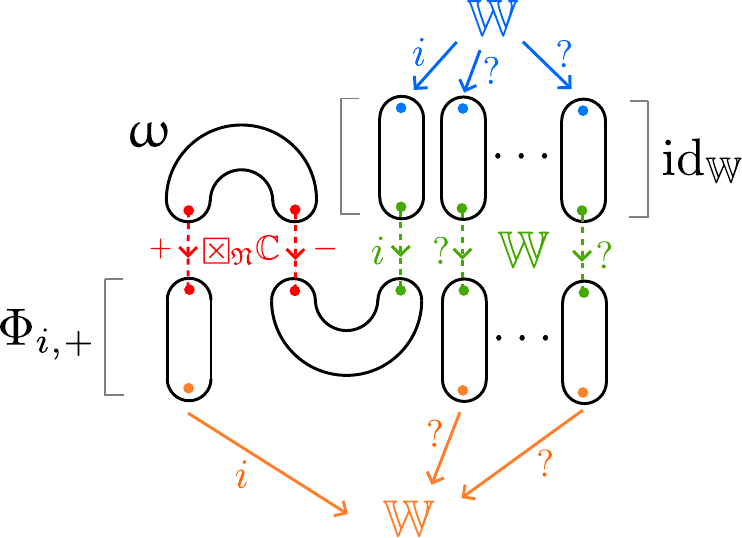}}}}\\[1ex]
&\xlongequal{\eqref{eq48}}\quad
\vcenter{\hbox{{
		   \includegraphics[height=3.8cm]{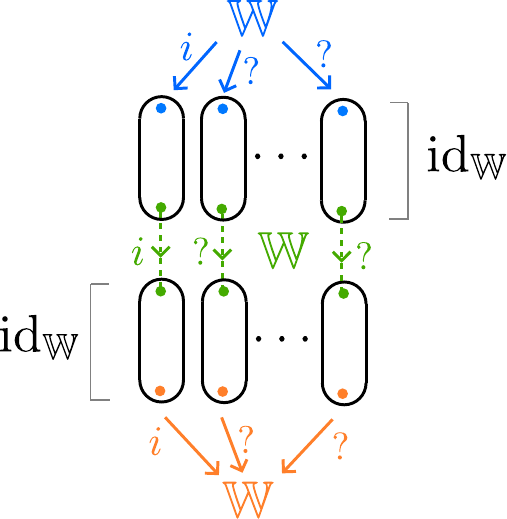}}}}
\quad=\quad \vcenter{\hbox{{
		   \includegraphics[height=2.4cm]{fig21c.pdf}}}}
\end{align*}
\caption{~~The pictorial illustration of \eqref{eq129}.}
	\label{img5}
\end{figure}
Applying twice the partial injectivity of the canonical conformal block $\upomega$ (cf. Rem. \ref{SF2}), we conclude $A=B$.
\end{proof}

\begin{co}\label{lb26}
Let $\Wbb\in \Mod(\Vbb^{\otimes 2})$. Then $\Wbb$ is a $\boxtimes_\fn\Cbb$-bimodule whose left and right module structures are defined by
\begin{gather*}
\boxtimes_\fn\Cbb\times \Wbb\rightarrow\Wbb\qquad (\psi,w)\mapsto \psi\diamond_L w\\
\Wbb\times\boxtimes_\fn\Cbb\rightarrow\Wbb \qquad (w,\psi)\mapsto w\diamond_R \psi
\end{gather*}
\end{co}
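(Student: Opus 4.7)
The plan is to assemble the corollary directly from three results that have already been established in this section; no additional conformal-block calculation should be needed. My first step would be to invoke Theorem \ref{lb50} in the case $N=2$ and $i=1$: identity \eqref{eq124} reads $(\psi_1 \diamond \psi_2) \diamond_L w = \psi_1 \diamond_L (\psi_2 \diamond_L w)$, which is precisely the associativity axiom making $\Wbb$ a left module over the associative $\Cbb$-algebra $(\boxtimes_\fn\Cbb, \diamond)$ constructed in Corollary \ref{lb82}. Applying Theorem \ref{lb50} again with $N=2$ and $i=2$, identity \eqref{eq125} reads $w \diamond_R (\psi_2 \diamond \psi_1) = (w \diamond_R \psi_2) \diamond_R \psi_1$, which is the right-module axiom.

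To upgrade these two compatible one-sided actions into a bimodule, I would invoke Proposition \ref{lb16}, which gives the commutation $(\psi_1 \diamond_L w) \diamond_R \psi_2 = \psi_1 \diamond_L (w \diamond_R \psi_2)$. Combined with the two previous identities, this furnishes all three bimodule axioms, so the corollary follows.

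The only real bookkeeping point, and the one thing to verify carefully, is that the algebra product $\diamond$ on $\boxtimes_\fn\Cbb$ (defined via $\Phi_{+,+}$ and $\Phi_{-,-}$ applied to the special $\Vbb^{\otimes 2}$-module $\Wbb = \boxtimes_\fn\Cbb$ in Proposition \ref{lb15} and Corollary \ref{lb82}) is the \emph{same} operation $\diamond$ that appears inside the parentheses on the left-hand sides of \eqref{eq124}–\eqref{eq125} for a general $\Wbb \in \Mod(\Vbb^{\otimes 2})$. This identification is immediate from the construction, since $\diamond$ is defined intrinsically on $\boxtimes_\fn\Cbb$, independently of the module $\Wbb$ on which it subsequently acts. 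Thus the main obstacle was overcome already in the proofs of Theorem \ref{lb50} and Proposition \ref{lb16}, where the partial injectivity of $\upomega$ (Remark \ref{SF2}) is used twice to compare composed conformal blocks; the present corollary is simply the packaging of those results into a single bimodule statement.
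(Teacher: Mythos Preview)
Your proposal is correct and follows exactly the same approach as the paper, which simply states that the corollary follows immediately from Prop.~\ref{lb16} and Thm.~\ref{lb50}. Your write-up is just a more explicit unpacking of that one-line proof, spelling out which values of $N$ and $i$ are used and which bimodule axiom each result supplies.
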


\begin{proof}
This follows immediately from Prop. \ref{lb16} and Thm. \ref{lb50}.
\end{proof}

\subsection{The AUF algebra $\boxtimes_\fn\Cbb$ with involution $\Theta$}

Fix $1\leq i\leq N$. In this section, we describe the actions of $\boxtimes_\fn\Cbb$ on $\Wbb\in\Mod(\Vbb^{\otimes N})$ in terms of vertex operators; see Thm. \ref{lb21}. This description will allow us to relate linear operators that intertwine the action of $\Vbb$ with those that intertwine the action of $\boxtimes_\fn\Cbb$. It will also be useful in showing that the associative algebra $\boxtimes_\fn\Cbb$ is AUF.

Recall that $\zeta$ is the standard coordinate of $\Cbb$.

\begin{df}\label{lb23}
For each $z\in\Cbb^\times=\Cbb\setminus\{0\}$, let $\fq_z$ be the propagation of the $(2,0)$-pointed sphere $\fn=\eqref{eq18}$ at $z$ with local coordinate $\zeta-z$, cf. Def. \ref{lb37}. Namely, $\fq_z$ is the unordered $(2,1)$-pointed sphere with local coordinates
	\begin{align*}
\fq_z=\big(\{\infty,0\};1/\zeta,\zeta\big|\Pbb^1\big|z;\zeta-z\big)
	\end{align*}
Choose orderings
   \begin{gather*}
\epsilon:\{+,-\}\xlongrightarrow{\simeq}\{\infty,0\}\qquad \epsilon(+)=\infty\qquad \epsilon(-)=0\\
	\iota_z:\{1\}\rightarrow \{z\}\qquad 1 \mapsto z
   \end{gather*}
where $\epsilon$ is the default ordering of $\{\infty,0\}$ (cf. Def. \ref{lb51}). By Def. \ref{lb37} and Thm. \ref{lb36}, 
\begin{align*}
\big(\boxtimes_\fn\Cbb,\aleph_z\big)
\end{align*}
is an $(\epsilon,\iota_z)$-fusion product of $\Vbb$ along $\fq_z$, where $\aleph_z$ is the unique element in
\begin{align*}
\ST^*_{\fq_z,\epsilon*\iota_z}(\bbs_\fn\Cbb\otimes\Vbb)=  
		\ST^*\Bigg(\vcenter{\hbox{{
			\includegraphics[height=2.1cm]{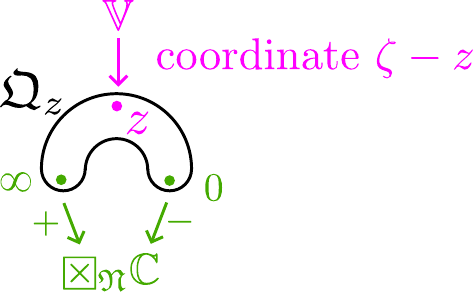}}}}~\Bigg)
\end{align*}
satisfying
\begin{align}\label{eq131}
\aleph_z:\bbs_\fn\Cbb\otimes\Vbb\rightarrow\Cbb\qquad \aleph_z(-\otimes\idt)=\upomega
\end{align}
Define 
\begin{align}\label{eq79}
	\aleph_z^\sharp:\Vbb\rightarrow \ovl{\boxtimes_\fn\Cbb}=(\bbs_\fn\Cbb)^*\qquad v\mapsto\aleph_z(-\otimes v)
\end{align}
where $\aleph_z(-\otimes v)$ denotes the linear map $\bbs_\fn\Cbb\rightarrow\Cbb$ sending each $\psi'\in\bbs_\fn\Cbb$ to $\aleph_z(\psi'\otimes v)$.
\end{df}

\begin{cv}\label{lb60}
As in Convention \ref{lb45}, we will not distinguish between $\aleph_z$ and $\aleph_z^\sharp$ in the graphical representations of conformal blocks. Moreover, only in this section, the local coordinates at $\infty$ and $0$ are always assumed to be $1/\zeta$ and $\zeta$, respectively.
\end{cv}

\begin{rem}
The map $\tipath:t\in\Pbb^1\mapsto 1/t\in\Pbb^1$ implements an isomorphism between $\fk Q_z$ and
\begin{align*}
\fk Q_{1/z}'=\big(\{\infty,0\};1/\zeta,\zeta\big|\Pbb^1\big|1/z;1/\zeta-z\big)
\end{align*}
Therefore, by Prop. \ref{lb8},
\begin{align*}
\big(\boxtimes_\fn\Cbb,\aleph_z\big)
\end{align*}
is an $(\tipath\circ\epsilon,\iota_{1/z})$-fusion product of $\Vbb$ along $\fk Q_{1/z}'$, where $\aleph_z$ belongs to
\begin{align*}
\ST^*\Bigg(\vcenter{\hbox{{
			\includegraphics[height=2.1cm]{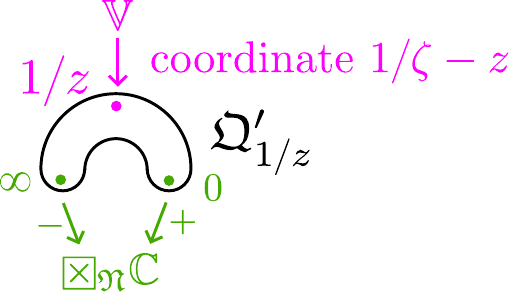}}}}~\Bigg)
\end{align*}
\end{rem}

\begin{lm}\label{lb24}
Fix $z\in\Cbb^\times$. Then for each $\lambda,\mu\in \Cbb$, the linear map
	\begin{gather}
		\label{eq81}P_+({\leq\lambda})\circ P_-({\leq\mu})\circ\aleph_z^\sharp:\Vbb\rightarrow \boxtimes_\fn\Cbb_{[\leq(\lambda,\mu)]}=\big(\bbs_\fn\Cbb_{[\leq(\lambda,\mu)]}\big)^*
	\end{gather}
	is surjective.
\end{lm}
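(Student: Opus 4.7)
The plan is to establish surjectivity by a duality argument, reducing the statement to the partial injectivity of the canonical conformal block $\aleph_z$ (cf. Rem. \ref{SF2}).

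First, I would observe that the target $\boxtimes_\fn\Cbb_{[\leq(\lambda,\mu)]}$ is finite-dimensional, being the image of the idempotent $P_+({\leq\lambda})\circ P_-({\leq\mu})$ on $\ovl{\boxtimes_\fn\Cbb}$, and it is perfectly paired with $\bbs_\fn\Cbb_{[\leq(\lambda,\mu)]}$. Hence surjectivity of \eqref{eq81} is equivalent to showing that every $\psi'\in\bbs_\fn\Cbb_{[\leq(\lambda,\mu)]}$ annihilating the image of \eqref{eq81} must vanish.

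So suppose such a $\psi'$ is given. Since $\psi'$ already lies in $\bbs_\fn\Cbb_{[\leq(\lambda,\mu)]}$, the transposes of $P_+({\leq\lambda})$ and $P_-({\leq\mu})$ act as the identity on it. Therefore, for all $v\in\Vbb$,
\begin{align*}
0=\bigbk{P_+({\leq\lambda})\circ P_-({\leq\mu})\circ\aleph_z^\sharp(v),\psi'}=\bigbk{\aleph_z^\sharp(v),\psi'}=\aleph_z(\psi'\otimes v).
\end{align*}
The sphere $\fq_z$ is connected and its unique incoming marked point is $z$, so each component of $\fq_z$ intersects its set of incoming marked points. Applying the partial injectivity of the canonical conformal block (Rem. \ref{SF2}) to the $(\epsilon,\iota_z)$-fusion product $(\boxtimes_\fn\Cbb,\aleph_z)$ of $\Vbb$ along $\fq_z$, we conclude $\psi'=0$, as required.

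I do not anticipate a substantive obstacle: the identification of the target with $(\bbs_\fn\Cbb_{[\leq(\lambda,\mu)]})^*$ is built into Def. \ref{lb29}, the projection operators and their transposes with respect to the contragredient pairing behave in the standard way, and partial injectivity applies verbatim once the easy geometric hypothesis on $\fq_z$ is checked.
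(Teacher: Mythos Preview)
Your proposal is correct and follows essentially the same approach as the paper: both argue by duality over the finite-dimensional target, reduce to $\aleph_z(\psi'\otimes v)=0$ for all $v\in\Vbb$, and conclude via the partial injectivity of the canonical conformal block $\aleph_z$ (Rem.~\ref{SF2}). Your version is slightly more careful in explicitly verifying the geometric hypothesis on $\fq_z$ needed to invoke Rem.~\ref{SF2}.
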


Note that $P_+({\leq\lambda})$ commutes with $P_-({\leq\mu})$, and $P_+(\lambda)$ commutes with $P_-(\mu)$. (Recall \eqref{eq130} for the meanings of these projection operators.)

\begin{proof}
	Note that $\bbs_\fn\Cbb_{[\leq(\lambda,\mu)]}$ is finite dimensional. If \eqref{eq81} is not surjective, then there exists $0\ne \upphi\in \bbs_\fn\Cbb_{[\leq(\lambda,\mu)]}$ such that for all $v\in V$, 
	\begin{align*}
		\Lan P_+(\leq\lambda)P_-(\leq\mu)\aleph_z^\sharp(v),\upphi\Ran=0,
	\end{align*}
	which is equivalent to $\aleph_z(v\otimes \upphi)=0$. This contradicts the partial injectivity of the canonical conformal block $\aleph_z$ (cf. Rem. \ref{SF2}). Thus \eqref{eq81} must be surjective. 
\end{proof}

\begin{rem}\label{lb30}
Fix $z\in\Cbb^\times$. Then by Lem. \ref{lb24}, each $\psi\in \boxtimes_\fn\Cbb$ can be written as a (finite) linear combination of elements of the form $P_+(\lambda)P_-(\mu)\aleph_z^\sharp(v)$ where $\lambda,\mu\in \Cbb$ and $v\in \Vbb$.
\end{rem}

\begin{thm}\label{lb21}
Let $z\in\Cbb^\times$ and $\Wbb\in\Mod(\Vbb^{\otimes N})$. Then for each $\lambda,\mu\in \Cbb$, $w\in \Wbb$ and $v\in \Vbb$, we have 
\begin{subequations}\label{eq68}
\begin{gather}
\Phi_{i,+}\big(P_+(\lambda)P_-(\mu)\aleph_z^\sharp (v)\otimes w\big) =P_i(\lambda)Y_i(v,z)P_i(\mu)w  \label{eq68a}\\
\Phi_{i,-}\big(w\otimes P_+(\lambda)P_-(\mu)\aleph_z^\sharp (v)\big) =P_i(\mu)Y'_i(v,z)P_i(\lambda)w  \label{eq68b}
\end{gather}
\end{subequations}
\end{thm}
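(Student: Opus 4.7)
The plan is to prove \eqref{eq68a} (the argument for \eqref{eq68b} is parallel, interchanging the roles of $+$ and $-$) by combining an explicit propagation formula for $\aleph_z$ with the commutation relation \eqref{eq118c} from Prop.~\ref{lb44}, bootstrapping from the base case $v = \idt$ which follows from the defining equation \eqref{eq50} of $\Phi_{i,+}$.

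The first and most technical step is to establish the propagation formula
\begin{align*}
\aleph_z(\psi \otimes v) = \bk{\upomega, Y_+'(v,z)\psi} \qquad (\psi\in\bbs_\fn\Cbb,\ v\in\Vbb),
\end{align*}
equivalently $\aleph_z^\sharp(v) = Y_+(v,z)\upomega$ in $\ovl{\boxtimes_\fn\Cbb}$. I would define the RHS as a candidate $\aleph_z^{\mathrm{guess}}$ and verify two things: (a) $\aleph_z^{\mathrm{guess}}$ satisfies the conformal block conditions on $\fq_z$, via a residue computation using the conformal block property of $\upomega$ on $\fn$ together with the Jacobi/commutator identities for the vertex operator of $\boxtimes_\fn\Cbb$; and (b) $\aleph_z^{\mathrm{guess}}(\psi \otimes \idt) = \bk{\upomega,\psi} = \upomega(\psi)$, since $Y_+'(\idt,z) = \id$. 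The uniqueness statement in Thm.~\ref{lb36} then forces $\aleph_z = \aleph_z^{\mathrm{guess}}$.

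With the propagation formula in hand, the remainder is algebraic. The base case $\Phi_{i,+}(P_+(\lambda)P_-(\mu)\upomega \otimes w) = P_i(\lambda)P_i(\mu)w$ follows by applying $P_i(\lambda_0)$ to the defining equation $\sum_{\lambda,\mu}\Phi_{i,+}(P_+(\lambda)P_-(\mu)\upomega \otimes w)=w$ (extracting the $\lambda=\lambda_0$-piece via \eqref{eq64c}), then replacing $w$ by $P_i(\mu_0)w$ and using \eqref{eq64b}. For general $v$, the grading-shift property of $Y_+(v,z)$ lets us decompose
\begin{align*}
P_+(\lambda)P_-(\mu)\aleph_z^\sharp(v) \;=\; \sum_{\lambda'}\bigl(P_+(\lambda)Y_+(v,z)P_+(\lambda')\bigr)\bigl(P_+(\lambda')P_-(\mu)\upomega\bigr),
\end{align*}
where each inner factor is a single mode of $Y_+(v,z)$ (times a power of $z$), and each summand is a well-defined element of $\boxtimes_\fn\Cbb_{[\lambda,\mu]}$; for any given $w$, only finitely many $\lambda'$ contribute. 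Applying $\Phi_{i,+}(\cdot \otimes w)$ and using the mode version of \eqref{eq118c}, namely $\Phi_{i,+}(Y_+(v)_n\chi \otimes w) = Y_i(v)_n\Phi_{i,+}(\chi \otimes w)$, together with the base case yields
\begin{align*}
\Phi_{i,+}\bigl(P_+(\lambda)P_-(\mu)\aleph_z^\sharp(v) \otimes w\bigr) \;=\; \sum_{\lambda'} P_i(\lambda)Y_i(v,z)P_i(\lambda')\,P_i(\lambda')P_i(\mu)w \;=\; P_i(\lambda)Y_i(v,z)P_i(\mu)w.
\end{align*}

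The principal obstacle is Step~1: the verification of the conformal block conditions for $\aleph_z^{\mathrm{guess}}$ on $\fq_z$ via residue computation on sections of $\SV_{\fq_z}\otimes\omega_{\Pbb^1}(\bullet\{\infty,0,z\})$. Alternatively, this step can be bypassed by composing $\aleph_z$ with $\Phi_{i,+}$ using the sewing Thm.~\ref{lb47} and invoking uniqueness of propagation (Thm.~\ref{lb36}) to identify the resulting composite conformal block directly with the vertex-operator insertion $\bk{Y_i(v,z)w,w'}$ — an approach more in the geometric spirit of the paper, although it requires careful tracking of the sewn pointed surface and its local coordinates.
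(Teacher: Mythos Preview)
Your primary approach is correct and differs from the paper's in an instructive way. The paper takes exactly your ``alternative'' route: it composes $\Phi_{i,+}$ with $\aleph_z$ via the sewing Thm.~\ref{lb47} to obtain a conformal block on the propagated surface, checks equality with the vertex-operator conformal block $\bk{w',Y_i(v,z)w}$ at $v=\idt$, and concludes by propagation uniqueness (\cite[Cor.~2.44]{GZ1}) that
\[
\sum_{\lambda,\mu}\bigbk{w',\Phi_{i,+}(P_+(\lambda)P_-(\mu)\aleph_z^\sharp(v)\otimes w)}=\bk{w',Y_i(v,z)w}.
\]
It then extracts \eqref{eq68a} directly by replacing $w,w'$ with $P_i(\mu)w,P_i(\lambda)w'$ and invoking \eqref{eq64b}, \eqref{eq64c}. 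This sidesteps both the residue verification for $\aleph_z^{\mathrm{guess}}$ and the mode-by-mode use of \eqref{eq118c}; the price is the sewing convergence theorem. Your route is more hands-on: the explicit identity $\aleph_z^\sharp(v)=Y_+(v,z)\upomega$ is the standard single-point propagation formula and is useful in its own right, and once established the rest runs entirely inside $\boxtimes_\fn\Cbb$ and $\Wbb$ without further geometric machinery.

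One sharpening: your sum over $\lambda'$ actually collapses to the single term $\lambda'=\mu$. The conformal-block relation \eqref{eq69} for $\upomega$ gives $\bk{\upomega,L_+(0)\psi}=\bk{\upomega,L_-(0)\psi}$, and Lem.~\ref{lb31} then forces $P_+(\lambda')P_-(\mu)\upomega=\delta_{\lambda',\mu}\,\upchi_\mu$ (cf.\ Cor.~\ref{lb20}). This removes any worry about convergence or finiteness of the $\lambda'$-sum \emph{before} applying $\Phi_{i,+}$, making your phrase ``for any given $w$, only finitely many $\lambda'$ contribute'' unnecessary.
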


Clearly, these identities still hold if $\lambda$ and $\mu$ are replaced with $\leq\lambda$ and $\leq\mu$.

\begin{proof}
Step 1.	We claim that for each $w\in \Wbb,w'\in \Wbb',v\in \Vbb$ we have
\begin{subequations}\label{eq62}
\begin{gather}
\wick{\Lan w',\Phi_{i,+}(\c1 -\otimes w)\Ran \aleph_z(\c1 - \otimes v)}=\Lan w',Y_i(v,z)w \Ran\label{eq62a}\\
\wick{\Lan w',\Phi_{i,-}(w\otimes \c1 -)\Ran \aleph_z(\c1 - \otimes v)}=\Lan w',Y'_i(v,z)w \Ran\label{eq62b}
\end{gather}
\end{subequations}
We first prove \eqref{eq62a}. Note that \eqref{eq62a} is equivalent to Fig. \ref{img6}; in particular, both sides of \eqref{eq62a} define conformal blocks associated to geometric data on the respective side of Fig. \ref{img6}. When $v$ is the vacuum vector $\idt$, the right hand side of \eqref{eq62a} becomes $\bk{w',w}$ (i.e., the conformal block $\id_\Wbb$); by \eqref{eq47} (whose pictorial illustration is \eqref{eq48}) and \eqref{eq131}, the left hand side also becomes $\bk{w',w}$. Therefore, by the propagation of conformal blocks (cf. \cite[Cor. 2.44]{GZ1}), \eqref{eq62a} holds true.
\begin{figure}[h]
	\centering
\begin{align*}
		\vcenter{\hbox{{
			\includegraphics[height=3.6cm]{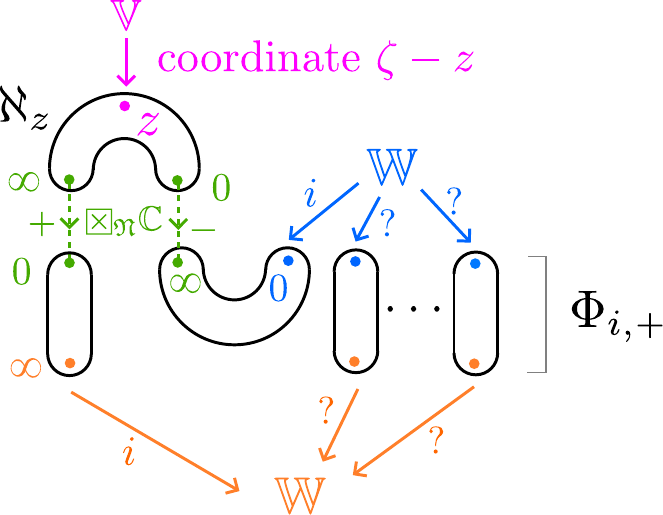}}}}\quad=\quad\vcenter{\hbox{{
				\includegraphics[height=2.4cm]{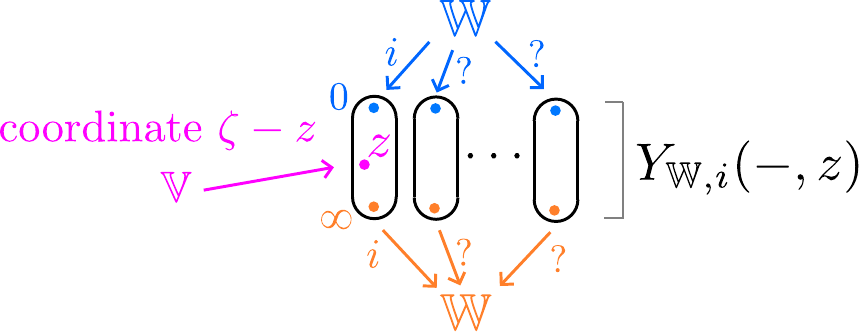}}}}
	 \end{align*}
\caption{~~The figure for \eqref{eq62a}}
	\label{img6}
\end{figure}

Similarly, \eqref{eq62b} is equivalent to Fig. \ref{img7}. Here, we need to check that the RHS of \eqref{eq62b} defines a conformal block associated to the geometric data given on the RHS of Fig. \ref{img7}. Once this is proved, by applying again the propagation of conformal blocks, we obtain \eqref{eq62b}. (Note again that in Fig. \ref{img6} and \ref{img7}, the sewing radii can be chosen to be admissible when all the sewing moduli are set to $1$. Therefore, the contractions converge absolutely.)
\begin{figure}[h]
	\centering
\begin{align*}
		\vcenter{\hbox{{
			\includegraphics[height=3.6cm]{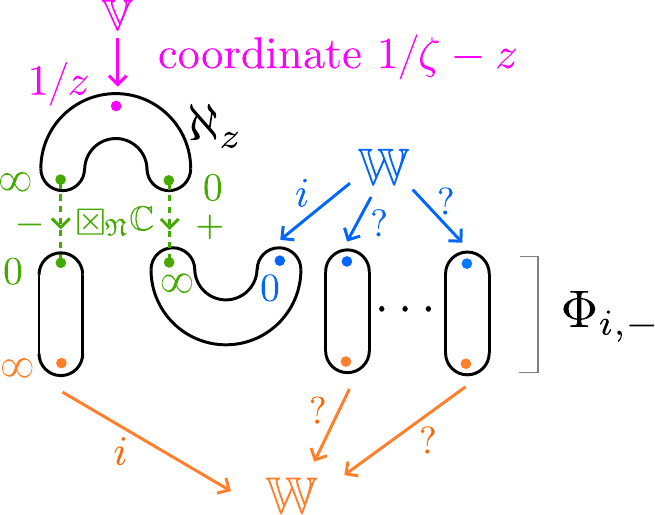}}}}\quad=\quad\vcenter{\hbox{{
				\includegraphics[height=2.4cm]{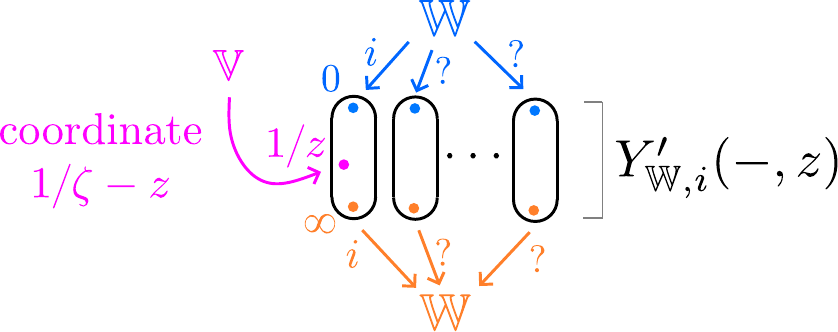}}}}
	 \end{align*}
\caption{~~The figure for \eqref{eq62b}}
	\label{img7}
\end{figure}

If the local coordinate at $1/z$ is $\zeta-1/z$ instead of $1/\zeta-z$, then 
\begin{gather*}
\Wbb\otimes\Vbb\otimes\Wbb'\rightarrow\Cbb\qquad w\otimes v\otimes w'\mapsto\bk{w',Y_i(v,1/z)w}
\end{gather*}
defines a conformal block. Now, the element $\upgamma_z\in\MG$ (cf. \eqref{eq132}) transforms $1/\zeta-z$ to $\zeta-1/z$, that is, $\upgamma_z\circ(1/\zeta-z)=\zeta-1/z$. By Prop. \ref{lb65}, the linear functional
\begin{gather*}
\Wbb\otimes\Vbb\otimes\Wbb'\rightarrow\Cbb\qquad w\otimes v\otimes w'\mapsto\bk{w',Y_i(\MU(\upgamma_z)v,1/z)w}=\bk{w',Y_i'(v,z)w}
\end{gather*}
defines a conformal block associated to the geometric data on the RHS of Fig. \ref{img7}.\\[-1ex]

Step 2. By the definition of contractions (cf. \eqref{eq57}) and the map $\aleph_z^\sharp$, we can write \eqref{eq62a} (equivalently, Fig. \ref{img6}) as 
	\begin{align}\label{eq63}
\sum_{\lambda,\mu\in \Cbb}\Lan w',\Phi_{i,+}(P_+(\lambda)P_-(\mu)\aleph_z^\sharp(v)\otimes w)\Ran=\Lan w',Y_i(v,z)w \Ran
	\end{align}
Therefore, for each $\lambda,\mu\in\Cbb$, we have
	\begin{align*}
&\Lan w',P_i(\lambda)Y_i(v,z)P_i(\mu)w\Ran=\Lan P_i(\lambda)w',Y_i(v,z)P_i(\mu)w\Ran\\
\xlongequal{\eqref{eq63}}&\sum_{\wtd\lambda,\wtd \mu \in \Cbb}\Lan  P_i(\lambda)w',\Phi_{i,+}(P_+(\wtd \lambda)P_-(\wtd \mu)\aleph_z^\sharp(v)\otimes P_i(\mu)w)\Ran\\
\xlongequal[\eqref{eq64c}]{\eqref{eq64b}} &\Lan w',\Phi_{i,+}(P_+(\lambda)P_-(\mu)\aleph_z^\sharp(v)\otimes w)\Ran
	\end{align*}
Since $w'\in \Wbb'$ is arbitrary, we obtain \eqref{eq68a}. Similarly, \eqref{eq62b} implies
\begin{align}\label{eq72}
\sum_{\lambda,\mu\in \Cbb}\Lan w',\Phi_{i,-}(w\otimes P_-(\mu)P_+(\lambda)\aleph_z^\sharp(v))\Ran=\Lan w',Y'_i(v,z)w \Ran
\end{align}
and hence
\begin{align*}
&\Lan w',P_i(\mu)Y'_i(v,z)P_i(\lambda)w\Ran=\Lan P_i(\mu)w',Y'_i(v,z)P_i(\lambda)w\Ran\\
\xlongequal{\eqref{eq72}}&\sum_{\wtd\lambda,\wtd \mu \in \Cbb}\Lan  P_i(\mu)w',\Phi_{i,-}(P_i(\lambda)w\otimes P_-(\wtd \mu)P_+(\wtd \lambda)\aleph_z^\sharp(v))\Ran\\
\xlongequal[\eqref{eq65c}]{\eqref{eq65b}} &\Lan w',\Phi_{i,-}(w\otimes P_-(\mu)P_+(\lambda)\aleph_z^\sharp(v))\Ran
\end{align*}
Eq. \eqref{eq68b} follows.
\end{proof}

\begin{co}\label{lb20}
Assume that $N=2$ and $\Wbb\in\Mod(\Vbb^{\otimes 2})$. Let $z\in\Cbb^\times$. For each $\lambda\in \Cbb$, let 
\begin{align}\label{eq85}
\upchi_\lambda=P_+(\lambda)P_-(\lambda)\aleph_z^\sharp (\idt)\qquad\in (\boxtimes_\fn\Cbb)_{[\lambda,\lambda]}
\end{align}
Then $\upchi_\lambda$ is independent of the choice of $z$. Moreover, for each $w\in \Wbb$, we have 
\begin{align}\label{eq70}
	\upchi_\lambda\diamond_{L} w=P_+(\lambda)w,\quad w\diamond_R \upchi_\lambda=P_-(\lambda)w.
\end{align}
\end{co}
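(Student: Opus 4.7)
The plan is to deduce the two multiplication identities as immediate specializations of Thm.~\ref{lb21}, and then exploit them in the case $\Wbb=\boxtimes_\fn\Cbb$ to obtain the $z$-independence of $\upchi_\lambda$.

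For the identities, apply Thm.~\ref{lb21} with $v=\idt$ and $\mu=\lambda$. The vacuum axiom gives $Y_i(\idt,z)=\id_\Wbb$, and since $L(n)\idt=0$ for $n\geq -1$ one has $\MU(\upgamma_z)\idt=\idt$, whence $Y_i'(\idt,z)=Y_i(\idt,z^{-1})=\id_\Wbb$. Using the idempotency $P_\pm(\lambda)^2=P_\pm(\lambda)$, eqs.~\eqref{eq68a} and \eqref{eq68b} (with $i=1$ and $i=2$ respectively) collapse to
\begin{align*}
\upchi_\lambda\diamond_L w = P_+(\lambda)\,Y_1(\idt,z)\,P_+(\lambda)w = P_+(\lambda)w, \qquad w\diamond_R\upchi_\lambda = P_-(\lambda)\,Y_2'(\idt,z)\,P_-(\lambda)w = P_-(\lambda)w,
\end{align*}
which are exactly \eqref{eq70}. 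In particular, $\upchi_\lambda$ acts as a two-sided ``partial identity'' for the $\boxtimes_\fn\Cbb$-action on any $\Vbb^{\otimes 2}$-module.

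For the $z$-independence, I would specialize these identities to $\Wbb=\boxtimes_\fn\Cbb$, where by Prop.~\ref{lb15} the two actions coincide: $\diamond_L=\diamond_R=\diamond$. Given $z,z'\in\Cbb^\times$, write $\upchi_\lambda(z),\upchi_\lambda(z')$ for the elements defined via the respective parameters. Both lie in $(\boxtimes_\fn\Cbb)_{[\lambda,\lambda]}$ by construction, so $P_+(\lambda)$ and $P_-(\lambda)$ each act as the identity on them. Applying the first identity with $w=\upchi_\lambda(z')$ gives
\begin{align*}
\upchi_\lambda(z)\diamond\upchi_\lambda(z') = P_+(\lambda)\upchi_\lambda(z') = \upchi_\lambda(z'),
\end{align*}
while applying the second identity with $w=\upchi_\lambda(z)$ and with $\upchi_\lambda(z')$ playing the role of $\upchi_\lambda$ gives
\begin{align*}
\upchi_\lambda(z)\diamond\upchi_\lambda(z') = P_-(\lambda)\upchi_\lambda(z) = \upchi_\lambda(z).
\end{align*}
Comparing the two expressions yields $\upchi_\lambda(z)=\upchi_\lambda(z')$.

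No serious obstacle is expected: the whole content of Cor.~\ref{lb20} amounts to evaluating Thm.~\ref{lb21} at the vacuum vector and then using the coincidence $\diamond_L=\diamond_R=\diamond$ from Prop.~\ref{lb15}. The only minor bookkeeping is to note that $P_+(\lambda)$ and $P_-(\lambda)$ commute (since $L_1(0)$ and $L_2(0)$ commute), so the order in which they appear in $\upchi_\lambda$ is immaterial.
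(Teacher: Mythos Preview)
Your proof is correct. The derivation of \eqref{eq70} matches the paper's exactly: specialize Thm.~\ref{lb21} to $v=\idt$, $\mu=\lambda$.

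For the $z$-independence you take a genuinely different route from the paper. The paper simply reads off from the defining relation \eqref{eq131} that $\aleph_z^\sharp(\idt)=\aleph_z(-\otimes\idt)=\upomega$, and since $\upomega$ is the fixed canonical conformal block chosen once and for all in Def.~\ref{lb29}, the element $\aleph_z^\sharp(\idt)$ (and hence its projection $\upchi_\lambda$) does not depend on $z$ at all. Your argument instead bootstraps from the already-established identities \eqref{eq70}, combined with Prop.~\ref{lb15}, to force $\upchi_\lambda(z)\diamond\upchi_\lambda(z')$ to equal both $\upchi_\lambda(z)$ and $\upchi_\lambda(z')$. This is perfectly valid and is a pleasant illustration of how the idempotent-like behaviour of $\upchi_\lambda$ determines it uniquely. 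The paper's approach is shorter because it goes back to the definition; yours has the virtue of not needing to unwind what $\aleph_z^\sharp$ is, relying only on the algebraic consequences already on the table.
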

\begin{proof}
\eqref{eq70} follows from Thm. \ref{lb21} by choosing $v=\idt$ and $\mu=\lambda$. By the definition of $\aleph_z$ and $\aleph_z^\sharp$ in Def. \ref{lb23}, the element $\aleph_z^\sharp(\idt)\in\ovl{\boxtimes_\fn\Cbb}=(\bbs_\fn\Cbb)^*$ is equal to $\upomega$. Therefore, $\aleph_z^\sharp(\idt)$ is independent of $z$. So is $\upchi_\lambda$. 
\end{proof}

\begin{co}\label{lb54}
$(\upchi_\lambda)_{\lambda\in \Cbb}$ is a family of mutually orthogonal idempotents in the algebra $\boxtimes_\fn\Cbb$, i.e., for each $\lambda,\mu\in \Cbb$, $\upchi_\lambda\diamond \upchi_\mu=\delta_{\lambda,\mu}\upchi_\mu$. Moreover, we have
\begin{gather*}
\dim \big(\upchi_\lambda \diamond \boxtimes_\fn\Cbb \diamond \upchi_\mu\big)<\infty
\end{gather*}
For each $\psi\in \boxtimes_\fn\Cbb$, we have
\begin{align}\label{eq61}
\psi=\sum_{\lambda,\mu\in \Cbb} \upchi_\lambda \diamond \psi\diamond \upchi_\mu
\end{align}
where RHS is a finite sum. Therefore, $(\boxtimes_\fn\Cbb,\diamond)$ is an AUF algebra in the sense of \cite{GZ4}. 
\end{co}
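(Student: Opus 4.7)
The plan is to apply Cor.~\ref{lb20} in the special case $\Wbb=\boxtimes_\fn\Cbb$ (which is an object of $\Mod(\Vbb^{\otimes 2})$) so that each assertion becomes a routine statement about the action of the generalized eigenspace projections $P_+(\lambda),P_-(\mu)$ on $\boxtimes_\fn\Cbb$. On this particular module, Prop.~\ref{lb15} tells us that the left and right actions agree with the algebra product $\diamond$, so the formulas \eqref{eq70} read
\begin{align*}
\upchi_\lambda\diamond\psi=P_+(\lambda)\psi,\qquad \psi\diamond\upchi_\mu=P_-(\mu)\psi\qquad(\psi\in\boxtimes_\fn\Cbb).
\end{align*}

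First I would establish mutual orthogonality: using the display above, $\upchi_\lambda\diamond\upchi_\mu=P_+(\lambda)\upchi_\mu$. Since \eqref{eq85} gives $\upchi_\mu\in(\boxtimes_\fn\Cbb)_{[\mu,\mu]}$, the operator $P_+(\lambda)$ acts as the identity when $\lambda=\mu$ and as zero otherwise, yielding $\upchi_\lambda\diamond\upchi_\mu=\delta_{\lambda,\mu}\upchi_\mu$. Next, combining both halves of \eqref{eq70} with the associativity from Prop.~\ref{lb16} gives, for any $\psi\in\boxtimes_\fn\Cbb$,
\begin{align*}
\upchi_\lambda\diamond\psi\diamond\upchi_\mu=P_+(\lambda)P_-(\mu)\psi\in(\boxtimes_\fn\Cbb)_{[\lambda,\mu]},
\end{align*}
and this subspace is finite-dimensional because $\boxtimes_\fn\Cbb$ is a grading-restricted generalized $\Vbb^{\otimes 2}$-module. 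Hence $\upchi_\lambda\diamond\boxtimes_\fn\Cbb\diamond\upchi_\mu$ is finite-dimensional.

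For the expansion \eqref{eq61}, I would invoke Rem.~\ref{lb30}: every $\psi\in\boxtimes_\fn\Cbb$ is a \emph{finite} linear combination of elements of the form $P_+(\lambda)P_-(\mu)\aleph_z^\sharp(v)$, so $\psi$ lies in the direct sum of finitely many generalized eigenspaces $(\boxtimes_\fn\Cbb)_{[\alpha,\beta]}$. The commuting projections $P_+(\alpha)P_-(\beta)$ restricted to this finite direct sum form a partition of the identity, so
\begin{align*}
\psi=\sum_{\alpha,\beta}P_+(\alpha)P_-(\beta)\psi=\sum_{\alpha,\beta}\upchi_\alpha\diamond\psi\diamond\upchi_\beta,
\end{align*}
a finite sum. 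Collecting the three steps verifies the defining property of an AUF algebra from \cite{GZ4}.

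There is no real obstacle here; the substantive work was done in proving Thm.~\ref{lb21} and its corollary, and the present statement is a direct bookkeeping consequence. The only point worth being careful about is confirming that $\upchi_\mu$ lies in the $(\mu,\mu)$-eigenspace (so that $P_+(\lambda)$ and $P_-(\lambda)$ interact with it correctly), which is transparent from the definition \eqref{eq85}, and that the $\diamond_L$/$\diamond_R$ ambiguity on $\boxtimes_\fn\Cbb$ is resolved by Prop.~\ref{lb15}.
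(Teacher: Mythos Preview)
Your argument is correct and follows essentially the same route as the paper: apply Cor.~\ref{lb20} with $\Wbb=\boxtimes_\fn\Cbb$, translate $\diamond_L,\diamond_R$ into the projections $P_+(\lambda),P_-(\mu)$, and read off orthogonality, finite-dimensionality, and the decomposition from the grading. The one minor simplification is that you do not need Rem.~\ref{lb30} for \eqref{eq61}: since $\boxtimes_\fn\Cbb\in\Mod(\Vbb^{\otimes 2})$ is grading-restricted by definition, the direct-sum decomposition $\boxtimes_\fn\Cbb=\bigoplus_{\lambda,\mu}(\boxtimes_\fn\Cbb)_{[\lambda,\mu]}$ already guarantees that every $\psi$ has only finitely many nonzero components $P_+(\lambda)P_-(\mu)\psi$, which is exactly how the paper argues.
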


\begin{proof}
	By Cor. \ref{lb20} with $\Wbb=\boxtimes_\fn\Cbb$, for each $\lambda,\mu\in \Cbb$, we have 
	\begin{align*}
		\upchi_\lambda\diamond \upchi_\mu=P_+(\lambda)\upchi_\mu=\delta_{\lambda,\mu}\upchi_\mu
	\end{align*}
	and 
	\begin{align}\label{eq60}
		\upchi_\lambda\diamond \boxtimes_\fn\Cbb\diamond \upchi_\mu=\boxtimes_\fn\Cbb_{[\lambda,\mu]}
	\end{align}
	\eqref{eq60} implies that $\dim \big(\upchi_\lambda \diamond \boxtimes_\fn\Cbb \diamond \upchi_\mu\big)<\infty$. Moreover, we have a decomposition
	\begin{align*}
		\boxtimes_\fn\Cbb=\bigoplus_{\lambda,\mu\in \Cbb} (\boxtimes_\fn\Cbb)_{[\lambda,\mu]}\xlongequal{\eqref{eq60}}\bigoplus_{\lambda,\mu\in \Cbb} \upchi_\lambda\diamond \boxtimes_\fn\Cbb\diamond \upchi_\mu
	\end{align*}
	which implies \eqref{eq61}. 
\end{proof}

\begin{comment}
\begin{co}
Let $\Wbb_1,\Wbb_2\in \Mod(\Vbb^{\otimes 2})$ and $T:\Wbb_1\rightarrow \Wbb_2$ be a linear map. Then $T\in \Hom_{\Vbb^{\otimes 2}}(\Wbb_1,\Wbb_2)$ iff for each $\psi,\varphi\in \boxtimes_\fn\Cbb$ and $w\in \Wbb_1$, we have 
\begin{align}
	T(\psi\diamond_L w)=\psi\diamond_L T(w),\quad T(w\diamond_R \varphi)=T(w)\diamond_R \varphi
\end{align}
\end{co}
\begin{proof}
	
\end{proof}
\end{comment}

\begin{co}
The canonical involution $\Theta$ is an anti-automorphism of the associative algebra $\boxtimes_\fn\Cbb$. That is, for each $\psi_1,\psi_2\in\boxtimes_\fn\Cbb$, we have
\begin{align}\label{eq89}
\Theta\psi_1\diamond\Theta\psi_2=\Theta(\psi_2\diamond\psi_1)
\end{align}
\end{co}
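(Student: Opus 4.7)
The plan is to define
\begin{align*}
\mu(\psi_1 \otimes \psi_2) := \Theta\bigl(\Theta \psi_2 \diamond \Theta \psi_1\bigr)
\end{align*}
and to show that $\mu$ coincides with the multiplication map $\Phi = \Phi_{+,+}$ on $\Wbb = \boxtimes_\fn\Cbb$. Once this is established, replacing $\psi_i$ by $\Theta\psi_i$ and using $\Theta^2 = \id_{\boxtimes_\fn\Cbb}$ from Thm.~\ref{lb62} yields exactly \eqref{eq89}. The identification $\mu = \Phi$ will follow from the uniqueness clause in Def.~\ref{lb49}: $\Phi$ is the unique element of the conformal block space \eqref{eq55} (with $\Wbb = \Mbb = \boxtimes_\fn\Cbb$ and $i = +$) satisfying the sewing normalization $\wick{\upomega(\c1 -)\cdot\Phi(\c1 -\otimes w)} = w$ for every $w \in \boxtimes_\fn\Cbb$.

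First I would verify that $\mu$ satisfies the three vertex operator relations \eqref{eq118} required by Prop.~\ref{lb44}. The crucial input is that $\Theta$ intertwines the two $\Vbb$-actions on $\boxtimes_\fn\Cbb$: \eqref{eq84} gives $\Theta Y_\pm(v,z) = Y_\mp(v,z)\Theta$, and in view of $Y'_i(v,z) = Y_i(\mc U(\upgamma_z)v, z^{-1})$ from \eqref{eq1}, the analogous identity $\Theta Y'_\pm(v,z) = Y'_\mp(v,z)\Theta$ also holds. Relations \eqref{eq118a} and \eqref{eq118c} for $\mu$ then follow by pushing $\Theta$ through the corresponding relations for $\Phi$. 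Relation \eqref{eq118b} is more delicate: transporting $Y_+(v,z)$ across the outer $\Theta$ produces $Y_-(v,z)$ on the \emph{first} argument of $\Phi$, a configuration not covered directly by \eqref{eq118}. Here I would invoke Prop.~\ref{lb15} ($\Phi = \Psi$ on $\boxtimes_\fn\Cbb$) and apply \eqref{eq119b}, which trades $Y_-$ on the left slot of $\Psi$ for $Y'_+$ on the right slot; a final application of $Y'_+\Theta = \Theta Y'_-$ rewrites the result as $\mu(Y'_-(v,z)\psi_1 \otimes \psi_2)$, matching \eqref{eq118b}.

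Second I would verify the normalization $\wick{\upomega(\c1 -)\cdot\mu(\c1 -\otimes w)} = w$ for $w \in \boxtimes_\fn\Cbb$. The key observation is that the contraction pairing between $\bbs_\fn\Cbb$ and $\boxtimes_\fn\Cbb$ is invariant under the substitution $(A, B) \mapsto (A \circ T^\tr, B \circ T)$ for any invertible $T \in \End(\boxtimes_\fn\Cbb)$, since replacing a basis $(\psi^\alpha)$ by $(T\psi^\alpha)$ transforms the dual basis by $(T^\tr)^{-1}$. Taking $T = \Theta$, so $(T^\tr)^{-1} = T^\tr$ thanks to $\Theta^2 = \id$, and using $\upomega \circ \Theta^\tr = \upomega$ from \eqref{eq86}, one computes
\begin{align*}
\wick{\upomega(\c1 -)\cdot\Theta(\Theta w \diamond \Theta\c1 -)} = \wick{\upomega(\c1 -)\cdot\Theta(\Theta w \diamond \c1 -)} = \Theta\bigl(\wick{(\Theta w \diamond \c1 -)\cdot\upomega(\c1 -)}\bigr) = \Theta(\Theta w) = w,
\end{align*}
where the second equality pulls the linear operator $\Theta$ outside the contraction sum and the third applies the companion normalization $\wick{(w \diamond \c1 -)\cdot\upomega(\c1 -)} = w$ from \eqref{eq51} with $w$ replaced by $\Theta w$.

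Combining the two steps, the uniqueness in Def.~\ref{lb49} forces $\mu = \Phi$, completing the proof of \eqref{eq89}. The most delicate point of the argument is the verification of \eqref{eq118b}, where the left--right asymmetry of the $\Phi_{+,+}$-type relations in \eqref{eq118} forces one to route through the coincidence $\Phi = \Psi$ from Prop.~\ref{lb15}, rather than manipulating only the $\Phi$-relations themselves.
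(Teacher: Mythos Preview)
Your proof is correct but follows a different route from the paper. The paper's argument is shorter and more packaged: it starts from the already-established relation \eqref{eq88} of Thm.~\ref{lb62}, which gives $\Theta\psi\diamond w=\Phi_{+,-}(w\otimes\psi)$ for all $w,\psi\in\boxtimes_\fn\Cbb$; then the right-module associativity \eqref{eq125} of Thm.~\ref{lb50} yields $(\Theta\psi_1\diamond\Theta\psi_2)\diamond w=\Theta(\psi_2\diamond\psi_1)\diamond w$ for every $w$; finally the AUF property (Cor.~\ref{lb54}) supplies an idempotent $w$ that cancels on the right. Your approach instead re-verifies from scratch that $\mu(\psi_1\otimes\psi_2):=\Theta(\Theta\psi_2\diamond\Theta\psi_1)$ satisfies the full conformal-block characterization of $\Phi_{+,+}$ in Def.~\ref{lb49}: the three relations \eqref{eq118} and the sewing normalization \eqref{eq51}. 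What you gain is a self-contained argument that avoids quoting \eqref{eq88} and the AUF cancellation trick; what the paper gains is brevity, since \eqref{eq88} already encodes exactly the intertwining of $\Theta$ with the left/right actions that you recover piecewise. Both proofs ultimately pass through Prop.~\ref{lb15} (you explicitly for \eqref{eq118b}, the paper implicitly through Thm.~\ref{lb50}).
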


\begin{proof}
By Thm. \ref{lb62}, $\Theta$ intertwines $\Phi_{i,+}$ and $\Phi_{i,-}$. Therefore, for each $w,\psi\in\boxtimes_\fn\Cbb$, we have
\begin{align*}
\Theta\psi\diamond w=\Phi_{+,-}(w\otimes\psi)
\end{align*}
Therefore
\begin{align*}
&(\Theta\psi_1\diamond\Theta\psi_2)\diamond w=\Theta\psi_1\diamond(\Theta\psi_2\diamond w)=\Theta\psi_1\diamond\Phi_{+,-}(w\otimes\psi_2)\\
=&\Phi_{+,-}(\Phi_{+,-}(w\otimes\psi_2)\otimes\psi_1)
\end{align*}
By Thm. \ref{lb50}, the last term above equals
\begin{align*}
\Phi_{+,-} (w\otimes(\psi_2\diamond\psi_1))=\Theta(\psi_2\diamond\psi_1)
\diamond w
\end{align*}
Since $\boxtimes_\fn\Cbb$ is AUF, there exists an idempotent $w$ such that
\begin{align*}
(\Theta\psi_1\diamond\Theta\psi_2)\diamond w=\Theta\psi_1\diamond\Theta\psi_2\qquad \Theta(\psi_2\diamond\psi_1)
\diamond w=\Theta(\psi_2\diamond\psi_1)
\end{align*}
This proves \eqref{eq89}. 
\end{proof}

\subsection{The linear isomorphism $\SF:\Mod(\Vbb)\xlongrightarrow{\simeq} \Coh(\boxtimes_\fn \Cbb)$}

In this section, we apply the previous results to the case where $N=1$. Recall from \eqref{eq87} that for each $\Mbb\in\Mod(\Vbb)$,
\begin{align*}
\End^0(\Mbb)\simeq \Mbb\otimes_\Cbb\Mbb'
\end{align*}
is a $\Cbb$-subalgebra of $\End(\Mbb)$. For the remainder of this article, we identify these two spaces when no confusion arises.

\begin{df}\label{lb83}
A left $\boxtimes_\fn\Cbb$-module $(\Mbb,\pi_\Mbb)$ is called \textbf{coherent} if it is finitely generated, and if for each $m\in\Mbb$ there exists $\psi\in\boxtimes_\fn\Cbb$ such that $\pi_\Mbb(\psi)m=m$. See \cite[Sec. 2]{GZ4} for equivalent descriptions of coherent left $\boxtimes_\fn\Cbb$-modules. The linear category of coherent left $\boxtimes_\fn\Cbb$-modules is denoted by $\pmb{\Coh(\boxtimes_\fn\Cbb)}$.
\end{df}

\begin{df}\label{lb57}
Let $\Mbb\in\Mod(\Vbb)$. Note that by Prop. \ref{lb19}, we have
\begin{align}
\Hom_{\Vbb^{\otimes2}}(\boxtimes_\fn\Cbb,\Mbb\otimes\Mbb')=\ST^*\Bigg(	\vcenter{\hbox{{
		\includegraphics[height=2.2cm]{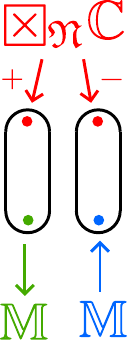}}}} ~\Bigg)=\ST^*\Bigg(	\vcenter{\hbox{{
		\includegraphics[height=2.2cm]{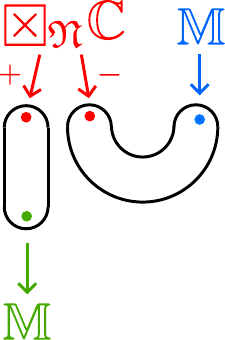}}}} ~\Bigg)
\end{align}
By the universal property of dual fusion products (cf. Def. \ref{lb2}), there is a unique
\begin{align*}
\pi_\Mbb\in\Hom_{\Vbb^{\otimes2}}(\boxtimes_\fn\Cbb,\Mbb\otimes\Mbb')
\end{align*}
whose transpose $\pi_\Mbb^\tr:\Mbb'\otimes\Mbb\rightarrow\bbs_\fn\Cbb$ composed with $\upomega:\bbs_\fn\Cbb\rightarrow\Cbb$ equals the evaluation pairing $\id_\Mbb^\flat:\Mbb'\otimes\Mbb\rightarrow\Cbb,m'\otimes m\mapsto\bk{m',m}$, that is,
\begin{align*}
\upomega\circ\pi_\Mbb^\tr=\id_\Mbb^\flat
\end{align*}
As conformal blocks, $\pi_\Mbb$ clearly equals the map $\Phi_{1,+}$ defined in Def. \ref{lb49} for $\Wbb=\Mbb$. More precisely, the map $\Phi_+\equiv \Phi_{1,+}$ satisfies
\begin{gather}
\Phi_+:\boxtimes_\fn\Cbb\otimes\Mbb\rightarrow\Mbb\qquad \psi\otimes m\mapsto\pi_\Mbb(\psi)m
\end{gather}
where $\pi_\Mbb(\psi)\in\End^0(\Mbb)$ is viewed as a linear operator on $\Mbb$.
\end{df}

\begin{rem}\label{lb25}
By Thm. \ref{lb21}, for each $m\in \Mbb,v\in \Vbb$ and $\lambda,\mu\in \Cbb$ and $z\in\Cbb^{\times}$, we have 
\begin{subequations}\label{eq83}
\begin{gather}
	\pi_\Mbb\big(P_+(\lambda)P_-(\mu)\aleph_z^\sharp(v)\big)m=P(\lambda)Y_\Mbb(v,z)P(\mu)m\label{eq83a}\\
\pi_\Mbb\big(P_+(\leq \lambda)P_-(\leq \mu)\aleph_z^\sharp(v)\big)m=P(\leq \lambda)Y_\Mbb(v,z)P(\leq \mu)m\label{eq83b}
\end{gather}
\end{subequations}
In particular, we have $\pi_\Mbb(\upchi_\lambda)m=P(\lambda)m$.
\end{rem}

\begin{pp}\label{lb28}
Let $\Mbb\in\Mod(\Vbb)$. Then $(\Mbb,\pi_\Mbb)$ is a coherent left $\boxtimes_\fn\Cbb$-module, i.e., $(\Mbb,\pi_\Mbb)\in \Coh(\boxtimes_\fn\Cbb)$.
\end{pp}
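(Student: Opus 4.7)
The plan is to verify the two defining conditions of Def. \ref{lb83}: (i) every $m\in\Mbb$ satisfies $\pi_\Mbb(\psi)m=m$ for some $\psi\in\boxtimes_\fn\Cbb$; and (ii) $\Mbb$ is finitely generated as a left $\boxtimes_\fn\Cbb$-module.

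Condition (i) is almost immediate from the idempotents $\upchi_\mu$ of Cor. \ref{lb20} and Cor. \ref{lb54}. By Rem. \ref{lb25} specialized to $\Wbb=\Mbb$, one has $\pi_\Mbb(\upchi_\mu)m=P(\mu)m$. Grading restriction ensures that any $m\in\Mbb$ lies in some finite-dimensional slice $\Mbb_{[\leq\lambda_0]}$, so $m=\sum_{\mu\in S}P(\mu)m$ for a finite set $S\subset\Cbb$; setting $\psi:=\sum_{\mu\in S}\upchi_\mu\in\boxtimes_\fn\Cbb$ and invoking the mutual orthogonality of the $\upchi_\mu$ (Cor. \ref{lb54}) yields $\pi_\Mbb(\psi)m=m$.

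For (ii), I would first invoke \cite{Hua-projectivecover}, according to which $\Mod(\Vbb)$ is a finite abelian category, so $\Mbb$ has finite length and is in particular finitely generated as a $\Vbb$-module. Pick $\Vbb$-generators $m_1,\dots,m_k$, and let $\mc N\subseteq\Mbb$ denote the left $\boxtimes_\fn\Cbb$-submodule they generate. Since $\Mbb$ coincides with the $\Vbb$-span of $\{m_1,\dots,m_k\}$, it suffices to show that $\mc N$ is stable under every vertex operator mode $Y_\Mbb(v)_j$. For $x\in\mc N$, part (i) together with Cor. \ref{lb54} gives $P(\mu)x=\pi_\Mbb(\upchi_\mu)x\in\mc N$, with only finitely many $\mu\in\Cbb$ contributing; hence it is enough to prove $Y_\Mbb(v)_jP(\mu)x\in\mc N$ for every homogeneous $v$ of weight $h$, every $\mu$, and every $j\in\Zbb$.

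The decisive input is \eqref{eq83a}: for any $\lambda\in\Cbb$ and any $z\in\Cbb^\times$,
\begin{align*}
\pi_\Mbb\bigl(P_+(\lambda)P_-(\mu)\aleph_z^\sharp(v)\bigr)P(\mu)x=P(\lambda)Y_\Mbb(v,z)P(\mu)x\in\mc N.
\end{align*}
Since $Y_\Mbb(v)_j$ is homogeneous of $L(0)_\srm$-degree $h-j-1$ (a standard HLZ fact, derivable from $[L(0),Y_\Mbb(v)_j]=(h-j-1)Y_\Mbb(v)_j$ together with uniqueness of the Jordan decomposition of $L(0)$), the expansion $P(\lambda)Y_\Mbb(v,z)P(\mu)x=\sum_j z^{-j-1}P(\lambda)Y_\Mbb(v)_jP(\mu)x$ collapses, for each $\lambda$, to the single term indexed by $j=\mu+h-1-\lambda$. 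Setting $z=1$ and letting $\lambda$ range therefore delivers every $Y_\Mbb(v)_jP(\mu)x$ into $\mc N$, forcing $\mc N=\Mbb$. The only subtle point is the ``single-mode'' step in the generalized weight setting, which is handled by the Jordan-decomposition observation just noted.
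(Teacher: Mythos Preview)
Your proof is correct and follows the same approach as the paper's: use the idempotents $\upchi_\mu$ for the local-unit condition, and use \eqref{eq83a} to pass from $\Vbb$-generators to $\boxtimes_\fn\Cbb$-generators. The paper's argument is terser (it simply cites Rem.~\ref{lb25} for finite generation without spelling out the single-mode extraction you carry out), and it opens by citing Thm.~\ref{lb50} to record that $\pi_\Mbb$ is a left $\boxtimes_\fn\Cbb$-module structure, which you left implicit.
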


\begin{proof}
By Thm. \ref{lb50}, $(\Mbb,\pi_\Mbb)$ is a left $\boxtimes_\fn\Cbb$-module. Since $\Mbb$ is a finitely generated $\Vbb$-module, by Rem. \ref{lb25}, $\Mbb$ is a finitely generated left $\boxtimes_\fn\Cbb$-module. For each $m\in \Mbb$, choose $\lambda\in \Cbb$ such that $m\in \Mbb_{[\leq \lambda]}$. Let $\psi\in\boxtimes_\fn\Cbb$ be $\sum_{\mu\leq\lambda}P_+(\mu)P_-(\mu)\aleph_1^\sharp(\idt)$. By Rem. \ref{lb25}, we have $\pi_\Mbb(\psi)m=P({\leq\lambda)}m=m$. Therefore, $(\Mbb,\pi_\Mbb)$ is coherent.
\end{proof}

\begin{pp}\label{lb27}
Let $\Mbb_1,\Mbb_2\in \Mod(\Vbb)$ and $T:\Mbb_1\rightarrow \Mbb_2$ be a linear map. Then $T$ intertwines the actions of $\Vbb$ if and only if $T$ intertwines the actions of $\boxtimes_\fn\Cbb$. Therefore, we have
\begin{align*}
\Hom_\Vbb(\Mbb_1,\Mbb_2)=\Hom_{\boxtimes_\fn\Cbb}(\Mbb_1,\Mbb_2)
\end{align*}
\end{pp}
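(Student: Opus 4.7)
The plan is to reduce both implications to the explicit formulas \eqref{eq83} of Rem.~\ref{lb25}, which describe $\pi_\Mbb$ applied to elements of the form $P_+(\leq\lambda)P_-(\leq\mu)\aleph_z^\sharp(v)$ via vertex operators and spectral projections, combined with Rem.~\ref{lb30}, which states that such elements span $\boxtimes_\fn\Cbb$.

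For the ``only if'' direction, I would assume $T \in \Hom_\Vbb(\Mbb_1,\Mbb_2)$. Since $T$ commutes with $L(0)$ it preserves each generalized eigenspace, hence commutes with every spectral projection $P(\leq\lambda)$; combined with $TY_{\Mbb_1}(v)_n=Y_{\Mbb_2}(v)_n T$ for every mode, this gives
\begin{align*}
T\bigl[P(\leq\lambda)Y_{\Mbb_1}(v,z)P(\leq\mu)m\bigr]=P(\leq\lambda)Y_{\Mbb_2}(v,z)P(\leq\mu)(Tm),
\end{align*}
which by \eqref{eq83b} is exactly $T\pi_{\Mbb_1}(\psi)m=\pi_{\Mbb_2}(\psi)(Tm)$ for $\psi=P_+(\leq\lambda)P_-(\leq\mu)\aleph_z^\sharp(v)$. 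Since such elements span $\boxtimes_\fn\Cbb$ by Rem.~\ref{lb30}, $T$ is a morphism of left $\boxtimes_\fn\Cbb$-modules.

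For the ``if'' direction, I would assume $T$ is $\boxtimes_\fn\Cbb$-linear. Applying the intertwining condition to $\psi=\upchi_\lambda$ and invoking Cor.~\ref{lb20} (which identifies $\pi_\Mbb(\upchi_\lambda)$ with $P(\lambda)$) gives $TP(\lambda)=P(\lambda)T$ for every $\lambda$, hence $TP(\leq\lambda)=P(\leq\lambda)T$. Applying the intertwining condition to $\psi=P_+(\leq\lambda)P_-(\leq\mu)\aleph_z^\sharp(v)$ and using \eqref{eq83b}, one obtains
\begin{align*}
P(\leq\lambda)\,T\,Y_{\Mbb_1}(v,z)\,P(\leq\mu)m=P(\leq\lambda)\,Y_{\Mbb_2}(v,z)\,P(\leq\mu)\,Tm
\end{align*}
for every $\lambda,\mu\in\Cbb$, $z\in\Cbb^\times$, $m\in\Mbb_1$, and $v\in\Vbb$. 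For fixed $m,v,n$, I would choose $\mu$ large enough that $P(\leq\mu)m=m$, then choose $\lambda$ large enough that $P(\leq\lambda)$ acts as the identity on both $TY_{\Mbb_1}(v)_n m$ and $Y_{\Mbb_2}(v)_n Tm$ (both of which lie in some $\Mbb_{2,[\leq\lambda]}$). Each side is then a Laurent polynomial in $z$, so matching the coefficient of $z^{-n-1}$ yields $TY_{\Mbb_1}(v)_n m=Y_{\Mbb_2}(v)_n Tm$. Since this holds for every $m,v,n$, $T$ intertwines the $\Vbb$-action.

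The main piece of bookkeeping is the backward direction: one must ensure that after the $\upchi_\lambda$-step secures compatibility of $T$ with every spectral projection, the truncated identity involving $P(\leq\lambda)$ and $P(\leq\mu)$ can be boosted to a mode-by-mode identity of vertex operators. This is handled by the finite-dimensionality of each $\Mbb_{[\leq\lambda]}$ together with the fact that $Y_\Mbb(v,z)m$ has bounded polar part at $z=0$; no other subtlety appears, and the remainder of the argument is a direct combination of Rem.~\ref{lb30} and formula~\eqref{eq83}.
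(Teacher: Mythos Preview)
Your proposal is correct and follows essentially the same route as the paper: both reduce the question to the explicit formula of Rem.~\ref{lb25} via the spanning statement Rem.~\ref{lb30}. The only cosmetic differences are that the paper works with the exact projections $P(\lambda),P(\mu)$ from \eqref{eq83a} rather than $P(\leq\lambda),P(\leq\mu)$, and in the ``if'' direction simply sums the identity $TP(\lambda)Y(v)_nP(\mu)=P(\lambda)Y(v)_nP(\mu)T$ over all $\lambda,\mu$ (a finite sum for fixed $m,v,n$) instead of first isolating $TP(\lambda)=P(\lambda)T$ via $\upchi_\lambda$ and then letting $\lambda,\mu$ grow; for the converse it cites Lem.~\ref{lb31} in place of your direct observation that $T$ commutes with $L(0)$.
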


\begin{proof}
By Rem. \ref{lb30} and \ref{lb25}, $T$ belongs to $\Hom_{\boxtimes_\fn\Cbb}(\Mbb_1,\Mbb_2)$ iff
\begin{align*}
TP(\lambda)Y(v,z)P(\mu)m=P(\lambda)Y(v,z)P(\mu)Tm
\end{align*}
holds for each $\lambda,\mu\in\Cbb$, $v\in\Vbb$, $m\in\Mbb_1$, and $z\in\Cbb^{\times}$. This is clearly equivalent to
\begin{align}\label{eq73}
TP(\lambda)Y(v)_nP(\mu)m=P(\lambda)Y(v)_nP(\mu)Tm
\end{align}
for each $\lambda,\mu,v,m$, and $n\in\Zbb$. If this holds, then taking the sum over all $\lambda,\mu$, we obtain
\begin{align}\label{eq133}
TY(v)_nm=Y(v)_nTm
\end{align}
for all $v\in\Vbb,m\in\Mbb_1$ and $n\in\Zbb$, which implies $T\in\Hom_\Vbb(\Mbb_1,\Mbb_2)$. Conversely, assume that $T\in\Hom_\Vbb(\Mbb_1,\Mbb_2)$. Then \eqref{eq133} holds. By Lem. \ref{lb31}, $T$ intertwines the actions of $P(\lambda)$ and $P(\mu)$. Therefore \eqref{eq73} is true.
\end{proof}

\begin{thm}\label{lb55}
The linear functor
\begin{gather}
\begin{gathered}
\scr F:\Mod(\Vbb)\rightarrow \Coh(\boxtimes_\fn\Cbb)\\
(\Mbb,Y_\Mbb)\mapsto (\Mbb,\pi_\Mbb)\\
T\in\Hom_\Vbb(\Mbb_1,\Mbb_2)\mapsto T\in\Hom_{\boxtimes_\fn\Cbb}(\Mbb_1,\Mbb_2)
\end{gathered}
\end{gather}
is an isomorphism of $\Cbb$-linear categories. Consequently, $\Coh(\boxtimes_\fn\Cbb)$ is an abelian category since $\Mod(\Vbb)$ is so.
\end{thm}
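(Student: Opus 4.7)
The plan is to reduce the theorem to two ingredients already developed in the excerpt: Proposition \ref{lb28} (which shows $\scr F$ is well-defined on objects) and Proposition \ref{lb27} (which shows $\scr F$ is a bijection on every Hom-set, since it sends $T\in\Hom_\Vbb(\Mbb_1,\Mbb_2)$ to the same underlying linear map $T\in\Hom_{\boxtimes_\fn\Cbb}(\Mbb_1,\Mbb_2)$). Taken together, these already give that $\scr F$ is fully faithful on the nose, with the identity map on morphism spaces. What remains is to show that $\scr F$ is a bijection on objects, i.e., that every coherent left $\boxtimes_\fn\Cbb$-module $(\Mbb,\pi)$ arises from a unique grading-restricted generalized $\Vbb$-module structure $Y_\Mbb$ on $\Mbb$.

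For uniqueness, I would argue as follows. Given any $\Vbb$-module structure $Y_\Mbb$ producing $\pi_\Mbb=\pi$, Corollary \ref{lb20} and Remark \ref{lb25} show that $\pi(\upchi_\lambda)=P(\lambda)$, so the $L(0)$-generalized eigenspace decomposition of $\Mbb$ is recovered from $\pi$ alone. Then by \eqref{eq83a}, the composition $P(\lambda)Y_\Mbb(v,z)P(\mu)$ on any $m\in\Mbb$ equals $\pi(P_+(\lambda)P_-(\mu)\aleph_z^\sharp(v))m$ for each $z\in\Cbb^\times$. Since this composition factors through the finite-dimensional space $\Mbb_{[\leq(\lambda,\mu)]}$, summing over $\lambda,\mu$ recovers $Y_\Mbb(v,z)m$ entirely from $\pi$, which gives uniqueness.

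For existence, given $(\Mbb,\pi)\in\Coh(\boxtimes_\fn\Cbb)$, I would define the grading via the mutually orthogonal idempotents $\{\pi(\upchi_\lambda)\}_{\lambda\in\Cbb}$; Corollary \ref{lb54} together with coherence of $\Mbb$ then yields a direct sum decomposition $\Mbb=\bigoplus_{\lambda\in\Cbb}\pi(\upchi_\lambda)\Mbb$ with $\pi(\upchi_\lambda)\Mbb$ finite-dimensional (this is the content of \cite{GZ4}, Section 2 on coherent modules over AUF algebras). Declare this to be the $L(0)$-generalized eigenspace decomposition. For each $v\in\Vbb,z\in\Cbb^\times$, and $m\in\pi(\upchi_\mu)\Mbb$, define
\begin{align*}
Y_\Mbb(v,z)m:=\sum_{\lambda\in\Cbb}\pi\bigl(P_+(\lambda)P_-(\mu)\aleph_z^\sharp(v)\bigr)m
\end{align*}
(a sum with finitely many nonzero terms below any $\Re(\lambda)$-threshold, reflecting lower-truncation of vertex operators). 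Extracting modes via the Laurent/logarithmic expansion of the right hand side---well-defined because each projected component lies in the finite-dimensional space $\pi(\upchi_\lambda)\Mbb$---produces candidate operators $Y_\Mbb(v)_n$ on $\Mbb$.

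The main obstacle, and the bulk of the work, is verifying that this candidate $Y_\Mbb$ actually satisfies the axioms of a grading-restricted generalized $\Vbb$-module (Jacobi identity, vacuum, $L(-1)$-derivative property, appropriate $L(0)$-compatibility). My strategy would be to translate each VOA axiom into an identity among conformal blocks on propagations of $\fn$ obtained by adding further marked points, then use the sewing-factorization theorem (Theorem \ref{SF}) and the partial injectivity of the canonical conformal block $\upomega$ (Remark \ref{SF2}), in direct analogy with the proofs of Proposition \ref{lb15}, Proposition \ref{lb16}, and Theorem \ref{lb50}. Concretely, each axiom translates into the equality of two conformal blocks built by composing $\aleph_z$, $\Phi_{i,\pm}$, and $\upomega$ in different orders, and the equality is forced by universality after sewing against $\upomega$. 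Once $Y_\Mbb$ is verified to define a module, the identity $\pi=\pi_\Mbb$ follows immediately from Remark \ref{lb25} and the definition of the grading. Finally, because $\scr F$ is a bijection of categories intertwining compositions, the abelian structure on $\Mod(\Vbb)$ transports to $\Coh(\boxtimes_\fn\Cbb)$, proving the final assertion.
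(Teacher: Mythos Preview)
Your proposal is correct in spirit but takes a substantially different route from the paper, and the part you flag as ``the bulk of the work'' is precisely what the paper's argument manages to sidestep.

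The paper does not construct $Y_\Mbb$ directly on an arbitrary $(\Mbb,\pi)\in\Coh(\boxtimes_\fn\Cbb)$ and then verify the VOA module axioms. Instead, it argues structurally: the essential image of $\scr F$ is closed under finite direct sums and quotients (the latter because a $\boxtimes_\fn\Cbb$-invariant subspace of a $\Vbb$-module is $\Vbb$-invariant, by Rem.~\ref{lb25}); by \cite[Def.~2.3]{GZ4} every coherent module is a quotient of a finite direct sum of modules of the form $\boxtimes_\fn\Cbb\diamond e$ with $e$ idempotent; and each such idempotent is a subidempotent of some $\sum_{\mu\in E}\upchi_\mu$. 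So it suffices to show that each column $\Xbb=\boxtimes_\fn\Cbb\diamond\upchi_\mu=\bigoplus_\lambda(\boxtimes_\fn\Cbb)_{[\lambda,\mu]}$ lies in the image. But $\Xbb$ already inherits a $\Vbb$-module structure from $Y_{\boxtimes_\fn\Cbb,+}$, and a short computation using \eqref{eq68}, \eqref{eq70}, \eqref{eq83} checks that the resulting $\pi_\Xbb$ coincides with the left multiplication action. No VOA axioms need to be verified from scratch: they are borrowed from the existing $\Vbb^{\otimes 2}$-module structure on $\boxtimes_\fn\Cbb$.

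Your direct approach is workable, but note a subtlety in your verification strategy: conformal blocks and the sewing-factorization machinery are defined for $\Vbb$-modules, so you cannot invoke them on the bare coherent module $\Mbb$ before $Y_\Mbb$ is known to be a module structure. What you would actually need to do is reduce each axiom (Jacobi, $L(-1)$-derivative, etc.) to an identity among elements of $\boxtimes_\fn\Cbb$ itself---e.g., between products of $P_+(\lambda)P_-(\mu)\aleph_z^\sharp(v)$'s---and verify those identities using the $\Vbb^{\otimes 2}$-module structure on $\boxtimes_\fn\Cbb$. This is feasible but laborious; the paper's quotient-of-columns argument buys you exactly the shortcut of never having to do this.
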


It follows that $\boxtimes_\fn\Cbb$ is \textbf{strongly AUF} in the sense of \cite{GZ4}. That is, the AUF algebra $\boxtimes_\fn\Cbb$ has finitely many irreducibles (cf. \cite[Cor. 6.4]{GZ4})---equivalently, $\Coh(\boxtimes_\fn\Cbb)$ has a projective generator (cf. \cite[Prop. 7.8]{GZ4}).

\begin{proof}
Step 1. Note that if $(\Mbb_1,Y_{\Mbb_1})$ and $(\Mbb_2,Y_{\Mbb_2})$ are sent by $\scr F$ to the same object, then clearly $\Mbb_1=\Mbb_2$ as vector spaces. Moreover, the identity map on $\Mbb_1$ intertwines the actions of $\boxtimes_\fn\Cbb$, and hence intertwines the actions of $\Vbb$ by Prop. \ref{lb27}. This proves $Y_{\Mbb_1}=Y_{\Mbb_2}$. 

According to the above paragraph, if we can prove that $\SF$ is surjective, namely, each object $(\Mbb,\pi_\Mbb)$ of $\Coh(\boxtimes_\fn\Cbb)$ equals $\SF(\Mbb,Y_\Mbb)$ for some $(\Mbb,Y_\Mbb)\in\Mod(\Vbb)$, then this object $(\Mbb,Y_\Mbb)$ is unique. Therefore, the functor $\scr G:\Coh(\boxtimes\Cbb)\rightarrow\Mod(\Vbb)$ sending each $(\Mbb,\pi_\Mbb)$ to $(\Mbb,Y_\Mbb)$ and sending each morphism to itself must be the inverse functor of $\SF$. This will finish the proof that $\SF$ is an isomorphism of linear categories. 

To prove that $\SF$ is surjective, it suffices to prove that it is essentially surjective. Indeed, suppose the latter is true. Choose any $(\Mbb,\pi_\Mbb)\in\Coh(\boxtimes_\fn\Cbb)$. Then there exists $(\wtd\Mbb,Y_{\wtd\Mbb})\in\Mod(\Vbb)$ and an isomorphism of left $\boxtimes_\fn\Cbb$-modules $T:(\Mbb,\pi_\Mbb)\rightarrow(\wtd\Mbb,\pi_{\wtd\Mbb})$, where $(\wtd\Mbb,\pi_{\wtd\Mbb})=\SF(\wtd\Mbb,Y_{\wtd\Mbb})$. Let $Y_\Mbb(-,z)=T^{-1}Y_{\wtd\Mbb}(-,z)T$. Then $(\Mbb,Y_\Mbb)\in\Mod(\Vbb)$ and $\SF(\Mbb,Y_\Mbb)=(\Mbb,\pi_\Mbb)$.\\[-1ex]

Step 2. Let us prove that $\SF$ is essentially surjective. Let $\scr C$ be the set of equivalence classes of objects in the range of $\SF$. It is clear that $\scr C$ is closed under finite direct sums. $\scr C$ is also closed under taking quotients. More precisely: Suppose that $(\Mbb,\pi_\Mbb)=\SF(\Mbb,Y_\Mbb)$ where $(\Mbb,Y_\Mbb)\in\Mod(\Vbb)$, and that $\Xbb\subset\Mbb$ is a linear subspace invariant under the left action of $\boxtimes_\fn\Cbb$. Then $\Mbb/\Xbb$ belongs to the range of $\SF$. To see this, we note that by Rem. \ref{lb25}, $\Xbb$ is $\Vbb$-invariant, so $Y_\Mbb$ descends to a vertex operator $Y_{\Mbb/\Xbb}$ on $\Mbb/\Xbb$, making $(\Mbb/\Xbb,Y_{\Mbb/\Xbb})$ an object of $\Mod(\Vbb)$. By Rem. \ref{lb25} again, it is clear that $\SF(\Mbb/\Xbb,Y_{\Mbb/\Xbb})$ is the quotient of $(\Mbb,\pi_\Mbb)$ by $\Xbb$.

By \cite[Def. 2.3]{GZ4}, any object in $\Coh(\boxtimes_\fn\Cbb)$ is isomorphic to a quotient of a finite direct sum of objects of the form $\boxtimes_\fn\Cbb\diamond e$ where $e\in\boxtimes_\fn\Cbb$ is an idempotent. Moreover, by \eqref{eq61}, $e$ is a subidempotent of $f$ (i.e. $e\diamond f=f\diamond e=e$) where $f=\sum_{\mu\in E}\upchi_\mu$ with $E$ a finite subset of $\Cbb$. Therefore, by the above paragraph, it suffices to show that for each $\mu\in\Cbb$, the left $\boxtimes_\fn\Cbb$-module $\Xbb:=\boxtimes_\fn\Cbb\diamond \upchi_\mu$ (whose module structure $\wtd\pi_\Xbb$ is defined by the left action of $\boxtimes_\fn\Cbb$, i.e., $\wtd\pi_\Xbb(\psi)(w\diamond\upchi_\mu)=\psi\diamond w\diamond\upchi_\mu$ for each $\psi,w\in\boxtimes_\fn\Cbb$) belongs to the range of $\SF$.

By Cor. \ref{lb54}, we have
\begin{align*}
\Xbb=\bigoplus_{\lambda\in\Cbb}(\boxtimes_\fn\Cbb)_{[\lambda,\mu]}
\end{align*}
as vector spaces. This shows that $\Xbb$ is a grading-restricted generalized $\Vbb$-module with vertex operator $Y_\Xbb$ defined to be the restriction of $Y_{\boxtimes_\fn\Cbb,+}$ to $\Xbb$. (In fact, for any $\Wbb\in\Mod(\Vbb^{\otimes 2})$, $\big(\bigoplus_\lambda\Wbb_{[\lambda,\mu]},Y_{\Wbb,+}\big)$ is clearly an object of $\Mod(\Vbb)$.) Let $(\Xbb,\pi_\Xbb)$ be $\SF(\Xbb,Y_\Xbb)$. Let us prove that $\wtd\pi_\Xbb=\pi_\Xbb$. This will finish the proof that $(\Xbb,\wtd\pi_\Xbb)$ belongs to the range of $\SF$.

By Rem. \ref{lb30}, it suffices to prove that $\wtd\pi_\Xbb(\psi)=\pi_\Xbb(\psi)$ where $\psi=P_+(\lambda)P_-(\kappa)\aleph_1^\sharp(v)$ for some $\lambda,\kappa\in\Cbb$ and $v\in\Vbb$. Choose any $w\in\boxtimes_\fn\Cbb$. Then
\begin{align*}
&\wtd\pi_\Xbb(\psi)(w\diamond \upchi_\mu)=\psi\diamond w\diamond \upchi_\mu=(\psi\diamond w)\diamond \upchi_\mu\\
\xlongequal{\eqref{eq68}}&(P_+(\lambda)Y_{\boxtimes_\fn\Cbb,+}(v,1)P_+(\kappa)w)\diamond\upchi_\mu\xlongequal{\eqref{eq70}}P_-(\mu)P_+(\lambda)Y_{\boxtimes_\fn\Cbb,+}(v,1)P_+(\kappa)w
\end{align*}
On the other hand,
\begin{align*}
&\pi_\Xbb(\psi)(w\diamond \upchi_\mu)\xlongequal{\eqref{eq83}}P(\lambda)Y_\Xbb(v,1)P(\kappa)(w\diamond\upchi_\mu)=P_+(\lambda)Y_{\boxtimes_\fn\Cbb,+}(v,1)P_+(\kappa)(w\diamond\upchi_\mu)\\
&\xlongequal{\eqref{eq70}}P_+(\lambda)Y_{\boxtimes_\fn\Cbb,+}(v,1)P_+(\kappa)P_-(\mu)w=P_-(\mu)P_+(\lambda)Y_{\boxtimes_\fn\Cbb,+}(v,1)P_+(\kappa)w
\end{align*}
This proves $\wtd\pi_\Xbb(\psi)=\pi_\Xbb(\psi)$.
\end{proof}

\begin{co}\label{lb58}
Let $(\Gbb,Y_\Gbb)$ be a \textbf{generator} of $\Mod(\Vbb)$, that is, each object of $\Mod(\Vbb)$ has an epimorphism from a finite direct sum of $\Gbb$. Then $(\Gbb,\pi_\Gbb)$ is a generator of $\Coh(\boxtimes_\fn\Cbb)$, and 
\begin{align}\label{eq75}
	\pi_\Gbb:\boxtimes_\fn\Cbb\rightarrow \Gbb\otimes \Gbb'=\End^0(\Gbb)
\end{align} 
is an injective homomorphism of associative $\Cbb$-algebras. Moreover, for each $z\in\Cbb^\times$, we have
\begin{align}\label{eq77}
\pi_\Gbb(\boxtimes_\fn\Cbb)=\Span_\Cbb\big\{P(\lambda)Y_\Gbb(v,z)P(\mu)\big|_\Gbb:\lambda,\mu\in\Cbb,v\in\Vbb\big\}
\end{align}
\end{co}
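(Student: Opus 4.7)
The plan is to verify the four conclusions in turn, with injectivity of $\pi_\Gbb$ the substantive step and the rest following from previously established results. First, by Thm. \ref{lb55}, the functor $\SF$ is an isomorphism of $\Cbb$-linear categories with $\SF(\Gbb, Y_\Gbb) = (\Gbb, \pi_\Gbb)$ (cf. Def. \ref{lb57}); since isomorphisms of linear categories transport generators to generators (the notion being purely categorical, formulated in terms of epimorphisms and finite direct sums), $(\Gbb, \pi_\Gbb)$ is a generator of $\Coh(\boxtimes_\fn\Cbb)$. Second, $\pi_\Gbb$ is an algebra homomorphism because, by Thm. \ref{lb50}, it is precisely the representation associated to the left $\boxtimes_\fn\Cbb$-module structure on $\Gbb$; its image lies in $\Gbb \otimes \Gbb' = \End^0(\Gbb)$ tautologically, by the very construction of $\pi_\Gbb$ as an element of $\Hom_{\Vbb^{\otimes 2}}(\boxtimes_\fn\Cbb, \Gbb \otimes \Gbb')$ in Def. \ref{lb57}.

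For the injectivity step, suppose $\psi \in \boxtimes_\fn\Cbb$ satisfies $\pi_\Gbb(\psi) = 0$. Because $(\Gbb, \pi_\Gbb)$ is a generator of $\Coh(\boxtimes_\fn\Cbb)$, every coherent left $\boxtimes_\fn\Cbb$-module admits an epimorphism from some $\Gbb^{\oplus n}$ in $\Coh(\boxtimes_\fn\Cbb)$, and so $\psi$ must act as zero on every coherent left $\boxtimes_\fn\Cbb$-module. I would then apply this to the particular modules $\Xbb_\mu := \boxtimes_\fn\Cbb \diamond \upchi_\mu$, whose coherence and membership in $\Coh(\boxtimes_\fn\Cbb)$ under the natural left-multiplication action were established inside the proof of Thm. \ref{lb55}: evaluating on the distinguished element $\upchi_\mu \in \Xbb_\mu$ gives $\psi \diamond \upchi_\mu = 0$ for every $\mu \in \Cbb$. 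The finite AUF decomposition \eqref{eq61} then forces $\psi = \sum_{\lambda, \mu} \upchi_\lambda \diamond \psi \diamond \upchi_\mu = 0$.

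Finally, the span formula \eqref{eq77} follows by combining two earlier observations: Rem. \ref{lb25} identifies $\pi_\Gbb\big(P_+(\lambda) P_-(\mu) \aleph_z^\sharp(v)\big)$ with the operator $P(\lambda) Y_\Gbb(v, z) P(\mu)\big|_\Gbb$ on $\Gbb$, and Rem. \ref{lb30} asserts that for any fixed $z \in \Cbb^\times$ the elements $P_+(\lambda) P_-(\mu) \aleph_z^\sharp(v)$ with $\lambda, \mu \in \Cbb$ and $v \in \Vbb$ span $\boxtimes_\fn\Cbb$, so applying the linear map $\pi_\Gbb$ to this spanning set yields \eqref{eq77}. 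The only nontrivial step is injectivity, which rests on the interplay of the AUF structure (Cor. \ref{lb54}) with the category isomorphism (Thm. \ref{lb55}) via the test modules $\boxtimes_\fn\Cbb \diamond \upchi_\mu$; I expect the main conceptual point to be identifying these as the correct test modules to detect annihilators of $\pi_\Gbb$, after which the argument reduces to a one-line idempotent calculation.
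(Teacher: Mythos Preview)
Your proposal is correct and follows essentially the same route as the paper: generator transport via Thm.~\ref{lb55}, injectivity by testing $\psi$ on the coherent modules $\boxtimes_\fn\Cbb\diamond\upchi_\mu$ and invoking the AUF decomposition \eqref{eq61}, and the span formula from Rem.~\ref{lb30} and~\ref{lb25}. The only cosmetic difference is that the paper evaluates on the idempotent $e_E=\sum_{\lambda\in E}\upchi_\lambda$ for a suitable finite $E$ rather than on each $\upchi_\mu$ separately, but this is the same argument.
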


%\pi_\Gbb^*:\SLF(\End_B^0(\Gbb))\xrightarrow{\simeq}\SLF (\boxtimes_\fn\Cbb)\quad f\mapsto f\circ \pi_\Gbb

\begin{proof}
By Thm. \ref{lb55}, $(\Gbb,\pi_\Gbb)$ is a generator of $\Coh(\boxtimes_\fn\Cbb)$. Therefore, for each $\lambda\in\Cbb$, since $\boxtimes_\fn\Cbb\diamond\upchi_\lambda$ belongs to $\Coh(\boxtimes_\fn\Cbb)$, it has an epimorphism from a direct sum of $\Gbb$. Therefore, if $x\in\boxtimes_\fn\Cbb$ acts as zero on $\Gbb$, then for each finite set $E\subset\Cbb$, $x$ acts as zero on $\bigoplus_{\lambda\in E}\boxtimes_\fn\Cbb\diamond\upchi_\lambda$. Note that this space can be written as $(\boxtimes_\fn\Cbb)\diamond e_E$ where $e_E=\sum_{\lambda\in E}\upchi_\lambda$ is an idempotent of $\boxtimes_\fn\Cbb$. Since there exists $E$ such that this space contains $x$, we have $x=x\diamond e_E$ and $e_E\in(\boxtimes_\fn\Cbb)\diamond e_E$, and hence $x=0$. This proves the injectivity of \eqref{eq75}. Eq. \eqref{eq77} follows immediately from Rem. \ref{lb30} and \ref{lb25}.
\end{proof}

\begin{co}\label{lb56}
Let $(\Gbb,Y_\Gbb)$ be a projective generator of $\Mod(\Vbb)$. Then $(\Gbb,\pi_\Gbb)$ is a projective generator of $\Coh(\boxtimes_\fn\Cbb)$. Moreover, if we let $B$ be $\End_\Vbb(\Gbb)^\opp$, the opposite algebra of $\End_\Vbb(\Gbb)$, then \eqref{eq75} restricts to an isomorphism of associative $\Cbb$-algebras
\begin{align}\label{eq76}
\pi_\Gbb:\boxtimes_\fn\Cbb\xlongrightarrow{\simeq} \End_B^0(\Gbb)
\end{align}
which is also an isomorphism in $\Mod(\Vbb^{\otimes 2})$. 
\end{co}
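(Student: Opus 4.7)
The plan has three main parts: verifying the projective generator claim, identifying the image of $\pi_\Gbb$ with $\End_B^0(\Gbb)$, and establishing the algebra isomorphism.

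First I would derive the projective generator statement almost for free from Thm. \ref{lb55}: since $\SF:\Mod(\Vbb)\xrightarrow{\simeq}\Coh(\boxtimes_\fn\Cbb)$ is an isomorphism of $\Cbb$-linear categories and it sends $(\Gbb,Y_\Gbb)$ to $(\Gbb,\pi_\Gbb)$, it preserves both projectivity (characterized by the exactness of $\Hom(-,\blt)$) and the generating property (characterized by the existence of epimorphisms from finite direct sums onto every object). Of course the generator property is already built into Cor. \ref{lb58}, so only projectivity needs this reasoning.

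Next I would verify that $\pi_\Gbb(\boxtimes_\fn\Cbb)\subseteq\End_B^0(\Gbb)$. The key is Prop. \ref{lb27}, which gives $B=\End_\Vbb(\Gbb)^\opp=\End_{\boxtimes_\fn\Cbb}(\Gbb)^\opp$. Thus the right $B$-action on $\Gbb$ tautologically commutes with the left $\boxtimes_\fn\Cbb$-action, so each $\pi_\Gbb(\psi)$ is $B$-linear and lies in $\End_B^0(\Gbb)$. For later use, the same commutation also shows that $\End_B^0(\Gbb)$ is a $\Vbb^{\otimes2}$-submodule of $\End^0(\Gbb)$, since the $\Vbb^{\otimes2}$-action on $\End^0(\Gbb)$ is by left and right composition with vertex operators $Y_\Gbb(v,z)$, which $B$ already commutes with.

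The hard part will be surjectivity of $\pi_\Gbb:\boxtimes_\fn\Cbb\to\End_B^0(\Gbb)$ (injectivity is already in Cor. \ref{lb58}). This is essentially a Morita-type statement: since $\boxtimes_\fn\Cbb$ is an AUF algebra (Cor. \ref{lb54}) and $(\Gbb,\pi_\Gbb)$ is a projective generator in $\Coh(\boxtimes_\fn\Cbb)$ by Step~1, the canonical algebra map $A\to\End_B^0(P)$ is an isomorphism, where $A=\boxtimes_\fn\Cbb$, $P=\Gbb$, and $B=\End_A(P)^\opp$. I would invoke the Morita theorem for AUF algebras developed in \cite{GZ4}; this is the natural non-unital analog of the classical statement that $A\simeq\End_B(P)$ whenever $P$ is a finitely generated projective generator of left $A$-modules and $B=\End_A(P)^\opp$. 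The use of $\End^0$ rather than $\End$ is precisely what is required to match the AUF grading-restriction on both sides.

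Finally, assembling the pieces: $\pi_\Gbb$ is a $\Vbb^{\otimes2}$-module map into $\End^0(\Gbb)$ by Def. \ref{lb57}, its image is contained in the $\Vbb^{\otimes2}$-submodule $\End_B^0(\Gbb)$ by the second paragraph, and it maps bijectively onto this submodule by the third. Hence the restriction is an isomorphism of associative $\Cbb$-algebras and simultaneously of $\Vbb^{\otimes2}$-modules, as claimed.
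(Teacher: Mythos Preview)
Your proposal is correct and follows essentially the same approach as the paper: both obtain the projective generator claim from Thm.~\ref{lb55}, take injectivity from Cor.~\ref{lb58}, and obtain surjectivity onto $\End_B^0(\Gbb)$ by invoking the Morita-type theorem for AUF algebras from \cite{GZ4} (the paper cites \cite[Thm.~11.7]{GZ4} specifically). Your additional remarks explaining why the image lands in $\End_B^0(\Gbb)$ via Prop.~\ref{lb27}, and why $\End_B^0(\Gbb)$ is a $\Vbb^{\otimes2}$-submodule, simply make explicit what the paper leaves implicit or marks as ``clearly''.
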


Recall \eqref{eq74} for the meaning of $\End_B^0(\Gbb)$.

\begin{proof}
By Thm. \ref{lb55}, $(\Gbb,\pi_\Gbb)$ is a projective generator of $\Coh(\boxtimes_\fn\Cbb)$. Note that \eqref{eq75} is simultaneously an injective homomorphism of associative algebras and (by Def. \ref{lb57}) an injective homomorphism of $\Vbb^{\otimes 2}$-modules. By \cite[Thm. 11.7]{GZ4}, the range of \eqref{eq75} is $\End_B^0(\Gbb)$. This proves that \eqref{eq75} restricts to an isomorphism of $\Cbb$-algebras \eqref{eq76}. In particular, \eqref{eq76} is bijective. Since $\End_B^0(\Gbb)$ is clearly an $\Vbb^{\otimes 2}$-submodule of $\Gbb\otimes\Gbb'\in\Mod(\Vbb^{\otimes 2})$, we have $\End_B^0(\Gbb)\in\Mod(\Vbb^{\otimes 2})$. It follows that \eqref{eq76} is also an isomorphism in $\Mod(\Vbb^{\otimes 2})$.
\end{proof}

\subsection{An alternative proof of the equivalence $\boxtimes_\fn\Cbb\simeq\int_{\Mbb\in\Mod(\Vbb)}\Mbb\otimes_\Cbb\Mbb'$}\label{lb81}

In this section, we use the results developed in the preceding sections to give an alternative proof of the isomorphism $\boxtimes_\fn\Cbb\simeq\int_{\Mbb\in\Mod(\Vbb)}\Mbb\otimes_\Cbb\Mbb'$, originally established in \cite[Sec. 0.6]{GZ3} with the help of \cite{FSS20}.

We first review the definition of ends and coend in the context of VOAs.

\begin{df}\label{lb75}
Let $N\in\Nbb$, and let $\scr D$ be a category. Suppose that $F:\Mod(\Vbb^{\otimes N})\times \Mod(\Vbb^{\otimes N})\rightarrow\scr D$ is a covariant bi-functor and $A,B\in\scr D$. A family of morphisms
\begin{align}
\varphi_\Wbb:F(\Wbb', \Wbb)\rightarrow A\quad \text{resp.}\quad \psi_\Wbb:B\rightarrow F(\Wbb', \Wbb)
\end{align}
for all $\Wbb\in\Mod(\Vbb^{\otimes N})$ (with contragredient module $\Wbb'$) is called \textbf{dinatural} if for any $\Mbb\in\Mod(\Vbb^{\otimes N})$ and $T\in\Hom_{\Vbb^{\otimes N}}(\Mbb,\Wbb)$ (with transpose $T^\tr$), the diagram
\begin{equation*}
\begin{tikzcd}[row sep=large, column sep=huge]
F(\Wbb',\Mbb) \arrow[r,"{F(T^\tr,\id_\Mbb)}"] \arrow[d,"{F(\id_{\Wbb'},T)}"'] & F(\Mbb',\Mbb) \arrow[d,"\varphi_\Mbb"] \\
F(\Wbb',\Wbb) \arrow[r,"\varphi_\Wbb"]           & A         
\end{tikzcd}
\quad\text{resp.}\quad
	\begin{tikzcd}[row sep=large, column sep=huge]
	B \arrow[r,"{\psi_\Wbb}"] \arrow[d,"{\psi_\Mbb}"'] & F(\Wbb',\Wbb) \arrow[d,"{F(T^t,\id_\Wbb)}"] \\
F(\Mbb',\Mbb) \arrow[r,"{F(\id_{\Mbb'},T)}"]           & F(\Mbb',\Wbb)      
	\end{tikzcd}
\end{equation*}
commutes. A dinatural transformation $\varphi_\Wbb:F(\Wbb', \Wbb)\rightarrow A$ (resp. $\psi_\Wbb:B\rightarrow F(\Wbb', \Wbb)$) is called a \textbf{coend} (resp. an \textbf{end}) in $\scr D$ if it satisfies the universal property that for any $\wtd A\in\scr D$ (resp. $\wtd B\in \scr D$) and dinatural transformation $\wtd \varphi_\Wbb:F(\Wbb',\Wbb)\rightarrow \wtd A$ (resp $\wtd \psi_\Wbb:\wtd B\rightarrow F(\Wbb',\Wbb)$) for all $\Wbb\in\Mod(\Vbb^{\otimes N})$, there is a unique $\Gamma\in\Hom_{\scr D}(A,\wtd A)$ (resp. $\Psi\in \Hom_{\scr D}(\wtd B,B)$) such that $\wtd\varphi_\Wbb=\Gamma\circ\varphi_\Wbb$ (resp. $\wtd\psi_\Wbb=\psi_\Wbb\circ \Psi$) holds for all $\Wbb$. In that case, we write
\begin{align*}
A=\int^{\Wbb\in\Mod(\Vbb^{\otimes N})}F(\Wbb',\Wbb)\quad\text{resp.}\quad B=\int_{\Wbb\in\Mod(\Vbb^{\otimes N})}F(\Wbb',\Wbb)
\end{align*}
\end{df}

From the universal property, if (co)ends exist, then they are unique up to unique isomorphisms. In this paper, we mainly consider the covariant bi-functor 
\begin{align*}
F:\Mod(\Vbb)\times \Mod(\Vbb)\rightarrow \Mod(\Vbb^{\otimes 2})\quad F(\Mbb,\Wbb)=\Wbb\otimes_\Cbb \Mbb
\end{align*}
%In the following, we give two proves that $\boxtimes_\fn\Cbb$ is an end.

\begin{thm}\label{end}
The dinatural transform $\pi_\Mbb:\boxtimes_\fn\Cbb\rightarrow \End^0(\Mbb)=\Mbb\otimes\Mbb'$ (for all $\Mbb\in \Mod(\Vbb)$) is an end in $\Mod(\Vbb^{\otimes 2})$. In short, we have 
\begin{align*}
	\boxtimes_\fn\Cbb\simeq \int_{\Mbb\in \Mod(\Vbb)}\Mbb\otimes\Mbb'\qquad\text{as $\Vbb^{\otimes 2}$-modules}
\end{align*} 
\end{thm}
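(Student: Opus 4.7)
The plan is to verify the two conditions defining an end in $\Mod(\Vbb^{\otimes 2})$: dinaturality of the family $\pi_\Mbb$, and the universal property. Dinaturality requires that for each $T\in\Hom_\Vbb(\Mbb,\Wbb)$, the two morphisms $(\id_\Wbb\otimes T^\tr)\circ\pi_\Wbb$ and $(T\otimes\id_{\Mbb'})\circ\pi_\Mbb$ in $\Hom_{\Vbb^{\otimes 2}}(\boxtimes_\fn\Cbb,\Wbb\otimes\Mbb')$ agree. By the universal property of the default dual fusion product $(\bbs_\fn\Cbb,\upomega)$, morphisms out of $\boxtimes_\fn\Cbb$ correspond bijectively to conformal blocks in $\ST^*_\fn$, obtained by taking transposes and composing with $\upomega$. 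Since $\upomega\circ\pi_\Mbb^\tr$ is by definition the canonical pairing $\id_\Mbb^\flat$, both composites reduce to the same pairing $(w',m)\mapsto\bk{w',Tm}$, giving dinaturality immediately.

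For the universal property, fix a projective generator $\Gbb$ of $\Mod(\Vbb)$ (which exists because $\Vbb$ is $C_2$-cofinite) and set $B=\End_\Vbb(\Gbb)^\opp$. Let $\wtd\psi_\Wbb:\wtd B\to\Wbb\otimes\Wbb'$ be an arbitrary dinatural family in $\Mod(\Vbb^{\otimes 2})$. Specializing dinaturality to $\Mbb=\Wbb=\Gbb$ with $T$ running over $\End_\Vbb(\Gbb)=B^\opp$ yields, for each $b\in\wtd B$, that $\wtd\psi_\Gbb(b)\in\Gbb\otimes\Gbb'=\End^0(\Gbb)$ commutes with every element of $\End_\Vbb(\Gbb)$. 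Hence $\wtd\psi_\Gbb(b)\in\End_B^0(\Gbb)$. By Cor.~\ref{lb56}, $\pi_\Gbb:\boxtimes_\fn\Cbb\xrightarrow{\simeq}\End_B^0(\Gbb)$ is an isomorphism of $\Vbb^{\otimes 2}$-modules, so we can define
\begin{align*}
\Psi:=\pi_\Gbb^{-1}\circ\wtd\psi_\Gbb:\wtd B\to\boxtimes_\fn\Cbb,
\end{align*}
which is automatically a morphism in $\Mod(\Vbb^{\otimes 2})$.

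It remains to show $\pi_\Wbb\circ\Psi=\wtd\psi_\Wbb$ for every $\Wbb\in\Mod(\Vbb)$. Apply dinaturality of both families to each $T:\Gbb\to\Wbb$ and use $\pi_\Gbb\circ\Psi=\wtd\psi_\Gbb$:
\begin{align*}
(\id_\Wbb\otimes T^\tr)\circ\pi_\Wbb\circ\Psi&=(T\otimes\id_{\Gbb'})\circ\pi_\Gbb\circ\Psi\\
&=(T\otimes\id_{\Gbb'})\circ\wtd\psi_\Gbb=(\id_\Wbb\otimes T^\tr)\circ\wtd\psi_\Wbb.
\end{align*}
Thus $(\id_\Wbb\otimes T^\tr)\circ(\pi_\Wbb\circ\Psi-\wtd\psi_\Wbb)=0$ for every $T:\Gbb\to\Wbb$. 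Writing an element of the image as $\sum_iw_i\otimes w'_i$ with $w_i$ linearly independent forces $T^\tr(w'_i)=0$, so $w'_i$ vanishes on $T(\Gbb)$ for every $T$. Since $\Gbb$ generates $\Mod(\Vbb)$, the sum of the images $T(\Gbb)$ fills $\Wbb$, hence $w'_i=0$ and the difference vanishes. Uniqueness of $\Psi$ follows from injectivity of $\pi_\Gbb$ (Cor.~\ref{lb58}) applied at $\Wbb=\Gbb$.

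The main conceptual step—already packaged by Cor.~\ref{lb56}—is recognizing that dinaturality at $\Gbb$ tested against endomorphisms of $\Gbb$ forces values to land exactly in the commutant $\End_B^0(\Gbb)=\pi_\Gbb(\boxtimes_\fn\Cbb)$. Everything else is a standard unwinding using that $\Gbb$ is a generator.
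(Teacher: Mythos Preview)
Your proof is correct and follows essentially the same approach as the paper: use dinaturality at $\Gbb$ against $\End_\Vbb(\Gbb)$ to land in $\End_B^0(\Gbb)$, invoke Cor.~\ref{lb56} to define $\Psi=\pi_\Gbb^{-1}\circ\wtd\psi_\Gbb$, and then propagate agreement from $\Gbb$ to every $\Wbb$ via the generator property, with uniqueness via Cor.~\ref{lb58}. The only cosmetic differences are that the paper derives dinaturality from Prop.~\ref{lb27} rather than directly from the universal property of $\upomega$, and in the existence step the paper uses a single epimorphism $T:\Gbb^{\oplus n}\twoheadrightarrow\Wbb$ together with injectivity of $\id_\Wbb\otimes T^\tr$, whereas you run over all $T:\Gbb\to\Wbb$ and use that their images cover $\Wbb$---these are equivalent arguments.
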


The dinaturality of $(\pi_\Mbb)_{\Mbb\in\Mod(\Vbb)}$, which means that $\pi_\Wbb(-)\circ T=T\circ\pi_\Mbb(-)$ holds for each $\Mbb,\Wbb\in\Mod(\Vbb)$ and $T\in\Hom_\Vbb(\Mbb,\Wbb)$, is obvious due to Prop. \ref{lb27}.

\begin{proof}
Let us check the universal property. Let $(\psi_\Mbb:\Abb \rightarrow \Mbb\otimes \Mbb')_{\Mbb\in\Mod(\Vbb)}$ be a dinatural transform in $\Mod(\Vbb^{\otimes 2})$, where $\Abb\in\Mod(\Vbb^{\otimes2})$. Choose a projective generator $\Gbb\in \Mod(\Vbb)$. By the dinaturality, for any $T\in\End_\Vbb(\Gbb)$, we have $(\id_\Gbb\otimes T^\tr)\psi_\Gbb=(T\otimes\id_{\Gbb'})\psi_\Gbb$. Therefore, each element in the range of $\psi_\Gbb:\Abb\rightarrow \Gbb\otimes \Gbb'=\End^0(\Gbb)$, as a linear operator on $\Gbb$, commutes with each $T\in\End_\Vbb(\Gbb)$. Therefore, $\psi_\Gbb(\Abb)$ is contained in $\End_B^0(\Gbb)$ where $B=\End_\Vbb(\Gbb)^\opp$. By Cor. \ref{lb56}, there is a unique $\Vbb^{\otimes2}$-module morphism $\Psi$ satisfying
\begin{align*}
\Psi:\Abb\rightarrow\boxtimes_\fn\Cbb\qquad\psi_\Gbb=\pi_\Gbb\circ\Psi
\end{align*}
Indeed, one sets $\Psi=\pi_\Gbb^{-1}\circ\psi_\Gbb$.

To prove the existence part of the universal property, we need to prove $\psi_\Mbb=\pi_\Mbb\circ\Psi$ for all $\Mbb\in \Mod(\Vbb)$, not just for $\Gbb$. Let $\wtd\psi_\Mbb=\pi_\Mbb\circ\Psi$. Then $(\wtd\psi_\Mbb:\Abb\rightarrow\Mbb\otimes\Mbb')_{\Mbb\in\Mod(\Vbb)}$ is also a dinatural transform. Moreover, we know that the dinatural transforms $(\psi_\Mbb)_{\Mbb\in\Mod(\Vbb)}$ and $(\wtd\psi_\Mbb)_{\Mbb\in\Mod(\Vbb)}$ agree when $\Mbb=\Gbb$. Thus, they agree on any finite direct sum of $\Gbb$. For any $\Mbb\in\Mod(\Vbb)$, there exist $n\in\Nbb_+$ and an epimorphism $T:\Xbb\rightarrow\Mbb$ where $\Xbb=\Gbb^{\oplus n}$. By the dinaturality, we have commuting diagrams
\begin{equation*}
	\begin{tikzcd}[row sep=large, column sep=huge]
\Abb \arrow[r,"{\psi_\Mbb}"] \arrow[d,"{\psi_\Xbb}"'] & \Mbb\otimes\Mbb' \arrow[d,"{\id_\Mbb\otimes T^\tr}"] \\
\Xbb\otimes\Xbb' \arrow[r,"{T\otimes\id_{\Xbb'}}"]           & \Mbb\otimes\Xbb'   
	\end{tikzcd}
\qquad
	\begin{tikzcd}[row sep=large, column sep=huge]
\Abb \arrow[r,"{\wtd\psi_\Mbb}"] \arrow[d,"{\wtd\psi_\Xbb}"'] & \Mbb\otimes\Mbb' \arrow[d,"{\id_\Mbb\otimes T^\tr}"] \\
\Xbb\otimes\Xbb' \arrow[r,"{T\otimes\id_{\Xbb'}}"]           & \Mbb\otimes\Xbb'   
	\end{tikzcd}
\end{equation*}
where $\psi_\Xbb=\wtd\psi_\Xbb$. Thus $(\id_\Mbb\otimes T^\tr)\psi_\Mbb=(\id_\Mbb\otimes T^\tr)\wtd\psi_\Mbb$. Since $T^\tr$ is injective and hence $\id_\Mbb\otimes T^\tr$ is injective, we conclude $\psi_\Mbb=\wtd\psi_\Mbb$.

Finally, we check the uniqueness. Suppose that $\Psi':\Abb\rightarrow\boxtimes_\fn\Cbb$ is another morphism satisfying $\psi_\Mbb=\pi_\Mbb\circ\Psi'$ for all $\Mbb\in\Mod(\Vbb)$. Then $\psi_\Gbb=\pi_\Gbb\circ\Psi'$, and hence $\pi_\Gbb\circ\Psi=\pi_\Gbb\circ\Psi'$. By Cor. \ref{lb58}, $\pi_\Gbb$ is injective. Therefore $\Psi=\Psi'$.
\end{proof}

\begin{co}\label{coend}
	The dinatural transform $\pi_\Mbb^t:\Mbb'\otimes \Mbb\rightarrow \bbs_\fn\Cbb$ (for all $\Mbb\in \Mod(\Vbb)$) is a coend in $\Mod(\Vbb^{\otimes 2})$. In short, we have
\begin{align*}
\bbs_\fn\Cbb\simeq \int^{\Mbb\in \Mod(\Vbb)}\Mbb'\otimes \Mbb\qquad\text{as $\Vbb^{\otimes 2}$-modules}
\end{align*} 
\end{co}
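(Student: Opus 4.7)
The plan is to deduce Cor.~\ref{coend} from Thm.~\ref{end} by contragredient duality in $\Mod(\Vbb^{\otimes 2})$. The starting observation is the natural isomorphism of $\Vbb^{\otimes 2}$-modules
\begin{align*}
(\Mbb \otimes_\Cbb \Mbb')' \simeq \Mbb' \otimes_\Cbb \Mbb
\end{align*}
for every $\Mbb \in \Mod(\Vbb)$: at the level of the $(\lambda, \mu)$-generalized weight space, both sides equal $(\Mbb_{[\lambda]})^* \otimes \Mbb_{[\mu]}$, and the $\Vbb^{\otimes 2}$-action matches on the nose by the definition of the contragredient. Under this identification, transposing $\pi_\Mbb: \boxtimes_\fn\Cbb \to \Mbb \otimes \Mbb'$ produces the morphism $\pi_\Mbb^\tr: \Mbb' \otimes \Mbb \to \bbs_\fn\Cbb$ in $\Mod(\Vbb^{\otimes 2})$ that serves as the candidate coend dinatural family.

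I would first verify dinaturality. For any $T \in \Hom_\Vbb(\Mbb, \Wbb)$, the end dinaturality square from Thm.~\ref{end} reads
\begin{align*}
(\id_\Wbb \otimes T^\tr) \circ \pi_\Wbb = (T \otimes \id_{\Mbb'}) \circ \pi_\Mbb.
\end{align*}
Transposing both sides, using $T^{\tr\tr} = T$ and the compatibility of $(-)'$ with tensor products of $\Vbb$-modules noted above, yields
\begin{align*}
\pi_\Wbb^\tr \circ (\id_{\Wbb'} \otimes T) = \pi_\Mbb^\tr \circ (T^\tr \otimes \id_\Mbb),
\end{align*}
which is exactly the coend dinaturality of $(\pi_\Mbb^\tr)_\Mbb$ with respect to the bi-functor $F(X, Y) = X \otimes Y$ in the sense of Def.~\ref{lb75}.

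For the universal property, suppose $(\varphi_\Mbb: \Mbb' \otimes \Mbb \to \Abb)_\Mbb$ is an arbitrary dinatural transform in $\Mod(\Vbb^{\otimes 2})$. The transposed family $(\varphi_\Mbb^\tr: \Abb' \to \Mbb \otimes \Mbb')_\Mbb$ is dinatural in the sense required by the end of Thm.~\ref{end}, by the same diagram chase run in reverse. Applying the end universal property gives a unique $\Psi \in \Hom_{\Vbb^{\otimes 2}}(\Abb', \boxtimes_\fn\Cbb)$ satisfying $\varphi_\Mbb^\tr = \pi_\Mbb \circ \Psi$ for every $\Mbb$; transposing once more yields $\Psi^\tr: \bbs_\fn\Cbb \to \Abb$ with $\varphi_\Mbb = \Psi^\tr \circ \pi_\Mbb^\tr$. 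Any other $\Phi: \bbs_\fn\Cbb \to \Abb$ with the same property transposes to another solution of the end universal problem, so $\Phi^\tr = \Psi$ and hence $\Phi = \Psi^\tr$.

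The argument is essentially formal once Thm.~\ref{end} is in hand; no new conformal-field-theoretic input is needed. The only potential obstacle is careful bookkeeping of the naturality in $\Mbb$ of the isomorphism $(\Mbb \otimes \Mbb')' \simeq \Mbb' \otimes \Mbb$ and the verification that $(-)^\tr$ converts each of the two edges of the end dinaturality square into the corresponding edge of the coend dinaturality square, but both are built into the contragredient construction.
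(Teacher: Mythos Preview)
Your proposal is correct and takes essentially the same approach as the paper, which proves the corollary in a single line: ``It follows from Thm.~\ref{end} by taking transpose and reversing the arrows.'' Your argument is precisely a detailed unpacking of that sentence, verifying that contragredient duality carries the end universal property of $(\pi_\Mbb)_\Mbb$ to the coend universal property of $(\pi_\Mbb^\tr)_\Mbb$.
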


\begin{proof}
It follows from Thm. \ref{end} by taking transpose and reversing the arrows.
\end{proof}

\subsection{Symmetric linear functionals on $\Wbb\in\Mod(\Vbb^{\otimes 2})$}

In this section, we assume $N=2$ and fix $\Wbb\in\Mod(\Vbb^{\otimes 2})$. Recall that by Cor. \ref{lb26}, $\Wbb$ is a $\boxtimes_\fn\Cbb$-bimodule. Let $\fk C=(C\big| \{z,\tipaz\};\eta_z,\eta_\tipaz)$ be a standard $2$-pointed sphere. Recall Rem. \ref{lb59} for the meaning of $\ST^*_{\fk C}(\Wbb)$. Recall the canonical involution $\Theta$ defined in Thm. \ref{lb62}.

\begin{df}\label{lb53}
Let $\Wbb\in\Mod(\Vbb^{\otimes 2})$. A linear map $\upphi:\Wbb\rightarrow\Cbb$ is called a \textbf{symmetric linear functional} if
\begin{align*}
\upphi(\psi\diamond_L w)=\upphi(w\diamond_R \psi)\qquad\text{for all } \psi\in \boxtimes_\fn\Cbb,w\in \Wbb
\end{align*}
The space of symmetric linear functionals on $\Wbb$ is denoted by $\pmb{\SLF(\Wbb)}$.
\end{df}

\begin{thm}\label{lb48}
Assume $N=2$, and let $\Wbb\in \Mod(\Vbb^{\otimes 2})$, viewed as a $\boxtimes_\fn\Cbb$-bimodule. Let $\upphi:\Wbb\rightarrow \Cbb$ be a linear map. Then $\upphi\in \SLF(\Wbb)$ if and only if $\upphi\in \ST_{\fk C}^*(\Wbb)$ (i.e., $\upphi$ satisfies \eqref{eq69} for all $v\in\Vbb$ and $w\in\Wbb$). In short, we have
\begin{align*}
\SLF(\Wbb)=\ST^*_{\fk C}(\Wbb)
\end{align*}
\end{thm}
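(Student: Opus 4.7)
The plan is to convert both $\SLF(\Wbb)$ and $\ST^*_{\fk C}(\Wbb)$ into explicit algebraic conditions on $\upphi$ and verify their equivalence mode-by-mode. By Rem. \ref{lb59} and Rem. \ref{lb40}, the space $\ST^*_{\fk C}(\Wbb)$ consists of all linear functionals $\upphi$ satisfying
\[
\bk{\upphi,Y_+(v,z)w}=\bk{\upphi,Y'_-(v,z)w}\qquad\text{in }\Cbb[[z^{\pm 1}]]
\]
for all $v\in\Vbb,w\in\Wbb$. Combining Rem. \ref{lb30} (every $\psi\in\boxtimes_\fn\Cbb$ is a finite linear combination of $P_+(\lambda)P_-(\mu)\aleph_z^\sharp(v)$) with Thm. \ref{lb21} (which gives $\psi\diamond_L w=P_+(\lambda)Y_+(v,z)P_+(\mu)w$ and $w\diamond_R\psi=P_-(\mu)Y'_-(v,z)P_-(\lambda)w$ for such $\psi$), the condition $\upphi\in\SLF(\Wbb)$ is equivalent to
\[
\upphi\bigl(P_+(\lambda)Y_+(v,z)P_+(\mu)w\bigr)=\upphi\bigl(P_-(\mu)Y'_-(v,z)P_-(\lambda)w\bigr)\qquad(\star)
\]
for all $\lambda,\mu\in\Cbb$, $v\in\Vbb$, $w\in\Wbb$, and $z\in\Cbb^\times$.

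For the inclusion $\SLF(\Wbb)\subseteq\ST^*_{\fk C}(\Wbb)$, I would sum $(\star)$ over all $\lambda$ and $\mu$. The decomposition $w=\sum_\mu P_+(\mu)w$ is a finite sum, and for each mode of $Y_+(v,z)P_+(\mu)w$ exactly one value of $\lambda$ in $\sum_\lambda P_+(\lambda)$ contributes; hence the LHS of $(\star)$ collapses mode-by-mode to $\upphi(Y_+(v,z)w)$, and the same reasoning on the RHS yields $\upphi(Y'_-(v,z)w)$, giving the conformal block identity.

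The reverse inclusion rests on a vanishing lemma: every $\upphi\in\ST^*_{\fk C}(\Wbb)$ annihilates $\Wbb_{[\alpha,\beta]}$ whenever $\alpha\ne\beta$. To prove this I would take $v=\cbf$; since $L(1)\cbf=0$ and the coefficient $c_2$ in the expansion of $\upgamma_z$ vanishes, one has $\MU(\upgamma_z)\cbf=z^{-4}\cbf$, so a direct computation yields $Y_+(\cbf)_1=L_+(0)$ and $Y'_-(\cbf)_1=L_-(0)$. The coefficient of $z^{-2}$ in the conformal block identity then gives $\upphi\bigl((L_+(0)-L_-(0))w\bigr)=0$ for all $w\in\Wbb$. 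On the finite-dimensional $\Wbb_{[\alpha,\beta]}$, the operator $L_+(0)-L_-(0)$ equals $(\alpha-\beta)\id+N$ for some nilpotent $N$ (the difference of two commuting nilpotents), which is invertible when $\alpha\ne\beta$, forcing $\upphi|_{\Wbb_{[\alpha,\beta]}}=0$.

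With this vanishing, $(\star)$ is verified directly on each $\Wbb_{[\alpha_0,\beta_0]}$: the LHS lies in $\Wbb_{[\lambda,\beta_0]}$ and the RHS in $\Wbb_{[\alpha_0,\mu]}$, so both sides vanish unless $\lambda=\beta_0$ and $\mu=\alpha_0$, in which case the identity reduces to a single per-mode equality $\upphi(Y_+(v)_{n_0}w)=\upphi(Y'_-(v)_{n_0}w)$ supplied by the conformal block condition. The main obstacle is the vanishing lemma; once that is in place, the remainder is a diagonal-support bookkeeping.
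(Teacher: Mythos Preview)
Your proposal is correct and follows the same overall architecture as the paper: both reduce $\SLF(\Wbb)$ to the condition $(\star)$ via Rem.~\ref{lb30} and Thm.~\ref{lb21}, and both handle the forward direction by summing over $\lambda,\mu$. The only real difference is in the backward direction. You extract from the conformal block identity (with $v=\cbf$) the single relation $\upphi\bigl((L_+(0)-L_-(0))w\bigr)=0$ and deduce the diagonal-support vanishing lemma $\upphi|_{\Wbb_{[\alpha,\beta]}}=0$ for $\alpha\neq\beta$, then verify $(\star)$ by a support argument. The paper instead invokes Lem.~\ref{lb31} (projections are polynomials in $L_j(0)$) to obtain $\upphi(P_+(\kappa)w)=\upphi(P_-(\kappa)w)$ for all $\kappa$, and then runs a short chain of equalities commuting $P_\pm$ past the vertex operators. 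Both routes rest on the same input---that $\upphi$ intertwines $L_+(0)$ and $L_-(0)$---and are essentially interchangeable; your vanishing-lemma formulation is slightly more direct, while the paper's version avoids singling out the conformal vector and packages the argument via the polynomial functional calculus already established in Lem.~\ref{lb31}. One small imprecision: your claim that ``exactly one value of $\lambda$ contributes'' assumes $v$ homogeneous; it is cleaner to simply note that $\sum_\lambda P_+(\lambda)$ acts as the identity on $\Wbb$, so the sum over $\lambda$ collapses regardless.
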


\begin{proof}
By Rem. \ref{lb30}, any element in $\boxtimes_\fn\Cbb$ is a linear combination of elements of the form $P_+(\lambda)P_-(\mu)\aleph_z^\sharp(v)$, where $\lambda,\mu \in \Cbb$, $z\in\Cbb^\times$, and $v\in \Vbb$. Thus, $\upphi\in \SLF(\Wbb)$ iff 
\begin{align}\label{eq96}
\bk{\upphi,P_+(\lambda)P_-(\mu)\aleph_z^\sharp(v)\diamond_L w}=\bk{\upphi,w\diamond_R P_+(\lambda)P_-(\mu)\aleph_z^\sharp(v)}
\end{align}
for each $v\in\Vbb$, $w\in\Wbb$, $\lambda,\mu\in\Cbb$, and $z\in\Cbb^\times$. By Thm. \ref{lb21}, \eqref{eq96} is equivalent to 
\begin{align*}
\bk{\upphi,P_+(\lambda)Y_+(v,z)P_+(\mu)w}=\bk{\upphi,P_-(\mu)Y_-'(v,z)P_-(\lambda)w}
\end{align*}
for all $v,w,\lambda,\mu,z$, and hence equivalent to
\begin{align}\label{eq71}
\bk{\upphi,P_+(\lambda)Y_+(v)_nP_+(\mu)w}=\bk{\upphi,P_-(\mu)Y_-'(v)_nP_-(\lambda)w}
\end{align}
for all $v,w,\lambda,\mu$ and $n\in\Zbb$, where $Y_-'(v,z)=\sum_n Y_-'(v)_nz^{-n-1}$.

We want to show that condition \eqref{eq71} is equivalent to \eqref{eq69}, i.e., 
\begin{align}\label{eq97}
\bk{\upphi,Y_+(v)_nw}=\bk{\upphi,Y_-'(v)_nw}
\end{align}
holds for all $v$ and $n\in\Zbb$. If \eqref{eq71} holds, by taking sum over all $\lambda,\mu$, we obtain \eqref{eq97}. Conversely, suppose that \eqref{eq97} holds. Then $\bk{\upphi,P_+(\kappa)w}=\bk{\upphi,P_-(\kappa)w}$ holds for each $\kappa\in\Cbb$ due to Lem. \ref{lb31}, and hence
\begin{align*}
&\bk{\upphi,P_+(\lambda)Y_+(v)_nP_+(\mu)w}=\bk{\upphi,P_-(\lambda)Y_+(v)_nP_+(\mu)w}\\
=&\bk{\upphi,Y_+(v)_nP_-(\lambda)P_+(\mu)w}=\bk{\upphi,Y'_-(v)_nP_-(\lambda)P_+(\mu)w}\\
=&\bk{\upphi,P_+(\mu)Y'_-(v)_nP_-(\lambda)w}=\bk{\upphi,P_-(\mu)Y'_-(v)_nP_-(\lambda)w}
\end{align*}
This proves \eqref{eq71}.
\end{proof}

By Prop. \ref{lb13}, regardless of which ordering we choose for $\{z,\tipaz\}$, the pair $(\boxtimes_\fn\Cbb,\upomega)$ is a fusion product of $\Cbb$ along $\fk C$. Therefore, we have a linear isomorphism
\begin{gather}\label{eq100}
\Hom_{\Vbb^{\otimes 2}}(\Wbb,\bbs_\fn\Cbb)\xlongrightarrow{\simeq}\ST^*_{\fk C}(\Wbb)\qquad T\mapsto \upomega\circ T
\end{gather}
Recall that $\ovl{\bbs_\fn\Cbb}=(\boxtimes_\fn\Cbb)^*$. Therefore, for each $w\in\Wbb$ and $T\in\Hom_{\Vbb^{\otimes 2}}(\Wbb,\bbs_\fn\Cbb)$, the element $Tw$ is a linear functional on $\boxtimes_\fn\Cbb$.

\begin{thm}\label{lb64}
Choose $\upxi\in\ST^*_{\fk C}(\Wbb)$, and let $T\in\Hom_{\Vbb^{\otimes 2}}(\Wbb,\bbs_\fn\Cbb)$ be the unique morphism satisfying $\upxi=\upomega\circ T$. Then for each $w\in\Wbb$, and $\psi\in\boxtimes_\fn\Cbb$, we have
\begin{align}\label{eq80}
\bk{T(w),\Theta\psi}=\upxi(\psi\diamond_Lw)=\upxi(w\diamond_R\psi)
\end{align}
\end{thm}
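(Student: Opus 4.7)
The second equality $\upxi(\psi \diamond_L w) = \upxi(w \diamond_R \psi)$ is immediate from Thm \ref{lb48} (which identifies $\ST^*_\fc(\Wbb) = \SLF(\Wbb)$) and Def \ref{lb53}. For the first equality, my plan is to reduce it to a universal-property statement on $\bbs_\fn\Cbb$ itself. I would first observe that $T$ intertwines the $\boxtimes_\fn\Cbb$-actions: by Rem \ref{lb30}, any $\psi \in \boxtimes_\fn\Cbb$ is a linear combination of elements $P_+(\lambda)P_-(\mu)\aleph_z^\sharp(v)$, and Thm \ref{lb21} expresses their action on any $\Vbb^{\otimes 2}$-module $\Wbb'$ purely in terms of $\Vbb^{\otimes 2}$-operators, namely $P_+^{\Wbb'}(\lambda)Y_+^{\Wbb'}(v,z)P_+^{\Wbb'}(\mu)$. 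Since $T$ is a $\Vbb^{\otimes 2}$-morphism, it intertwines these and we obtain $T(\psi \diamond_L w) = \psi \diamond_L T(w)$, hence $\upxi(\psi \diamond_L w) = \upomega(\psi \diamond_L T(w))$. It therefore suffices to prove the key identity, which I denote $(*)$:
\begin{equation*}
\upomega(\psi \diamond_L \xi) = \bk{\xi, \Theta\psi} \qquad (\psi \in \boxtimes_\fn\Cbb,\ \xi \in \bbs_\fn\Cbb).
\end{equation*}

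To establish $(*)$, I would define $R: \bbs_\fn\Cbb \to \boxtimes_\fn\Cbb^*$ by $R(\xi)(\psi) := \upomega(\psi \diamond_L \xi)$; then $(*)$ amounts to showing $R = \Theta^\tr$ as maps into $\bbs_\fn\Cbb$, since $(\Theta^\tr\xi)(\psi) = \bk{\xi, \Theta\psi}$. Three items need verification. First, that $R$ actually takes values in $\bbs_\fn\Cbb$: using Thm \ref{lb21} to write $\psi \diamond_L \xi = P_+^{\bbs}(\lambda) Y_+^{\bbs}(v,z) P_+^{\bbs}(\mu) \xi$ for generating $\psi$, together with the ``diagonal'' vanishing of $\upomega$ on $\bbs_\fn\Cbb_{[\lambda,\nu]}$ whenever $\lambda \neq \nu$---which follows by setting $v = \cbf$ in the conformal block relation $\bk{\upomega, Y_+'(v,z)\xi} = \bk{\upomega, Y_-(v,z)\xi}$ of Rem \ref{lb40}, extracting the coefficient of $z^{-2}$ to get $\upomega \circ (L_+(0)-L_-(0)) = 0$, and invoking the invertibility of $L_+(0)-L_-(0)$ on off-diagonal generalized eigenspaces---one sees $R(\xi)$ is supported in finitely many graded pieces. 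Second, that $R$ is a $\Vbb^{\otimes 2}$-morphism $\bbs_\fn\Cbb \to \ovl\alpha(\bbs_\fn\Cbb)$, with $\alpha$ the factor-swap of Def \ref{lb61}; this follows by chasing the Ward identities \eqref{eq118} for $\Phi_{+,+}$ through the contragredient pairing \eqref{eq3}. Third, that $\upomega \circ R = \upomega$: using the diagonal structure to write $\upomega = \sum_\lambda \upchi_\lambda$ in $\ovl{\boxtimes_\fn\Cbb}$, I would compute $\upomega(R(\xi)) = \sum_\lambda R(\xi)(\upchi_\lambda) = \sum_\lambda \upomega(\upchi_\lambda \diamond_L \xi) = \sum_\lambda \upomega(P_+^{\bbs}(\lambda)\xi) = \upomega(\xi)$, a finite sum by grading restriction.

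The conclusion then follows from the universal property of the dual fusion product: by Prop \ref{lb13} and Def \ref{lb2}, $(\bbs_\fn\Cbb, \upomega)$ is a dual fusion product of $\Cbb$ along $\fn$ for either ordering of $\{\infty, 0\}$, so the assignment $S \mapsto \upomega \circ S$ is a bijection from $\Hom_{\Vbb^{\otimes 2}}(\bbs_\fn\Cbb, \ovl\alpha(\bbs_\fn\Cbb))$ to $\ST^*_\fn(\bbs_\fn\Cbb)$. Since $\Theta^\tr$ is also a $\Vbb^{\otimes 2}$-morphism $\bbs_\fn\Cbb \to \ovl\alpha(\bbs_\fn\Cbb)$ with $\upomega \circ \Theta^\tr = \upomega$ by Eq \eqref{eq86}, uniqueness forces $R = \Theta^\tr$, yielding $(*)$. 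The main obstacle will be the second verification above---establishing that $R$ is a morphism into the twisted module $\ovl\alpha(\bbs_\fn\Cbb)$ rather than $\bbs_\fn\Cbb$ with its original structure---which requires careful bookkeeping with the Ward identities for $\Phi_{+,+}$ and the contragredient vertex operators on $\bbs_\fn\Cbb$.
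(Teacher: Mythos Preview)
Your proposal is correct and genuinely differs from the paper's argument, though the two converge at the end on the same uniqueness mechanism.

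The paper does not perform your preliminary reduction to $\Wbb=\bbs_\fn\Cbb$. Instead it works directly with general $\Wbb$ and $T$: setting $A=\Theta^\tr\circ T$ and $B=\upxi\circ\Phi_{+,+}$, it observes that $B$ is a \emph{composition} of conformal blocks (geometrically, gluing the sphere carrying $\upxi$ to the pair-of-pants carrying $\Phi_{+,+}$), so by Thm.~\ref{lb47} it is automatically a conformal block in the twisted space \eqref{eq90}. Then $\upomega\circ A=\upomega\circ\Theta^\tr\circ T=\upxi$ by \eqref{eq86}, while $\upomega\circ B=\upxi$ follows in one line from the defining relation \eqref{eq47} for $\Phi_{+,+}$. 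Partial injectivity of $\upomega$ (Rem.~\ref{SF2}) then forces $A=B$ as conformal blocks, i.e.\ $\bk{Aw,\psi}=B(\psi\otimes w)$.

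Your route instead factors off the reduction $T(\psi\diamond_L w)=\psi\diamond_L T(w)$ via Thm.~\ref{lb21}, and then proves the universal identity $\upomega(\psi\diamond_L\xi)=\bk{\xi,\Theta\psi}$ by hand-verifying that $R$ is a $\Vbb^{\otimes2}$-morphism into the twisted module and that $\upomega\circ R=\upomega$. This is exactly the paper's argument specialized to $\Wbb=\bbs_\fn\Cbb$, $T=\id$, except that what the paper gets for free from the composition formalism (that $B$ is a conformal block, and that contracting with $\upomega$ collapses it to $\upxi$) you establish algebraically through your verifications (1)--(3). The trade-off: the paper's proof is shorter and leans on the sewing machinery already in place, while yours is more explicit and makes the Ward-identity content of the statement visible; in particular your verification (2) is precisely the algebraic unwinding of ``$B$ is a conformal block in \eqref{eq90}.''
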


\begin{proof}
The second equality in \eqref{eq80} is obvious. Let us prove the first one. 

Define $A=\Theta^\tr\circ T:\Wbb\rightarrow\bbs_\fn\Cbb$,
which clearly belongs to
\begin{align}\label{eq90}
\ST^*\Bigg(\vcenter{\hbox{{
				   \includegraphics[height=2.4cm]{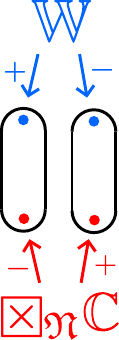}}}}~\Bigg)
\end{align}
Define $B=\upxi\circ\Phi_{+,+}:\boxtimes_\fn\Cbb\otimes\Wbb\rightarrow\Cbb$, which is a composition of conformal blocks as indicated by Fig. \ref{img8}. Therefore, $B$ belongs to \eqref{eq90}.

\begin{figure}[h]
	\centering
\begin{align*}
B\quad=\quad\vcenter{\hbox{{
				   \includegraphics[height=3.3cm]{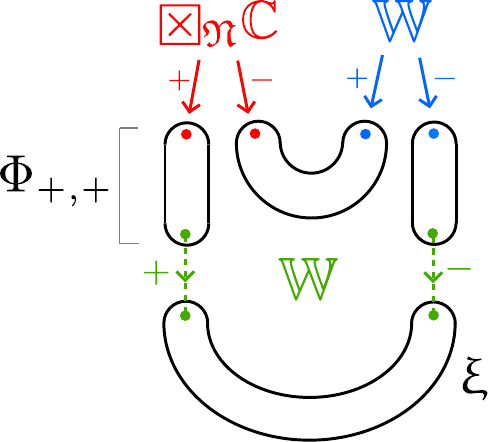}}}}
\end{align*}
\caption{~~The conformal block $B$.}
	\label{img8}
\end{figure}

Since $\upomega=\upomega\circ\Theta^\tr$ (cf. Thm. \ref{lb62}), we have $\upomega\circ A=\upomega\circ\Theta^\tr\circ T=\upxi$. This is a relation of conformal blocks, as indicated in Fig. \ref{img9}.

\begin{figure}[h]
	\centering
\begin{align*}
\quad\vcenter{\hbox{{
				   \includegraphics[height=3cm]{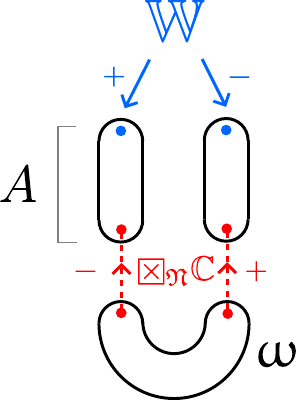}}}}
\quad=\quad
\quad\vcenter{\hbox{{
				   \includegraphics[height=1.7cm]{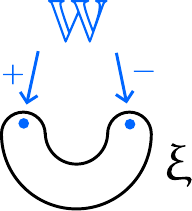}}}}
\end{align*}
\caption{~~The relation $\upomega\circ A=\upxi$.}
	\label{img9}
\end{figure}
On the other hand, for each $w\in\Wbb$, we have
\begin{align}\label{eq91}
\wick{\upomega(\c1-)B(\c1-\otimes w)}=\wick{\upxi\circ(\upomega(\c1-)\Phi_{+,+}(\c1-\otimes w))}\xlongequal{\eqref{eq47}}\upxi(w)
\end{align}
See Fig. \ref{img10}. Therefore, by the partial injectivity of $\upomega$ (cf. Rem. \ref{SF2}), we see that $A$ and $B$ correspond to the same conformal block in \eqref{eq90}. More precisely: $\bk{Aw,\psi}=B(\psi\otimes w)$ holds for all $w\in\Wbb,\psi\in\boxtimes_\fn\Cbb$. This proves \eqref{eq80}.
\begin{figure}[h]
	\centering
\begin{align*}
\quad\vcenter{\hbox{{
				   \includegraphics[height=3.5cm]{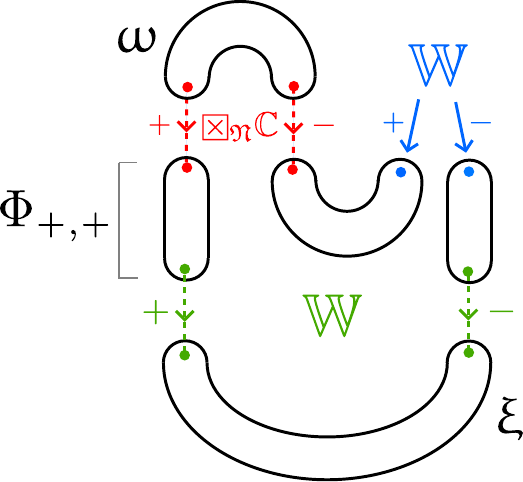}}}}
\quad\xlongequal{\eqref{eq48}}\quad
\quad\vcenter{\hbox{{
				   \includegraphics[height=3cm]{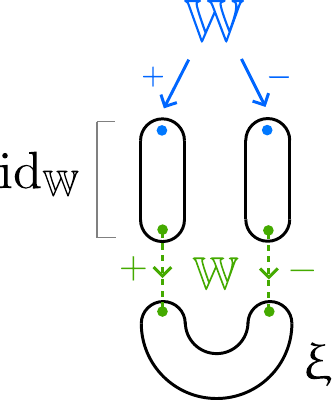}}}}
\quad=\quad
\quad\vcenter{\hbox{{
				   \includegraphics[height=1.7cm]{fig25d.pdf}}}}
\end{align*}
\caption{~~The pictorial illustration of \eqref{eq91}.}
	\label{img10}
\end{figure}
\end{proof}

We now consider the case where $\Wbb=\boxtimes_\fn\Cbb$. Then \eqref{eq100} becomes the linear isomorphism
\begin{align}\label{eq101}
\Hom_{\Vbb^{\otimes 2}}(\boxtimes_\fn\Cbb,\bbs_\fn\Cbb)\xlongrightarrow{\simeq}\SLF(\boxtimes_\fn\Cbb)=\ST^*_{\fk C}(\boxtimes_\fn\Cbb)\qquad T\mapsto \upomega\circ T
\end{align}

\begin{co}\label{lb63}
Choose $\upxi$ in $\SLF(\boxtimes_\fn\Cbb)$, and let $T\in \Hom_{\Vbb^{\otimes 2}}\big(\boxtimes_\fn\Cbb,\bbs_\fn\Cbb \big)$ be the unique morphism satisfying $\upxi=\upomega\circ T$. Then the following are equivalent:
\begin{enumerate}[label=(\alph*)]
\item $T$ is injective.
\item $T$ is surjective.
\item $T$ is an isomorphism of $\Vbb^{\otimes2}$-modules.
\item The symmetric linear functional $\upxi$ is \textbf{non-degenerate}, that is, the following map is injective:
\begin{align}
\boxtimes_\fn\Cbb\rightarrow(\boxtimes_\fn\Cbb)^*\qquad \psi\mapsto\upxi(\psi\diamond-)
\end{align}
\end{enumerate}
\end{co}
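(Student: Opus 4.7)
The plan is to first prove (a) $\Leftrightarrow$ (b) $\Leftrightarrow$ (c) by a grading-wise finite-dimensional argument, and then prove (a) $\Leftrightarrow$ (d) by combining Thm. \ref{lb64} with the symmetry of $\upxi$.

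For (a) $\Leftrightarrow$ (b) $\Leftrightarrow$ (c): Since $T\in\Hom_{\Vbb^{\otimes 2}}(\boxtimes_\fn\Cbb,\bbs_\fn\Cbb)$ intertwines $L_+(0)$ and $L_-(0)$, it preserves the bigraded decomposition. So for each $\lambda,\mu\in\Cbb$, $T$ restricts to a linear map
\begin{align*}
T|_{[\lambda,\mu]}:(\boxtimes_\fn\Cbb)_{[\lambda,\mu]}\longrightarrow (\bbs_\fn\Cbb)_{[\lambda,\mu]}=\big((\boxtimes_\fn\Cbb)_{[\lambda,\mu]}\big)^*
\end{align*}
between finite-dimensional spaces of equal dimension. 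Hence $T|_{[\lambda,\mu]}$ is injective iff surjective iff bijective; summing over all $\lambda,\mu$ shows that $T$ is injective iff surjective iff a $\Vbb^{\otimes 2}$-module isomorphism.

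For (a) $\Leftrightarrow$ (d): By Thm. \ref{lb64}, for all $w,\psi\in\boxtimes_\fn\Cbb$,
\begin{align*}
\upxi(\psi\diamond w)=\bk{T(w),\Theta\psi}
\end{align*}
The symmetry of $\upxi$ (i.e., $\upxi(\psi\diamond w)=\upxi(w\diamond\psi)$) combined with Thm. \ref{lb64} applied with the roles of $\psi$ and $w$ reversed gives the alternative expression
\begin{align*}
\upxi(\psi\diamond w)=\upxi(w\diamond\psi)=\bk{T(\psi),\Theta w}
\end{align*}
Therefore, for fixed $\psi$, the functional $w\mapsto \upxi(\psi\diamond w)$ equals $w\mapsto\bk{T(\psi),\Theta w}$. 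Since $\Theta$ is a bijection of $\boxtimes_\fn\Cbb$ (Thm. \ref{lb62}) and $\boxtimes_\fn\Cbb=(\bbs_\fn\Cbb)'$ separates points of $\bbs_\fn\Cbb$ (as each generalized eigenspace of $\bbs_\fn\Cbb$ is finite-dimensional and paired with its dual inside $\boxtimes_\fn\Cbb$), this functional vanishes identically in $w$ iff $T(\psi)=0$. Hence the map $\psi\mapsto\upxi(\psi\diamond -)$ has the same kernel as $T$, and (d) $\Leftrightarrow$ (a).

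I do not expect any serious obstacle: the whole argument is formal bookkeeping once Thm. \ref{lb64} and the bigraded finite-dimensionality of $\boxtimes_\fn\Cbb$ are in hand. The only mildly delicate point is noting that the symmetry relation $\upxi(\psi\diamond w)=\upxi(w\diamond\psi)$ allows Thm. \ref{lb64} to be invoked with either argument in the role of the ``distinguished'' one, converting a ``one-sided'' nondegeneracy statement into the ``other-sided'' kernel condition for $T$.
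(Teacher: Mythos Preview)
Your proof is correct and matches the paper's approach: the equivalence (a)$\Leftrightarrow$(b)$\Leftrightarrow$(c) via the finite-dimensional graded pieces $(\boxtimes_\fn\Cbb)_{[\lambda,\mu]}\simeq(\bbs_\fn\Cbb)_{[\lambda,\mu]}^*$ is identical to the paper, and for (a)$\Leftrightarrow$(d) the paper likewise uses Thm.~\ref{lb64} to identify the map $\psi\mapsto\upxi(\psi\diamond-)$ with $\Theta^\tr\circ T$. The only cosmetic difference is that the paper reads off $\upxi(\psi\diamond w)=\bk{T(\psi),\Theta w}$ directly from the second equality in Thm.~\ref{lb64} (since $\diamond=\diamond_R$), whereas you route through the symmetry of $\upxi$ and the first equality; these are the same step, as Thm.~\ref{lb64} already records the symmetry.
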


\begin{proof}
For each $\lambda,\mu\in\Cbb$, the morphism $T$ restricts to
	\begin{gather}\label{eq92}
		T:(\boxtimes_\fn\Cbb)_{[\lambda,\mu]}\rightarrow (\bbs_\fn\Cbb)_{[\lambda,\mu]}
	\end{gather}
whose domain and codomain have the same finite dimension (because $(\bbs_\fn\Cbb)_{[\lambda,\mu]}$ is the dual space of $(\boxtimes_\fn\Cbb)_{[\lambda,\mu]}$). Therefore, $T$ is injective iff $T$ is surjective iff $T$ is bijective. This proves the equivalence of (a), (b), and (c). By Thm. \ref{lb64}, the map \eqref{eq92} sends each $w\in\boxtimes_\fn\Cbb$ to $\Theta^\tr\circ T(w)$. Therefore, (a) and (d) are equivalent.
\end{proof}

\section{Torus conformal blocks and pseudo-$q$-traces}

\subsection{The space $\ST^*_{\ft_{z,q}}(\Vbb)$ of vacuum torus conformal blocks}

\subsubsection{The geometric setting}\label{lb67}

As usual, we let $\zeta$ be the standard coordinate of $\Cbb$. We fix a standard $2$-pointed sphere $\fk C$, viewed as an $(0,2)$-pointed sphere. Fix $z\in\Cbb^\times$, and fix $\tau\in\mbb H$ where $\mbb H\subset\Cbb$ is the (open) upper half plane. Let $q=e^{2\im\pi\tau}$ with $\arg q=2\pi\Re(\tau)$. Let $\fq_{z,q}$ be the $(1,2)$-pointed sphere with local coordinates described by
\begin{align*}
\fq_{z,q}=\big(\{\infty,0\};1/q\zeta,\zeta\big|\Pbb^1\big|z;\zeta-z\big)
\end{align*}
Since $0<|q|<1$, $\fc$ can be composed with $\fq_{z,q}$, because the sewing radii can be chosen to be admissible when the sewing moduli are set to $1$. Let
\begin{align}
\tipae:\{+,-\}\xlongrightarrow{\simeq}\{\infty,0\}\qquad \tipae(+)=\infty\qquad \tipae(-)=0
\end{align} 
be the default ordering of the outgoing marked points of $\fq_{z,q}$. Let $\eps$ be an arbitrary ordering of the marked points of $\fc$. Let $\fk T_{z,q}$ be the $1$-pointed torus with local coordinates defined by
\begin{align*}
\fk T_{z,q}=\fc\#^{\eps,\tipae}\fq_{z,q}
\end{align*}
That is,
\begin{align*}
\vcenter{\hbox{{\includegraphics[height=1cm]{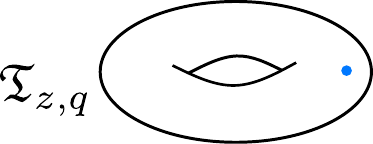}}}}\quad=\quad \vcenter{\hbox{{\includegraphics[height=1.4cm]{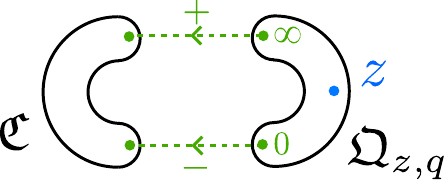}}}}
\end{align*}
Clearly, $\fk T_{z,q}$ is isomorphic to
\begin{align*}
\Big(\Cbb/(\Zbb+\tau\Zbb)\Big|\frac{\log z}{2\im\pi};e^{2\im\pi\zeta}-z \Big)
\end{align*}

\subsubsection{The sewing-factorization theorem for vacuum torus conformal blocks}

Recall from Rem. \ref{lb59} that $\ST^*_\fc(\boxtimes_\fn\Cbb)$ is the space of conformal blocks associated to $\boxtimes_\fn\Cbb$ and $\fc$, where the choice of ordering is irrelevant. Recall from Thm. \ref{lb48} that
\begin{align*}
\SLF(\boxtimes_\fn\Cbb)=\ST^*_{\fk C}(\boxtimes_\fn\Cbb)
\end{align*}
Let $\ST^*_{\ft_{z,q}}(\Vbb)$ be the space of conformal blocks associated to $\Vbb$ and $\ft_{z,q}$ via the unique ordering of the marked point of $\ft_{z,q}$. 

\begin{rem}
Let $\iota_z$ be the unique ordering of $\{z\}$. As noted in Def. \ref{lb23}, $(\boxtimes_\fn\Cbb,\aleph_z)$ is an $(\tipae,\iota_z)$-fusion product of $\Vbb$ along $\fq_z=\fq_{z,1}$. Let
\begin{align*}
\aleph_{z,q}=\aleph_z\circ(q^{L_+(0)}\otimes\id_\Vbb):\bbs_\fn\Cbb\otimes\Vbb\rightarrow\Cbb
\end{align*}
whose corresponding map $\Vbb\rightarrow\ovl{\boxtimes_\fn\Cbb}$ is
\begin{align*}
\aleph^\sharp_{z,q}=q^{L_+(0)}\circ\aleph^\sharp_z:\Vbb\rightarrow\ovl{\boxtimes_\fn\Cbb}
\end{align*}
Then by Prop. \ref{lb66},  $(\boxtimes_\fn\Cbb,\aleph_{z,q})$ is an $(\tipae,\iota_z)$-fusion product of $\Vbb$ along $\fq_{z,q}$.
\end{rem}

\begin{thm}\label{lb69}
We have a linear isomorphism
\begin{gather}\label{eq94}
\SLF(\boxtimes_\fn\Cbb)\xlongrightarrow{\simeq}\ST^*_{\fk T_{z,q}}(\Vbb)\qquad\upxi\mapsto \upxi\circ \aleph^\sharp_{z,q}
\end{gather}
where the RHS converges absolutely in the sense of \eqref{eq104}.
\end{thm}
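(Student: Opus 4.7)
The plan is to deduce this from the sewing-factorization theorem, Thm. \ref{SF}, applied with $\ff=\fc$ and $\fg=\fq_{z,q}$, combined with the identification $\ST^*_\fc(\boxtimes_\fn\Cbb)=\SLF(\boxtimes_\fn\Cbb)$ from Thm. \ref{lb48}.

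First I would assemble the SF data. Viewing $\fc$ as a $(0,2)$-pointed sphere, the ``output'' module is $\Wbb=\Cbb\in\Mod(\Vbb^{\otimes 0})$, while the ``input'' module of $\fq_{z,q}$ (a $(2,1)$-pointed sphere) is $\Xbb=\Vbb\in\Mod(\Vbb)$, assigned at the marked point $z$ via $\iota_z$. The fusion product of $\Vbb$ along $\fq_{z,q}$ required by Thm. \ref{SF} is $(\boxtimes_\fn\Cbb,\aleph_{z,q})$: this is obtained by applying Prop. \ref{lb66} to the change of local coordinate $1/\zeta\mapsto 1/(q\zeta)$ at $\infty$, starting from the $(\tipae,\iota_z)$-fusion product $(\boxtimes_\fn\Cbb,\aleph_z)$ along $\fq_z$ of Def. \ref{lb23}.

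Since $\fc\#^{\eps,\tipae}\fq_{z,q}=\fk T_{z,q}$ by the construction in Subsec. \ref{lb67}, Thm. \ref{SF} (with all sewing moduli set to $1$, which is admissible by $|q|<1$ as noted in that subsection) then gives a linear isomorphism
\begin{align*}
\ST^*_\fc(\boxtimes_\fn\Cbb)\xlongrightarrow{\simeq}\ST^*_{\fk T_{z,q}}(\Vbb),\qquad \upphi\mapsto\upphi\circ\aleph_{z,q},
\end{align*}
where the RHS is the composition of conformal blocks of Def. \ref{lb71}. By Thm. \ref{lb48} (together with Rem. \ref{lb59} for ordering-independence), the LHS is $\SLF(\boxtimes_\fn\Cbb)$.

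The last step is to rewrite $\upphi\circ\aleph_{z,q}$ in the form $\upphi\circ\aleph^\sharp_{z,q}$. Since the ``output side'' is trivial ($\Wbb'=\Cbb$), one has $\upphi^\sharp=\upphi$, and unpacking the contraction of Def. \ref{lb71} yields, for each $v\in\Vbb$,
\begin{align*}
(\upphi\circ\aleph_{z,q})(v)=\sum_{\lambda_+,\lambda_-\in\Cbb}\upphi\big(P_+(\lambda_+)P_-(\lambda_-)\aleph^\sharp_{z,q}(v)\big),
\end{align*}
which is precisely $\upphi\circ\aleph^\sharp_{z,q}(v)$ in the sense of \eqref{eq104}; absolute convergence is already guaranteed by Thm. \ref{lb47}. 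There is no serious obstacle; the only points meriting verification are the admissibility of sewing at moduli $(1,1)$ (built into the setup in Subsec. \ref{lb67}) and the identification of the sewn surface with $\fk T_{z,q}$ (true by construction).
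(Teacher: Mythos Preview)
Your proposal is correct and follows exactly the paper's approach: the paper's proof consists of the single sentence ``This is a special case of the sewing-factorization Thm.~\ref{SF},'' and your argument simply unpacks that sentence, identifying $\ff=\fc$, $\fg=\fq_{z,q}$, the fusion product $(\boxtimes_\fn\Cbb,\aleph_{z,q})$ (already recorded in the remark preceding the theorem), and invoking Thm.~\ref{lb48} for $\ST^*_\fc(\boxtimes_\fn\Cbb)=\SLF(\boxtimes_\fn\Cbb)$.
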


\begin{proof}
This is a special case of the sewing-factorization Thm. \ref{SF}.
\end{proof}

The picture for the sewing-factorization isomorphism \eqref{eq94} is
\begin{align*}
\vcenter{\hbox{{\includegraphics[height=1.8cm]{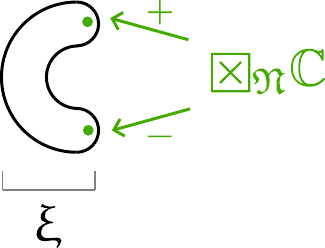}}}}\qquad\longmapsto\qquad \vcenter{\hbox{{\includegraphics[height=1.9cm]{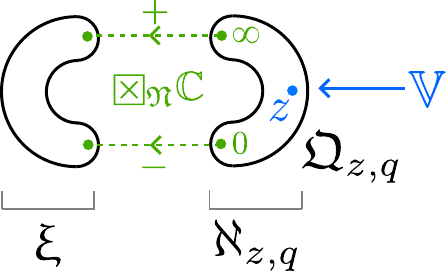}}}}
\end{align*}

\subsection{The isomorphism $\SLF(\End_\Vbb(\Gbb)^\opp)\simeq\ST^*_{\ft_{z,q}}(\Vbb)$ via pseudo-$q$-traces}\label{lb79}

Let $(\Gbb,Y_\Gbb)$ be a projective generator of $\Mod(\Vbb)$. By Cor. \ref{lb56}, $(\Gbb,\pi_\Gbb)$ is a projective generator of $\Mod(\boxtimes_\fn\Cbb)$. Let
\begin{align*}
B=\End_\Vbb(\Gbb)^\opp\xlongequal{\text{Prop.\ref{lb27}}}\End_{\boxtimes_\fn\Cbb}(\Gbb)^\opp
\end{align*}
which is a finite-dimensional unital $\Cbb$-algebra. Then $\Gbb$ is a $\boxtimes_\fn\Cbb$-$B$ bimodule.

\subsubsection{Pseudotraces}
By Prop. 9.1 and Thm. 9.4 of \cite{GZ4}, or by \cite[Lem. 5.3]{GR-Verlinde}, $\Gbb$ is a projective right $B$-module. In particular, each graded subspace $\Gbb_{[\lambda]}$ is a finite-dimensional projective right $B$-module. It follows that the right $B$-module $\Gbb$ has a \textbf{left coordinate system}, i.e., a collection of morphisms of right $B$-modules
\begin{align}\label{eq98}
	\alpha_i\in \Hom_B(B,\Gbb)\quad \wch\alpha^i\in \Hom_B(\Gbb,B),\quad \text{where }i\in I
\end{align}
satisfying the following conditions:
\begin{enumerate}[label=(\alph*)]
\item  For each $\xi\in \Gbb$, we have $\wch\alpha^i(\xi)=0$ for all but finitely many $i\in I$, and  $\sum_{i\in I}\alpha_i\circ\wch\alpha^i(\xi)=\xi$.
\item For each $x\in \End_B^0(\Gbb)$, we have $x\circ\alpha_i=0$ and $\wch\alpha^i\circ x=0$ for all but finitely many $i\in I$.
\end{enumerate}

\begin{df}
For each $\phi\in\SLF(B)$, the associated (left) \textbf{pseudotrace} $\Tr^\phi:\End^0_B(\Gbb)\rightarrow\Cbb$ is defined by sending each $x\in\End^0_B(\Gbb)$ to
\begin{align}
\Tr^\phi(x)=\sum_{i\in I}\phi\big(\wch\alpha^i\circ x\circ\alpha_i(1_B)\big)
\end{align}
Then $\Tr^\phi$ is independent of the choice of left coordinate systems, and $\Tr^\phi\in\SLF(\End^0_B(\Gbb))$. See \cite[Sec. 4]{GZ4} for details.
\end{df}

\begin{thm}\label{lb70}
We have a linear isomorphism
\begin{align}\label{eq95}
\SLF(\End_\Vbb(\Gbb)^\opp)\xlongrightarrow{\simeq}\SLF(\boxtimes_\fn\Cbb)\qquad\phi\mapsto \Tr^\phi\circ\pi_\Gbb
\end{align}
Moreover, $\phi$ is non-degenerate if and only if $\Tr^\phi\circ\pi_\Gbb$ is non-degenerate.
\end{thm}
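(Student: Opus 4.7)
The plan is to combine Cor. \ref{lb56} with the main theorem (Thm. 9.4) of \cite{GZ4} on pseudotraces of strongly AUF algebras. The hard work has already been carried out in the preceding sections and in \cite{GZ4}, so the remaining task is merely an assembly of existing results.

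First, by Cor. \ref{lb54}, the algebra $\boxtimes_\fn\Cbb$ is AUF; combining this with Thm. \ref{lb55} and the existence of a projective generator $\Gbb$ of $\Mod(\Vbb)$, we conclude that $\boxtimes_\fn\Cbb$ is strongly AUF in the sense of \cite{GZ4}, with $(\Gbb,\pi_\Gbb)$ a projective generator of $\Coh(\boxtimes_\fn\Cbb)$. By Cor. \ref{lb56}, $\pi_\Gbb$ restricts to an isomorphism of associative $\Cbb$-algebras
\begin{align*}
\pi_\Gbb:\boxtimes_\fn\Cbb\xlongrightarrow{\simeq}\End_B^0(\Gbb),
\end{align*}
where $B=\End_\Vbb(\Gbb)^\opp$. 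Since the symmetry condition $\eta(ab)=\eta(ba)$ and the non-degeneracy condition for a symmetric linear functional depend only on the underlying algebra structure, pullback along $\pi_\Gbb$ yields a linear isomorphism
\begin{align*}
\pi_\Gbb^*:\SLF(\End_B^0(\Gbb))\xlongrightarrow{\simeq}\SLF(\boxtimes_\fn\Cbb),\qquad \eta\mapsto\eta\circ\pi_\Gbb,
\end{align*}
which preserves non-degeneracy.

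Next, Thm. 9.4 of \cite{GZ4}, applied to the strongly AUF algebra $\boxtimes_\fn\Cbb$ with projective generator $\Gbb$ and endomorphism algebra $B$, states that the pseudotrace construction yields a linear isomorphism
\begin{align*}
\SLF(B)\xlongrightarrow{\simeq}\SLF(\End_B^0(\Gbb)),\qquad \phi\mapsto\Tr^\phi,
\end{align*}
and, moreover, that $\phi$ is non-degenerate if and only if $\Tr^\phi$ is non-degenerate. Composing this isomorphism with $\pi_\Gbb^*$ produces precisely the map $\phi\mapsto\Tr^\phi\circ\pi_\Gbb$ appearing in the statement of Thm. \ref{lb70}, and both the linear-isomorphism and non-degeneracy assertions follow at once.

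There is no substantive obstacle at this stage: the main burden has been relegated to \cite{GZ4}, and the only point that must be verified in the present paper is the hypothesis of Thm. 9.4 of \cite{GZ4}, namely that $\boxtimes_\fn\Cbb$ is strongly AUF with the claimed projective generator and endomorphism algebra. This is exactly the content of Cor. \ref{lb54} and Cor. \ref{lb56}.
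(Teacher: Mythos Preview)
Your proposal is correct and follows essentially the same approach as the paper's proof, which simply cites Thm.~9.4 and 10.4 of \cite{GZ4}; you have spelled out the verification of the hypotheses (via Cor.~\ref{lb54}, Thm.~\ref{lb55}, Cor.~\ref{lb56}) that the paper leaves implicit. The only minor discrepancy is that the paper attributes the non-degeneracy statement separately to Thm.~10.4 of \cite{GZ4}, whereas you fold it into Thm.~9.4; this is a citation detail rather than a mathematical issue.
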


\begin{proof}
This is due to Thm. 9.4 and 10.4 of \cite{GZ4}.
\end{proof}

\begin{rem}\label{lb73}
Recall from \eqref{eq101} the bijection between $T\in\Hom_{\Vbb^{\otimes 2}}(\boxtimes_\fn\Cbb,\bbs_\fn\Cbb)$ and $\upxi\in\SLF(\boxtimes_\fn\Cbb)$ related by $\upxi=\upomega\circ T$. Therefore, by Thm. \ref{lb70}, there is a bijection between  $\phi\in\SLF(\End_\Vbb(\Gbb)^\opp)$ and
\begin{align*}
T^\phi\in\Hom_{\Vbb^{\otimes2}}(\boxtimes_\fn\Cbb,\bbs_\fn\Cbb)\simeq \Hom_{\Vbb^{\otimes2}}\Big(\int_{\Mbb\in\Mod(\Vbb)}\Mbb\otimes\Mbb',\int^{\Mbb\in\Mod(\Vbb)}\Mbb'\otimes\Mbb\Big)
\end{align*}
(cf. Thm. \ref{end} and Cor. \ref{coend} for the last equivalence) related by
\begin{align}
\Tr^\phi\circ\pi_\Gbb=\upomega\circ T^\phi
\end{align}
Moreover, by Cor. \ref{lb63}, $\phi$ is non-degenerate iff $T^\phi$ is an isomorphism of $\Vbb^{\otimes 2}$-modules.
\end{rem}

\begin{rem}\label{lb74}
In the special case that $\Vbb$ is \textbf{strongly-finite} (i.e. the $C_2$-cofinite VOA $\Vbb=\bigoplus_{n\in\Nbb}\Vbb(n)$ satisfies $\Vbb\simeq\Vbb'$ and $\dim\Vbb(0)=1$), suppose that the conjectured rigidity of $\Mod(\Vbb)$ holds. Then by \cite{McR21-rational}, $\Mod(\Vbb)$ is a factorizable finite ribbon category. Therefore, by \cite{GR-modified-trace}, there is a distinguished (up to scalar multiplication) element $\phi\in\SLF(\End_\Vbb(\Gbb)^\opp)$, called the \textbf{modified trace}, which is non-degenerate \cite[Prop. 4.2]{GR-modified-trace}. Therefore, by Rem. \ref{lb73}, $\phi$ gives rise to an distinguished $\Vbb^{\otimes 2}$-module isomorphism
\begin{align*}
T^\phi:\boxtimes_\fn\Cbb\xrightarrow{\simeq}\bbs_\fn\Cbb
\end{align*}
In particular, $\boxtimes_\fn\Cbb$ is self-dual. (The self-dualness of the end $\boxtimes_\fn\Cbb$ also follows (more directly) from \cite{Shi-unimodular}.)
\end{rem}

\subsubsection{The isomorphism $\SLF(\End_\Vbb(\Gbb)^\opp)\simeq\ST^*_{\ft_{z,q}}(\Vbb)$}

Recall $z\in\Cbb^\times$, $q=e^{2\im\pi\tau}$ (where $\tau\in\mbb H$), and the $1$-pointed torus $\ft_{z,q}$ in Subsec. \ref{lb67}. We are now ready to prove the following conjecture by Gainutdinov-Runkel, cf. \cite[Conjecture 5.8]{GR-Verlinde}. Note that each element of $\ST^*_{\ft_{z,q}}(\Vbb)$ is a linear functional on $\Vbb$.

\begin{thm}\label{lb68}
Let $(\Gbb,Y_\Gbb)$ be a projective generator of $\Mod(\Vbb)$. Then we have a linear isomorphism
\begin{gather}\label{eq99}
\SLF (\End_\Vbb(\Gbb)^\opp)\xlongrightarrow{\simeq} \ST_{\fk T_{z,q}}^*(\Vbb)\qquad\phi\mapsto \Tr^\phi\big(Y_\Gbb(-,z)q^{L(0)}\big)
\end{gather}
where the RHS is understood as follows: for each $v\in\Vbb$,
\begin{align*}
\Tr^\phi\big(Y_\Gbb(v,z)q^{L(0)}\big):=\sum_{\lambda\in\Cbb}\Tr^\phi\big(P(\lambda)Y_\Gbb(v,z)q^{L(0)}P(\lambda)\big)
\end{align*}
and the series on the RHS converges absolutely.
\end{thm}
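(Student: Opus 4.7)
\medskip

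The plan is to construct the isomorphism \eqref{eq99} as the composition of two linear isomorphisms that have already been established in the paper, and then to verify that this composition coincides with the pseudo-$q$-trace map.

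First, I would invoke Thm. \ref{lb70} to obtain the linear isomorphism
\begin{align*}
\SLF(\End_\Vbb(\Gbb)^\opp)\xlongrightarrow{\simeq}\SLF(\boxtimes_\fn\Cbb)\qquad\phi\mapsto \Tr^\phi\circ\pi_\Gbb
\end{align*}
and combine it with the sewing-factorization isomorphism of Thm. \ref{lb69},
\begin{align*}
\SLF(\boxtimes_\fn\Cbb)\xlongrightarrow{\simeq}\ST^*_{\fk T_{z,q}}(\Vbb)\qquad\upxi\mapsto \upxi\circ \aleph^\sharp_{z,q}.
\end{align*}
This already produces a linear isomorphism $\SLF(\End_\Vbb(\Gbb)^\opp)\xrightarrow{\simeq}\ST^*_{\fk T_{z,q}}(\Vbb)$ sending $\phi$ to the composition $(\Tr^\phi\circ\pi_\Gbb)\circ \aleph^\sharp_{z,q}$, where the latter is evaluated on $v\in\Vbb$ via the absolutely convergent sum
\begin{align*}
\bigl(\Tr^\phi\circ\pi_\Gbb\bigr)\circ\aleph_{z,q}^\sharp(v)=\sum_{\lambda,\mu\in\Cbb}\Tr^\phi\Bigl(\pi_\Gbb\bigl(P_+(\lambda)P_-(\mu)\,q^{L_+(0)}\aleph_z^\sharp(v)\bigr)\Bigr),
\end{align*}
cf. \eqref{eq104} and the definition $\aleph_{z,q}^\sharp=q^{L_+(0)}\circ\aleph_z^\sharp$.

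The main task is then to identify this series with $\Tr^\phi(Y_\Gbb(v,z)q^{L(0)})$. Here I would use the central computation in Rem. \ref{lb25}, which states
\begin{align*}
\pi_\Gbb\bigl(P_+(\lambda)P_-(\mu)\aleph_z^\sharp(v)\bigr)=P(\lambda)\,Y_\Gbb(v,z)\,P(\mu)\qquad\text{in }\End^0(\Gbb).
\end{align*}
Since $q^{L_+(0)}$ commutes with $P_-(\mu)$ and preserves each generalized eigenspace of $L_+(0)$, one has $P_+(\lambda)q^{L_+(0)}=q^{L_+(0)}P_+(\lambda)$, and on $P_+(\lambda)\boxtimes_\fn\Cbb$ the operator $q^{L_+(0)}$ equals $q^\lambda$ times the exponential of the (finite) nilpotent part $(\log q)\,L_+(0)_\nil$. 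Applying $\pi_\Gbb$ and using the intertwining property (Prop. \ref{lb27}) together with Rem. \ref{lb25}, I would obtain
\begin{align*}
\pi_\Gbb\bigl(P_+(\lambda)P_-(\mu)q^{L_+(0)}\aleph_z^\sharp(v)\bigr)=P(\lambda)\,Y_\Gbb(v,z)\,q^{L(0)}\,P(\mu)
\end{align*}
as operators in $\End^0(\Gbb)$. Summing over $\mu$ (a finite sum on any fixed graded piece, absorbed via $\sum_\mu P(\mu)=\id$), then taking $\Tr^\phi$ and using the symmetry property $\Tr^\phi(AB)=\Tr^\phi(BA)$ on $\End^0_B(\Gbb)$, the sum over $\lambda$ collapses to the diagonal terms, yielding
\begin{align*}
\sum_\lambda \Tr^\phi\bigl(P(\lambda)Y_\Gbb(v,z)q^{L(0)}P(\lambda)\bigr)=\Tr^\phi\bigl(Y_\Gbb(v,z)q^{L(0)}\bigr),
\end{align*}
which matches the prescribed formula \eqref{eq99}.

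The main obstacle I expect is the careful bookkeeping in the last step: the operator $q^{L(0)}$ on $\Gbb$ is not of finite rank, so one must handle the two separate sums (over $\lambda$ and $\mu$) and the insertion of $q^{L_+(0)}$ (whose absolute convergence is guaranteed by the admissibility of sewing radii when the sewing modulus has $|q|<1$, cf. Thm. \ref{lb47}) in such a way that the intermediate expression $\Tr^\phi(P(\lambda)Y_\Gbb(v,z)q^{L(0)}P(\mu))$ lies in $\End^0_B(\Gbb)$ and that the cyclicity of $\Tr^\phi$ applies. Once the Fubini-type interchange of the two sums is justified by the joint absolute convergence (as in the convergence arguments repeatedly used in the body of the paper), the identification above follows formally, and bijectivity of the resulting map is automatic from the bijectivity of the two constituent isomorphisms.
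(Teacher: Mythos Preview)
Your proposal is correct and follows essentially the same route as the paper: compose the isomorphisms of Thm.~\ref{lb70} and Thm.~\ref{lb69}, then compute the composite on $v\in\Vbb$ using Rem.~\ref{lb25} and the symmetry of $\Tr^\phi$. Two small points: the relation $\pi_\Gbb(q^{L_+(0)}\psi)=q^{L(0)}\pi_\Gbb(\psi)$ is most cleanly justified not via Prop.~\ref{lb27} but from \eqref{eq118c} (equivalently, from $\pi_\Gbb\in\Hom_{\Vbb^{\otimes2}}(\boxtimes_\fn\Cbb,\Gbb\otimes\Gbb')$, which intertwines $L_+(0)$ with $L(0)\otimes\id$); and rather than summing over $\mu$ first (which produces an operator not in $\End^0_B(\Gbb)$, as you note), the paper keeps both projections, writes $q^{L(0)}P(\lambda)=P(\lambda)q^{L(0)}P(\lambda)$, applies cyclicity of $\Tr^\phi$ to the finite-rank operator $P(\lambda)q^{L(0)}P(\lambda)Y_\Gbb(v,z)P(\mu)$, and then uses $P(\mu)P(\lambda)=\delta_{\lambda,\mu}P(\lambda)$ to collapse the double sum to the diagonal---thereby sidestepping the Fubini/convergence concern you flagged.
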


The construction of $\Tr^\phi\big(Y_\Gbb(-,z)q^{L(0)}\big)$ from $\phi$ is called the \textbf{pseudo-$q$-trace construction}.

\begin{proof}
Let us show that the composition of the isomorphisms \eqref{eq94} and \eqref{eq95} agrees with \eqref{eq99}. We compute that
\begin{align*}
&\bk{\eqref{eq94}\circ\eqref{eq95}(\phi),v}=(\Tr^\phi\circ\pi_\Gbb)\circ\aleph^\sharp_{z,q}(v)\xlongequal{\text{Thm. \ref{lb47}}}\sum_{\lambda,\mu\in\Cbb}\Tr^\phi\circ\pi_\Gbb\big(P(\lambda,\mu)\aleph^\sharp_{z,q}(v)\big)\\
=&\sum_{\lambda,\mu\in\Cbb}\Tr^\phi\circ\pi_\Gbb\big(P_+(\lambda)P_-(\mu)\aleph^\sharp_{z,q}(v)\big)=\sum_{\lambda,\mu\in\Cbb}\Tr^\phi\circ\pi_\Gbb\big(P_+(\lambda)P_-(\mu)q^{L_+(0)}\aleph^\sharp_z(v)\big)
\end{align*}
where the sums converge absolutely. Recall from Def. \ref{lb57} that for each $\psi\in\boxtimes_\fn\Cbb$ and $w\in\Gbb$, we have $\Phi_+(\psi\otimes w)=\pi_\Gbb(\psi)w$. By \eqref{eq118c}, we have $\Phi_+(q^{L_+(0)}\psi\otimes w)=q^{L(0)}\Phi_+(\psi\otimes w)$, and hence $\pi_\Gbb(q^{L_+(0)}\psi)=q^{L(0)}\pi_\Gbb(\psi)$. Therefore,
\begin{align*}
&\bk{\eqref{eq94}\circ\eqref{eq95}(\phi),v}=\sum_{\lambda,\mu\in\Cbb}\Tr^\phi\Big( q^{L(0)}\pi_\Gbb\big(P_+(\lambda)P_-(\mu)\aleph^\sharp_z(v)\big)\Big)\\
\xlongequal{\eqref{eq83a}}&\sum_{\lambda,\mu\in\Cbb}\Tr^\phi\big(q^{L(0)} P(\lambda)Y_\Gbb(v,z)P(\mu)\big)=\sum_{\lambda,\mu\in\Cbb}\Tr^\phi\big(P(\lambda)q^{L(0)} P(\lambda)Y_\Gbb(v,z)P(\mu)\big)
\end{align*}
Since $\Tr^\phi$ a symmetric linear functional on $\End^0_B(\Gbb)$, the last term above equals
\begin{align*}
&\sum_{\lambda,\mu\in\Cbb}\Tr^\phi\big(P(\lambda)Y_\Gbb(v,z)P(\mu)P(\lambda)q^{L(0)} P(\lambda)\big)=\sum_{\lambda\in\Cbb}\Tr^\phi\big(P(\lambda)Y_\Gbb(v,z)q^{L(0)} P(\lambda)\big)
\end{align*}
This finishes the proof.
\end{proof}

For any associative $\Cbb$-algebra $A$, let $Z(A)$ be its center.

\begin{co}
Assume that $\Vbb$ is strongly-finite, and that the conjectured rigidity of $\Mod(\Vbb)$ holds. Then for each projective generator $\Gbb$ of $\Mod(\Vbb)$, and for each non-degenerate $\phi\in\SLF(\End_\Vbb(\Gbb)^\opp)$ (cf. Rem. \ref{lb74} for the existence), we have a linear isomorphism
\begin{align}
Z(\End_\Vbb(\Gbb)^\opp)\xlongrightarrow{\simeq} \ST_{\fk T_{z,q}}^*(\Vbb)\qquad x\mapsto \Tr^{\phi_x}\big(Y_\Gbb(-,z)q^{L(0)}\big)
\end{align}
where $\phi_x\in\SLF(\End_\Vbb(\Gbb)^\opp)$ is defined by sending each $y\in\End_\Vbb(\Gbb)^\opp$ to $\phi(xy)$.
\end{co}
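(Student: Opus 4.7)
The plan is to factor the proposed map through the isomorphism of Thm. \ref{lb68} and reduce to a standard Frobenius-type argument on the finite-dimensional algebra $B := \End_\Vbb(\Gbb)^\opp$. More precisely, writing $\Theta_\phi : Z(B) \to \SLF(B)$ for the map $x \mapsto \phi_x$, the assignment $x \mapsto \Tr^{\phi_x}\bigl(Y_\Gbb(-,z)q^{L(0)}\bigr)$ is the composition of $\Theta_\phi$ with the linear isomorphism $\SLF(B) \xlongrightarrow{\simeq} \ST_{\ft_{z,q}}^*(\Vbb)$ of Thm. \ref{lb68}. Hence it suffices to show that $\Theta_\phi$ is a linear isomorphism whenever $\phi$ is non-degenerate.

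First I would verify that $\Theta_\phi$ lands in $\SLF(B)$: for $x \in Z(B)$ and $y_1, y_2 \in B$, one computes
\begin{align*}
\phi_x(y_1 y_2) = \phi(x y_1 y_2) = \phi(y_1 x y_2) = \phi(x y_2 y_1) = \phi_x(y_2 y_1),
\end{align*}
using centrality of $x$ and the symmetry of $\phi$. Linearity of $\Theta_\phi$ is immediate. For injectivity, if $\phi_x = 0$ then $\phi(x y) = 0$ for all $y \in B$, so non-degeneracy of $\phi$ (in the sense that the map $B \to B^*$, $b \mapsto \phi(b \cdot -)$ is injective, cf. Cor. \ref{lb63} transported to $B$ via Thm. \ref{lb70}) forces $x = 0$.

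For surjectivity, fix $\psi \in \SLF(B)$. Non-degeneracy of $\phi$, together with $\dim B < \infty$, implies that $b \mapsto \phi(b \cdot -)$ is a linear isomorphism $B \xlongrightarrow{\simeq} B^*$. Hence there exists a unique $x \in B$ with $\psi(y) = \phi(x y)$ for all $y \in B$. It remains to show that $x$ actually lies in $Z(B)$. For any $y, z \in B$, symmetry of $\psi$ and of $\phi$ give
\begin{align*}
\phi(x y z) = \psi(y z) = \psi(z y) = \phi(x z y) = \phi(y x z),
\end{align*}
so $\phi\bigl((xy - yx) z\bigr) = 0$ for all $z \in B$. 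Non-degeneracy of $\phi$ then yields $xy = yx$ for all $y$, so $x \in Z(B)$ and $\Theta_\phi(x) = \psi$.

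The argument is entirely elementary once Thm. \ref{lb68} is in hand; the only non-routine point is confirming that the notion of non-degeneracy used to state the corollary (through Rem. \ref{lb73} and \ref{lb74}) is the standard Frobenius-algebra non-degeneracy on $B = \End_\Vbb(\Gbb)^\opp$, but this is exactly the content of Cor. \ref{lb63} combined with the algebra isomorphism $\pi_\Gbb : \boxtimes_\fn\Cbb \xlongrightarrow{\simeq} \End_B^0(\Gbb)$ of Cor. \ref{lb56} and the pseudotrace isomorphism \eqref{eq95} in Thm. \ref{lb70}. No real obstacle is expected.
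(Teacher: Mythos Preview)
Your proposal is correct and follows exactly the same route as the paper's own proof: factor through the isomorphism of Thm.~\ref{lb68} and invoke the elementary Frobenius-algebra fact that $x\mapsto\phi_x$ is a linear isomorphism $Z(B)\xrightarrow{\simeq}\SLF(B)$ whenever $\phi$ is non-degenerate. The paper simply declares this last step an ``easy fact'' without writing out the computation; you have supplied those details, and they are correct.
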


\begin{proof}
This follows immediately from Thm. \ref{lb68} and the easy fact that for any finite-dimensional $\Cbb$-algebra $A$ and a fixed non-degenerate $\phi\in\SLF(A)$, the map
\begin{align*}
Z(A)\rightarrow\SLF(A)\qquad x\mapsto\phi_x
\end{align*}
is a linear isomorphism.
\end{proof}

%%%%%%%%%%%%%%%%%%%%%%%%%%%%%%%%%%%%%%%%%%%%%%%%%%%%%%%%%%%%%%%%
%  References
%%%%%%%%%%%%%%%%%%%%%%%%%%%%%%%%%%%%%%%%%%%%%%%%%%%%%%%%%%%%%%%%
\footnotesize
	\bibliographystyle{alpha}
    \bibliography{voa}

\begin{thebibliography}{ALSW21}

\bibitem[ALSW21]{ALSW21}
Robert Allen, Simon Lentner, Christoph Schweigert, and Simon Wood.
\newblock Duality structures for module categories of vertex operator algebras
  and the {Feigin} {Fuchs} boson.
\newblock Preprint, {arXiv}:2107.05718 [math.{QA}] (2021), 2021.

\bibitem[AN03]{AN03-finite-dimensional}
Toshiyuki Abe and Kiyokazu Nagatomo.
\newblock Finiteness of conformal blocks over compact {Riemann} surfaces.
\newblock {\em Osaka J. Math.}, 40(2):375--391, 2003.

\bibitem[AN13]{AN-pseudo-trace}
Yusuke Arike and Kiyokazu Nagatomo.
\newblock Some remarks on pseudo-trace functions for orbifold models associated
  with symplectic fermions.
\newblock {\em Int. J. Math.}, 24(2):1350008, 29, 2013.

\bibitem[DGK24]{DGK3-morita}
Chiara Damiolini, Angela Gibney, and Daniel Krashen.
\newblock Morita equivalences for {Zhu}'s algebra.
\newblock Preprint, {arXiv}:2403.11855 [math.{RT}] (2024), 2024.

\bibitem[DGK25]{DGK2}
Chiara Damiolini, Angela Gibney, and Daniel Krashen.
\newblock Conformal blocks on smoothings via mode transition algebras.
\newblock {\em Comm. Math. Phys.}, 406(6):Paper No. 131, 58, 2025.

\bibitem[DGT24]{DGT2}
Chiara Damiolini, Angela Gibney, and Nicola Tarasca.
\newblock On factorization and vector bundles of conformal blocks from vertex
  algebras.
\newblock {\em Ann. Sci. \'Ec. Norm. Sup\'er. (4)}, 57(1):241--292, 2024.

\bibitem[DSPS19]{DSPS19-balanced}
Christopher~L. Douglas, Christopher Schommer-Pries, and Noah Snyder.
\newblock The balanced tensor product of module categories.
\newblock {\em Kyoto J. Math.}, 59(1):167--179, 2019.

\bibitem[DW25]{DW-modular-functor}
Chiara Damiolini and Lukas Woike.
\newblock Modular functors from conformal blocks of rational vertex operator
  algebras.
\newblock Preprint, {arXiv}:2507.05845 [math.{QA}] (2025), 2025.

\bibitem[FBZ04]{FB04}
Edward Frenkel and David Ben-Zvi.
\newblock {\em Vertex algebras and algebraic curves}, volume~88 of {\em
  Mathematical Surveys and Monographs}.
\newblock American Mathematical Society, Providence, RI, second edition, 2004.

\bibitem[Fio16]{Fio-genus-1}
Francesco Fiordalisi.
\newblock Logarithmic intertwining operators and genus-one correlation
  functions.
\newblock {\em Commun. Contemp. Math.}, 18(6):46, 2016.
\newblock Id/No 1650026.

\bibitem[FS17]{FS-coends-CFT}
J\"urgen Fuchs and Christoph Schweigert.
\newblock Coends in conformal field theory.
\newblock In {\em Lie algebras, vertex operator algebras, and related topics},
  volume 695 of {\em Contemp. Math.}, pages 65--81. Amer. Math. Soc.,
  Providence, RI, 2017.

\bibitem[FSS20]{FSS20}
J\"urgen Fuchs, Gregor Schaumann, and Christoph Schweigert.
\newblock Eilenberg-{W}atts calculus for finite categories and a bimodule
  {R}adford {$S^4$} theorem.
\newblock {\em Trans. Amer. Math. Soc.}, 373(1):1--40, 2020.

\bibitem[GR19]{GR-Verlinde}
Azat~M. Gainutdinov and Ingo Runkel.
\newblock The non-semisimple {V}erlinde formula and pseudo-trace functions.
\newblock {\em J. Pure Appl. Algebra}, 223(2):660--690, 2019.

\bibitem[GR20]{GR-modified-trace}
Azat~M. Gainutdinov and Ingo Runkel.
\newblock Projective objects and the modified trace in factorisable finite
  tensor categories.
\newblock {\em Compos. Math.}, 156(4):770--821, 2020.

\bibitem[Gui24]{Gui-sewingconvergence}
Bin Gui.
\newblock Convergence of sewing conformal blocks.
\newblock {\em Commun. Contemp. Math.}, 26(3):65, 2024.
\newblock Id/No 2350007.

\bibitem[GZ23]{GZ1}
Bin Gui and Hao Zhang.
\newblock Analytic conformal blocks of {$C_2$}-cofinite vertex operator
  algebras {I}: Propagation and dual fusion products.
\newblock arXiv:2305.10180, 2023.

\bibitem[GZ24]{GZ2}
Bin Gui and Hao Zhang.
\newblock Analytic conformal blocks of {$C_2$}-cofinite vertex operator
  algebras {II}: Convergence of sewing and higher genus pseudo-{$q$}-traces.
\newblock arXiv:2411.07707, 2024.

\bibitem[GZ25a]{GZ3}
Bin Gui and Hao Zhang.
\newblock Analytic conformal blocks of {$C_2$}-cofinite vertex operator
  algebras {III}: The sewing-factorization theorems.
\newblock arXiv:2503.23995, 2025.

\bibitem[GZ25b]{GZ4}
Bin Gui and Hao Zhang.
\newblock Pseudotraces on almost unital and finite-dimensional algebras.
\newblock arXiv:2508.00431, 2025.

\bibitem[HLZ12a]{HLZ2}
Yi-Zhi Huang, James Lepowsky, and Lin Zhang.
\newblock Logarithmic tensor category theory, ii: Logarithmic formal calculus
  and properties of logarithmic intertwining operators.
\newblock arXiv:1012.4196, 2012.

\bibitem[HLZ12b]{HLZ3}
Yi-Zhi Huang, James Lepowsky, and Lin Zhang.
\newblock Logarithmic tensor category theory, iii: Intertwining maps and tensor
  product bifunctors.
\newblock arXiv:1012.4197, 2012.

\bibitem[HLZ12c]{HLZ4}
Yi-Zhi Huang, James Lepowsky, and Lin Zhang.
\newblock Logarithmic tensor category theory, iv: Constructions of tensor
  product bifunctors and the compatibility conditions.
\newblock arXiv:1012.4198, 2012.

\bibitem[HLZ12d]{HLZ5}
Yi-Zhi Huang, James Lepowsky, and Lin Zhang.
\newblock Logarithmic tensor category theory, v: Convergence condition for
  intertwining maps and the corresponding compatibility condition.
\newblock arXiv:1012.4199, 2012.

\bibitem[HLZ12e]{HLZ6}
Yi-Zhi Huang, James Lepowsky, and Lin Zhang.
\newblock Logarithmic tensor category theory, vi: Expansion condition,
  associativity of logarithmic intertwining operators, and the associativity
  isomorphisms.
\newblock arXiv:1012.4202, 2012.

\bibitem[HLZ12f]{HLZ7}
Yi-Zhi Huang, James Lepowsky, and Lin Zhang.
\newblock Logarithmic tensor category theory, vii: Convergence and extension
  properties and applications to expansion for intertwining maps.
\newblock arXiv:1110.1929, 2012.

\bibitem[HLZ12g]{HLZ8}
Yi-Zhi Huang, James Lepowsky, and Lin Zhang.
\newblock Logarithmic tensor category theory, viii: Braided tensor category
  structure on categories of generalized modules for a conformal vertex
  algebra.
\newblock arXiv:1110.1931, 2012.

\bibitem[HLZ14]{HLZ1}
Yi-Zhi Huang, James Lepowsky, and Lin Zhang.
\newblock Logarithmic tensor category theory for generalized modules for a
  conformal vertex algebra, {I}: introduction and strongly graded algebras and
  their generalized modules.
\newblock In {\em Conformal field theories and tensor categories}, Math. Lect.
  Peking Univ., pages 169--248. Springer, Heidelberg, 2014.

\bibitem[HR24]{HR24-MF}
Aaron Hofer and Ingo Runkel.
\newblock Modular functors from non-semisimple 3d {TFTs}.
\newblock Preprint, {arXiv}:2405.18038 [math.{QA}] (2024), 2024.

\bibitem[Hua95]{Hua-tensor-4}
Yi-Zhi Huang.
\newblock A theory of tensor products for module categories for a vertex
  operator algebra. {IV}.
\newblock {\em J. Pure Appl. Algebra}, 100(1-3):173--216, 1995.

\bibitem[Hua05a]{Hua-differential-genus-0}
Yi-Zhi Huang.
\newblock Differential equations and intertwining operators.
\newblock {\em Commun. Contemp. Math.}, 7(3):375--400, 2005.

\bibitem[Hua05b]{Hua-differential-genus-1}
Yi-Zhi Huang.
\newblock Differential equations, duality and modular invariance.
\newblock {\em Commun. Contemp. Math.}, 7(5):649--706, 2005.

\bibitem[Hua09]{Hua-projectivecover}
Yi-Zhi Huang.
\newblock Cofiniteness conditions, projective covers and the logarithmic tensor
  product theory.
\newblock {\em J. Pure Appl. Algebra}, 213(4):458--475, 2009.

\bibitem[Hua22]{Hua22-Ass-IO}
Yi-Zhi Huang.
\newblock Associative algebras and intertwining operators.
\newblock {\em Comm. Math. Phys.}, 396(1):1--44, 2022.

\bibitem[Hua24a]{Hua-associative}
Yi-Zhi Huang.
\newblock Associative algebras and the representation theory of
  grading-restricted vertex algebras.
\newblock {\em Commun. Contemp. Math.}, 26(6):Paper No. 2350036, 46, 2024.

\bibitem[Hua24b]{Hua-modular-C2}
Yi-Zhi Huang.
\newblock Modular invariance of (logarithmic) intertwining operators.
\newblock {\em Comm. Math. Phys.}, 405(5):Paper No. 131, 82, 2024.

\bibitem[Li01a]{Li-regular-Zhu}
Haisheng Li.
\newblock The regular representation, {Zhu}'s {{\(A(V)\)}}-theory, and induced
  modules.
\newblock {\em J. Algebra}, 238(1):159--193, 2001.

\bibitem[Li01b]{Li-regular-AnV}
Haisheng Li.
\newblock The regular representations and the {{\(A_n(V)\)}}-algebras.
\newblock In {\em Proceedings on Moonshine and related topics. Proceedings of
  the workshop, Montr\'eal, Canada, May 1999. Dedicated to the memory of
  Chih-Han Sah}, pages 99--116. Providence, RI: American Mathematical Society
  (AMS), 2001.

\bibitem[Li02]{Li-regular-rep}
Haisheng Li.
\newblock Regular representations of vertex operator algebras.
\newblock {\em Commun. Contemp. Math.}, 4(4):639--683, 2002.

\bibitem[Lyu95]{Lyu95-Invariants}
Volodymyr~V. Lyubashenko.
\newblock Invariants of {$3$}-manifolds and projective representations of
  mapping class groups via quantum groups at roots of unity.
\newblock {\em Comm. Math. Phys.}, 172(3):467--516, 1995.

\bibitem[Lyu96]{Lyu96-Ribbon}
V.~Lyubashenko.
\newblock Ribbon abelian categories as modular categories.
\newblock {\em J. Knot Theory Ramifications}, 5(3):311--403, 1996.

\bibitem[McR21]{McR21-rational}
Robert McRae.
\newblock On rationality for ${C_2}$-cofinite vertex operator algebras.
\newblock Preprint, {arXiv}:2108.01898 [math.{QA}] (2021), 2021.

\bibitem[McR23]{McR-deligne}
Robert McRae.
\newblock Deligne tensor products of categories of modules for vertex operator
  algebras.
\newblock 2304.14023v1, 2023.

\bibitem[Miy04]{Miy-modular-invariance}
Masahiko Miyamoto.
\newblock Modular invariance of vertex operator algebras satisfying
  {$C_2$}-cofiniteness.
\newblock {\em Duke Math. J.}, 122(1):51--91, 2004.

\bibitem[MNT10]{MNT10}
Atsushi Matsuo, Kiyokazu Nagatomo, and Akihiro Tsuchiya.
\newblock Quasi-finite algebras graded by {H}amiltonian and vertex operator
  algebras.
\newblock In {\em Moonshine: the first quarter century and beyond}, volume 372
  of {\em London Math. Soc. Lecture Note Ser.}, pages 282--329. Cambridge Univ.
  Press, Cambridge, 2010.

\bibitem[Mor22]{Moriwaki22-CB}
Yuto Moriwaki.
\newblock Vertex operator algebra and parenthesized braid operad.
\newblock Preprint, {arXiv}:2209.10443, 2022.

\bibitem[NT05]{NT-P1_conformal_blocks}
Kiyokazu Nagatomo and Akihiro Tsuchiya.
\newblock Conformal field theories associated to regular chiral vertex operator
  algebras. {I}: {Theories} over the projective line.
\newblock {\em Duke Math. J.}, 128(3):393--471, 2005.

\bibitem[Seg88]{Segal-CFT1}
G.~B. Segal.
\newblock The definition of conformal field theory.
\newblock Differential geometrical methods in theoretical physics, {Proc}. 16th
  {Int}. {Conf}., {NATO} {Adv}. {Res}. {Workshop}, {Como}/{Italy} 1987, {NATO}
  {ASI} {Ser}., {Ser}. {C} 250, 165-171 (1988)., 1988.

\bibitem[Seg04]{Segal-CFT2}
Graeme Segal.
\newblock The definition of conformal field theory.
\newblock In {\em Topology, geometry and quantum field theory. Proceedings of
  the 2002 Oxford symposium in honour of the 60th birthday of Graeme Segal,
  Oxford, UK, June 24--29, 2002}, pages 421--577. Cambridge: Cambridge
  University Press, 2004.

\bibitem[Shi17]{Shi-unimodular}
Kenichi Shimizu.
\newblock On unimodular finite tensor categories.
\newblock {\em Int. Math. Res. Not. IMRN}, (1):277--322, 2017.

\bibitem[Zhu96]{Zhu-modular-invariance}
Yongchang Zhu.
\newblock Modular invariance of characters of vertex operator algebras.
\newblock {\em J. Amer. Math. Soc.}, 9(1):237--302, 1996.

\end{thebibliography}

\noindent {\small \sc Yau Mathematical Sciences Center, Tsinghua University, Beijing, China.}

\noindent {\textit{E-mail}}: binguimath@gmail.com\qquad bingui@tsinghua.edu.cn\\

\noindent {\small \sc Yau Mathematical Sciences Center and Department of Mathematics, Tsinghua University, Beijing, China.}

\noindent {\textit{E-mail}}: zhanghao1999math@gmail.com \qquad h-zhang21@mails.tsinghua.edu.cn
\end{document}